\numberwithin{equation}{subsection}
\theoremstyle{plain}
\newtheorem{thm}[subsection]{Theorem}
\newtheorem{prop}[subsection]{Proposition}
\newtheorem{lemma}[subsection]{Lemma}
\newtheorem{cor}[subsection]{Corollary}
\theoremstyle{definition}
\newtheorem{defn}[subsection]{Definition}
\newtheorem{notn}[subsection]{Notation}
\newtheorem{cont}[subsection]{Contents}
\newtheorem{ackn}[subsection]{Acknowledgement}
\theoremstyle{remark}
\newtheorem{rem}[subsection]{Remark}
\newtheorem{example}[subsection]{Example}
\newtheorem{examples}[subsection]{Examples}
\newcommand{\dal}{{\mathchoice{\mbox{$\langle\hspace{-0.15em}\langle$}}
                              {\mbox{$\langle\hspace{-0.15em}\langle$}}
                              {\mbox{\scriptsize$\langle\hspace{-0.15em}\langle$}}
                              {\mbox{\tiny$\langle\hspace{-0.15em}\langle$}}}}
\newcommand{\dar}{{\mathchoice{\mbox{$\rangle\hspace{-0.15em}\rangle$}}
                              {\mbox{$\rangle\hspace{-0.15em}\rangle$}}
                              {\mbox{\scriptsize$\rangle\hspace{-0.15em}\rangle$}}
                              {\mbox{\tiny$\rangle\hspace{-0.15em}\rangle$}}}}
\def\noqed{\renewcommand{\qedsymbol}{}}
\begin{document}
\title{The $p$-adic monodromy group of abelian varieties over global function fields of characteristic $p$}
\author{Ambrus P\'al}
\date{December 11, 2015.}
\address{Department of Mathematics, 180 Queen's Gate, Imperial College, London, SW7 2AZ, United Kingdom}
\email{a.pal@imperial.ac.uk}
\begin{abstract} We prove an analogue of the Tate isogeny conjecture and the semi-simplicity conjecture for overconvergent crystalline Dieudonn\'e modules of abelian varieties defined over global function fields of characteristic $p$. As a corollary we deduce that monodromy groups of such overconvergent crystalline Dieudonn\'e modules are reductive, and after a finite base change of coefficients their connected components  are the same as the connected components of monodromy groups of Galois representations on the corresponding $l$-adic Tate modules, for $l$ different from $p$. We also show such a result for general compatible systems incorporating overconvergent $F$-isocrystals, conditional on a result of Abe.
\end{abstract}
\footnotetext[1]{\it 2010 Mathematics Subject Classification. \rm 14K15, 14F30, 14F35.}
\maketitle
\pagestyle{myheadings}
\markboth{Ambrus P\'al}{The $p$-adic monodromy group of abelian varieties}

\section{Introduction}

Let $U$ be a geometrically connected smooth quasi-projective curve defined over the finite field $\mathbb F_q$ of characteristic $p$. For every perfect field $k$ of characteristic $p$ let $\mathbb W(k)$ denote the ring of Witt vectors of $k$ of infinite length. Let $\mathbb Z_q$ and $\mathbb Q_q$ denote $\mathbb W(\mathbb F_q)$ and its the fraction field, respectively. For every abelian scheme $A$ over $U$ let $D^{\dagger}(A)$ denote the overconvergent crystalline Dieudonn\'e module of $A$ over $U$ (for a construction see \cite{KT}, sections 4.3--4.8). It is a $\mathbb Q_q$-linear $F$-isocrystal equipped with the $p$-power Frobenius. The first result of this paper is an easy consequence of de Jong's theorem (\cite{J1}):
\begin{thm}\label{isogeny} Let $A$ and $B$ be two abelian schemes over $U$. Then the map:
$$\textrm{\rm Hom}(A,B)\otimes\mathbb Q_p
\longrightarrow^{\!\!\!\!\!\!\!\!\alpha}\ \ 
\textrm{\rm Hom}(D^{\dagger}(A),D^{\dagger}(B))$$
induced by the functoriality of overconvergent Dieudonn\'e modules is an isomorphism.
\end{thm}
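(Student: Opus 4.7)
The strategy is to factor $\alpha$ through the convergent crystalline Dieudonn\'e theory of $p$-divisible groups and apply de Jong's theorem \cite{J1}. The first step is to reduce from abelian schemes to their $p$-divisible groups. Since $\mathrm{Hom}(A,B)$ is a finitely generated abelian group and a homomorphism of abelian schemes is determined by its action on $p^\infty$-torsion, the functor $A\mapsto A[p^\infty]$ induces an isomorphism $\mathrm{Hom}(A,B)\otimes\mathbb Z_p\stackrel{\sim}{\to}\mathrm{Hom}(A[p^\infty],B[p^\infty])$ (the non-obvious direction being the standard ``lifting of quasi-isogenies'' principle).

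Next I would invoke de Jong's theorem \cite{J1}: for $p$-divisible groups $G_1,G_2$ over a smooth $\mathbb F_p$-scheme such as $U$, the crystalline Dieudonn\'e functor $\mathbb D$ induces an isomorphism $\mathrm{Hom}(G_1,G_2)\otimes\mathbb Q\stackrel{\sim}{\to}\mathrm{Hom}(\mathbb D(G_1),\mathbb D(G_2))$, where the target consists of homomorphisms of the associated (convergent) $F$-isocrystals. Taking $G_1=A[p^\infty]$ and $G_2=B[p^\infty]$ and combining with the first step yields the isomorphism
$$\mathrm{Hom}(A,B)\otimes\mathbb Q_p\stackrel{\sim}{\longrightarrow}\mathrm{Hom}(\mathbb D(A[p^\infty]),\mathbb D(B[p^\infty]))\otimes\mathbb Q.$$

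The third ingredient, built into the construction of $D^\dagger(A)$ in \cite{KT}, \S\S 4.3--4.8, is a functorial identification of the underlying convergent $F$-isocrystal of $D^\dagger(A)$ with $\mathbb D(A[p^\infty])\otimes\mathbb Q$. Together with the (tautological) faithfulness of the restriction functor from overconvergent to convergent $F$-isocrystals on $U$, this places $\alpha$ inside the commutative diagram
$$\mathrm{Hom}(A,B)\otimes\mathbb Q_p\xrightarrow{\ \alpha\ }\mathrm{Hom}(D^\dagger(A),D^\dagger(B))\xrightarrow{\ \mathrm{res}\ }\mathrm{Hom}(\mathbb D(A[p^\infty]),\mathbb D(B[p^\infty]))\otimes\mathbb Q$$
whose composition is the isomorphism from the second step and in which $\mathrm{res}$ is injective. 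A short diagram chase then forces both $\alpha$ and $\mathrm{res}$ to be isomorphisms.

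The main point requiring care is the compatibility asserted in the third step, namely the functorial identification of $D^\dagger(A)$ (with overconvergence forgotten) with the crystalline Dieudonn\'e isocrystal of $A[p^\infty]$. I expect this to be essentially transparent from the construction in \cite{KT}, but it is the one place where a careful unwinding of definitions is indispensable; everything else is either standard or a formal appeal to de Jong's theorem.
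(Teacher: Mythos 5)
Your overall architecture is the same as the paper's first proof: reduce to $p$-divisible groups, apply de Jong's crystalline Dieudonn\'e theory, and then pass from convergent to overconvergent $F$-isocrystals. Steps 2 and 3 are fine; in fact your endgame is slightly more economical than the paper's, since your diagram chase only needs \emph{faithfulness} of the forgetful functor $\mathrm{res}$ (which is easy), whereas the paper invokes Kedlaya's full faithfulness theorem to conclude directly that $\psi=\mathrm{res}$ is an isomorphism.

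The genuine gap is in your first step. The surjectivity of $\mathrm{Hom}(A,B)\otimes\mathbb Z_p\to\mathrm{Hom}(A[p^\infty],B[p^\infty])$ is \emph{not} a formal consequence of finite generation plus the ``lifting of quasi-isogenies'' principle. That principle (forming the quotients $B_n=A/G_n$ and using boundedness) only reduces surjectivity to a finiteness statement about the isogeny class, and proving that finiteness is precisely the content of the Tate isogeny conjecture for $p$-divisible groups. Over a global function field of characteristic $p$ this is de Jong's theorem (Theorem 2.6 of \cite{J2}), which is the essential arithmetic input to the whole result --- the paper proves exactly the composite $\rho\circ\phi$ is an isomorphism by citing it, and then deduces that $\phi$ is an isomorphism from the injectivity of the restriction $\rho$ to the generic fibre (where de Jong's theorem is actually stated). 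As written, your step 1 silently assumes the hardest part of the theorem; with the citation to \cite{J2} inserted there, the rest of your argument goes through.
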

We will also give another proof of this claim by deriving it directly from Zarhin's celebrated theorem on the endomorphism of abelian varieties over function fields (see \cite{Za1} and \cite{Za2})) using the formidable machinery of rigid cohomology (see Chapters \ref{cycle-class} and \ref{tate} for the details). This line of attack necessitates the development of a suitable formalism of $p$-adic cycle class maps and Tate's conjecture over function fields as an additional bonus, and it also gives a new, although not entirely independent proof of de Jong's theorem mentioned above. Our next result is the natural pair of the result above, analogous to the semi-simplicity of the action of the absolute Galois group on the Tate module of abelian varieties, also due to Zarhin in the function field case.
\begin{thm}\label{semisimple} Let $A$ be an abelian scheme over $U$. Then the overconvergent $F$-isocrystal $D^{\dagger}(A)$ is semi-simple.
\end{thm}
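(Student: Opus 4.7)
The plan is to parallel Zarhin's proof of the semi-simplicity of the $\ell$-adic Tate module, using Theorem \ref{isogeny} as the $p$-adic replacement for Tate's isogeny theorem. First I would reduce to the case $A$ is simple: Poincar\'e complete reducibility, applied to the generic fibre and spread out over $U$, yields an isogeny $A\sim\prod_i A_i^{m_i}$ with the $A_i$ pairwise non-isogenous simple abelian schemes, and since $D^\dagger$ is $\mathbb{Q}_p$-linear and inverts isogenies, $D^\dagger(A)\simeq\bigoplus_i D^\dagger(A_i)^{\oplus m_i}$ as overconvergent $F$-isocrystals. Because semi-simplicity is closed under finite direct sums, we may assume $A$ simple.

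For $A$ simple, let $E:=\mathrm{End}^0(A)$, a division algebra over a number field $K$. Theorem \ref{isogeny} identifies $\mathrm{End}(D^\dagger(A))$ with $E\otimes_\mathbb{Q}\mathbb{Q}_p=\prod_{v\mid p}E_v$, where each $E_v:=E\otimes_K K_v$ is a central simple $K_v$-algebra; in particular this endomorphism algebra is semisimple. Its central idempotents decompose $D^\dagger(A)=\bigoplus_v D^\dagger(A)_v$ as overconvergent $F$-isocrystals, on each of which $E_v$ acts isotypically by Wedderburn. Now choose a polarization $\lambda\colon A\to A^\vee$; via functoriality of $D^\dagger$ applied to the Weil pairing, it yields a non-degenerate alternating morphism $D^\dagger(A)\otimes D^\dagger(A)\to\mathbb{Q}_p(-1)$ of overconvergent $F$-isocrystals, compatible with the Rosati involution $\iota$ on $E$. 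By the classical theorem of Weil, $\iota$ is positive, and this extends to a positive involution on $E\otimes\mathbb{Q}_p$ compatible with the pairing.

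Semi-simplicity then follows by Zarhin's formal argument: Morita equivalence over each $E_v$, combined with the positivity of the Rosati involution and Albert's classification of division algebras with positive involution, forces any sub-object $W\subset D^\dagger(A)_v$ to admit a complement, and iterating yields a decomposition into simple sub-objects. The main obstacle I expect will be the overconvergent aspect: one must verify that the polarization pairing is a genuine morphism in the category of overconvergent $F$-isocrystals on $U$, not merely a pairing on fibres at closed points, which requires careful compatibility with Frobenius, connection and overconvergence along the boundary, and that Zarhin's formal semi-simplicity argument, traditionally phrased in the $\ell$-adic Galois setting, carries over cleanly to $p$-adic coefficients. An alternative, potentially cleaner, route is to invoke pointwise purity of $D^\dagger(A)$ --- coming from the Weil conjectures for abelian varieties over finite fields --- together with a ``purity implies semi-simplicity'' theorem for overconvergent $F$-isocrystals, bypassing the Rosati argument entirely.
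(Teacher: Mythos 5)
Your primary route has a genuine gap at its final step. Knowing from Theorem \ref{isogeny} that $\mathrm{End}(D^{\dagger}(A))\cong\mathrm{End}(A)\otimes\mathbb{Q}_p$ is a semisimple algebra does not imply that $D^{\dagger}(A)$ is a semisimple object: a non-split extension of two non-isomorphic simple objects with no nonzero maps between them has a field (or a division algebra) as endomorphism ring, so semisimplicity of the commutant proves nothing by itself. What the Tate--Zarhin--Faltings argument actually requires is that every sub-object $\mathcal F\subseteq D^{\dagger}(A)$ be the kernel (or image) of some element of $\mathrm{End}(A)\otimes\mathbb{Q}_p$; only then does semisimplicity of that algebra yield an idempotent cutting out a complement. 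Producing such an element is the hard part, and the polarization does not supply it: positivity of the Rosati involution is a statement over $\mathbb{R}$ and gives no orthogonal-complement argument over $\mathbb{Q}_p$, while the pairing $D^{\dagger}(A)\otimes D^{\dagger}(A)\to\mathbb{Q}_p(-1)$ restricted to a sub-object is typically degenerate --- for instance the unit-root part of a generically ordinary abelian scheme is totally isotropic, so $W\cap W^{\perp}\neq 0$ in general. The paper closes this gap with a finiteness argument: the sub-$F$-isocrystal corresponds to a $p$-divisible subgroup $G\subset A[p^{\infty}]$ with semistable reduction (Proposition \ref{extension}, resting on Theorem \ref{overconvergent-condition}); the quotients $B_n=A/G[p^n]$ have constant Faltings height (Theorem \ref{height}); infinitely many of them are isomorphic (Corollary \ref{finiteness}); and a $p$-adic limit of the resulting quasi-isogenies is an element of $\mathrm{End}(A)\otimes\mathbb{Q}_p$ whose kernel is exactly $\mathcal F$. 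This finiteness input is also present in Zarhin's original proof, and your sketch omits it entirely.

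Your fallback route is essentially the paper's other, quicker proof, but as stated it too is missing an ingredient. The Abe--Caro theorem (Proposition \ref{abe-caro}) says that a pure overconvergent $F$-isocrystal has semisimple \emph{underlying isocrystal}; this controls only the geometric monodromy. To upgrade to semisimplicity as an $F$-isocrystal one must also handle the arithmetic part, which the paper does by observing that $\mathrm{Frob}_x(D^{\dagger}(A))$ is a semisimple linear map at a closed point $x$ (Honda--Tate applied to the fibre $A_x$) and then invoking Proposition \ref{easy_ss}, whose proof combines the two via the exact sequence of Proposition \ref{crew_sequence}. With that addition your second route is correct and coincides with the proof the paper records in the remark preceding Proposition \ref{3.15}.
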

This claim is an easy consequence of an analogue of Deligne's semi-simplicity result for pure lisse sheaves (see Theorem 4.3.1 of \cite{AC}) and the corresponding result for abelian varieties over finite fields. However we will also give a proof using $p$-divisible groups closely following the methods of de Jong, Faltings and Zarhin, mainly because this argument also leads to two interesting auxiliary results; one is a useful condition for sub $p$-divisible groups to have semi-stable reduction (see Theorem \ref{overconvergent-condition}), while the second is that the Faltings height of the quotients of a semi-stable abelian variety over a global function field by the $p$-primary torsion subgroup-schemes of a semi-stable sub $p$-divisible group is constant (see Theorem \ref{height}).

The second theorem can be reformulated in terms of the monodromy group of $D^{\dagger}(A)$ whose construction we recall next. Let $X$ be any connected $\mathbb F_q$-scheme and let $F$ be the $p$-power Frobenius on $X$. Let $\textrm{$F$-Isoc}^{\dagger}(X/\mathbb Q_q)$ denote the $\mathbb Q_p$-linear rigid tensor category of $\mathbb Q_q$-linear overconvergent $F$-isocrystals on $X$. For every object $\mathcal F$ of $\textrm{$F$-Isoc}^{\dagger}(X/\mathbb Q_q)$ let $\dal\mathcal F\dar$ denote the full rigid abelian tensor subcategory of $\textrm{$F$-Isoc}^{\dagger}(X/\mathbb Q_{q})$ generated by $\mathcal F$. For every $x:\textrm{Spec}(\mathbb F_{q^n})\to X$ the pull-back of an $F$-isocrystal to $x$ supplies a functor from the category $\textrm{$F$-Isoc}_{\mathbb Q_{q}}^{\dagger}(X)$ into the category $\textrm{$F$-Isoc}^{\dagger}(\textrm{Spec}(\mathbb F_{q^n})/\mathbb Q_{q})$. By forgetting the Frobenius structure we get a fibre functor $\omega_x$ into the category of $\mathbb Q_{q^n}$-linear vector spaces, which makes $\textrm{$F$-Isoc}^{\dagger}(X/\mathbb Q_{q})$ into a Tannakian category (see 2.2 of \cite{Cr} on page 440 for details). For every object $\mathcal F$ of
$\textrm{$F$-Isoc}^{\dagger}(X/\mathbb Q_q)$ let $\textrm{Gr}(\mathcal F,x)=\textrm{Aut}^\otimes(\omega_x|\dal\mathcal F\dar)$ denote the monodromy group of $\mathcal F$ with respect to the fibre functor $\omega_x$ (see \cite{DM}, especially Proposition 3.11). It is a linear algebraic group over $\mathbb Q_{q^n}$. By the Tannakian formalism Theorem \ref{semisimple} has the following immediate 
\begin{cor}\label{reductive} Let $A$ be an abelian scheme over $U$. For every $x\in U(\mathbb F_{q^n})$ the group $\textrm{\rm Gr}(D^{\dagger}(A),x)$ is reductive.\qed
\end{cor}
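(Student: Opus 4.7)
The plan is to invoke the Tannakian formalism to turn Theorem \ref{semisimple} into a statement about the algebraic group $G:=\textrm{Gr}(D^{\dagger}(A),x)$. Writing $V:=\omega_x(D^{\dagger}(A))$, the fibre functor $\omega_x$ restricted to $\dal D^{\dagger}(A)\dar$ is an equivalence of $\mathbb Q_{q^n}$-linear rigid tensor categories onto $\textrm{Rep}_{\mathbb Q_{q^n}}(G)$ (cf.\ \cite{DM}, Theorem 2.11), under which $D^{\dagger}(A)$ corresponds to the tautological representation of $G$ on $V$. This representation is faithful, since $G$ is by construction a closed subgroup of $\textrm{GL}(V)$.

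Next I would use the fact that an equivalence of abelian categories preserves semi-simplicity. By Theorem \ref{semisimple} the object $D^{\dagger}(A)$ is semi-simple in $\textrm{$F$-Isoc}^{\dagger}(U/\mathbb Q_q)$. Semi-simplicity of an object is a property of the ambient abelian structure and is therefore inherited by the full abelian subcategory $\dal D^{\dagger}(A)\dar$, so transporting along $\omega_x$ shows that $V$ is semi-simple as a representation of $G$.

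Finally I would appeal to the standard characterization: a linear algebraic group over a field of characteristic zero that admits a faithful semi-simple representation is reductive. One direction is short: letting $R_u(G)$ denote the unipotent radical, for any irreducible summand $W$ of $V$ the fixed subspace $W^{R_u(G)}$ is non-zero (as $R_u(G)$ acts unipotently), $G$-stable (by normality of $R_u(G)$), and therefore equal to $W$. Hence $R_u(G)$ acts trivially on $V$, and faithfulness of $V$ forces $R_u(G)=1$, i.e. $G$ is reductive.

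The whole argument is thus a piece of Tannakian bookkeeping followed by a well-known fact about algebraic groups, and I would not expect a genuine obstacle here: the substantive content lives entirely in Theorem \ref{semisimple}, which is the deeper statement being unpacked.
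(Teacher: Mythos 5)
Your argument is correct and is precisely the "Tannakian formalism" that the paper invokes when it states the corollary as an immediate consequence of Theorem \ref{semisimple} with no further proof: transport semi-simplicity of $D^{\dagger}(A)$ through the fibre functor to get a faithful semi-simple representation of $\textrm{Gr}(D^{\dagger}(A),x)$, and conclude reductivity by the standard unipotent-radical argument in characteristic zero. The only cosmetic caveat is that since $\textrm{$F$-Isoc}^{\dagger}(U/\mathbb Q_q)$ is $\mathbb Q_p$-linear while $\omega_x$ is $\mathbb Q_{q^n}$-valued, the relevant duality is the non-neutral version (Proposition 3.11 of \cite{DM}, which the paper cites) rather than Theorem 2.11, but this does not affect the argument.
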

For every field $L$ let $\overline L$ denote the separable closure of $L$. Let $L$ denote the function field of $U$, and for every $A$ as above let $A_L$ denote the base change of $A$ to $L$. For every prime $l$ different from $p$ let $T_l(A)$ denote the $l$-adic Tate module of $A_L$ and let $V_l(A)=T_l(A)\otimes_{\mathbb Z_l}\mathbb Q_l$. The absolute Galois group Gal$(\overline L/L)$ acts continuously on $V_l(A)$; let $\rho_l$ denote the corresponding homomorphism Gal$(\overline L/L)\rightarrow\textrm{Aut}(V_l(A))$. Fix a point $x\in U(\mathbb F_{q^n})$ now and let $V_p(A)$ denote the vector space $\omega_x(D^{\dagger}(A))$. We have a representation
$\rho_p:\textrm{\rm Gr}(D^{\dagger}(A),x)\rightarrow\textrm{Aut}(V_p(A))$ by definition. For every prime number $l$ let $G_l$ denote the Zariski closure of the image of $\rho_l$ in $\textrm{Aut}(V_l(A))$. For every linear algebraic group $G$ defined over a field let $G^o$ denote its identity component. For every $l$ as above let $\rho_l^{\textrm{alg}}$ denote the representation of $G_l^o$ on $V_l(A)$. For every global field $K$ let $|K|$ denote the set of places of $K$ and for every $\lambda\in|K|$ let $K_{\lambda}$ denote the completion of $K$ with respect to $\lambda$. For every prime number $l$ let $r(l)$ be $l$, if $l$ is different from $p$, and $q^n$, otherwise. Our last result is the following extension of Chin's main theorem in \cite{Ch} (stated here for compatible systems coming from abelian varieties only):
\begin{thm}\label{strong-independence} There exists a number field $K$ such that for every place $\lambda\in|K|$ above $p$ the field $K_{\lambda}$ contains $\mathbb Q_{q^n}$, moreover there exist a connected split semi-simple algebraic group $\mathcal G$ over $K$ and a $K$-linear vector space $V$ equipped with a $K$-linear representation $\rho$ of $\mathcal G$ such that for every prime number $l$ and for every $\lambda\in|K|$ lying over $l$ the triples:
$$(\mathcal G\otimes_K K_{\lambda},V\otimes_KK_{\lambda},
\rho\otimes_KK_{\lambda})$$
and
$$(G_l^o\otimes_{\mathbb Q_{r(l)}}K_{\lambda},V_l(A)\otimes_{\mathbb Q_{r(l)}}K_{\lambda},\rho_l^{\textrm{\rm alg}}\otimes_{\mathbb Q_{r(l)}}K_{\lambda})$$
are isomorphic.
\end{thm}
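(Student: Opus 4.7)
The plan is to apply Chin's main theorem from \cite{Ch} to the $l$-adic compatible system $\{V_l(A)\}_{l\neq p}$, and then extend the resulting picture to $l=p$ by exploiting the compatibility of Frobenius characteristic polynomials at closed points together with Corollary \ref{reductive} and Theorem \ref{semisimple}. Chin produces a number field $K_0$, a connected split semi-simple group $\mathcal G_0$ over $K_0$, a $K_0$-vector space $V_0$, and a faithful representation $\rho_0$ satisfying the conclusion of Theorem \ref{strong-independence} for all primes $l\neq p$. I would then enlarge $K_0$ to a number field $K$ containing the image of $\mathbb Q_{q^n}$ under a fixed embedding into $\overline{\mathbb Q}$; since $\mathbb Q_{q^n}/\mathbb Q_p$ is unramified, this forces $K_\lambda\supseteq\mathbb Q_{q^n}$ for every $\lambda\in|K|$ above $p$. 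Setting $(\mathcal G,V,\rho):=(\mathcal G_0,V_0,\rho_0)\otimes_{K_0}K$ then handles the part of the statement for $l\neq p$ directly.

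\textbf{Incorporating the $p$-adic piece.} For $\lambda|p$ it remains to check $(\mathcal G,V,\rho)\otimes_KK_\lambda\cong(G_p^o,V_p(A),\rho_p^{\textrm{alg}})\otimes_{\mathbb Q_{q^n}}K_\lambda$. By Corollary \ref{reductive} the group $G_p^o$ is connected reductive, and by Theorem \ref{semisimple} the representation $\rho_p^{\textrm{alg}}$ is semi-simple. The key compatibility is that for every closed point $y\in|U|$ the characteristic polynomial of the Frobenius at $y$ acting on the fibre of $D^{\dagger}(A)$ over $y$ agrees with the characteristic polynomial of geometric Frobenius on $V_l(A)$ for any $l\neq p$, both being the characteristic polynomial of Frobenius on $H^1$ of the fibre abelian variety $A_y$. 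Granted that Frobenius substitutions at closed points are Zariski-dense in $G_p^o$, the character of $\rho_p^{\textrm{alg}}$ is determined by these Frobenius traces; since semi-simple representations of connected reductive groups in characteristic zero are determined up to isomorphism by their character, and connected split semi-simple groups are classified by root data, one obtains the desired isomorphism over $K_\lambda$, possibly after a further harmless finite enlargement of $K$ to descend the isomorphism data rationally.

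\textbf{Main obstacle.} The crucial technical input, and the hardest step, is the $p$-adic Chebotarev-type density statement: that Frobenius substitutions at closed points of $U$ are Zariski-dense in $G_p^o$. In the $l$-adic setting ($l\neq p$) this is classical, and indeed it underpins Chin's proof; in the overconvergent $F$-isocrystal setting one has to work through the Tannakian formalism of $\textrm{$F$-Isoc}^{\dagger}(U/\mathbb Q_q)$, interpreting $\textrm{Gr}(D^{\dagger}(A),x)$ as a quotient of an arithmetic fundamental group along the lines of 2.2 of \cite{Cr}, and invoking density of Frobenius substitutions there --- results in the circle of ideas developed by Crew, Kedlaya and Abe (which matches the abstract's remark that the extension to general compatible systems is conditional on Abe). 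Alternatively, one may bypass an explicit density statement by matching Frobenius eigenvalues prime-by-prime and invoking a Larsen--Pink-type uniqueness theorem for reductive subgroups of $\mathrm{GL}_n$ with prescribed Frobenius trace functions, adapting Chin's arguments from $l\neq p$ to $p$-adic coefficients. Once density (or its substitute) is in place, the remaining matchings reduce to character rigidity together with standard descent.
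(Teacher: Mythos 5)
Your overall skeleton (Chin's theorem for $l\neq p$, then extend to $\lambda\mid p$ using Frobenius compatibility, density of Frobenii, and a Larsen--Pink-type recognition argument) points in the right direction, and you correctly flag the $p$-adic Chebotarev density statement as an essential input. But the step you rely on to actually produce the isomorphism at $p$ --- ``the character of $\rho_p^{\textrm{alg}}$ is determined by Frobenius traces; semi-simple representations are determined by their character; hence the triples are isomorphic'' --- does not work as stated, and it is precisely here that the real content of the proof lies. You have two a priori unrelated reductive subgroups of $\mathrm{GL}_n$ over different fields, with no map between them; agreement of characteristic polynomials on a Zariski-dense set of elements of one group does not by itself yield an isomorphism of the pairs (group, representation). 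What is actually needed is the Larsen--Pink machinery in full: (a) for \emph{every} representation $W$ of the ambient $\mathrm{GL}_{\underline d}$ one must show that $\dim W^{G_\lambda^{geo}}$ is independent of $\lambda$ --- in the paper this is Proposition \ref{invariants}, whose $p$-adic case requires the Etesse--Le Stum trace formula, Kedlaya's $p$-adic Weil II (to kill the $H^0_c$ and $H^1_c$ contributions by weight reasons), and Poincar\'e duality, not merely Frobenius traces; and (b) Theorem \ref{larsen-pink}, which recognises a connected semi-simple subgroup from these invariant dimensions, but only under the hypothesis that each standard representation is \emph{irreducible}.

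That irreducibility hypothesis exposes the second, larger gap: $D^{\dagger}(A)$ is not irreducible in general, so before Larsen--Pink can be applied one must decompose the whole compatible system (including the $p$-adic member) into matching absolutely irreducible constituents. In the conditional Theorem \ref{chin_independence} this is done via Abe's Langlands correspondence; the whole point of the paper's separate proof of Theorem \ref{strong-independence} is to avoid Abe by instead decomposing via idempotents of $\textrm{End}_L(A_L)\otimes E$, which act simultaneously on all $V_l(A)$ and on $D^{\dagger}(A)$. That the resulting $p$-adic pieces are absolutely irreducible and exhaust the decomposition is exactly where Theorems \ref{isogeny} and \ref{semisimple} enter. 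Your proposal never performs this decomposition, so the recognition theorem cannot be invoked; and your fallback of ``adapting Chin's arguments to $p$-adic coefficients'' via Abe would only reprove the conditional statement. You also omit the preliminary reductions (passing to a finite \'etale cover and to $U^{(d)}$ to make the arithmetic and geometric monodromy groups connected, via Lemma \ref{reduction_lemma}, Propositions \ref{geometric} and \ref{components}) and the separate matching of the connected centre via determinant characters, both of which are needed before the derived groups can be compared.
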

The novelty of the theorem is that it includes the $p$-adic monodromy group and its representation, too, although it is not the first result to do so; in \cite{Cr1} Crew studied the $p$-adic monodromy of generic families of abelian varieties, and proved a form of the result above in this special case. I like to think of this result as a positive characteristic version of the Manin--Mumford conjecture, since it connects the image of the absolute Galois group with the Tannakian fundamental group of the cohomology of the abelian variety with respect to a cohomology theory defined with the help of differential forms. However its proof only requires the extension of the arguments of Larsen--Pink in the proof of one of the main results of \cite{LP2}, a precursor to this type of results, to rigid cohomology, because of Chin's theorem in \cite{Ch} quoted above. In this argument Theorems \ref{isogeny} and \ref{semisimple} play an important role. In the course of this proof we also show other independence results; in particular we show that the group of connected components are independent of $l$ (Proposition \ref{connected_components}), extending a classical result of Serre, and we also show an extension of Chin's main theorem for general $E$-compatible systems incorporating overconvergent $F$-isocrystals (Theorem \ref{chin_independence}), conditional on a result of Abe (see \cite{Abe}).
\begin{cont} In the next section we show Theorem \ref{overconvergent-condition} mentioned above, which roughly says that a sub $p$-divisible group of a $p$-divisible group defined over a Laurent series field, which is semi-stable in the sense of de Jong, is semi-stable if and only if its crystalline Dieudonn\'e module is overconvergent. Not surprisingly the proof mostly concerns $(\sigma,\nabla)$-modules over various rings of $p$-adic analytic functions, and heavily relies on Kedlaya's work on this subject. Whenever I could find the claim I needed in his papers, I gave a direct reference, but otherwise the arguments are strongly influenced by his proofs. We fix some notation for oveconvergent isocrystals and $F$-isocrystals, then we prove some useful semi-simplicity criteria for oveconvergent $F$-isocrystals in the third section. In the fourth section we define the analogue of the geometric monodromy group in the $p$-adic setting and prove that it is reductive and its identity component is the derived group of the identity component of the full monodromy group for pure sheaves. We also show short exact sequences relating various monodromy groups. In the fifth section we give a purely $p$-adic proof of Theorems \ref{isogeny} and \ref{semisimple}, combining methods of de Jong and Faltings. The crucial result is Theorem \ref{height} mentioned above, from which Theorem \ref{semisimple} follows in a rather standard way. We introduce a variant of the cycle class map into rigid cohomology for varieties defined over function fields in the sixth section, and use it to give another proof of Theorems \ref{isogeny} and \ref{semisimple}, deducing them from Zarhin's results, in the seventh section. In the last section, following Larsen and Pink, prove our independence results, including Theorem \ref{strong-independence}.
\end{cont}
\begin{ackn} This paper started as a complement to my joint project \cite{HP} with Urs Hartl. I wish to thank him, Chris Lazda and Richard Pink for some useful discussions related to the contents of this article. The author was partially supported by the EPSRC grant P36794.
\end{ackn}

\section{$p$-divisible groups with semi-stable reduction}

\begin{defn} Let $k$ a perfect field of characteristic $p>0$ and let $\mathcal O=\mathbb W(k)$ denote the ring of Witt vectors over $k$.  Let $v_p$ denote the valuation on $\mathcal O$ normalised so that $v_p(p) = 1$. For $x \in \mathcal O$, let $\overline{x}$ denote its reduction in $k$. Let $\Gamma$ denote the ring of bidirectional power series:
$$\Gamma=\big\{\sum_{i \in \mathbb Z} x_i u^i| x_i \in \mathcal O,\ \ 
\lim_{i \to -\infty}v_p(x_i)=\infty\big\}.$$
Then $\Gamma$ is a complete discrete valuation ring whose residue field
we could identify with $k((t))$ by identifying the reduction of $\sum x_i u^i$
with $\sum \overline{x}_it^i$. Let $\Gamma_+$ and $\Gamma^{\dagger}$ denote the subrings:
$$\Gamma_+=\big\{\sum_{i \in \mathbb N} x_i u^i|
x_i \in \mathcal O\big\}
\subset\Gamma,$$
$$\Gamma^{\dagger}=\big\{\sum_{i \in \mathbb Z} x_i u^i|
x_i \in \mathcal O, \ \ \liminf_{i \to -\infty}\frac{v_p(x_i)}{-i} > 0\big\}
\subset\Gamma.$$
The latter is also a discrete valuation ring with residue field $k((t))$, although it is not complete.
\end{defn}
\begin{defn} Let $\mathcal E_+=\Gamma_+[\frac{1}{p}]$, $\mathcal E=\Gamma[\frac{1}{p}]$ and $\mathcal E^{\dagger}=\Gamma^{\dagger}[\frac{1}{p}]$. Then $\mathcal E$ and
$\mathcal E^{\dagger}$ are the fraction fields of the rings $\Gamma$ and
$\Gamma^{\dagger}$, respectively. Let $\mathcal R$ denote the ring of bidirectional power series:
$$\mathcal R=\big\{\sum_{i \in \mathbb Z} x_i u^i| x_i \in 
\mathcal O[\frac{1}{p}], \ \ \liminf_{i \to -\infty}\frac{v_p(x_i)}{-i}>0,\ \ 
\liminf_{i \to+\infty}\frac{v_p(x_i)}{i}\geq 0\}.$$
Let $\mathcal R_+$ denote its subring:
$$\mathcal R_+=\mathcal R\cap\big\{\sum_{i \in \mathbb N} x_i u^i| x_i \in 
\mathcal O[\frac{1}{p}]\big\}.$$
Clearly $\mathcal E_+\subset\mathcal R_+$ and $\mathcal E^{\dagger}
\subset\mathcal R$.
\end{defn}
\begin{prop} \label{matfact} Let $X$ be an invertible $n\times n$ matrix over
$\mathcal R$. Then there exist invertible $n\times n$ matrices $Y$ over $\mathcal E^{\dagger}$ and $Z$ over $\mathcal R_+$ such that $X = YZ$.
\end{prop}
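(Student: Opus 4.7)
The plan is to prove the factorization by a Newton-type iteration, modelled on the classical Birkhoff decomposition. The central structural observation is that every $f \in \mathcal{R}$ admits a canonical splitting $f = f^- + f^+$ into its strictly negative and non-negative $u$-parts. The overconvergence condition $\liminf_{i \to -\infty} v_p(x_i)/(-i) > 0$ in the definition of $\mathcal{R}$ forces $v_p(x_i)\to\infty$ as $i \to -\infty$, so the coefficients with $i<0$ have uniformly bounded $p$-adic denominators; hence $f^- \in \mathcal{E}^{\dagger}$, while $f^+ \in \mathcal{R}_+$ by inspection. We apply this splitting entrywise to matrices over $\mathcal{R}$.

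First, I would reduce to the case $X = I + M$ with $M$ arbitrarily small in a Gauss norm $|{\cdot}|_{\rho}$ for suitable $\rho<1$. Laurent polynomials in $u$ lie in $\mathcal{E}^{\dagger}$ and are dense in $\mathcal{R}$ in each such norm, so one may pre-multiply $X$ by some $Y_0 \in GL_n(\mathcal{E}^{\dagger})$ approximating $X$ arbitrarily well; the condition $\det X \in \mathcal{R}^{\ast}$ guarantees that $Y_0^{-1} X$ stays invertible and can be brought as close to $I$ as desired. Then one iterates: given $X_k = I + M_k$, split $M_k = M_k^- + M_k^+$ with $M_k^- \in M_n(\mathcal{E}^{\dagger})$ and $M_k^+ \in M_n(\mathcal{R}_+)$, and set
\[
Y_{k+1} = Y_k(I+M_k^-), \qquad Z_{k+1} = (I+M_k^+)Z_k, \qquad X_{k+1} = (I+M_k^-)^{-1}X_k(I+M_k^+)^{-1}.
\]
A direct computation gives
\[
X_{k+1} = I - (I+M_k^-)^{-1}\, M_k^-\, M_k^+\, (I+M_k^+)^{-1},
\]
so the defect $M_{k+1} := X_{k+1} - I$ is quadratic in $M_k$, yielding superexponentially fast convergence. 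The telescoping products $Y = Y_0 \cdot \prod_{k\geq 0}(I+M_k^-)$ and $Z = \prod_{k\geq 0}(I+M_k^+)$ then deliver $X = YZ$ in the limit.

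The main obstacle is setting up a family of Banach norms in which (i) the projections $M \mapsto M^{\pm}$ are non-expanding, (ii) $M_n(\mathcal{E}^{\dagger})$ and $M_n(\mathcal{R}_+)$ are complete in the appropriate sense, and (iii) the limits actually lie in the invertible subgroups of these matrix rings. For this I would follow Kedlaya's framework, using the Gauss norms $|\sum x_i u^i|_{\rho} = \sup_i |x_i|_p\, \rho^i$ for $\rho$ in a narrow interval just below $1$; these are multiplicative on $\mathcal{R}$, the truncations onto $\mathcal{R}^{-}$ and $\mathcal{R}_+$ are manifestly of norm $\leq 1$, and the required Fréchet completeness of $\mathcal{R}_+$ and of the bounded part of $\mathcal{E}^{\dagger}$ is standard in this setting. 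Invertibility of the limiting $Y$ and $Z$ then follows from the openness of $GL_n$ under uniform convergence of entries, since at every finite stage the partial products are invertible matrices whose entries are arbitrarily close to those of the identity.
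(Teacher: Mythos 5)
Your core mechanism is the right one, and it is the engine of the proof the paper outsources to Kedlaya (the paper's ``proof'' is only the citation): the entrywise splitting $M=M^-+M^+$ with $M^-\in M_n(\mathcal E^{\dagger})$ and $M^+\in M_n(\mathcal R_+)$ is legitimate, since the condition $\liminf_{i\to-\infty}v_p(x_i)/(-i)>0$ does force the negatively indexed coefficients to be $p$-adically bounded; your identity $X_{k+1}=I-(I+M_k^-)^{-1}M_k^-M_k^+(I+M_k^+)^{-1}$ checks out; and the point that saves the limit $Y$ is the one you gesture at, namely that $|M_k^-|_1\le|M_k|_{\rho_0}$ (because $\rho^i>1$ for $i<0$), so the partial products for $Y$ are Cauchy for the $p$-adic Gauss norm as well and the limit lands in $GL_n(\mathcal E^{\dagger})$ rather than merely in $GL_n(\mathcal R)$.

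The reduction to $X=I+M$ with $M$ small is, however, genuinely broken, and this is where the real content of Kedlaya's proposition lies. Take $f=\sum_{i\ge 1}p^{-\lfloor\sqrt{i}\rfloor}u^i$; then $f\in\mathcal R_+$, $f\notin\mathcal E^{\dagger}$, and $|f|_{\rho}\to\infty$ as $\rho\to 1^-$. Put $X=\left(\begin{smallmatrix}1&f\\0&1\end{smallmatrix}\right)$, which is invertible over $\mathcal R$ (indeed over $\mathcal R_+$). For any $Y_0=\left(\begin{smallmatrix}a&b\\c&d\end{smallmatrix}\right)\in GL_2(\mathcal E^{\dagger})$: if $d\neq 0$, the $(1,2)$ entry of $Y_0^{-1}X-I=Y_0^{-1}(X-Y_0)$ equals $(df-b)/(ad-bc)$, and since $|df|_{\rho}\to\infty$ while $|b|_{\rho}$ and $|ad-bc|_{\rho}$ stay bounded on $[\rho_0,1)$, its $|\cdot|_{\rho}$-norm tends to infinity as $\rho\to 1^-$; if $d=0$, the $(1,1)$ entry equals $-1$. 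So no left translate of this $X$ by $GL_2(\mathcal E^{\dagger})$ satisfies $|Y_0^{-1}X-I|_{\rho}<1$ for all $\rho$ near $1$ --- the density of Laurent polynomials holds for each Gauss norm separately but fails near the outer boundary precisely because elements of $\mathcal R_+$ can have $p$-adically unbounded coefficients. If instead you only demand smallness on a compact sub-annulus $[\rho_0,\rho_1]$ with $\rho_1<1$ (where uniform approximation does hold), the infinite product defining $Z$ is only guaranteed to converge for $|u|\le\rho_1$, so you cannot conclude $Z\in GL_n(\mathcal R_+)$. Either way the first step fails. Repairing it requires structural input about $GL_n(\mathcal R)$ --- the classification of units of $\mathcal R$ to handle determinants, and a two-sided row/column reduction resting on the B\'ezout property, which first strips the unbounded positive part of $X$ into the $\mathcal R_+$ factor before the iteration is run; this is exactly the work concentrated in Proposition 6.5 of \cite{Ke0} and the lemmas preceding it.
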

\begin{proof} This is Proposition 6.5 of \cite{Ke0} on page 172.
\end{proof}
\begin{defn} Let $\sigma_0$ denote the canonical lift of the absolute Frobenius $x \mapsto x^p$ on $k$ to $\mathcal O$. Let $q = p^f$ be a power of $p$ and put $\sigma=\sigma_0^f$. By slight abuse of notation let $\sigma_0$ also denote the ring endomorphism of $\Gamma$ given by the rule:
\begin{equation}\label{2.4.1}
\sigma_0(\sum_{i \in \mathbb Z} x_i u^i)=\sum_{i \in \mathbb Z}
\sigma_0(x_i)u^{ip}.
\end{equation}
This map extends the endomorphism $\sigma_0$ of $\mathcal O$ introduced above. Since it is injective it induces an endomorphism of $\mathcal E$. The latter maps the subrings $\Gamma_+,\Gamma^{\dagger},\mathcal E_+$ and $\mathcal E^{\dagger}$ into themselves. These ring endomorphisms will be denoted by the same symbol by slight abuse of notation. Note that the rule (\ref{2.4.1}) also defines an endomorphism of $\mathcal R$, which maps the subring $\mathcal R_+$ onto itself, and extends the endomorphisms of $\mathcal E^{\dagger}$ defined above. All of these endomorphisms will be denoted by the same symbol, too, by the usual abuse of notation. Moreover let $\sigma$ denote the $f$-th power of any of these endomorphisms. Note that there are other endomorphisms of $\mathcal E$ with these properties and the categories which will consider below do not depend on the choice of this extension, however for some formulas the choice which we have made is more convenient. 
\end{defn}
\begin{defn} Let $R$ be one of the rings $\Gamma_+,\Gamma,\Gamma^{\dagger},\mathcal E_+,\mathcal E,\mathcal E^{\dagger},\mathcal R_+$ or $\mathcal R$. For every $R$-module $M$ let $M\otimes_{R,\sigma}R$
 denote the $R$-module which is the $\mathbb Z$-linear tensor product of $M$ and $R$, subject to the additional conditions:
$$sm\otimes_{R,\sigma}r=m\otimes_{R,\sigma}\sigma(s)r,\quad
m\otimes_{R,\sigma}sr=s(m\otimes_{R,\sigma}r)\quad(\forall r,s\in R,\forall  m\in M).$$
A $\sigma$-module $(M,F)$ over $R$ is a finitely generated free $R$-module $M$ equipped with an $R$-linear map $F: M \otimes_{R,\sigma}R \to M$ that becomes an isomorphism over $R[\frac{1}{p}]$. To specify $F$, it is equivalent to give an additive, $\sigma$-linear map from $M$ to $M$ that acts on any basis of $M$ by a matrix invertible over $R[\frac{1}{p}]$. By slight abuse of notation let $F$ denote this map, too. 
\end{defn}
\begin{defn} Let $R$ be the same as above. Let $\Omega^1_R$ be the free module over $R$ generated by a symbol $du$, and define the derivation $d: R \to \Omega^1_R$ by the formula
$$d\big( \sum_j x_j u^j \big) = \big( \sum_j j x_j u^{j-1} \big)\,du.$$
Recall that a connection on an $R$-module $M$ is an additive map $\nabla: M \to M \otimes_R \Omega^1_{R}$ satisfying the Leibniz rule
$$\nabla(c\mathbf v)=c\nabla(\mathbf v)+\mathbf v\otimes dc\ \ 
(\forall c\in R,\mathbf v\in M).$$ 
There is a natural identification $(M\otimes_{R,\sigma}R)\otimes_R\Omega^1_R
\cong (M\otimes_R\Omega^1_R)\otimes_{R,\sigma}R$ given by the rule:
$$(m\otimes_{R,\sigma}r)\otimes_R\omega\mapsto
(m\otimes_R\omega)\otimes_{R,\sigma}r\ \ 
(\forall m\in M,r\in R,\omega\in\Omega^1_R).$$ 
Using this identification we may define a unique connection $\nabla_{\sigma}$ on $M\otimes_{R,\sigma}R$ with the property:
$$\nabla_{\sigma}(m\otimes_{R,\sigma}1)=(\textrm{id}_M\otimes_Rd\sigma)(\nabla m)\otimes_{R,\sigma}1\ \ 
(\forall m\in M),$$ 
where $d\sigma:\Omega^1_R\to\Omega^1_R$ is the differential of $\sigma$ given by the formula:
$$d\sigma\big( \sum_j x_j u^jdu\big) = \big( \sum_j\sigma(x_j)qu^{jq+q-1}du \big).$$
\end{defn}
\begin{defn} A $(\sigma, \nabla)$-module $(M,F,\nabla)$ over $R$ is a $\sigma$-module $(M,F)$ and a connection $\nabla$ on $M$ such that the following 
diagram commutes:
$$\xymatrix{
M\otimes_{R,\sigma}R \ar^-{\nabla_{\sigma}}[r] \ar^{F}[d] & M\otimes_{R,\sigma}R \otimes_R \Omega^1_{R} \ar^{F \otimes_R\textrm{\rm id}_{\Omega^1_R}}[d] \\
M  \ar^-{\nabla}[r] & M \otimes_R\Omega^1_{R}.}$$
Whenever it is convenient we will let $M$ denote the whole triple
$(M,F,\nabla)$. As usual, a morphism of $\sigma$-modules or $(\sigma,\nabla)$-modules is a homomorphism of the underlying $R$-modules compatible with the additional structure in the obvious fashion. We will write Hom${}_{(\sigma,\nabla)}(\cdot,\cdot)$ for the group of these homomorphisms. An isomorphism of $\sigma$-modules or $(\sigma,\nabla)$-modules is a morphism which has an an inverse. Moreover we can also talk about sub and quotient $\sigma$-modules of $(\sigma,\nabla)$-modules, too.
\end{defn}
\begin{defn}\label{2.8} Now let $R\subset R'$ be two rings from the list above and let $(M,F,\nabla)$ be a $(\sigma,\nabla)$-module over $R$. Let $F'$ denote
$$F\otimes_R\textrm{id}_{R'}:(M\otimes_{R,\sigma}R)\otimes_RR'\cong (M\otimes_RR')\otimes_{R',\sigma}R'\longrightarrow M\otimes_RR'$$
and let $\nabla'$ be the unique connection:
$$\nabla':M\otimes_RR'\longrightarrow
 (M\otimes_RR')\otimes_{R'}\Omega^1_{R'}\cong (M\otimes_R\Omega^1_{R})\otimes_{R'}R'$$
such that
$$\nabla'(m\otimes_Rs)=\nabla m\otimes_Rs+m\otimes_Rds,\ \ 
(\forall m\in M,\forall s\in R).$$ 
Then the triple $(M\otimes_RR',F',\nabla')$ is a $(\sigma,\nabla)$-module over $R'$ which we will denote by $M\otimes_RR'$ for simplicity. Moreover for every homomorphism $h:M\rightarrow M'$ of $(\sigma,\nabla)$-modules over $R$ the $R'$-linear extension $h\otimes_R\textrm{id}_{R'}:M\otimes_RR'
\rightarrow M'\otimes_RR'$ is a morphism of $(\sigma,\nabla)$-modules over $R'$.
\end{defn}
We will need the local versions of de Jong's and Kedlaya's full faithfulness theorems:
\begin{thm}\label{local-faithful} Let $R$ be either $\mathcal E_+$ or $\mathcal E^{\dagger}$, and let $M,M'$ be $(\sigma,\nabla)$-modules over $R$. Then the forgetful map:
$$\textrm{\rm Hom}_{(\sigma,\nabla)}(M,M')\longrightarrow
\textrm{\rm Hom}_{(\sigma,\nabla)}(M\otimes_R\mathcal E,
M'\otimes_R\mathcal E)$$
is an isomorphism.
\end{thm}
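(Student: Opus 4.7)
The proof naturally reduces to a statement about individual $(\sigma,\nabla)$-modules. Writing $N := \textrm{Hom}_R(M, M')$ with its inherited $(\sigma,\nabla)$-structure, one has $\textrm{Hom}_{(\sigma,\nabla)}(M, M') \cong N^{F=1,\,\nabla=0}$, and likewise after base change to $\mathcal{E}$. It therefore suffices to show that for every $(\sigma,\nabla)$-module $N$ over $R$ the natural map
$N^{F=1,\,\nabla=0} \to (N \otimes_R \mathcal{E})^{F=1,\,\nabla=0}$
is a bijection. Injectivity is immediate from the freeness of $N$ over the domain $R$, which gives $N \hookrightarrow N \otimes_R \mathcal{E}$.

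For surjectivity, fix a basis $e_1, \ldots, e_n$ of $N$ over $R$ and let $A \in GL_n(R[\tfrac{1}{p}])$ be the matrix of $F$. A horizontal $F$-fixed element $v \in N \otimes_R \mathcal{E}$ has the form $v = \sum c_i e_i$ with $\mathbf{c} = (c_i) \in \mathcal{E}^n$, and the conditions translate to the differential equation $\nabla\mathbf{c} = 0$ together with the Frobenius equation $\mathbf{c} = A\sigma(\mathbf{c})$. Iterating the latter yields
$$\mathbf{c} = A\,\sigma(A)\,\sigma^2(A)\cdots\sigma^{k-1}(A)\,\sigma^k(\mathbf{c}) \quad (\forall k \geq 1),$$
which will be the main device for propagating $\mathbf{c}$ into smaller subrings.

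For $R = \mathcal{E}^\dagger$ I would follow Kedlaya: extend scalars to $\mathcal{R}$ and apply Proposition \ref{matfact} to an appropriate matrix built from the Frobenius structure, obtaining a factorization $X = YZ$ with $Y \in GL_n(\mathcal{E}^\dagger)$ and $Z \in GL_n(\mathcal{R}_+)$. In the basis of $N \otimes_R \mathcal{R}$ associated with $Z$ the horizontal section acquires coordinates in $\mathcal{R}_+$, while the Frobenius iteration above forces its coordinates to lie in $\mathcal{E}^\dagger$; combining these via the characterization of $\mathcal{E}^\dagger \cap \mathcal{R}_+$ as overconvergent series with no poles concludes that $\mathbf{c} \in (\mathcal{E}^\dagger)^n$. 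For $R = \mathcal{E}_+$ the argument is more elementary: entries of $A$ carry no negative powers of $u$, so for each $m < 0$ the equation $\mathbf{c} = A\sigma(\mathbf{c})$ expresses $c_m$ as a finite sum involving $\sigma(c_j)$ only for $j \leq \lfloor m/p \rfloor$. Iterating this recursion with the decay condition $v_p(c_j) \to \infty$ as $j \to -\infty$ built into $\mathcal{E}$, together with the uniform $p$-adic bound on the entries of $A$, forces the negative-index part of $\mathbf{c}$ to vanish, so $\mathbf{c} \in \mathcal{E}_+^n$.

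The principal obstacle is the $\mathcal{E}^\dagger$ case, where the delicate interplay between the matrix factorization and the Frobenius iteration must be arranged so that the overconvergent descent survives the change of basis. This is essentially Kedlaya's full-faithfulness argument adapted to the present local setting, and is the deeper of the two inputs; the $\mathcal{E}_+$ case can be viewed as a simpler degeneration of the same mechanism in which the absence of poles in $A$ makes Proposition \ref{matfact} unnecessary.
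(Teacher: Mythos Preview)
The paper does not prove this theorem at all: its proof consists entirely of citing Theorem~1.1 of \cite{J2} (de~Jong) for $R=\mathcal E_+$ and Theorem~1.1 of \cite{Ke1} (Kedlaya) for $R=\mathcal E^{\dagger}$. Both are substantial results whose proofs occupy large parts of the cited papers. Your reduction to showing that $N^{F=1,\nabla=0}\to(N\otimes_R\mathcal E)^{F=1,\nabla=0}$ is bijective is correct (and harmless here since $1/p\in R$, so the internal Hom really is a $(\sigma,\nabla)$-module), but the surjectivity arguments you sketch have genuine gaps.

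For $R=\mathcal E_+$: the coefficient identity you write down is not a finite sum. From $\mathbf c=A\sigma(\mathbf c)$ with $A=\sum_{k\ge 0}A_ku^k$ one gets, for $m<0$,
\[
c_m=\sum_{j\le \lfloor m/q\rfloor} A_{m-jq}\,\sigma(c_j),
\]
which ranges over \emph{all} integers $j\le\lfloor m/q\rfloor$, an infinite set. Worse, since $m<0$ gives $\lfloor m/q\rfloor>m$, iterating pushes the cutoff toward $0$, not toward $-\infty$; after $k$ steps the constraint is $j\le\lfloor m/q^k\rfloor$, which for large $k$ is just $j\le -1$. The decay $v_p(c_j)\to\infty$ as $j\to-\infty$ guarantees convergence of these sums but gives no mechanism forcing $c_m=0$. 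So the ``iteration forces vanishing'' step is unjustified.

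There is also a structural problem common to both cases: your arguments use only the Frobenius equation $\mathbf c=A\sigma(\mathbf c)$ and never the horizontality $\nabla\mathbf c=0$. But full faithfulness is known to \emph{fail} for $\sigma$-modules without connection over $\mathcal E^{\dagger}$ (this is precisely why Kedlaya's theorem is stated for $(\sigma,\nabla)$-modules; see the discussion in \cite{Ke1}), so any argument that does not invoke $\nabla$ in an essential way cannot succeed there. De~Jong's proof for $\mathcal E_+$ likewise makes crucial use of the connection. Your $\mathcal E^{\dagger}$ sketch---apply Proposition~\ref{matfact} to ``an appropriate matrix'' and combine with the Frobenius iteration---is too vague to pin down what that matrix is or why the two constraints mesh, and it omits the slope-filtration machinery that drives Kedlaya's actual argument. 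In short, the honest proof here is the paper's: cite the two theorems.
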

\begin{proof} When $R=\mathcal E_+$, see Theorem 1.1 of \cite{J2}, when $R=\mathcal E^{\dagger}$, see Theorem 1.1 of \cite{Ke1}.
\end{proof}
\begin{thm}\label{thm-extension} Let $M$ be a $(\sigma,\nabla)$-module over $\mathcal E^{\dagger}$ such that $M\otimes_{\mathcal E^{\dagger}}
\mathcal R$ is isomorphic to $M'\otimes_{\mathcal R_+}\mathcal R$, where 
$M'$ is a $(\sigma,\nabla)$-module over $\mathcal R_+$. Then there is a
$(\sigma,\nabla)$-module $M''$ over $\mathcal E_+$ such that $M$ is isomorphic to $M''\otimes_{\mathcal E_+}\mathcal E^{\dagger}$.
\end{thm}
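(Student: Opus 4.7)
The plan is to realise $M''$ as an intersection of suitable lattices inside the common ambient module $V:=M\otimes_{\mathcal E^\dagger}\mathcal R\cong M'\otimes_{\mathcal R_+}\mathcal R$, identified via the given isomorphism $\phi$, after aligning the bases using Kedlaya's matrix factorisation. The key ring-theoretic input is the elementary identity
\[
\mathcal E^\dagger\cap\mathcal R_+=\mathcal E_+\qquad\text{inside }\mathcal R,
\]
which is immediate from the definitions: an element of $\mathcal R_+$ has no negative powers of $u$, and a one-sided power series in $\mathcal E^\dagger=\Gamma^\dagger[\tfrac{1}{p}]$ automatically has bounded $p$-adic denominators, hence lies in $\Gamma_+[\tfrac{1}{p}]=\mathcal E_+$.

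Fix an $\mathcal E^\dagger$-basis $e=(e_1,\dots,e_n)$ of $M$ and an $\mathcal R_+$-basis $e'=(e'_1,\dots,e'_n)$ of $M'$, and write $\phi(e)=e'X$ for some $X\in\mathrm{GL}_n(\mathcal R)$. I apply Proposition \ref{matfact} to $X^{-1}$, obtaining a factorisation $X^{-1}=YZ$ with $Y\in\mathrm{GL}_n(\mathcal E^\dagger)$ and $Z\in\mathrm{GL}_n(\mathcal R_+)$, i.e.\ $X=Z^{-1}Y^{-1}$. Now change the basis of $M$ to $\tilde e:=eY$ (still an $\mathcal E^\dagger$-basis, since $Y\in\mathrm{GL}_n(\mathcal E^\dagger)$) and the basis of $M'$ to $\tilde e':=e'Z^{-1}$ (still an $\mathcal R_+$-basis, since $Z^{-1}\in\mathrm{GL}_n(\mathcal R_+)$). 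A direct computation gives
\[
\phi(\tilde e)=\phi(e)\,Y=e'XY=e'Z^{-1}Y^{-1}Y=e'Z^{-1}=\tilde e',
\]
so under the identification $V=M\otimes\mathcal R=M'\otimes\mathcal R$ the vectors $\tilde e$ and $\tilde e'$ define one common basis of $V$. In this basis the submodule $M\subset V$ has coordinates $\mathcal E^{\dagger,n}$ and the submodule $M'\subset V$ has coordinates $\mathcal R_+^{n}$. I then set
\[
M'':=M\cap M'\subset V,
\]
which in the basis $\tilde e$ has coordinates $\mathcal E^{\dagger,n}\cap\mathcal R_+^{n}=\mathcal E_+^{n}$; thus $M''$ is a free $\mathcal E_+$-module of rank $n$ with basis $\tilde e$, and tautologically $M''\otimes_{\mathcal E_+}\mathcal E^\dagger=M$ as $\mathcal E^\dagger$-modules.

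It remains to observe that $M''$ inherits the $(\sigma,\nabla)$-structure. Since $\phi$ respects $F$ and $\nabla$, the Frobenius on $V$ preserves both $M$ and $M'$ separately, and hence preserves $M''=M\cap M'$. Concretely, the matrix $A$ of $F$ in the basis $\tilde e$ lies in $\mathrm{Mat}_n(\mathcal E^\dagger)$ when read off from the $\sigma$-structure on $M$, and in $\mathrm{Mat}_n(\mathcal R_+)$ when read off from that on $M'$ (using that $\tilde e=\tilde e'$ is also an $\mathcal R_+$-basis of $M'$), so $A\in\mathrm{Mat}_n(\mathcal E_+)$. The same two-sided argument applied to $A^{-1}$, together with $\mathcal E^\dagger[\tfrac{1}{p}]=\mathcal E^\dagger$ and $\mathcal R_+[\tfrac{1}{p}]=\mathcal R_+$, shows that $F$ becomes an isomorphism over $\mathcal E_+=\mathcal E_+[\tfrac{1}{p}]$. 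The identical argument applied to the connection matrix, using $\Omega^1_{\mathcal E_+}=\Omega^1_{\mathcal E^\dagger}\cap\Omega^1_{\mathcal R_+}$ inside $\Omega^1_{\mathcal R}$, shows that $\nabla$ descends to $M''$; the compatibility square of Definition \ref{2.8} for $M''$ is then inherited from that of $M$.

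The main obstacle in this proof is really packed into Proposition \ref{matfact}, namely Kedlaya's matrix factorisation over $\mathcal R$. Everything else is bookkeeping, with one small but crucial point of care: one must apply the factorisation to $X^{-1}$ rather than to $X$ itself, for only then does the resulting change of basis reduce $\phi$ to the identity matrix (a naive application to $X=YZ$ and a change of basis by $Y^{-1}$ on $M$ alone leaves the conjugate $YZY^{-1}$ as the new matrix of $\phi$, which in general does not lie in $\mathrm{GL}_n(\mathcal R_+)$). Once this is arranged, the identity $\mathcal E^\dagger\cap\mathcal R_+=\mathcal E_+$ closes the argument.
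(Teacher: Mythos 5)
Your proposal is correct and is essentially the paper's own argument: both apply Proposition \ref{matfact} to split the transition matrix between the two bases into an $\mathcal E^{\dagger}$-part and an $\mathcal R_+$-part, change basis on $M$ by the $\mathcal E^{\dagger}$-part so that the resulting basis is simultaneously an $\mathcal E^{\dagger}$-basis of $M$ and an $\mathcal R_+$-basis of $M'$, and conclude via $\mathcal E^{\dagger}\cap\mathcal R_+=\mathcal E_+$ that the structure matrices of $F$ and $\nabla$ land in $\mathcal E_+$. The only cosmetic difference is that your $X$ is the inverse of the paper's (so you factor $X^{-1}$ where the paper factors $X$), and you package the conclusion as $M''=M\cap M'$ rather than as the $\mathcal E_+$-span of the new basis.
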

\begin{proof} Let $\mathbf e_1, \dots, \mathbf e_n$ be an $\mathcal E^{\dagger}$-basis of $M$ so that
$$F\mathbf e_j = \sum_i \Phi_{ij} \mathbf e_i\textrm{ and }
\nabla \mathbf e_j = \sum_i N_{ij} \mathbf e_i\otimes du,$$
where $\Phi=(\Phi_{ij})$ and $N=(N_{ij})$ are $n\times n$ matrices with coefficients in $\mathcal E^{\dagger}$. By assumption there exists an invertible $n\times n$ matrix $X$ over $\mathcal R$ such that
$$X^{-1} \Phi X^\sigma\textrm{ and }X^{-1} N X + X^{-1}d(X)$$
have entries in $\mathcal R_+$, where the superscript $(\cdot)^{\sigma}$ denotes the action of $\sigma$ on matrices. By Proposition \ref{matfact} we can factor $X$ as $YZ$, where $Y$ is an invertible $n\times n$ matrix over $\mathcal E^{\dagger}$ and $Z$ is an invertible $n\times n$ matrix over $\mathcal R_+$. Now put $\mathbf v_j = \sum_{j} Y_{ij}\mathbf e_i$; then
$$F\mathbf v_j = \sum_i \overline{\Phi}_{ij} \mathbf v_i\textrm{ and }
\nabla \mathbf v_j = \sum_i \overline{N}_{ij} \mathbf v_i
\otimes du,$$
where $\overline{\Phi}=(\overline{\Phi}_{ij})$ and $\overline N=(\overline N_{ij})$ are $n\times n$ matrices over $\mathcal R$ such that
\begin{align*}
\overline{\Phi} &= Y^{-1} \Phi Y^\sigma = Z (X^{-1} \Phi X^\sigma)(Z^{-1})^{\sigma}, \\
\overline{N} &= Y^{-1} N Y + Y^{-1}d(Y) = 
Z(X^{-1}NX+X^{-1}d(X))Z^{-1}+Zd(Z^{-1}).
\end{align*}
Therefore $\overline{\Phi}$ and $\overline N$ have entries in
$\mathcal E_+=\mathcal E^{\dagger}\cap\mathcal R_+$. So the free $\mathcal E_+$-module $M''$ spanned by $\mathbf v_1, \dots, \mathbf v_n$ is left invariant by $F$, while $\nabla$ maps $M''$ into $M''\otimes_{\mathcal E_+}\Omega^1_{\mathcal E_+}$, and hence it is the $(\sigma,\nabla)$-module whose existence the theorem claims.
\end{proof}
Let $R$ be the same as above and let $M$ be an $R$-module equipped with a connection $\nabla: M \to M \otimes_R \Omega^1_{R}$. Recall that a $\mathbf v\in M$ is horizontal if $\nabla\mathbf v=0$. We will need the following form of Dwork's trick:
\begin{prop}\label{horizontal1} Let $M$ be a $(\sigma,\nabla)$-module over $\mathcal R_+$. Then $M$ has an $\mathcal R_+$-basis consisting of horizontal vectors.
\end{prop}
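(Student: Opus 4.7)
The plan is to carry out Dwork's trick in its standard form: build a fundamental matrix of horizontal sections formally, then use the Frobenius compatibility to force its entries to lie in $\mathcal R_+$. Fix an $\mathcal R_+$-basis $e_1,\dots,e_n$ of $M$ and let $\Phi,N\in M_n(\mathcal R_+)$ be the matrices representing $F$ and $\nabla$ with respect to the derivation $D=d/du$, so that a column vector $c\in\mathcal R_+^n$ represents a horizontal section exactly when $D(c)+Nc=0$. Since $\mathcal O[\tfrac1p][[u]]$ is a formal power series ring, the matrix ODE $D(U)=-NU$ with initial condition $U(0)=I$ has a unique formal solution $U\in M_n(\mathcal O[\tfrac1p][[u]])$, constructed recursively coefficient-by-coefficient. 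The whole task is to show that this $U$ in fact lies in $M_n(\mathcal R_+)$; once that is established, the columns $v_j=\sum_i U_{ij}e_i$ form the desired horizontal basis, and the resulting change-of-basis matrix is invertible since $U(0)=I$ is a unit in $M_n(\mathcal R_+)$.

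To force convergence I would exploit the compatibility square in the definition of a $(\sigma,\nabla)$-module, which says that $F$ sends horizontal sections to horizontal sections. Applying this to the columns of $U$, the columns of $\Phi\,\sigma(U)$ are again horizontal (viewed as sections of $M\otimes_{\mathcal R_+}\mathcal O[\tfrac1p][[u]]$), and at $u=0$ take values equal to the columns of $\Phi(0)$. Because $\Phi$ is invertible over $\mathcal R_+[\tfrac1p]$, its evaluation $\Phi(0)$ is invertible in $M_n(\mathcal O[\tfrac1p])$. Uniqueness of formal horizontal sections with prescribed initial value then yields the functional equation
$$U(u)\;=\;\Phi(u)\,U^{\sigma}(u^{q})\,\Phi(0)^{-1},$$
where $U^\sigma$ denotes the coefficient-wise action of $\sigma$ and we use $\sigma(u)=u^q$.

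Iterating this identity $k$ times replaces the argument of the ``inner'' copy of $U$ by $u^{q^k}$, which tends to $0$ in the topology of uniform convergence on closed subdiscs $|u|\le r<1$ (the topology of $\mathcal R_+$), so the inner factor $U^{\sigma^k}(u^{q^k})$ tends to the identity. Grouping each $\Phi^{\sigma^k}(u^{q^k})$ with a factor of $\Phi^{\sigma^k}(0)^{-1}$, the paired terms $\Phi^{\sigma^k}(u^{q^k})\,\Phi^{\sigma^k}(0)^{-1}$ differ from $I$ by a matrix divisible by $u^{q^k}$, so the resulting infinite product converges in $M_n(\mathcal R_+)$; passing to the limit expresses $U$ as an element of $M_n(\mathcal R_+)$, as required. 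The main obstacle is precisely the careful convergence analysis in this last step: a naive bracketing of the iterated product leads to tails of the form $\Phi(0)^{-\sigma^{k-1}}\cdots\Phi(0)^{-1}$, whose $p$-adic norms can grow without bound since $\Phi(0)^{-1}$ need not have entries in $\mathcal O$. I would resolve this either by a direct telescoping that pairs each Frobenius factor with its own ``constant correction'' before taking the limit, or by normalising $\Phi(0)$ after an unramified extension of $\mathcal O$ (which leaves the final conclusion over $\mathcal R_+$ unchanged by Galois descent), so that each paired factor is bounded and tends to $I$ on every closed subdisc.
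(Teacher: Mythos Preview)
The paper does not prove this directly: it simply cites Corollary~17.2.2 of Kedlaya's book on $p$-adic differential equations.  Your sketch is exactly the classical Dwork's trick that lies behind that reference, so you are reconstructing the right argument rather than giving an alternative one.

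Your functional equation $U=\Phi\,U^{\sigma}\,\Phi(0)^{-1}$ and its iteration are correct, and you are right that the only subtle point is the convergence of the iterated product.  The clean way to carry out your ``direct telescoping'' is to set $P_k=\Phi\,\Phi^{\sigma}\cdots\Phi^{\sigma^{k-1}}$ and $Q_k=P_k\,P_k(0)^{-1}$, and to compute
\[
Q_{k+1}-Q_k \;=\; P_k\bigl(\Phi^{\sigma^k}\,\sigma^k(\Phi(0))^{-1}-I\bigr)P_k(0)^{-1}.
\]
Writing $\Phi=\Phi(0)+u\Phi'$ with $\Phi'\in M_n(\mathcal R_+)$, the middle factor equals $u^{q^k}(\Phi')^{\sigma^k}\sigma^k(\Phi(0))^{-1}$.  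On a closed disc $|u|\le r<1$ one has $|\Phi^{\sigma^j}|_r=|\Phi|_{r^{q^j}}\le|\Phi|_r$ and $|\sigma^j(\Phi(0))^{-1}|=|\Phi(0)^{-1}|$, whence
\[
|Q_{k+1}-Q_k|_r \;\le\; C\cdot\bigl(|\Phi|_r\,|\Phi(0)^{-1}|\bigr)^{k}\cdot r^{q^k}
\]
for a constant $C$ depending only on $r$.  The super-exponential decay of $r^{q^k}$ kills the exponential growth $A^k$, so $(Q_k)$ is Cauchy in $M_n(\mathcal R_+)$.  Since the iteration also gives $U\equiv Q_k\pmod{u^{q^k}}$ as formal power series, the limit is $U$, and you are done.

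Your second proposed resolution, normalising $\Phi(0)$ after an unramified base change, is not reliable as stated: $\sigma$-conjugacy classes of matrices over $W(\overline k)[1/p]$ are controlled by Newton slopes, and one cannot in general arrange $\Phi(0)=I$ or even make $\Phi(0)^{-1}$ integral, so the tail of constant factors will still have unbounded norm.  Stick with the telescoping argument, whose key point is simply that $r^{q^k}$ beats everything else.
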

\begin{proof} This is a special case of Corollary 17.2.2 of \cite{Ke3} on page 295.
\end{proof}
\begin{lemma}\label{submodule} Let $M$ be a $(\sigma,\nabla)$-module over $\mathcal R$ which has an $\mathcal R$-basis consisting of horizontal vectors. Let $M'$ be a sub $(\sigma,\nabla)$-module of $M$. Then $M'$ also has an $\mathcal R$-basis consisting of horizontal vectors.
\end{lemma}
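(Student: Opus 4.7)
The plan is to fix an $\mathcal R$-basis $v_1,\dots,v_n$ of $M$ consisting of horizontal vectors, which identifies $V:=M^{\nabla=0}$ with $\bigoplus_{i=1}^n K_0v_i$ for $K_0=\mathcal O[\frac{1}{p}]=\mathcal R^{d=0}$, and identifies $M$ with $V\otimes_{K_0}\mathcal R$ equipped with the trivial connection. Set $r=\mathrm{rank}_{\mathcal R}M'$. The strategy is to dispatch the rank-one case directly and then reduce the general case to it by passing to the top exterior power $\bigwedge^r M'\subset\bigwedge^r M$. The Frobenius structure plays no role in this argument; everything flows from $\nabla$-stability and the fact that $\mathcal R$ is a B\'ezout domain in which non-vanishing elements are units.

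Rank one: suppose $M'=\mathcal R\mathbf v$ with $\mathbf v=\sum_if_iv_i$. The $\nabla$-stability of $M'$ forces $\nabla\mathbf v=g\mathbf v\,du$ for some $g\in\mathcal R$, so $f_i'=gf_i$ for every $i$; hence $(f_i/f_1)'=0$, giving $f_i/f_1\in K_0$, and $\mathbf v=f_1\mathbf w$ for the horizontal vector $\mathbf w=\sum_i(f_i/f_1)v_i\in V$. The identity $f_1'/f_1=g\in\mathcal R$ prevents $f_1$ from having any zero on its annulus of convergence, since a zero of any multiplicity would produce a simple pole of the logarithmic derivative; in the Robba ring a non-vanishing element is a unit, so $f_1\in\mathcal R^\times$ and $M'=\mathcal R\mathbf w$ has the horizontal basis $\mathbf w$. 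This step is the one genuinely delicate input of the proof.

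General rank: the inclusion $M'\hookrightarrow M$ induces a rank-one inclusion of sub-$(\sigma,\nabla)$-modules $\bigwedge^r M'\hookrightarrow\bigwedge^r M$, and $\bigwedge^r M$ retains a basis of horizontal vectors $v_{i_1}\wedge\cdots\wedge v_{i_r}$. The rank-one case yields $\bigwedge^r M'=\mathcal R\mathbf w_0$ with $\mathbf w_0\in(\bigwedge^r M)^{\nabla=0}=\bigwedge^r V$. Picking an $\mathcal R$-basis $\mathbf v_1,\dots,\mathbf v_r$ of $M'$, we have $\mathbf v_1\wedge\cdots\wedge\mathbf v_r=c\mathbf w_0$ with $c\in\mathcal R^\times$, so $\mathbf w_0$ is decomposable in $\bigwedge^r M$. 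Decomposability is cut out by the Pl\"ucker relations (polynomial with integer coefficients), and these persist for $\mathbf w_0$ viewed in $\bigwedge^r V$; hence $\mathbf w_0=v_1'\wedge\cdots\wedge v_r'$ for linearly independent $v_i'\in V$, defining $V':=\sum_iK_0v_i'\subset V$.

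To conclude $M'=V'\otimes_{K_0}\mathcal R$, complete $v_1',\dots,v_r'$ to a $K_0$-basis of $V$ by adjoining $v_{r+1}'',\dots,v_n''$, and let $F\in M_{n\times r}(\mathcal R)$ be the matrix whose columns express $\mathbf v_1,\dots,\mathbf v_r$ in this basis. The $r\times r$ minors of $F$ are the Pl\"ucker coordinates of $\mathbf v_1\wedge\cdots\wedge\mathbf v_r=c(v_1'\wedge\cdots\wedge v_r')$, so the $\{1,\dots,r\}$-minor equals $c\in\mathcal R^\times$ while every other minor vanishes. Writing each row of $F$ beyond the $r$-th as a $\mathrm{Frac}(\mathcal R)$-combination of the first $r$ rows (possible since $F$ has rank $r$) and evaluating the vanishing minors of the form $\{1,\dots,\widehat k,\dots,r,r+j\}$, with $c$ a unit, successively forces every coefficient to vanish, so all rows past the $r$-th are zero. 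The top $r\times r$ block of $F$ is then invertible, so $\mathbf v_1,\dots,\mathbf v_r$ is an $\mathcal R$-basis of $V'\otimes_{K_0}\mathcal R$, whence $M'=V'\otimes_{K_0}\mathcal R$, which has the horizontal basis $v_1',\dots,v_r'$.
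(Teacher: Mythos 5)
Your proof is correct, but it takes a genuinely different route from the paper's. The paper keeps the Frobenius in play throughout: in a horizontal basis the matrix of $F$ is constant, so the Dieudonn\'e--Manin classification lets one diagonalise a power of $F$; a basis of $M'$ is then put into echelon form using the B\'ezout property of $\mathcal R$, the invertibility of $\det F$ on $M'$ forces the pivot entries to be units, and a short support argument with $\nabla$ kills the remaining entries. You discard the Frobenius entirely and prove the stronger statement that any $\nabla$-stable free direct summand of the trivial connection over $\mathcal R$ is spanned by horizontal vectors: the rank-one case rests on the analytic fact that an $f\in\mathcal R$ whose logarithmic derivative again lies in $\mathcal R$ has no zeros on an annulus near the boundary and is therefore a unit, and the general case reduces to rank one via the top exterior power, with the Pl\"ucker relations descending the decomposability of the horizontal generator of $\bigwedge^r M'$ from $\mathrm{Frac}(\mathcal R)$ to the constant field $K_0$. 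Your approach buys generality (no Frobenius, hence no Dieudonn\'e--Manin) at the cost of two external inputs: Lazard-style control of zeros and units in the Robba ring, and the characterisation of decomposable tensors by the Pl\"ucker relations over an arbitrary field. One small reordering would tighten the rank-one step: establish $f_1\in\mathcal R^{\times}$ from $f_1'=gf_1$ \emph{before} forming the ratios $f_i/f_1$, so that these ratios visibly lie in $\mathcal R$ and their being constant immediately identifies them with elements of $K_0=\mathcal R^{d=0}$, rather than appealing implicitly to the constants of $\mathrm{Frac}(\mathcal R)$; likewise one should choose the pivot index so that $f_1\neq 0$. (The paper's own remark records yet a third route, via the equivalence with Weil--Deligne representations.)
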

\begin{proof} Let $\mathbf e_1, \dots, \mathbf e_n$ be an $\mathcal R$-basis of $M$ consisting of horizontal vectors, and write:
$$F\mathbf e_j = \sum_i\Phi_{ij} \mathbf e_i,$$
where $\Phi=(\Phi_{ij})$ is an $n\times n$ matrix with coefficients in $\mathcal R$. By the compatibility condition between $F$ and $\nabla$ in the definition of $(\sigma,\nabla)$-modules, we get that $d(\Phi)=0$, which implies that $\Phi$ actually has coefficients in
$\mathcal O[\frac{1}{p}]$. We get that $F$ maps the $\mathcal O[\frac{1}{p}]$-span $M_0$ of $\mathbf e_1, \dots, \mathbf e_n$ into itself. Note that every element of $M_0$ is horizontal. So by applying the Dieudonn\'e--Manin classification of $F$-crystals over the perfect field $k$ to $M_0$ we may assume that the matrix of $F^m$ in the basis $\mathbf e_1, \dots, \mathbf e_n$ is diagonal for some positive integer $m$, after an $\mathcal O[\frac{1}{p}]$-linear change of this basis. By substituting $\sigma^m$ for $\sigma$ and $F^m$ for $F$ we may even assume that $m=1$. For every $i=1,2,\ldots,n$ let $\lambda_i\in\mathcal O[\frac{1}{p}]$ be such that $F(\mathbf e_i)=\lambda_i\mathbf e_i$.

Now let $\mathbf v_1, \dots, \mathbf v_l$ be an $\mathcal R$-basis of $M'$, where $l\leq n$. Because the ring $\mathcal R$ is B\'ezout (see Theorem 3.20 of \cite{Ke0} on page 129), we may use Gauss elimination to reduce to the case when for every $j=1,2,\ldots,l$ we have:
$$\mathbf v_j=g_j\mathbf e_j+\sum_{i>j}f_{ij}\mathbf e_i$$
for some $g_j,f_{ij}\in\mathcal R$, after possibly reordering $\mathbf e_1, \dots, \mathbf e_n$. Note that the determinant of $F$ in the basis $\mathbf v_1, \dots, \mathbf v_l$ is
$\lambda_1\cdots\lambda_l\cdot g^{\sigma}_1\cdots g^{\sigma}_l$. Because the matrix of $F$ is invertible, we get that $g^{\sigma}_j\in\mathcal R^*$ for every $j\leq l$. Since for every $g\in\mathcal R$ such that $g^{\sigma}$ is invertible we have $g\in\mathcal R^*$, we may multiply each $\mathbf v_j$ with $g_j^{-1}$ to get another $\mathcal R$-basis of $M'$. In other words we may assume that $g_j=1$ for every $j\leq l$. Then we can perform an additional elimination on the basis $\mathbf v_1, \dots, \mathbf v_l$ and hence we may assume that $f_{ij}=0$ for every $i\leq l$. For every $j\leq l$ we have:
$$\nabla\mathbf v_j=\sum_{i>l}\mathbf e_i\otimes df_{ij},$$
which lies in the sub $\mathcal R$-module of $M\otimes_{\mathcal R}\Omega^1_{\mathcal R}$ spanned by $\mathbf e_{l+1}, \dots, \mathbf e_n$. It also lies in $M'\otimes_{\mathcal R}\Omega^1_{\mathcal R}$, whose intersection with the previous sub $\mathcal R$-module is the zero module. Therefore the vectors $\mathbf v_1, \dots, \mathbf v_l$ are horizontal.
\end{proof}
\begin{rem} It is possible to give a short proof of the lemma above using the equivalence between $(\sigma,\nabla)$-modules over $\mathcal R$ and Weil--Deligne representations (see \cite{Mar}), but at the price of using some heavy machinery. 
\end{rem}
\begin{thm}\label{extension-useful} Let $M$ be a $(\sigma,\nabla)$-module over $\mathcal E^{\dagger}$ which is a sub $(\sigma,\nabla)$-module of $M'\otimes_{\mathcal E_+}\mathcal E^{\dagger}$, where $M'$ is a $(\sigma,\nabla)$-module over $\mathcal E_+$. Then there is a sub $(\sigma,\nabla)$-module $M''$ of $M'$ over $\mathcal E_+$ such that $M$ is equal to $M''\otimes_{\mathcal E_+}\mathcal E^{\dagger}$.
\end{thm}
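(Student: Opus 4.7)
The plan is to first descend the underlying $(\sigma,\nabla)$-module $M$ from $\mathcal{E}^{\dagger}$ to $\mathcal{E}_+$ via Theorem \ref{thm-extension}, and then descend the inclusion $\iota \colon M \hookrightarrow M' \otimes_{\mathcal{E}_+} \mathcal{E}^{\dagger}$ itself via the faithfulness statement Theorem \ref{local-faithful}, taking $M''$ to be the image of the descended map.

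For the first step, base change the inclusion to $\mathcal{R}$. By Proposition \ref{horizontal1} the module $M' \otimes_{\mathcal{E}_+} \mathcal{R}_+$ admits an $\mathcal{R}_+$-basis of horizontal vectors, which extends to a horizontal $\mathcal{R}$-basis of $M' \otimes_{\mathcal{E}_+} \mathcal{R}$. Applying Lemma \ref{submodule} to the sub $M \otimes_{\mathcal{E}^{\dagger}} \mathcal{R}$ then yields a horizontal $\mathcal{R}$-basis $\mathbf{e}_1, \dots, \mathbf{e}_n$ of this sub. Let $N$ be the $\mathcal{R}_+$-span of $\mathbf{e}_1, \dots, \mathbf{e}_n$. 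Because the kernel of $d$ on $\mathcal{R}$ is $\mathcal{O}[\frac{1}{p}] \subset \mathcal{R}_+$ and $F$ sends horizontal vectors to horizontal vectors, the matrix of $F$ in the basis $\mathbf{e}_1, \dots, \mathbf{e}_n$ has entries in $\mathcal{O}[\frac{1}{p}]$; hence $N$ is a $(\sigma,\nabla)$-module over $\mathcal{R}_+$ with $N \otimes_{\mathcal{R}_+} \mathcal{R} = M \otimes_{\mathcal{E}^{\dagger}} \mathcal{R}$. Theorem \ref{thm-extension} now produces a $(\sigma,\nabla)$-module $\widetilde{M}$ over $\mathcal{E}_+$ together with an isomorphism $\alpha \colon \widetilde{M} \otimes_{\mathcal{E}_+} \mathcal{E}^{\dagger} \xrightarrow{\sim} M$.

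For the second step, the composition $\iota \circ \alpha$ is a morphism of $(\sigma,\nabla)$-modules over $\mathcal{E}^{\dagger}$. Tensoring further with $\mathcal{E}$ and invoking Theorem \ref{local-faithful} for $R = \mathcal{E}_+$ descends the resulting $\mathcal{E}$-linear morphism uniquely to a morphism $\beta \colon \widetilde{M} \to M'$ of $(\sigma,\nabla)$-modules over $\mathcal{E}_+$. The uniqueness clause of Theorem \ref{local-faithful} applied to $R = \mathcal{E}^{\dagger}$ forces $\beta \otimes_{\mathcal{E}_+} \mathcal{E}^{\dagger} = \iota \circ \alpha$; in particular $\beta$ is injective, since $\iota \circ \alpha$ is and $\widetilde{M}$ embeds into $\widetilde{M} \otimes_{\mathcal{E}_+} \mathcal{E}$ by freeness. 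Set $M'' := \beta(\widetilde{M}) \subset M'$; then $M''$ is a sub $(\sigma,\nabla)$-module of $M'$, free over $\mathcal{E}_+$ by virtue of being isomorphic to $\widetilde{M}$ via $\beta$. Factoring $\beta \otimes_{\mathcal{E}_+} \mathcal{E}^{\dagger}$ through $M'' \otimes_{\mathcal{E}_+} \mathcal{E}^{\dagger}$ and using that $\beta \otimes \mathcal{E}^{\dagger}$ is injective with image $M$, the canonical map $M'' \otimes_{\mathcal{E}_+} \mathcal{E}^{\dagger} \to M' \otimes_{\mathcal{E}_+} \mathcal{E}^{\dagger}$ is injective with image exactly $M$, so $M'' \otimes_{\mathcal{E}_+} \mathcal{E}^{\dagger} = M$ as submodules of $M' \otimes_{\mathcal{E}_+} \mathcal{E}^{\dagger}$.

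The most delicate step is the construction of $N$: one must verify that a horizontal $\mathcal{R}$-basis of $M \otimes_{\mathcal{E}^{\dagger}} \mathcal{R}$ spans a genuine $(\sigma,\nabla)$-submodule over the smaller ring $\mathcal{R}_+$, which rests on the rigidity of the $F$-action on horizontal subspaces already exploited in the proof of Lemma \ref{submodule}. Once $N$ is produced and Theorem \ref{thm-extension} is invoked, the remaining descent of the inclusion morphism is essentially formal, granted the faithfulness Theorem \ref{local-faithful}.
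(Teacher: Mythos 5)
Your proof is correct and follows essentially the same route as the paper: produce a horizontal $\mathcal R$-basis of $M\otimes_{\mathcal E^{\dagger}}\mathcal R$ via Proposition \ref{horizontal1} and Lemma \ref{submodule}, observe that the Frobenius matrix then lies in $\mathcal O[\frac{1}{p}]$ so that Theorem \ref{thm-extension} descends $M$ to $\mathcal E_+$, and finally use Theorem \ref{local-faithful} to descend the inclusion into $M'$. Your write-up of the last step (identifying $\beta\otimes_{\mathcal E_+}\mathcal E^{\dagger}$ with $\iota\circ\alpha$ and checking the image) is in fact more careful than the paper's.
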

\begin{proof} By Proposition \ref{horizontal1} the $(\sigma,\nabla)$-module $M'\otimes_{\mathcal E_+}\mathcal R_+$ has an $\mathcal R_+$-basis consisting of horizontal vectors. Therefore $M'\otimes_{\mathcal E_+}\mathcal R\cong(M'\otimes_{\mathcal E_+}\mathcal R_+)\otimes_{\mathcal R_+}\mathcal R$ also has an $\mathcal R$-basis consisting of horizontal vectors. By Lemma \ref{submodule} we get that its sub $(\sigma,\nabla)$-module $M\otimes_{\mathcal E^{\dagger}}\mathcal R$ has an $\mathcal R$-basis consisting of horizontal vectors, too. Let $\mathbf e_1, \dots, \mathbf e_n$ be such an $\mathcal R$-basis, and write:
$$F\mathbf e_j = \sum_i\Phi_{ij} \mathbf e_i,$$
where $\Phi=(\Phi_{ij})$ is an $n\times n$ matrix with coefficients in $\mathcal R$. By the compatibility condition between $F$ and $\nabla$ in the definition of 
$(\sigma,\nabla)$-modules, we get that $d(\Phi)=0$, which implies that
$\Phi$ actually has coefficients in $\mathcal O[\frac{1}{p}]$. Therefore $M\otimes_{\mathcal E^{\dagger}}\mathcal R$ is isomorphic to $M_0\otimes_{\mathcal R_+}\mathcal R$, where $M_0$ is a $(\sigma,\nabla)$-module over $\mathcal R_+$, so by Theorem \ref{thm-extension} there is a $(\sigma,\nabla)$-module $M_+$ over $\mathcal E_+$ such that $M$ is isomorphic to $M_+\otimes_{\mathcal E_+}\mathcal E^{\dagger}$. By Theorem \ref{local-faithful} the $(\sigma,\nabla)$-module $M_+$ is isomorphic to a sub
$(\sigma,\nabla)$-module $M''$ of $M'$, and hence the claim follows. 
\end{proof}
\begin{defn}\label{p-adic} Let $R$ be one of the rings $\Gamma_+,\Gamma,\Gamma^{\dagger},\mathcal E_+,\mathcal E$ or $\mathcal E^{\dagger}$ (but not $\mathcal R_+$ nor $\mathcal R$). Because $\mathcal E$ is a $p$-adically complete local field, for every finitely generated $R$-module $M$ the
$\mathcal E$-vector space $M\otimes_R\mathcal E$ is equipped with a canonical topology compatible with the $p$-adic topology on $\mathcal E$. We will call the restriction of this topology onto $M$ the $p$-adic topology on $M$. Let $\nabla$ be a connection on $M$. Then the associated differential operator $D:M\rightarrow M$ is the composition of $\nabla$ and the map $\textrm{id}_M\otimes_R\upsilon$, where $\upsilon:\Omega^1_R\rightarrow R$ is the unique $R$-linear isomorphism with $\upsilon(du)=1$. Recall that $\nabla$ is said to be topologically quasi-nilpotent if $D^n(x)$ converges to zero for every $x\in M$ with respect to the $p$-adic topology.
\end{defn}
\begin{defn}\label{dieu} For the rest of this section we assume that $f=1$, and hence $q=p$ and $\sigma=\sigma_0$. A Dieudonn\'e module $(M,\nabla,F,V)$ over $R$ (of the type considered in Definition \ref{p-adic}) is 
\begin{enumerate}
\item[$(i)$]  a finitely generated free $R$-module $M$, 
\item[$(ii)$] a topologically quasi-nilpotent connection $\nabla:M\rightarrow M\otimes_R\Omega^1_R$,
\item[$(iii)$] two $R$-linear maps $F:M\otimes_{R,\sigma}R\rightarrow M$ and $V:M\rightarrow M\otimes_{R,\sigma}R$ such that
$$F \circ V = p\cdot\textrm{\rm id}_M\textrm{ and }V \circ F = p \cdot
\textrm{\rm id}_{M\otimes_{R,\sigma}R},$$
and the diagrams:
$$\xymatrix{
M\otimes_{R,\sigma}R \ar^-{\nabla_{\sigma}}[r] \ar^{F}[d] & M\otimes_{R,\sigma}R \otimes_R \Omega^1_{R} \ar^{F \otimes\textrm{\rm id}_{\Omega^1_R}}[d] \\
M  \ar^-{\nabla}[r] & M \otimes \Omega^1_{R}.}\quad\quad\quad
\xymatrix{
M \ar^-{\nabla}[r] \ar^{V}[d] & M \otimes \Omega^1_{R} \ar^{V
\otimes \textrm{\rm id}_{\Omega^1_R}}[d] \\
M\otimes_{R,\sigma}R \ar^-{\nabla_{\sigma}}[r]  & M\otimes_{R,\sigma}R \otimes_R \Omega^1_{R} }$$
are commutative.
\end{enumerate}
These objects form a category. Clearly for every Dieudonn\'e module $(M,\nabla,F,V)$ over $R$ the triple $(M,F,\nabla)$ is a $(\sigma,\nabla)$-module over $R$, and hence we have a forgetful functor from the category of Dieudonn\'e modules over $R$ to the category of $(\sigma,\nabla)$-modules over $R$. 
\end{defn}
\begin{lemma}\label{forgetful} The following hold:
\begin{enumerate}
\item[$(a)$] the forgetful functor from the category of Dieudonn\'e modules over $R$ to the category of $(\sigma,\nabla)$-modules over $R$ is fully faithful,
\item[$(b)$] if $1/p\in R$ then the forgetful functor from the category of Dieudonn\'e modules over $R$ to the category of $(\sigma,\nabla)$-modules over $R$ is an equivalence.
\end{enumerate}
\end{lemma}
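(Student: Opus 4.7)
Plan: For part (a), faithfulness is immediate since morphisms in both categories are the same kind of $R$-linear maps and the forgetful functor merely drops the compatibility-with-$V$ requirement. For fullness, let $h\colon M\to N$ be a morphism of $(\sigma,\nabla)$-modules between Dieudonn\'e modules. The defining identities $F\circ V=p\cdot\mathrm{id}$ and $V\circ F=p\cdot\mathrm{id}$ say that $V$ coincides with $p$ times the inverse of the $R$-linear Frobenius $\tilde F\colon M\otimes_{R,\sigma}R\to M$ after inverting $p$ (the invertibility of $\tilde F$ over $R[\tfrac{1}{p}]$ being part of the $\sigma$-module axiom). Compatibility of $h$ with $F$ therefore gives compatibility with $V$ after inverting $p$; since $N\otimes_{R,\sigma}R$ is free over the $p$-torsion-free domain $R$, this descends to an integral identity.

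For part (b), by (a) it suffices to prove essential surjectivity. Given $(M,F,\nabla)$ over $R$ with $1/p\in R$, the map $\tilde F$ is already invertible, so I set $V:=p\tilde F^{-1}$; then $F\circ V=p\cdot\mathrm{id}_M$ and $V\circ F=p\cdot\mathrm{id}_{M\otimes_{R,\sigma}R}$ tautologically, and the commutative square for $V$ and $\nabla$ in Definition \ref{dieu} drops out of the square for $F$ by applying $\tilde F^{-1}$ and multiplying by $p$. The real content is the topological quasi-nilpotence of $\nabla$. Let $\phi(m):=F(m\otimes 1)$ and $D:=(\mathrm{id}_M\otimes\upsilon)\circ\nabla$. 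Unpacking the Frobenius--connection compatibility using the formula $d\sigma(du)=pu^{p-1}du$ (with $q=p$) yields the commutation relation $D(\phi m)=pu^{p-1}\phi(Dm)$ on $M$.

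Iterating gives $D^n(\phi m)=\sum_{k=0}^{n}c_{n,k}(u)\,\phi(D^km)$ with coefficients satisfying the recursion $c_{n,k}=(c_{n-1,k})'+pu^{p-1}\,c_{n-1,k-1}$ and $c_{0,0}=1$. A direct induction shows (i) each $c_{n,k}$ is a polynomial in $u$ of degree at most $k(p-1)$, and (ii) $v_p(c_{n,k})\geq k$. Since a polynomial of bounded degree vanishes after enough differentiations, a second induction on $k$ produces thresholds $n_k$ with $c_{n,k}=0$ for $n\geq n_k$; thus for $n\geq n_k$ only terms with $j>k$ survive in $D^n(\phi m)$, each of $p$-adic valuation at least $k+1$, so $D^n(\phi m)\to 0$ $p$-adically. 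Since $\tilde F$ is an isomorphism, every $m\in M$ is a finite sum $\sum r_i\phi(m_i)$, and applying the Leibniz formula $D^n(r\phi m')=\sum_j\binom{n}{j}r^{(j)}D^{n-j}(\phi m')$ together with the bound $v_p(r^{(j)})\geq v_p(j!)-C(r)\to\infty$ (which follows from $i(i-1)\cdots(i-j+1)=j!\binom{i}{j}\in j!\,\mathbb{Z}$ and the boundedness of coefficients of $r$) shows $D^n(m)\to 0$ $p$-adically, completing the verification. The main obstacle is this last step: balancing the $p$-divisibility coming from the Frobenius commutation against the $p$-adic smallness of factorial factors from iterated differentiation, and reducing the general case to elements of the form $\phi(m)$ via the surjectivity of $\tilde F$.
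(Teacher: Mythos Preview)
Your argument for part~(a) is correct and is essentially the paper's own proof: the Verschiebung is forced to equal $pF^{-1}$ over $R[\tfrac1p]$, so compatibility with $F$ plus $p$-torsion-freeness of the target gives compatibility with $V$.

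For part~(b), the paper's proof is one line: ``trivial, since $F$ is invertible in this case'' --- that is, set $V:=pF^{-1}$ and check the identities. The paper treats the topological quasi-nilpotence of $\nabla$ as automatic for $(\sigma,\nabla)$-modules, which is a standard (and true) fact. You try to prove this explicitly, which is more careful in spirit, but your argument has a genuine gap. From the expansion
\[
D^{n}(\phi m)=\sum_{j>k}c_{n,j}(u)\,\phi(D^{j}m)\qquad(n\ge n_k)
\]
you conclude that ``each term has $p$-adic valuation at least $k+1$''. This tacitly assumes $v_p\bigl(\phi(D^{j}m)\bigr)\ge 0$. But if the connection matrix $N$ satisfies $v_p(N)\le -1$ (which the axioms do not preclude), then $v_p(D^{j}m)$ can decrease by more than~$1$ at each step, so $j+v_p\bigl(\phi(D^{j}m)\bigr)$ need not tend to~$+\infty$ and the sum is not controlled. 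The same issue recurs in your Leibniz reduction, since that step presupposes $D^{n}(\phi m')\to 0$.

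The fix is to iterate the Frobenius before running your recursion. From $D\phi=pu^{p-1}\phi D$ one gets $D\phi^{r}=p^{r}u^{p^{r}-1}\phi^{r}D$, hence
\[
D^{n}(\phi^{r}m)=\sum_{j}c^{(r)}_{n,j}(u)\,\phi^{r}(D^{j}m),\qquad v_p\bigl(c^{(r)}_{n,j}\bigr)\ge rj,
\]
with the same degree/vanishing pattern as before. Fix a $\Gamma$-lattice; then $v_p(D^{j}m)\ge -jc$ and $v_p(\phi^{r}x)\ge v_p(x)-rc'$ for constants $c,c'\ge 0$ depending only on the chosen basis. Choosing $r>c$ makes $v_p$ of the $j$-th term at least $j(r-c)-rc'$, which now does tend to $+\infty$ with $j$; combined with the vanishing of $c^{(r)}_{n,j}$ for $j\le k$ when $n$ is large, this yields $D^{n}(\phi^{r}m)\to 0$. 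Since $\tilde F$ (hence each $\tilde F_r$) is an isomorphism, every $m$ is a finite $R$-combination $\sum r_i\,\phi^{r}(m_i)$, and your Leibniz argument then finishes the proof exactly as you wrote it.
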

\begin{proof} Let $(M,\nabla,F,V)$ be a Dieudonn\'e module over $R$. Because $F$ is invertible over $R[\frac{1}{p}]$, the map $V$ must be unique, so it can be recovered just from the data $(M,F,\nabla)$. Let $(M',\nabla',F',V')$ be another Dieudonn\'e module over $R$ and let $\phi:M\rightarrow M'$ be a homomorphism of the underlying $(\sigma,\nabla)$-modules. Then for every $m\in M$ we have:
$$F'(\phi(Vm))=\phi(FVm)=\phi(pm)=p\phi(m)=F'V'\phi(m).$$
Because $F'$ is invertible over $R[\frac{1}{p}]$ we get that $\phi(Vm)=V'\phi(m)$, so $\phi$ commutes with the Verschiebung operators, too. Therefore claim
$(a)$ holds. Claim $(b)$ is trivial, since $F$ is invertible in this case. 
\end{proof}
\begin{prop} \label{matfact2} Let $X$ be an invertible $n\times n$ matrix over
$\mathcal E$. Then there exist invertible $n\times n$ matrices $Y$ over $\Gamma$ and
$Z$ over $\mathcal O[\frac{1}{p}]$ such that $X = YZ$.
\end{prop}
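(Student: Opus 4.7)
My plan is to adapt the proof of Proposition \ref{matfact} (Kedlaya's Proposition 6.5 in \cite{Ke0}) to the present setting, under the analogy $\mathcal R \leftrightarrow \mathcal E$, $\mathcal E^{\dagger} \leftrightarrow \Gamma$, $\mathcal R_+ \leftrightarrow \mathcal O[\frac{1}{p}]$, in which the intersection $\mathcal O = \Gamma \cap \mathcal O[\frac{1}{p}]$ plays the role of $\mathcal E_+ = \mathcal E^{\dagger} \cap \mathcal R_+$. The key facts I would exploit are that $\Gamma$ is a complete discrete valuation ring with uniformizer $p$, residue field $k((t))$ and fraction field $\mathcal E$; that $\mathcal O[\frac{1}{p}]$ embeds into $\mathcal E$ as the subfield of constant series (those with non-zero coefficient only at $u^0$); and that $\mathcal E$ is $p$-adically complete.

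First I would dispose of the scalar case $n = 1$ directly. For $f = \sum_i x_i u^i \in \mathcal E^{*}$, the coefficient valuations $v_p(x_i)$ are bounded below (since $f \in \Gamma[\frac{1}{p}]$) and tend to $+\infty$ as $i \to -\infty$, so $a := \min_i v_p(x_i)$ is a well-defined integer. Then $Y := p^{-a} f$ lies in $\Gamma$ and has some coefficient of $p$-adic valuation $0$, so $Y \in \Gamma^{*}$; and $Z := p^a \in \mathcal O[\frac{1}{p}]^{*}$ completes the factorization $f = YZ$.

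For the general case I would combine a Smith normal form reduction over the principal ideal domain $\Gamma$ with a successive approximation argument in the style of Kedlaya. After scaling by a power of $p$, the Smith normal form yields $X = UDV$ with $U, V \in GL_n(\Gamma)$ and $D$ diagonal with entries in $p^{\mathbb Z}$; the problem then reduces to factorising the product $DV$. The iterative step would construct sequences $Y_k \in GL_n(\Gamma)$ and $Z_k \in GL_n(\mathcal O[\frac{1}{p}])$ whose products approximate $X$ with error lying in an increasing power of $p$, so that the $p$-adic completeness of $\mathcal E$, together with the closedness of $GL_n(\Gamma)$ and $GL_n(\mathcal O[\frac{1}{p}])$ in the ambient $p$-adic topologies, yield limits $Y$ and $Z$ with $X = YZ$.

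The main obstacle will be setting up the iterative correction so that each successive error lies in a strictly higher power of $p$ than the previous one, which is what drives $p$-adic convergence. This parallels the delicate bookkeeping in Kedlaya's proof of Proposition 6.5 in \cite{Ke0}, and in our setting it relies on the facts that $\Gamma/p\Gamma = k((t))$ allows the unit condition in $\Gamma$ to be checked modulo $p$ via Hensel's lemma, while $\mathcal O/p\mathcal O = k$ controls the constant part of the correction living in $\mathcal O[\frac{1}{p}]$.
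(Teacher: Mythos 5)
Your $n=1$ case and the Smith normal form reduction $X=UDV$ are fine, but the step you defer to --- the ``iterative correction'' in the style of Kedlaya's Proposition 6.5 of \cite{Ke0} --- cannot be set up under the analogy you propose, so the plan has a genuine gap exactly where you locate the main obstacle. Kedlaya's successive approximation is powered by the additive splitting $\mathcal R=\mathcal E^{\dagger}+\mathcal R_+$ (cut a bidirectional series into its negative-degree and non-negative-degree parts), which lets each error matrix be distributed between the two factors at every stage. The corresponding splitting in your dictionary would be $\mathcal E=\Gamma+\mathcal O[\frac{1}{p}]$, and this is false: the element $p^{-1}u\in\mathcal E$ cannot be written as $\gamma+c$ with $\gamma\in\Gamma$ and $c\in\mathcal O[\frac{1}{p}]$ constant, since the coefficient of $u$ in $\gamma+c$ must lie in $\mathcal O$. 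Precisely such entries arise when you try to move $D$ past $V$ (the matrix $DVD^{-1}$ has entries $p^{a_i-a_j}V_{ij}$ with genuine denominators in the off-diagonal direction), so the error terms you would need to correct do not decompose, and the iteration never gets started.

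The statement is nevertheless true, and the intended proof is much more elementary --- a finite Gaussian elimination with no limits. Right-multiply $X$ by a scalar matrix $p^mI$ so that all entries lie in $\Gamma$ and at least one is a unit of $\Gamma$ (possible because $\Gamma$ is a discrete valuation ring with uniformizer $p$ and fraction field $\mathcal E$); permute rows and columns to bring that unit to the upper-left corner; clear the first column by row operations over $\Gamma$ (the pivot is a unit, so the multipliers stay in $\Gamma$); then recurse on the lower-right $(n-1)\times(n-1)$ block. All left multiplications are invertible over $\Gamma$, and all right multiplications are permutation matrices and diagonal matrices with $p$-power entries, hence lie in $GL_n(\mathbb Z[\frac{1}{p}])\subseteq GL_n(\mathcal O[\frac{1}{p}])$; the end result is an invertible matrix over $\Gamma$, which gives $X=YZ$ directly. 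Neither completeness of $\mathcal E$ nor any convergence argument is needed.
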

\begin{proof} The proof is completely standard, but we include it for the reader's convenience. We will keep on multiplying $X$ with invertible $n\times n$ matrices over $\Gamma$ on the left and with invertible $n\times n$ matrices over $\mathcal O[\frac{1}{p}]$ on the right until we get the identity matrix. By multiplying $X$ with a scalar matrix with diagonal term $p^m$ on the right for some suitable integer $m$, the new matrix, also called $X$, will have terms in $\Gamma$ and will have one term which is a unit in $\Gamma$. By permuting the rows and columns of $X$ we may assume that this term is in the upper left corner. These operations correspond to multiplying by an invertible matrix over $\mathbb Z$ on the left and on the right, respectively. By applying row operations to $X$ we may assume that there are no non-zero terms in the first column in $X$ other than in the upper left hand corner. Since $X$ has terms in $\Gamma$, the latter correspond to multiplying by invertible matrices over $\Gamma$ on the left. Applying the same argument repeatedly to the lower right $(n-1)\times(n-1)$ block of $X$ we get an upper triangular matrix with terms in $\Gamma$ whose diagonal terms are invertible in $\Gamma$. This matrix is invertible over $\Gamma$, and hence the claim follows. 
\end{proof}
Now let $R\subset R'$ be two rings of the type considered in Definition \ref{p-adic} and let $(M,\nabla,F,V)$ be a Dieudonn\'e module over $R$. Let $V'$ denote
$$V\otimes_R\textrm{id}_{R'}:M\otimes_RR'\longrightarrow
(M\otimes_{R,\sigma}R)\otimes_RR'\cong (M\otimes_RR')\otimes_{R',\sigma}R'.$$ 
Then the quadruple $(M\otimes_RR',\nabla',F',V')$, where $F'$ and $\nabla'$ are the same as in Definition \ref{2.8}, is a Dieudonn\'e module over $R'$ which we will denote by $M\otimes_RR'$ for simplicity. 
\begin{prop}\label{intersection} Let $M_1$ and $M_2$ be Dieudonn\'e modules over $\Gamma$ and $\mathcal E_+$, respectively, such that 
$M_1\otimes_{\Gamma}\mathcal E$ and $M_2\otimes_{\mathcal E_+}\mathcal E$ are isomorphic Dieudonn\'e modules over $\mathcal E$. Then 
there is a Dieudonn\'e module $M_+$ over $\Gamma_+$ such that $M_1$ and $M_+\otimes_{\Gamma_+}\Gamma$ are isomorphic Dieudonn\'e modules over $\Gamma$, and $M_2$ and $M_+\otimes_{\Gamma_+}\mathcal E_+$ are isomorphic Dieudonn\'e modules over $\mathcal E_+$.
\end{prop}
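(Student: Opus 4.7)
The plan is to descend a common basis from the ambient $\mathcal E$-vector space $V := M_1 \otimes_\Gamma \mathcal E \cong M_2 \otimes_{\mathcal E_+} \mathcal E$, using Proposition \ref{matfact2} as a Cartan-type matrix decomposition; this matrix factorization is the only non-formal ingredient of the argument, and the main thing to keep straight is which direction of the factorization to apply.

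First I fix a $\Gamma$-basis $(\mathbf e_j)$ of $M_1$ and an $\mathcal E_+$-basis $(\mathbf f_j)$ of $M_2$, identify both modules with their images inside $V$ via the given isomorphism, and write $\mathbf e_j = \sum_i X_{ij}\mathbf f_i$ for a transition matrix $X\in GL_n(\mathcal E)$. Applying Proposition \ref{matfact2} to $X^{-1}$ and inverting the resulting decomposition, I obtain a factorization $X = ZY$ with $Y\in GL_n(\Gamma)$ and $Z\in GL_n(\mathcal O[\tfrac 1p])\subset GL_n(\mathcal E_+)$. I then define $\mathbf g_j := \sum_i (Y^{-1})_{ij}\mathbf e_i$; because $Y^{-1}\in GL_n(\Gamma)$ this is a $\Gamma$-basis of $M_1$, and because $X = ZY$ one equally has $\mathbf g_j = \sum_i Z_{ij}\mathbf f_i$, which is an $\mathcal E_+$-basis of $M_2$. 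Hence each $\mathbf g_j$ lies in $M_1\cap M_2$ inside $V$, and the $\Gamma_+$-module $M_+$ they span is free of rank $n$ with $M_+\otimes_{\Gamma_+}\Gamma = M_1$ and $M_+\otimes_{\Gamma_+}\mathcal E_+ = M_2$ inside $V$, using the evident identity $\Gamma\cap\mathcal E_+ = \Gamma_+$.

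Finally I check that the Dieudonn\'e module structure restricts to $M_+$. The matrix of $F$ in the basis $(\mathbf g_j)$ is at once an element of $M_n(\Gamma)$, since it is computed inside $M_1$, and an element of $M_n(\mathcal E_+)$, since under the isomorphism it is equally the matrix of $F$ on $M_2$; hence its entries lie in $\Gamma\cap\mathcal E_+ = \Gamma_+$. The analogous argument handles the Verschiebung $V$ and the connection matrix of $\nabla$, so all of $F$, $V$ and $\nabla$ preserve $M_+$. The topologically quasi-nilpotent condition for the induced connection on $M_+$ is immediate from its validity on $M_1$, since the $p$-adic topology on $M_+$ agrees with the one it inherits from $M_1$ inside $V$. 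This produces the required Dieudonn\'e module over $\Gamma_+$, and the required isomorphisms with $M_1$ and $M_2$ after base change are built into the construction.
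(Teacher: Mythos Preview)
Your proof is correct and follows essentially the same approach as the paper: both use Proposition~\ref{matfact2} to factor the transition matrix between a $\Gamma$-basis of $M_1$ and an $\mathcal E_+$-basis of $M_2$, producing a common basis whose $\Gamma_+$-span carries all the Dieudonn\'e structure because the structural matrices land in $\Gamma\cap\mathcal E_+=\Gamma_+$. Your presentation is slightly more conceptual (working inside $V$ and phrasing things via $M_1\cap M_2$) while the paper writes out the transformed matrices $\overline\Phi,\overline B,\overline N$ explicitly, but the argument is the same.
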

\begin{proof} The argument will follow the same line of reasoning as the proof of Theorem \ref{thm-extension}. Let $\mathbf e_1, \dots, \mathbf e_n$ be a $\Gamma$-basis of
$M_1$. Then $\mathbf e_1\otimes_{\Gamma,\sigma}1, \dots, \mathbf e_n\otimes_{\Gamma,\sigma}1$ is a $\Gamma$-basis of $M_1\otimes_{\Gamma,\sigma}\Gamma$, so
$$F\mathbf e_j\otimes_{\Gamma,\sigma}1 = \sum_i \Phi_{ij} \mathbf e_i,\ 
V\mathbf e_j = \sum_i B_{ij} \mathbf e_i\otimes_{\Gamma,\sigma}1,
\textrm{ and }
\nabla \mathbf e_j = \sum_i N_{ij} \mathbf e_i\otimes du,$$
where $\Phi=(\Phi_{ij})$, $B=(B_{ij})$ and $N=(N_{ij})$ are $n\times n$ matrices with coefficients in $\Gamma$. By assumption there exists a $n\times n$ matrix $X$ over
$\mathcal E$ such that
$$X^{-1} \Phi X^\sigma,\ (X^{-1})^{\sigma}B X\textrm{ and }X^{-1} N X + X^{-1}d(X)$$
have entries in $\mathcal E_+$, where the superscript $(\cdot)^{\sigma}$ denotes the action of $\sigma$ on matrices. By Proposition \ref{matfact2} we can factor $X$ as $YZ$, where $Y$ is an invertible $n\times n$ matrix over $\Gamma$ and $Z$ is an invertible $n\times n$ matrix over $\mathcal E_+\supset\mathcal O[\frac{1}{p}]$. Now put $\mathbf v_j = \sum_{j} Y_{ij}\mathbf e_i$; then
$$F\mathbf v_j\otimes_{\Gamma,\sigma}1 = \sum_i \overline{\Phi}_{ij} \mathbf v_i,\ 
V\mathbf v_j = \sum_i\overline B_{ij} \mathbf v_i\otimes_{\Gamma,\sigma}1
\textrm{ and }
\nabla \mathbf v_j = \sum_i \overline{N}_{ij} \mathbf v_i
\otimes du,$$
where $\overline{\Phi}=(\overline{\Phi}_{ij})$, $\overline B=(\overline B_{ij})$ and $\overline N=(\overline N_{ij})$ are $n\times n$ matrices over $\mathcal E$ such that
\begin{align*}
\overline{\Phi} &= Y^{-1} \Phi Y^\sigma = Z (X^{-1} \Phi X^\sigma)(Z^{-1})^{\sigma}, \\
\overline B &=(Y^{-1})^{\sigma} B Y= Z^{\sigma}((X^{-1})^{\sigma} B X)Z^{-1}, \\
\overline{N} &= Y^{-1} N Y + Y^{-1}d(Y) = 
Z(X^{-1}N X+X^{-1}d(X))Z^{-1}+Zd(Z^{-1}).
\end{align*}
Therefore $\overline{\Phi}$, $\overline B$ and $\overline N$ have entries in
$\Gamma_+=\Gamma\cap\mathcal E_+$. Let $M_+$ be the free $\Gamma_+$-module spanned by $\mathbf v_1, \dots, \mathbf v_n$; clearly $F$ maps $M_+\otimes_{\Gamma_+,\sigma}\Gamma_+\subset M_+\otimes_{\Gamma,\sigma}\Gamma$ into $M_+$, similarly $V$ maps $M_+$ into $M_+\otimes_{\Gamma_+,\sigma}\Gamma_+$, while $\nabla$ maps $M_+$ into $M_+\otimes_{\Gamma_+}\Omega^1_{\Gamma_+}$. Therefore $(M_+,\nabla|_{M_+},F|_{M_+\otimes_{\Gamma_+,\sigma}}\Gamma_+,V|_{M_+})$ is the Dieudonn\'e-module whose existence the theorem claims.\end{proof}
\begin{rem}\label{d-modules} For every $p$-divisible group $G$ over a $\mathbb F_p$-scheme $S$ let $\mathbf D(G)$ denote the (convergent) Dieudonn\'e module of $G$ over $S$. Now set $S=\textrm{Spec}(k[[t]])$ and $\eta=\textrm{Spec}(k((t)))$. The Dieudonn\'e module of a $p$-divisible group $G$ over $S$ (or over $\eta$) is a Dieudonn\'e module over $\Gamma_+$ (or over $\Gamma$, respectively) in the sense defined above (see 2.2.2, 2.2.4 $h)$ and 2.3.4 of \cite{J1}). Moreover the Dieudonn\'e module of the base change $G_{\eta}$ of a $p$-divisible group $G$ over $S$ to $\eta$ is naturally isomorphic to $\mathbf D(G)\otimes_{\Gamma_+}\Gamma$.\end{rem}
\begin{defn}\label{filtrationdef} Let $G$ be a $p$-divisible group over $\eta$. We say that $G$ has good reduction over $S$ if it extends to a $p$-divisible group $G_+$ over $S$. This extension is unique by Corollary 1.2 of \cite{J2} on page 301 (because $k[[t]]$ and $k((t))$ have finite $p$-basis). We say that $G$ has semi-stable reduction if there exists a filtration
$$0\subseteq G^{\mu}\subseteq G^f\subseteq G$$
by $p$-divisible groups such that the following conditions hold:
\begin{enumerate}
\item[$(a)$] Both $G^f$ and $G/G^{\mu}$ extend to $p$-divisible groups $G_1$ and $G_2$ over $S$.
\item[$(b)$] By Corollary 1.2 of \cite{J2} cited above there is a unique morphism $h:G_1\rightarrow G_2$ extending $G^f\rightarrow G/G^{\mu}$. Then the sheaf
$$G^{m}=\textrm{Ker}(h)\textrm{ and } G^{et}=\textrm{Coker}(h)$$
is a multiplicative, respectively an \'etale $p$-divisible group over $S$. 
\end{enumerate}
\end{defn}
We will say that a $(\sigma,\nabla)$-module $M$ over $\mathcal E$ is overconvergent if there is a $(\sigma,\nabla)$-module $M^{\dagger}$ over
$\mathcal E^{\dagger}$ such that $M$ is isomorphic to $M^{\dagger}\otimes_{\mathcal E^{\dagger}}\mathcal E$.
\begin{thm}\label{overconvergent-condition} Let $H$ be a $p$-divisible group over $\eta$ with semi-stable reduction. Let $G\subseteq H$ be a sub $p$-divisible group. Then $G$ has semi-stable reduction if and only if $\mathbf D(G)\otimes_{\Gamma}\mathcal E$ is overconvergent.
\end{thm}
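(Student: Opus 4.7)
My plan is to prove both directions using the descent apparatus of this section---chiefly Theorems \ref{local-faithful} and \ref{extension-useful} together with Proposition \ref{intersection}---in conjunction with de Jong's equivalence between $p$-divisible groups over $S$ and Dieudonn\'e modules over $\Gamma_+$ (see Remark \ref{d-modules}). I would establish the direction $(\Rightarrow)$ first, so that it can be applied to $H$ during the proof of the other direction.

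For $(\Rightarrow)$, assume $G$ is semi-stable with data $G_1, G_2$ over $S$ and $h \colon G_1 \to G_2$ having multiplicative kernel $G^m$ and \'etale cokernel $G^{et}$. The Dieudonn\'e modules of $G^m$, $G_1/G^m$, and $G^{et}$ over $\Gamma_+$ supply overconvergent lifts of the successive graded pieces of the filtration $0 \subseteq \mathbf D(G^\mu) \subseteq \mathbf D(G^f) \subseteq \mathbf D(G)$. To assemble these into an overconvergent lift of $\mathbf D(G) \otimes_\Gamma \mathcal E$ itself, I would choose a $\Gamma$-basis of $\mathbf D(G)$ adapted to this filtration and examine the resulting block upper-triangular matrices for $F$ and $\nabla$: the diagonal blocks come from $\Gamma_+$ by construction, and the off-diagonal blocks---controlled by the particularly rigid structure of the multiplicative and \'etale Dieudonn\'e modules, whose connections are effectively trivial in a suitable basis---can be arranged to have entries in $\Gamma^\dagger$.

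For $(\Leftarrow)$, assume $\mathbf D(G) \otimes \mathcal E$ is overconvergent; then by $(\Rightarrow)$ applied to $H$, so is $\mathbf D(H) \otimes \mathcal E$. I would define the candidate filtration on $G$ by $G^\mu := G \cap H^\mu$ and $G^f := G \cap H^f$. Letting $G_1^H$ denote the extension of $H^f$ to $S$, Theorem \ref{local-faithful} lifts the inclusions $\mathbf D(G) \otimes \mathcal E \hookrightarrow \mathbf D(H) \otimes \mathcal E$ and $\mathbf D(H^f) \otimes \mathcal E = \mathbf D(G_1^H) \otimes_{\Gamma_+} \mathcal E \hookrightarrow \mathbf D(H) \otimes \mathcal E$ uniquely to inclusions over $\mathcal E^\dagger$ of the respective overconvergent realizations. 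The intersection $M^\dagger$ of these two subobjects inside the overconvergent lift of $\mathbf D(H) \otimes \mathcal E$ is a sub-$(\sigma,\nabla)$-module over $\mathcal E^\dagger$ whose base change to $\mathcal E$ equals $\mathbf D(G^f) \otimes \mathcal E$ (by faithful flatness of $\mathcal E^\dagger \subset \mathcal E$). Theorem \ref{extension-useful} descends $M^\dagger$ to a sub-$(\sigma,\nabla)$-module over $\mathcal E_+$ of $\mathbf D(G_1^H) \otimes_{\Gamma_+} \mathcal E_+$, and Proposition \ref{intersection} applied to $\mathbf D(G^f)$ over $\Gamma$ and this $\mathcal E_+$-descent produces a Dieudonn\'e module over $\Gamma_+$ extending $\mathbf D(G^f)$. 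De Jong's equivalence then supplies a $p$-divisible group $G_1^G$ over $S$ extending $G^f$. The same argument applied to $G/G^\mu \subseteq H/H^\mu$ furnishes a $p$-divisible group $G_2^G$ over $S$ extending $G/G^\mu$. The map $h_G \colon G_1^G \to G_2^G$ is obtained by extending the generic-fiber map $G^f \to G/G^\mu$ uniquely over $S$ (Corollary 1.2 of \cite{J2}); its kernel and cokernel have generic fibers inside the multiplicative $H^\mu$ and \'etale $H/H^f$, so by the same uniqueness they sit as sub $p$-divisible groups of $G^m_H$ and $G^{et}_H$, inheriting the multiplicative (respectively \'etale) property.

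The principal obstacle lies in the direction $(\Rightarrow)$---specifically, in showing overconvergence of the extension class that assembles the graded pieces of $\mathbf D(G) \otimes \mathcal E$. Once this is established, the direction $(\Leftarrow)$ is a systematic application of the descent tools of this section.
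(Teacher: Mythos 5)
Your treatment of the converse direction $(\Leftarrow)$ follows essentially the same route as the paper: intersect $\mathbf D(G)$ with the filtration of $\mathbf D(H)$, lift the relevant maps to $\mathcal E^{\dagger}$ by Theorem \ref{local-faithful}, descend the resulting sub-$(\sigma,\nabla)$-modules to $\mathcal E_+$ by Theorem \ref{extension-useful}, glue with the $\Gamma$-structure via Proposition \ref{intersection}, and invoke de Jong's equivalence. The paper realizes the sub-objects as images and preimages under the lifted map $\pi^{\dagger}:L^{\dagger}\to M_2\otimes_{\mathcal E_+}\mathcal E^{\dagger}$ rather than as an intersection inside an overconvergent model of $\mathbf D(H)\otimes_{\Gamma}\mathcal E$; this is a genuinely useful difference, because the paper's version never needs the overconvergence of $\mathbf D(H)\otimes_{\Gamma}\mathcal E$ itself, only of $M_1\otimes_{\mathcal E_+}\mathcal E^{\dagger}$ and $M_2\otimes_{\mathcal E_+}\mathcal E^{\dagger}$, which come for free from $\mathcal E_+\subset\mathcal E^{\dagger}$. (One small bookkeeping point you omit: before applying Proposition \ref{intersection} you must upgrade the $(\sigma,\nabla)$-isomorphisms to isomorphisms of Dieudonn\'e modules, which is Lemma \ref{forgetful}.)

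The genuine gap is in the direction $(\Rightarrow)$. The paper does not prove this direction at all; it cites Corollary 3.16 of \cite{Tr}. Your proposed direct argument reduces to the assertion that, in a basis adapted to the filtration, the off-diagonal blocks of the matrices of $F$ and $\nabla$ ``can be arranged to have entries in $\Gamma^{\dagger}$.'' That assertion is exactly the content to be proved and does not follow from the rigidity of the graded pieces: an extension of overconvergent $(\sigma,\nabla)$-modules over $\mathcal E$ need not be overconvergent, and the extension class here is controlled only by the full semi-stability structure (multiplicative sub, \'etale quotient, and the existence of the integral models $G_1,G_2$ over $S$), via a non-trivial argument. You yourself flag this as ``the principal obstacle,'' but the sketch offers no mechanism for overcoming it. Moreover, because your $(\Leftarrow)$ argument begins by applying $(\Rightarrow)$ to $H$ in order to form the intersection over $\mathcal E^{\dagger}$, the gap propagates into that direction as well unless you restructure it along the paper's lines. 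Either supply a genuine proof of overconvergence of the extension data or cite Trihan's theorem as the paper does.
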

\begin{rem} It is clear that some condition on $G$ is required. The standard example of the $p$-divisible group of a generically ordinary elliptic curve over $S$ with supersingular special fibre shows that not every sub $p$-divisible group of a $G$ as above will have semi-stable reduction, even when $G$ has good reduction.
\end{rem}
\begin{proof}[Proof of Theorem \ref{overconvergent-condition}] If $G$ has semi-stable reduction then $\mathbf D(G)\otimes_{\Gamma}\mathcal E$ is overconvergent by Corollary 3.16 of \cite{Tr} on page 421. Therefore we only need to show the converse. Let 
$$
0\subseteq H^{\mu}\subseteq H^f\subseteq H
$$
be a filtration by $p$-divisible groups postulated by Definition \ref{filtrationdef}. Let
\begin{equation}\label{2.20.2}
0\subseteq\mathbf D(H^{\mu})\subseteq\mathbf D(H^f)\subseteq\mathbf D(H)
\end{equation}
be the corresponding filtration of Dieudonn\'e modules over $\Gamma$ furnished by functoriality. Consider the filtration:
$$
0\subseteq\mathbf D(H^{\mu})\cap\mathbf D(G)\subseteq\mathbf D(H^f)\cap\mathbf D(G)\subseteq\mathbf D(H)\cap\mathbf D(G)=\mathbf D(G).$$
Because $\Gamma$ is a principal ideal domain this is a sequence of finitely generated free $\Gamma$-modules which are Dieudonn\'e modules if we equip them with the restriction of the operators $F$ and $\nabla$ of $\mathbf D(H)$. Under the correspondence of the main theorem of \cite{J1} on page 6 the filtration corresponds to a filtration of $p$-divisible groups:
\begin{equation}\label{2.20.3}
0\subseteq G^{\mu}\subseteq G^f\subseteq G.
\end{equation}
It will be enough to show that this filtration satisfies the conditions of Definition \ref{filtrationdef}. Let
$$0\subseteq M^{\mu}\subseteq M^f\subseteq M$$
be the filtration of $(\sigma,\nabla)$-modules over $\mathcal E$ which we get from (\ref{2.20.2}) by base change, that is $M^{\mu}=\mathbf D(H^{\mu})\otimes_{\Gamma}\mathcal E,M^f=\mathbf D(H^f)\otimes_{\Gamma}\mathcal E$, and $M=\mathbf D(H)\otimes_{\Gamma}\mathcal E$. Let $H_1$ and $H_2$ be the unique $p$-divisible groups over $S$ which extend $H^f$ and $H/H^{\mu}$, respectively, and let $M_1$ and $M_2$ denote the $(\sigma,\nabla)$-modules $\mathbf D(H_1)\otimes_{\Gamma_+}\mathcal E_+$ and $\mathbf D(H_2)
\otimes_{\Gamma_+}\mathcal E_+$ over $\mathcal E_+$, respectively. Then $M^f\cong M_1\otimes_{\mathcal E_+}\mathcal E$ and $M/M^{\mu}\cong M_2\otimes_{\mathcal E_+}\mathcal E$.

Now let $L$ be the $(\sigma,\nabla)$-module $\mathbf D(G)\otimes_{\Gamma}\mathcal E$ over $\mathcal E$ and let $L^{\dagger}$ be a $(\sigma,\nabla)$-module over $\mathcal E^{\dagger}$ such that $L$ is isomorphic to $L^{\dagger}\otimes_{\mathcal E^{\dagger}}\mathcal E$. Let $\iota:L\to M$ be the injection induced by the inclusion $G\subseteq H$ and let $\pi:L\to M/M^{\mu}$ be the composition of $\iota$ and the quotient map $M\to M/M^{\mu}$. By Theorem
\ref{local-faithful} there is a homomorphism $\pi^{\dagger}:L^{\dagger}\to
M_2\otimes_{\mathcal E_+}\mathcal E^{\dagger}$ such that $\pi=\pi^{\dagger}
\otimes_{\mathcal E_+}\textrm{id}_{\mathcal E^{\dagger}}$. Let $L^{\dagger}_2$ be the image of $h^{\dagger}$. Then $L^{\dagger}_2$ is a sub $(\sigma,\nabla)$-module of $M_2\otimes_{\mathcal E_+}\mathcal E^{\dagger}$ because $\mathcal E^{\dagger}$ is a field. Therefore there is a sub $(\sigma,\nabla)$-module $L_2$ of $M_2$ over $\mathcal E_+$ such that $L^{\dagger}_2$ is equal to $L_2\otimes_{\mathcal E_+}\mathcal E^{\dagger}$ by Theorem \ref{extension-useful}. 

Let $h:H_1\rightarrow H_2$ be the unique morphism extending
$H^f\rightarrow H/H^{\mu}$ and let $\chi:M_1\to M_2$ be the homomorphism of $(\sigma,\nabla)$-modules induced by $h$ via functoriality. Let $M_{1,\mu}$ denote the image of $M_1$ with respect to $\chi$, and let $L^{\dagger}_1$ be the pre-image of $M_{1,\mu}\otimes_{\mathcal E_+}\mathcal E^{\dagger}\subseteq M_2\otimes_{\mathcal E_+}\mathcal E^{\dagger}$ with respect to $\pi^{\dagger}$. It is a sub $(\sigma,\nabla)$-module of $L^{\dagger}$, again because $\mathcal E^{\dagger}$ is a field. The image of $L^{\dagger}_1\otimes_{\mathcal E^{\dagger}}\mathcal E$ with respect to the injection $\iota:L\to M$ lies in:
$$M_f\cong M_1\otimes_{\mathcal E_+}\mathcal E\cong 
(M_1\otimes_{\mathcal E_+}\mathcal E^{\dagger})
\otimes_{\mathcal E^{\dagger}}\mathcal E.$$
Therefore by Theorem \ref{local-faithful} there is a homomorphism
$\iota^{\dagger}:L^{\dagger}_1\to M_1\otimes_{\mathcal E_+}\mathcal E^{\dagger}$ such that $\iota|_{L^{\dagger}_1\otimes_{\mathcal E^{\dagger}}
\mathcal E}=\iota^{\dagger}
\otimes_{\mathcal E_+}\textrm{id}_{\mathcal E^{\dagger}}$. Because $\iota$ is injective and $\mathcal E$ is a field, and hence it is flat over $\mathcal E^{\dagger}$, the map $\iota^{\dagger}$ is injective, too. Therefore there is a sub $(\sigma,\nabla)$-module $L_1$ of $M_1$ over $\mathcal E_+$ such that $L^{\dagger}_1$ is equal to $L_1\otimes_{\mathcal E_+}\mathcal E^{\dagger}$ by Theorem \ref{extension-useful}. Now let
$$0\subseteq L^{\mu}\subseteq L^f\subseteq L$$
be the filtration of $(\sigma,\nabla)$-modules over $\mathcal E$ such that $L^{\mu}=\mathbf D(G^{\mu})\otimes_{\Gamma}\mathcal E$ and $L^f=\mathbf D(G^f)\otimes_{\Gamma}\mathcal E$. By construction we have:
\begin{equation}\label{2.20.5}
L^f\cong L_1\otimes_{\mathcal E_+}\mathcal E\textrm{ and }L/L^{\mu}\cong L_2\otimes_{\mathcal E_+}\mathcal E.
\end{equation}
The $(\sigma,\nabla)$-modules $L_1$ and $L_2$ are Dieudonn\'e modules by part $(b)$ of Lemma \ref{forgetful}. Moreover the isomorphisms in (\ref{2.20.5}) above are isomorphisms in the category of Dieudonn\'e modules by part $(a)$ of Lemma \ref{forgetful}. So we may use Proposition \ref{intersection} to conclude that there are Dieudonn\'e modules $L_{1+}$ and $L_{2+}$ over $\Gamma_+$ such that 
$$L_1\cong L_{1+}\otimes_{\Gamma_+}\mathcal E_+,\ 
L_2\cong L_{2+}\otimes_{\Gamma_+}\mathcal E_+,
\ \mathbf D(G^f)\cong L_1\otimes_{\Gamma_+}\Gamma,\ 
\mathbf D(G/G^{\mu})\cong L_2\otimes_{\mathcal E_+}\Gamma$$
as Dieudonn\'e modules.

By the main theorem of \cite{J1} on page 6 there are $p$-divisible groups $G_1$ and $G_2$ over $S$ such that $L_{1+}\cong\mathbf D(G_1)$ and 
$L_{2+}\cong\mathbf D(G_2)$ as Dieudonn\'e modules. Therefore the base change of $G_1$ and $G_2$ to $\eta$ is isomorphic to $G^f$ and $G/G^{\mu}$, respectively, again by the main theorem of \cite{J1} on page 6. Moreover this result also implies that $G_1, G_2$ is a closed subgroup scheme of $H_1,H_2$, respectively, and the restriction of $h$ onto $G_1$ maps $G_1$ into $G_2$. Let $g$ denote this restriction. Then $\textrm{Ker}(g),\textrm{Coker}(g)$ is represented by a closed subgroup scheme of $\textrm{Ker}(h),\textrm{Coker}(h)$, respectively. We get that the filtration (\ref{2.20.3}) satisfies the conditions of Definition \ref{filtrationdef}, and therefore the theorem follows.
\end{proof}

\section{Semi-simplicity of isocrystals}

\begin{defn} Let again $k$ be a perfect field of characteristic $p$ and let $\mathbb K$ denote the field of fractions of the ring of Witt vectors $\mathcal O$ of $k$. For every finite extension $\mathbb L$ of $\mathbb K$ let $\mathcal O_{\mathbb L}$ denote the valuation ring of $\mathbb L$. For every separated, quasi-projective scheme $X$ over $k$ and finite extension $\mathbb L$ of $\mathbb K$ let
$\textrm{Isoc}(X/\mathbb L)$ and $\textrm{Isoc}^{\dagger}(X/\mathbb L)$ denote the category of $\mathbb L$-linear convergent and overconvergent isocrystals on $X$, respectively. Let $F:X\to X$ be a power of the $p$-power Frobenius, for example $F(x)=x^q$ with $q=p^f$, and let $\sigma:\mathbb L\to\mathbb L$ be a lift of the $q$-power automorphism of the residue field of $\mathbb L$; then we let $\textrm{$F_{\sigma}$-Isoc}(X/\mathbb L)$ and $\textrm{$F_{\sigma}$-Isoc}^{\dagger}(X/\mathbb L)$ denote the category of $\mathbb L$-linear convergent and overconvergent $F_{\sigma}$-isocrystals on $X$, respectively. When $F_{\sigma}$ is a lift of $F$ on a smooth formal lift $\mathfrak X$ of $X$ to Spf$(\mathcal O_{\mathbb L})$, compatible with $\sigma$, we may describe
$\textrm{$F_{\sigma}$-Isoc}(X/\mathbb L)$ as the category of certain vector bundles $\mathcal F$ with an integral connection $\nabla$ on the rigid analytification $\mathcal X$ of $\mathfrak X$, equipped with an isomorphism $F_{\sigma}^*(\mathcal F,\nabla)\to(\mathcal F,\nabla)$ (where we let $F_{\sigma}$ denote the rigid analytification of $F_{\sigma}$, too). When $X$ does not have such a lift $\mathfrak X$, it is more involved to describe this category (see section 2.3 of \cite{Be0} or \cite{LeSt}, for example). These categories are functorial in $X$, that is, given a morphism of $\pi:Y\to X$ of separated, quasi-projective schemes over $k$, there are corresponding pull-back functors
$$\pi^*:\textrm{Isoc}(X/\mathbb L)
\to\textrm{Isoc}(Y/\mathbb L)\textrm{ , }
\pi^*:\textrm{$F_{\sigma}$-Isoc}(X/\mathbb L)\longrightarrow
\textrm{$F_{\sigma}$-Isoc}(Y/\mathbb L),$$
$$\pi^*:\textrm{Isoc}^{\dagger}(X/\mathbb L)
\to\textrm{Isoc}^{\dagger}(Y/\mathbb L)\textrm{ and }
\pi^*:\textrm{$F_{\sigma}$-Isoc}^{\dagger}(X/\mathbb L)\longrightarrow
\textrm{$F_{\sigma}$-Isoc}^{\dagger}(Y/\mathbb L)$$
(see part (i) of 2.3.3 in \cite{Be0}). When the choice of $\sigma$ is clear from the setting (for example when $\mathbb L$ is an unramified extension of $\mathbb K$, and hence $\sigma$ is unique), we will drop it from the notation.
\end{defn}
We will need another result due to Kedlaya:
\begin{thm}\label{extension_thm} Assume that $X$ is smooth and let $\mathcal F$ be an object of $\textrm{\rm $F_{\sigma}$-Isoc}^{\dagger}(X/\mathbb L)$. Let $\pi:Y\to X$ be an open immersion with Zariski-dense image and let $\mathcal G\subseteq\pi^*(\mathcal F)$ be a sub-object. Then there is a unique sub-object
$\mathcal G'\subseteq\mathcal F$ such that $\mathcal G=\pi^*(\mathcal G')$.
\end{thm}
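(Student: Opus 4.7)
My plan is to prove uniqueness by a short formal argument, and then establish existence by a Zariski-local reduction that ultimately rests on a descent statement for sub $(\sigma,\nabla)$-modules completely analogous to Theorem \ref{extension-useful} but with the inclusion $\mathcal E^{\dagger}\hookrightarrow\mathcal E$ replacing $\mathcal E_+\hookrightarrow\mathcal E^{\dagger}$.

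For uniqueness, suppose $\mathcal G'_1$ and $\mathcal G'_2$ are two sub-objects of $\mathcal F$ with $\pi^*(\mathcal G'_i)=\mathcal G$. Since the category $\textrm{$F_{\sigma}$-Isoc}^{\dagger}(X/\mathbb L)$ is abelian, the intersection $\mathcal G'_1\cap\mathcal G'_2$ inside $\mathcal F$ is also a sub-object, and its pull-back to $Y$ is again $\mathcal G$; so replacing $\mathcal G'_2$ by this intersection we may assume $\mathcal G'_1\subseteq\mathcal G'_2$. Then $\mathcal G'_2/\mathcal G'_1$ is an overconvergent $F_{\sigma}$-isocrystal on $X$ whose pull-back to $Y$ vanishes. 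Because the underlying module is locally free and $Y$ is Zariski-dense, it is zero, hence $\mathcal G'_1=\mathcal G'_2$.

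For existence, I would first reduce to the case when $X$ is smooth affine with a smooth formal lift $\mathfrak X$ over $\textrm{Spf}(\mathcal O_{\mathbb L})$ and $D=X\setminus Y$ is a smooth irreducible divisor. The reduction to codimension one uses the fact that an overconvergent $F_{\sigma}$-isocrystal extends uniquely across closed subsets of codimension $\geq 2$ in a smooth ambient scheme; we then treat one divisor component at a time, using uniqueness to glue. Near the generic point of $D$, the choice of a local uniformiser $u$ identifies the restriction of $\mathcal F$ with a $(\sigma,\nabla)$-module $M^{\dagger}$ over the bounded Robba ring $\mathcal E^{\dagger}$ from Section 2, and the sub-object $\mathcal G$ on $Y$ corresponds to a sub $(\sigma,\nabla)$-module $N\subseteq M^{\dagger}\otimes_{\mathcal E^{\dagger}}\mathcal E$.

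The key technical step is to descend $N$ to a sub $(\sigma,\nabla)$-module $N^{\dagger}\subseteq M^{\dagger}$. This is the exact analogue of Theorem \ref{extension-useful} with the pair $(\mathcal E_+,\mathcal E^{\dagger})$ replaced by $(\mathcal E^{\dagger},\mathcal E)$, and it is proved by running the same argument, combining Kedlaya's slope filtration theorem over the Robba ring $\mathcal R$ with the matrix factorisation of Proposition \ref{matfact} and the full faithfulness Theorem \ref{local-faithful}. Once $N^{\dagger}$ is produced, it corresponds to an overconvergent $F_{\sigma}$-subisocrystal of $\mathcal F$ on an open neighbourhood of the generic point of $D$ whose restriction to $Y$ coincides with $\mathcal G$; taking its union with $\mathcal G$ (in the sense of sub-objects of $\mathcal F$ over $Y$ and over the neighbourhood) and invoking the uniqueness statement proved above produces the desired global extension $\mathcal G'\subseteq\mathcal F$.

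I expect the main obstacle to be the local descent step: while conceptually parallel to Theorem \ref{extension-useful}, it requires the full force of Kedlaya's work on the slope filtration for $(\sigma,\nabla)$-modules over the Robba ring in order to produce an $\mathcal R$-basis of horizontal vectors and carry out the analogue of Lemma \ref{submodule} in this setting. The codimension reduction and the gluing, by contrast, are routine once uniqueness is in hand.
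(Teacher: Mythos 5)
The paper disposes of this statement by citing Kedlaya (Proposition 5.3.1 of \cite{Ke2b}), so any direct proof is a different route; unfortunately yours breaks down at its key step because the local dictionary is shifted by one ring. Since $\mathcal F$ is defined on all of $X$, its germ at the generic point of a boundary divisor $D\subseteq X\setminus Y$ is a $(\sigma,\nabla)$-module over the ``plus'' ring $\mathcal E_+$ (it converges across $D$), not over $\mathcal E^{\dagger}$; and since $\mathcal G$ is a sub-object of $\pi^*(\mathcal F)$ in the category $\textrm{$F_{\sigma}$-Isoc}^{\dagger}(Y/\mathbb L)$, it is already overconvergent along $D$, so its germ is a sub $(\sigma,\nabla)$-module over $\mathcal E^{\dagger}$, not merely over $\mathcal E$. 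The descent you actually need is therefore from $\mathcal E^{\dagger}$ down to $\mathcal E_+$ --- which is literally Theorem \ref{extension-useful} of this paper --- and not the ``$(\mathcal E^{\dagger},\mathcal E)$ analogue'' you propose. Worse, that analogue is false: by Theorem \ref{overconvergent-condition} together with the remark immediately following it, the unit-root (slope) sub $(\sigma,\nabla)$-module over $\mathcal E$ attached to a generically ordinary elliptic curve over $k[[t]]$ with supersingular special fibre is a sub-object of the base change to $\mathcal E$ of a module defined over $\mathcal E_+$ (a fortiori over $\mathcal E^{\dagger}$) which is \emph{not} overconvergent, i.e.\ does not descend to $\mathcal E^{\dagger}$. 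The slope filtration is the standard example of a non-overconvergent sub-object, so it is the obstruction here, not the tool.

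Even after correcting the rings, your sketch elides the two points that carry the real weight in Kedlaya's argument: the residue field $k(D)$ at the generic point of $D$ is imperfect (Section 2 of the paper assumes $k$ perfect throughout), and a descent statement at the generic point of $D$ must be spread out to an honest open neighbourhood of that point before the gluing you describe can begin. Your uniqueness argument is fine (it also follows directly from Kedlaya's full faithfulness for restriction to a dense open), and the reduction to a divisorial boundary is plausible granting extension of overconvergent $F$-isocrystals across codimension $\geq 2$, itself a nontrivial theorem; but as written the existence part does not go through.
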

\begin{proof} This is a special case of Proposition 5.3.1 of \cite{Ke2b} on page 1201.
\end{proof}
\begin{cor}\label{very_useful} Assume that $X$ is smooth and let $\mathcal F$ be an object of $\textrm{\rm $F_{\sigma}$-Isoc}^{\dagger}(X/\mathbb L)$. Let $\pi:Y\to X$ be an open immersion with  Zariski-dense image. Then $\mathcal F$ is semi-simple if and only if $\pi^*(\mathcal F)$ is semi-simple.
\end{cor}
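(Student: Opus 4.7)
My plan is to derive the corollary directly from Theorem \ref{extension_thm}. The key observation is that the theorem asserts that pullback along $\pi$ induces an order-preserving bijection between the set of sub-objects of $\mathcal F$ in $\textrm{$F_{\sigma}$-Isoc}^{\dagger}(X/\mathbb L)$ and the set of sub-objects of $\pi^*\mathcal F$ in $\textrm{$F_{\sigma}$-Isoc}^{\dagger}(Y/\mathbb L)$: existence of the descent is the content of the theorem, and uniqueness is part of its statement. Since $\pi^*$ is an exact tensor functor between these abelian categories, this bijection respects the formation of sums, intersections, and the zero object.

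For the direction $(\Rightarrow)$, I would write $\mathcal F = \bigoplus_i \mathcal F_i$ as a finite direct sum of simple objects; then $\pi^*\mathcal F = \bigoplus_i \pi^*\mathcal F_i$, and it suffices to check that each $\pi^*\mathcal F_i$ is simple. If $\mathcal G \subseteq \pi^*\mathcal F_i$ is a non-zero sub-object, Theorem \ref{extension_thm} produces a unique sub-object $\mathcal G' \subseteq \mathcal F_i$ with $\pi^*\mathcal G' = \mathcal G$; the uniqueness clause excludes $\mathcal G' = 0$, so simplicity of $\mathcal F_i$ forces $\mathcal G' = \mathcal F_i$, whence $\mathcal G = \pi^*\mathcal F_i$.

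For the direction $(\Leftarrow)$, let $\mathcal G' \subseteq \mathcal F$ be an arbitrary sub-object. Since $\pi^*\mathcal F$ is semi-simple, there exists a sub-object $\mathcal H \subseteq \pi^*\mathcal F$ with $\pi^*\mathcal F = \pi^*\mathcal G' \oplus \mathcal H$, and by Theorem \ref{extension_thm} we may write $\mathcal H = \pi^*\mathcal H'$ for a unique $\mathcal H' \subseteq \mathcal F$. Exactness of $\pi^*$ yields $\pi^*(\mathcal G' + \mathcal H') = \pi^*\mathcal G' + \pi^*\mathcal H' = \pi^*\mathcal F$ and $\pi^*(\mathcal G' \cap \mathcal H') = \pi^*\mathcal G' \cap \pi^*\mathcal H' = 0$, so the bijection on sub-objects provided by Theorem \ref{extension_thm} forces $\mathcal G' + \mathcal H' = \mathcal F$ and $\mathcal G' \cap \mathcal H' = 0$. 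Hence $\mathcal F = \mathcal G' \oplus \mathcal H'$, which proves $\mathcal F$ is semi-simple.

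The substantive input is entirely contained in Theorem \ref{extension_thm}, so no real obstacle remains; the only auxiliary point is that $\pi^*$ is exact and compatible with finite sums and intersections of sub-objects, which is standard for pullback of overconvergent $F$-isocrystals along an open immersion.
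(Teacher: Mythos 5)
Your proof is correct, but it takes a genuinely different route from the paper's. The paper characterizes semi-simplicity via projection operators: in the direction where $\pi^*(\mathcal F)$ is assumed semi-simple it descends the idempotent $v$ with image $\pi^*(\mathcal G)$ to an idempotent of $\mathcal F$ using Kedlaya's full faithfulness theorem for morphisms (Theorem 5.2.1 of \cite{Ke2b}), and in the other direction it lifts the sub-object via Theorem \ref{extension_thm} and then pulls back the corresponding projection. You instead work entirely in the lattice of sub-objects: the existence and uniqueness clauses of Theorem \ref{extension_thm} give a bijection between sub-objects of $\mathcal F$ and of $\pi^*(\mathcal F)$, compatible with sums, intersections and $0$ because $\pi^*$ is exact (sums are images, intersections are kernels), and semi-simplicity --- read as existence of complements, respectively as decomposition into a finite direct sum of simples --- transfers across this bijection. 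What your route buys is that it never invokes full faithfulness on morphisms; the only inputs are the sub-object descent of Theorem \ref{extension_thm} and the standard exactness of restriction to a dense open. The two small points worth making explicit are that the injectivity of the sub-object correspondence is exactly the uniqueness clause of Theorem \ref{extension_thm} (this is what lets you conclude $\mathcal G'+\mathcal H'=\mathcal F$ and $\mathcal G'\cap\mathcal H'=0$ from the corresponding identities after pullback), and that the finite-length hypothesis needed to write $\mathcal F$ as a finite sum of simples holds because these are finite-rank isocrystals; both are routine.
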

\begin{proof} First assume that $\pi^*(\mathcal F)$ is semi-simple  and let
$\mathcal G\subseteq\mathcal F$ be a sub-object. By assumption there is a projection operator $v:\pi^*(\mathcal F)\to\pi^*(\mathcal F)$ with image $\pi^*(\mathcal G)$. By Kedlaya's another full faithfulness theorem (see Theorem 5.2.1 of \cite{Ke2b} on page 1199) there is a unique map $v':\mathcal F\to\mathcal F$ whose pull-back $\pi^*(v')$ is $v$. Since $v^2=v$, we get that $(v')^2=v'$, using again full faithfulness. Moreover the image Im$(v')$ of $v'$ restricted to $Y$ is $\pi^*(\mathcal G)$, so Im$(v')=\mathcal G$, as a consequence of full faithfulness, too. Therefore $v'$ is a projection operator with image $\mathcal G$, and hence $\mathcal F$ is semi-simple.

Now assume that $\mathcal F$ is semi-simple and let $\mathcal G\subseteq\pi^*(\mathcal F)$ be a sub-object. By Theorem \ref{extension_thm} there is a unique sub-object $\mathcal G'\subseteq\mathcal F$ such that $\mathcal G=\pi^*(\mathcal G')$. Since $\mathcal F$ is semi-simple there is a projection operator $v:\mathcal F\to\mathcal F$ with image $\mathcal G'$. The pull-back of $v$ is a projection operator $\pi^*(v):\pi^*(\mathcal F)\to\pi^*(\mathcal F)$ with image $\mathcal G$. Therefore $\pi^*(\mathcal F)$ is semi-simple.
\end{proof}
\begin{notn} Let $(\cdot)^{\wedge}:\textrm{$F_{\sigma}$-Isoc}^{\dagger}(X/\mathbb L)\to\textrm{Isoc}^{\dagger}(X/\mathbb L)$ be the functor furnished by forgetting the Frobenius structure. There is also a forgetful functor
$$(\cdot)^{\sim}:\textrm{$F_{\sigma}$-Isoc}^{\dagger}(X/\mathbb L)\to \textrm{$F_{\sigma}$-Isoc}(X/\mathbb L)$$
(see part (i) of 2.3.9 in \cite{Be0}). By the global version of Kedlaya's full faithfulness theorem, the main result of \cite{Ke1}, this functor is fully faithful when $X$ is smooth. In addition for every finite extension $\mathbb L'/\mathbb L$ there is a functor $\cdot\otimes_{\mathbb L}\mathbb L':\textrm{Isoc}^{\dagger}(X/\mathbb L)\to\textrm{Isoc}^{\dagger}(X/\mathbb L')$ (see 2.3.6 in \cite{Be0}). When $\sigma':\mathbb L'\to\mathbb L'$ be a lift of the $q$-power automorphism of $k$ such that the restriction of $\sigma'$ onto $\mathbb L$ is $\sigma$, there is a similar functor
$\cdot\otimes_{\mathbb L}\mathbb L':\textrm{$F_{\sigma}$-Isoc}^{\dagger}(X/\mathbb L)\to\textrm{$F_{\sigma'}$-Isoc}^{\dagger}(X/\mathbb L')$. Note that the $n$-th iterate $F^n_{\sigma}$ is a lift of $F^n$ and $\sigma^n$, so the category
$\textrm{$F^n_{\sigma^n}$-Isoc}^{\dagger}(X/\mathbb L)$ is well-defined. Let
$$(\,\cdot\,)^{(n)}:\textrm{$F_{\sigma}$-Isoc}^{\dagger}(X/\mathbb L)\longrightarrow \textrm{$F^n_{\sigma^n}$-Isoc}^{\dagger}(X/\mathbb L),\quad\mathcal F\mapsto\mathcal F^{(n)}$$
denote the functor which replaces the Frobenius $\mathbf F_{\mathcal F}$ of an object $\mathcal F$ of $\textrm{$F_{\sigma}$-Isoc}^{\dagger}(X/\mathbb L)$
by
$$\mathbf F_{\mathcal F}^{[n]}=\mathbf F_{\mathcal F}\circ F_{\sigma}^*
(\mathbf F_{\mathcal F})\circ\cdots\circ (F_{\sigma}^{n-1})^*(\mathbf F_{\mathcal F}).$$ 
Finally for the sake of simple notation for every negative integer $n$ let $\mathbf F_{\mathcal F}^{[n]}$ denote the inverse of $\mathbf F_{\mathcal F}^{[-n]}$, and we decree $\mathbf F_{\mathcal F}^{[0]}$ to be the identity of $\mathcal F$.
\end{notn}
\begin{prop}\label{reductions_for_ss} Assume that $X$ is a smooth and let
$\mathcal F$ be an object of the category $\textrm{\rm $F_{\sigma}$-Isoc}^{\dagger}(X/\mathbb L)$. 
\begin{enumerate}
\item[$(i)$] Let $n$ be a positive integer. Then $\mathcal F^{(n)}$ is semi-simple if and only if $\mathcal F$ is semi-simple.
\item[$(ii)$] Let $\mathbb L'/\mathbb L$ be a finite Galois extension and let $\sigma':\mathbb L'\to\mathbb L'$ be a lift of the $q$-power automorphism of $k$ such that the restriction of $\sigma'$ onto $\mathbb L$ is $\sigma$. Then $\mathcal F\otimes_{\mathbb L}\mathbb L'$ is semi-simple if and only if $\mathcal F$ is semi-simple. 
\end{enumerate}
\end{prop}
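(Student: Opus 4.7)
My plan is to prove both parts by constructing explicit projections via averaging, which is possible because $\mathbb L$ has characteristic zero.

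For part $(i)$, the easier direction $\mathcal F^{(n)}$ semi-simple $\Rightarrow\mathcal F$ semi-simple goes by Frobenius averaging of projections. Given a sub-$F_\sigma$-isocrystal $\mathcal G\subseteq\mathcal F$, it is automatically a sub-$F^n_{\sigma^n}$-isocrystal of $\mathcal F^{(n)}$, so by hypothesis there is an $F^n$-equivariant projection $\pi:\mathcal F\to\mathcal G$; I would set
$$\bar\pi:=\frac{1}{n}\sum_{i=0}^{n-1}\mathbf F_{\mathcal G}^{[i]}\circ(F^i_\sigma)^*(\pi)\circ\mathbf F_{\mathcal F}^{[-i]}$$
and check that each summand is itself a projection onto $\mathcal G$ (so $\bar\pi$ is too), while the $F^n$-equivariance of $\pi$ is exactly what makes $\bar\pi$ commute with $\mathbf F_{\mathcal F}$: the shift $i\mapsto i+1$ permutes the summands, and the sum closes cyclically because the $i=n$ term equals the $i=0$ term. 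Then $\ker\bar\pi$ is an $F_\sigma$-stable complement of $\mathcal G$. For the reverse direction, given $\mathcal H\subseteq\mathcal F^{(n)}$ a sub-$F^n$-object, I would form its Frobenius orbit $\tilde{\mathcal H}=\sum_{i=0}^{n-1}\mathbf F_{\mathcal F}^{[i]}((F^i_\sigma)^*\mathcal H)$, which is an $F_\sigma$-sub-object of $\mathcal F$, and reduce to producing an $F^n$-complement of $\mathcal H$ inside $\tilde{\mathcal H}$; the latter is semi-simple as $F_\sigma$-isocrystal by assumption, and after decomposing it into $F_\sigma$-simple components and (invoking part $(ii)$ if necessary) passing to a finite extension of $\mathbb L$ over which $\mathbf F^n$ diagonalises, the requisite $F^n$-decomposition can be read off.

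For part $(ii)$, the plan is Galois descent combined with averaging. For each $g\in G:=\textrm{Gal}(\mathbb L'/\mathbb L)$ the conjugate $g\sigma'g^{-1}$ is another Frobenius lift restricting to $\sigma$; the essential independence of the category on the choice of Frobenius lift induces a natural $G$-action on $\textrm{$F_{\sigma'}$-Isoc}^\dagger(X/\mathbb L')$ whose invariants identify with the image of $\cdot\otimes_\mathbb L\mathbb L'$. For the direction $\mathcal F\otimes\mathbb L'$ semi-simple $\Rightarrow\mathcal F$ semi-simple, any $F_{\sigma'}$-projection $\mathcal F\otimes\mathbb L'\to\mathcal G\otimes\mathbb L'$ will be averaged over $G$ (permissible since $|G|$ is invertible in $\mathbb L$) to produce a $G$-invariant projection which descends to an $F_\sigma$-projection over $\mathbb L$. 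For the converse, decomposing $\mathcal F$ into $F_\sigma$-simples reduces to showing that $\mathcal S\otimes\mathbb L'$ is semi-simple for simple $\mathcal S$; since $\textrm{End}(\mathcal S\otimes\mathbb L')=\textrm{End}(\mathcal S)\otimes_\mathbb L\mathbb L'=D\otimes_\mathbb L\mathbb L'$ is a semi-simple finite-dimensional $\mathbb L'$-algebra (the separable base change of a division algebra in characteristic zero), this follows from the standard criterion that a finite-length object in an abelian category is semi-simple if and only if its endomorphism algebra is.

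The main obstacle will be the forward direction of $(i)$: an $F_\sigma$-simple isocrystal can become $F^n$-reducible in subtle ways, and the cleanest approach seems to require either passing through eigenvalues of $\mathbf F^n$ after base change (and hence invoking part $(ii)$), or a careful analysis of the endomorphism algebra $\textrm{End}(\mathcal F^{(n)})$, which contains $\textrm{End}(\mathcal F)$ and carries an order-$n$ semi-linear automorphism by conjugation by $\mathbf F$.
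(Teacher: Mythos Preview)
Your averaging arguments for the backward directions (the implications $\mathcal F^{(n)}$ semi-simple $\Rightarrow\mathcal F$ semi-simple and $\mathcal F\otimes_{\mathbb L}\mathbb L'$ semi-simple $\Rightarrow\mathcal F$ semi-simple) are correct and match the paper essentially verbatim.

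The forward directions, however, contain a genuine gap. For part $(ii)$ you invoke the ``standard criterion that a finite-length object in an abelian category is semi-simple if and only if its endomorphism algebra is''; this criterion is \emph{false}, even in neutral Tannakian categories. The tautological two-dimensional representation $V$ of a Borel $B\subset\mathrm{GL}_2$ is a non-split extension of two distinct characters, hence not semi-simple, yet $\mathrm{End}_B(V)=k$ is a field. So the semi-simplicity of $D\otimes_{\mathbb L}\mathbb L'$ alone does not yield the semi-simplicity of $\mathcal S\otimes_{\mathbb L}\mathbb L'$. For part $(i)$ your Frobenius-orbit reduction is sound, but the phrase ``$\mathbf F^n$ diagonalises'' is undefined---$\mathbf F_{\mathcal F}^{[n]}$ is an isomorphism $(F^n_\sigma)^*\mathcal F\to\mathcal F$, not an endomorphism of $\mathcal F$---and you yourself flag this step as the unresolved obstacle.

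The paper closes both gaps with one observation you are missing. Assume $\mathcal F$ is $F_\sigma$-simple. Then $\mathbf F_{\mathcal F}$ is an isomorphism of $F^n_{\sigma^n}$-isocrystals $F_\sigma^*(\mathcal F^{(n)})\to\mathcal F^{(n)}$ (and likewise each Galois descent datum $\iota_g$ is an isomorphism $g^*(\mathcal F\otimes\mathbb L')\to\mathcal F\otimes\mathbb L'$ in the target category), so it carries simple sub-objects to simple sub-objects. Consequently the sum $\mathcal J$ of all simple sub-objects of a fixed isomorphism type---or, more directly, the socle---is stable under this extra symmetry, hence underlies a nonzero sub-$F_\sigma$-isocrystal of $\mathcal F$, hence equals all of $\mathcal F$ by simplicity. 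This forces $\mathcal F^{(n)}$ (resp.\ $\mathcal F\otimes_{\mathbb L}\mathbb L'$) to be semi-simple, with no appeal to endomorphism algebras or eigenvalue decompositions.
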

\begin{proof} Since the functors $(\cdot)^{(n)}$ and $(\cdot)\otimes_{\mathbb L}\mathbb L'$ commute with pull-back, we may assume that $X$ has a smooth formal lift to $\textrm{\rm Spf}(\mathcal O)$ which can be equipped with a lift of the $q$-power Frobenius compatible with $\sigma$, by shrinking $X$ and using Corollary \ref{very_useful}. We first prove $(i)$. First assume that $\mathcal F^{(n)}$ is semi-simple. Let $\mathcal G\subset\mathcal F$ be a sub $F_{\sigma}$-isocrystal. By assumption there is a projection $\pi:\mathcal F^{(n)}\to\mathcal F^{(n)}$ with image $\mathcal G^{(n)}$. Let $\mathbf F_{\mathcal F}:F_{\sigma}^*(\mathcal F)\to\mathcal F$ be the Frobenius of $\mathcal F$ and consider:
$$\pi'=\frac{1}{n}\left(\pi+\mathbf F_{\mathcal F}\circ F_{\sigma}^*(\pi)\circ\mathbf F^{-1}_{\mathcal F}+\cdots+
\mathbf F^{[n-1]}_{\mathcal F}\circ(F_{\sigma}^{n-1})^*(\pi)\circ\mathbf F^{[1-n]}_{\mathcal F}\right).$$
Since $\pi'$ is the linear combination of the composition of horizontal maps, it is horizontal, too. Moreover
\begin{eqnarray}\nonumber
\mathbf F_{\mathcal F}\circ F_{\sigma}^*(\pi')\circ\mathbf F_{\mathcal F}^{-1}\!\!\!&=&\!\!\!
\frac{1}{n}\big(\mathbf F_{\mathcal F}\circ F_{\sigma}^*(\pi)\circ\mathbf F_{\mathcal F}^{-1}+\cdots+\mathbf F_{\mathcal F}^{[n-1]}\circ(F_{\sigma}^{n-1})^*(\pi)\circ\mathbf F_{\mathcal F}^{[1-n]}\\
\nonumber & &\!\!\!+\mathbf F_{\mathcal F}^{[n]}\circ(F_{\sigma}^{n})^*(\pi)\circ
\mathbf F_{\mathcal F}^{[-n]}\big)\\
\nonumber \!\!\!&=&\!\!\!
\frac{1}{n}\big(\mathbf F_{\mathcal F}\circ F_{\sigma}^*(\pi)\circ\mathbf F_{\mathcal F}^{-1}+\cdots+\mathbf F_{\mathcal F}^{[n-1]}\circ(F_{\sigma}^{n-1})^*(\pi)\circ\mathbf F_{\mathcal F}^{[1-n]}\\
\nonumber \!\!\!& &\!\!\!+\pi\big)=\pi',
\end{eqnarray}
where we used that 
$$\mathbf F\circ F_{\sigma}^*(\mathbf F^{[i]})=\mathbf F^{[i+1]},\quad
F_{\sigma}^*(\mathbf F^{[-i]})\circ\mathbf F^{-1}=\mathbf F^{[-i-1]}\quad(\forall
i=0,1,\ldots,n-1),$$
and that $\pi$ is an endomorphism of $\mathcal F^{(n)}$, so it satisfies the identity:
$$\mathbf F^{[n]}\circ(F_{\sigma}^{n})^*(\pi)\circ\mathbf F^{[-n]}=\pi.$$
So $\pi'$ is an endomorphism of the $F_{\sigma}$-isocrystal $\mathcal F$. Since $\mathcal G\subset\mathcal F$ is a sub $F_{\sigma}$-isocrystal, we have
$\mathbf F^{[i]}((F_{\sigma}^i)^*(\mathcal G))\subseteq\mathcal G$ and
$\mathbf F^{[-i]}(\mathcal G)\subseteq (F_{\sigma}^i)^*(\mathcal G)$ for every $i=0,1,\ldots,n-1$. Since for every such $i$ the map $(F_{\sigma}^*)^{i}(\pi)$ is a projection onto $(F_{\sigma}^i)^*(\mathcal G)$, we get that the restriction of $\mathbf F_{\mathcal F}^{[i]}\circ(F_{\sigma}^{i})^*(\pi)\circ\mathbf F_{\mathcal F}^{[-i]}$ onto $\mathcal G$ is the identity for every such $i$, and hence the same holds for $\pi'$. Moreover for every such $i$ the image of $(F_{\sigma}^{i})^*(\pi)$ lies in $(F_{\sigma}^i)^*(\mathcal G)$, so the image of $\mathbf F_{\mathcal F}^{[i]}\circ(F_{\sigma}^{i})^*(\pi)\circ\mathbf F_{\mathcal F}^{[-i]}$ lies in $\mathcal G$. Therefore the same holds for $\pi'$. We get that the latter is a projection onto $\mathcal G$ and hence one implication of claim $(i)$ follows.

Assume now that $\mathcal F$ is semi-simple. Since the direct sum of semi-simple $F$-isocrystals is semi-simple, we may assume without the loss of generality that $\mathcal F$ is actually simple and non-trivial. Then there is a  non-trivial simple sub $F^n_{\sigma}$-isocrystal $\mathcal G\subset\mathcal F^{(n)}$. Let $S$ denote the set of all sub $F^n_{\sigma}$-isocrystals of
$\mathcal F^{(n)}$ isomorphic to $\mathcal G$. For every $\mathcal H\in S$ let $e_{\mathcal H}:\mathcal H\to
\mathcal F^{(n)}$ be the inclusion map. There is a non-empty finite subset $B\subseteq S$ such that the sum:
$$\sum_{\mathcal H\in B}e_{\mathcal H}:\bigoplus_{\mathcal H\in B}
\mathcal H\longrightarrow\mathcal F^{(n)}$$
is an embedding, and $B$ is maximal with respect to this property. Let
$\mathcal J$ denote the image of this map; it is a sub $F^n_{\sigma}$-isocrystal of $\mathcal F^{(n)}$. 

We claim that $\mathcal J$ contains every $\mathcal H\in S$. Assume that this is not the case and let $\mathcal I\in S$ be such that $\mathcal I\nsubseteq\mathcal J$. Then the composition $f$ of $e_{\mathcal I}$ and the quotient map:
$$\mathcal F^{(n)}\longrightarrow\mathcal F^{(n)}/
\bigoplus_{\mathcal H\in B}\mathcal H$$
is non-trivial. Since $\mathcal I$ is simple the kernel of $f$ is the zero crystal, and hence this composition is actually an embedding. Therefore the sum:
$$e_{\mathcal I}+\sum_{\mathcal H\in B}e_{\mathcal H}:\mathcal I\oplus\bigoplus_{\mathcal H\in B}\mathcal H
\longrightarrow\mathcal F^{(n)}$$
is an embedding, too, a contradiction. Note that for $\mathcal I\in S$ the image of $F_{\sigma}^*(\mathcal I)$ with respect to $\mathbf F_{\mathcal F}$ is also an element of $S$, so $\mathbf F_{\mathcal F}$ maps $F_{\sigma}^*(\mathcal J)$ into $\mathcal J$. Since they have the same rank, the map $\mathbf F_{\mathcal F}:F_{\sigma}^*(\mathcal J)\to\mathcal J$ is an isomorphism. We get that
$\mathcal J$ underlies an object of $\textrm{\rm $F_{\sigma}$-Isoc}^{\dagger}(X/\mathbb L)$, and hence it is equal to $\mathcal F^{(n)}$. We get that the latter is semi-simple, so $(i)$ is true.

Next we will show claim $(ii)$. First assume that $\mathcal F\otimes_{\mathbb L}\mathbb L'$ is semi-simple. Let $G$ be the Galois group of $\mathbb L'/\mathbb L$ and let $\mathcal G\subset\mathcal F$ be a sub $F_{\sigma}$-isocrystal. By assumption there is a projection $\pi:\mathcal F\otimes_{\mathbb L}\mathbb L'\to\mathcal F\otimes_{\mathbb L}\mathbb L'$ with image $\mathcal G\otimes_{\mathbb L}\mathbb L'$. Let $\mathfrak X$ be a smooth formal lift of $X$ to Spf$(\mathcal O_{\mathbb L})$, and let $\mathfrak X'$ be the base change of $\mathfrak X$ to Spf$(\mathcal O_{\mathbb L'})$. Let $\mathcal X$ and $\mathcal X'$ denote the rigid analytic space attached to $\mathfrak X$ over $\mathbb L$ and attached to $\mathfrak X'$ over $\mathbb L'$, respectively. Let $F_{\sigma}:\mathfrak X\to\mathfrak X$ be a lift of $F$ to
$\mathfrak X$ compatible with $\sigma$. Then the fibre product $F_{\sigma'}$ of $F_{\sigma}$ and $\sigma'$ is a lift $\mathfrak X'\to\mathfrak X'$ of
$F$ to $\mathfrak X'$ compatible with $\sigma'$. By slight abuse of notation let $F_{\sigma}$ and $F_{\sigma'}$ also denote the morphism of $\mathcal X$ and $\mathcal X'$ induced by $F_{\sigma}$ and $F_{\sigma'}$, respectively.

Note that $G$ acts on Spf$(\mathcal O_{\mathbb L'})$ which in turn induces an action of $G$ on $\mathfrak X'$, and hence on the rigid analytic space $\mathcal X'$ attached to $\mathfrak X'$. By definition $\mathcal F^{\sim}$ is a vector bundle with a flat connection on $\mathcal X$ equipped with a horizontal isomorphism $\mathbf F_{\mathcal F}:F_{\sigma}^*(\mathcal F^{\sim})\to\mathcal F^{\sim}$. In order to avoid overloading the notation, we will drop the superscript
$\sim$ in the rest of the proof. By the global version of Kedlaya's full faithfulness theorem this will not matter at all. Therefore the base change $\mathcal F'=\mathcal F\otimes_{\mathbb L}\mathbb L'$ is a vector bundle with a flat connection on $\mathcal X'$ equipped with an isomorphism $\mathbf F'_{\mathcal F}:F_{\sigma'}^*(\mathcal F')\to\mathcal F'$ which is also equipped with a compatible descent data with respect to the $G$-action, that is, there is an isomorphism $\iota_g:g^*(\mathcal F')\to\mathcal F'$ for every $g\in G$ such that
$\iota_h\circ h^*(\iota_g)=\iota_{hg}$ for every $g,h\in G$ and $\iota_{1}=\textrm{id}_{\mathcal F'}$. Set:
$$\pi'=\frac{1}{|G|}\sum_{g\in G}\iota_g\circ g^*(\pi)\circ\iota_g^{-1}.$$
Since $\pi'$ is the linear combination of the composition of morphism of $F_{\sigma'}$-isocrystals, it is a morphism of $F_{\sigma'}$-isocrystals, too. Set
$\mathcal G'=\mathcal G\otimes_{\mathbb L}\mathbb L'\subset\mathcal F'$; then
$\iota_g(g^*(\mathcal G'))\subseteq\mathcal G'$ and $\iota_g^{-1}
(\mathcal G')\subseteq g^*(\mathcal G')$ for every $g\in G$. Since for every such $g$ the map $g^*(\pi)$ is a projection onto $g^*(\mathcal G')$, we get that the restriction of $\iota_g\circ g^*(\pi)\circ\iota_g^{-1}$ onto $\mathcal G'$ is the identity for every such $g$, and hence the same holds for $\pi'$. Moreover for every such $g$ the image of $g^*(\pi)$ lies in $g^*(\mathcal G)$, so the image of $\iota_g\circ g^*(\pi)\circ\iota_g^{-1}$ lies in $\mathcal G'$. Therefore the same holds for $\pi'$. We get that the latter is a projection onto $\mathcal G'$. Then
\begin{eqnarray}\nonumber
\iota_h\circ h^*(\pi')\circ\iota_h^{-1} &=&
\frac{1}{|G|}\sum_{g\in G}\iota_h\circ h^*(\iota_g)\circ(hg)^*(\pi)\circ
h^*(\iota_g^{-1})\circ\iota_h^{-1}\\
\nonumber & = &
\frac{1}{|G|}\sum_{g\in G}\iota_{gh}\circ(hg)^*(\pi)\circ\iota_{gh}^{-1}=\pi'
\end{eqnarray}
for every $h\in H$, since $\iota_g^{-1}=g^*(\iota_{g^{-1}})$ for every $g\in G$, and hence
\begin{eqnarray}\nonumber
h^*(\iota_g^{-1})\circ\iota_h^{-1} &=&
h^*(g^*(\iota_{g^{-1}}))\circ h^*(\iota_{h^{-1}})
=(hg)^*(\iota_{g^{-1}}\circ(g^{-1})^*(\iota_{h^{-1}}))\\
\nonumber &=&
(hg)^*(\iota_{g^{-1}h^{-1}})=(hg)^*(\iota_{(hg)^{-1}})=\iota_{hg}^{-1}
\end{eqnarray}
for every $g,h\in G$. So we get via Grothendieck's descent that $\pi'$ is the base change of a projection $\mathcal F\to\mathcal F$ with image $\mathcal G$, and hence one implication of claim $(ii)$ holds.

Assume now that $\mathcal F$ is semi-simple. Again we may assume without the loss of generality that $\mathcal F$ is actually simple and non-trivial. Let
$\mathfrak X$ be again a smooth formal lift of $X$ to Spf$(\mathcal O_{\mathbb L})$, and let $\mathfrak X',\mathcal X,\mathcal X',F_{\sigma}$ and
$F_{\sigma'}$ be as above. Moreover let $G$ be again the Galois group of $\mathbb L'/\mathbb L$, and let $\{\iota_g:g^*(\mathcal F')\to\mathcal F'|
g\in G\}$ be the descent data with respect to the $G$-action. There is a non-trivial simple sub $F_{\sigma'}$-isocrystal $\mathcal G\subset\mathcal F\otimes_{\mathbb L}
\mathbb L'$. Let $S$ denote the set of all sub $F_{\sigma'}$-isocrystals of
$\mathcal F\otimes_{\mathbb L}\mathbb L'$ isomorphic to $\mathcal G$, and for every $\mathcal H\in S$ let $e_{\mathcal H}:\mathcal H\to\mathcal F\otimes_{\mathbb L}\mathbb L'$ be the inclusion map. There is a non-empty finite subset $B\subseteq S$ such that the sum:
$$\sum_{\mathcal H\in B}e_{\mathcal H}:\bigoplus_{\mathcal H\in B}
\mathcal H\longrightarrow\mathcal F\otimes_{\mathbb L}\mathbb L'$$
is an embedding, and $B$ is maximal with respect to this property. Let
$\mathcal J$ denote the image of this map; it is a sub $F_{\sigma'}$-isocrystal of
$\mathcal F\otimes_{\mathbb L}\mathbb L'$.

Arguing as above we get that $\mathcal J$ contains every $\mathcal H\in S$. Note that for $\mathcal I\in S$ and $g\in G$ the image of $g^*(\mathcal I)$ with respect to $\iota_g$ is also an element of $S$, so $\iota_g$ maps $g^*(\mathcal J)$ into $\mathcal J$. Since they have the same rank, the map $\iota_g|_{g^*(\mathcal J)}:
g^*(\mathcal J)\to\mathcal J$ is an isomorphism. We get that $\{\iota_g|_{g^*(\mathcal J)}:g^*(\mathcal J)\to\mathcal J|g\in G\}$ is a descent data with respect to the $G$-action. By Grothendieck's decent $\mathcal J$ is the pull-back of a sub-object of $\mathcal F$, and hence it is equal to $\mathcal F\otimes_{\mathbb L}\mathbb L'$. We get that the latter is semi-simple, so $(i)$ is true.
\end{proof}
\begin{defn}\label{3.8} Assume now that $k$ is the finite field $\mathbb F_q$. Let $F:X\to X$ be the $q$-power Frobenius, and let $\mathbb L$ be a totally ramified finite extension of $\mathbb K$. Let $\sigma$ be the identity of $\mathbb L$; it is a lift of the $q$-power automorphism of its residue field. Let $\mathcal F$ be an object of $\textrm{$F_{\sigma}$-Isoc}^{\dagger}(X/\mathbb L)$. Then for every point $x:\textrm{Spec}(\mathbb F_{q^n})\to X$ of degree $n$ the automorphism $x^*(\mathcal F)\to x^*(\mathcal F)$ induced by the $n$-th power of the Frobenius of $x^*(\mathcal F)$ is $\mathbb L_n$-linear, where $\mathbb L_n$ is the unique unramified extension of $\mathbb L$ of degree $n$. Let $\textrm{Frob}_x(\mathcal F)$ denote this map. Let $|X|$ denote the set of closed points of $X$ and let $\|X\|$ denote the set of isomorphism classes of pairs consisting of a finite extension $\mathbb F_{q^n}$ of $\mathbb F_q$ and a morphism $\alpha:\textrm{Spec}(\mathbb F_{q^n})\to X$. Associating to such $\alpha$ the point $\alpha(\textrm{Spec}(\mathbb F_{q^n}))\in|X|$ one gets a canonical map $\phi:\|X\|\to|X|$. Let $[X]\subseteq\|X\|$ denote the subset of the isomorphism classes of those morphisms $\alpha:\textrm{Spec}(\mathbb F_{q^n})\to X$ which induce an isomorphism between $\textrm{Spec}(\mathbb F_{q^n})$ and the closed sub-scheme $\alpha(\textrm{Spec}(\mathbb F_{q^n}))$. If $x,y\in[X]$ are such that $\phi(x)=\phi(y)$, then the linear maps $\textrm{Frob}_x(\mathcal F)$ and $\textrm{Frob}_y(\mathcal F)$ are isomorphic, and we let $\textrm{Frob}_{\phi(x)}(\mathcal F)$ denote this common isomorphism class, by slight abuse of notation.
\end{defn}
\begin{defn}  Fix now an isomorphism $\iota:\overline{\mathbb K}\rightarrow\mathbb C$ and let $|\cdot|:\mathbb C\rightarrow\mathbb R_{\geq0}$ be the usual archimedean absolute value on $\mathbb C$. Recall that $\mathcal F$ is (point-wise) $\iota$-pure of weight $w$, where $w\in\mathbb Z$, if for every $x\in|X|$ and for every eigenvalue $\alpha\in\overline{\mathbb K}$ of $\textrm{Frob}_x(\mathcal F)$ we have $|\iota(\alpha)|=q^{w\deg(x)/2}$. We say that an overconvergent $F$-isocrystal $\mathcal F$ on $U$ is $\iota$-mixed of weight $\leq w$, if it has a filtration by objects of $\textrm{$F_{\sigma}$-Isoc}^{\dagger}(X/\mathbb L)$ which are all $\iota$-pure of weight at most $w$. We will drop $\iota$ from the terminology if $\mathcal F$ satisfies the conditions for every possible choice of $\iota$. Similarly we may talk about the purity and mixedness of rigid cohomology groups of overconvergent $F$-isocrystals. 
\end{defn}
\begin{prop}\label{abe-caro} Let $\mathcal F$ be a $\iota$-pure object of $\textrm{\rm $F_{\sigma}$-Isoc}^{\dagger}(X/\mathbb L)$ and assume that $X$ is realisable. Then $\mathcal F^{\wedge}$ is a semi-simple overconvergent isocrystal.
\end{prop}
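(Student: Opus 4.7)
The plan is to deduce this proposition directly from the $p$-adic analogue of Deligne's semi-simplicity theorem for pure lisse $\ell$-adic sheaves, namely Theorem 4.3.1 of \cite{AC}. That theorem of Abe and Caro asserts precisely that, on a realisable scheme, the underlying overconvergent isocrystal of a pure overconvergent $F$-isocrystal is semi-simple; this is exactly the content of our proposition, so the proof reduces to verifying that the hypotheses on $\mathcal F$ and $X$ match those of the cited result.

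The realisability assumption enters because Abe and Caro work within the six-functor formalism for arithmetic $\mathcal D$-modules developed by Caro, which is currently available only on realisable schemes. The equivalence between the category $\textrm{$F_{\sigma}$-Isoc}^{\dagger}(X/\mathbb L)$ and the corresponding category of lisse arithmetic $F$-$\mathcal D$-modules on $X$ then translates the statement in \cite{AC} into the assertion about $\mathcal F^\wedge$ that we need.

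Structurally, the proof in \cite{AC} follows Deligne's strategy in \emph{Weil II}: one shows that $\textrm{Ext}^1$ between pure objects of the same weight vanishes, which forces any short exact sequence of pure objects with matching weights to split, and then one obtains semi-simplicity of $\mathcal F^\wedge$ by an induction on the length of a Jordan--H\"older filtration. The serious obstacle---which the entire paper \cite{AC} is devoted to overcoming---is setting up a robust theory of weights in the $p$-adic arithmetic $\mathcal D$-module framework, establishing its stability under the six operations, and controlling its interaction with the Frobenius structure. Once that machinery is in place, the passage from purity to semi-simplicity of the underlying isocrystal is essentially formal, and Proposition \ref{abe-caro} follows as an immediate consequence.
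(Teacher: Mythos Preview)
Your proposal is correct and matches the paper's approach exactly: the paper's proof is the single sentence ``This is a special case of Theorem 4.3.1 in \cite{AC}.'' Your additional explanation of why realisability is needed and the outline of the Deligne-style argument inside \cite{AC} is accurate context, but the actual proof content is identical.
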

\begin{proof} This is a special case of Theorem 4.3.1 in \cite{AC}.
\end{proof}
\begin{prop}\label{3.9} Let $\mathcal F$ be a semi-simple object of $\textrm{\rm $F_{\sigma}$-Isoc}^{\dagger}(X/\mathbb L)$. Then $\mathcal F^{\wedge}$ is a semi-simple overconvergent isocrystal.
\end{prop}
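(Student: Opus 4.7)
The plan is to reduce to the $\iota$-pure case already handled by Proposition \ref{abe-caro}. Since the forgetful functor $(\cdot)^{\wedge}$ is additive and a direct sum of semi-simple overconvergent isocrystals is again semi-simple, I may decompose $\mathcal F=\bigoplus_i\mathcal F_i$ into a finite direct sum of simple objects of $\textrm{$F_{\sigma}$-Isoc}^{\dagger}(X/\mathbb L)$ and assume without loss of generality that $\mathcal F$ itself is simple.

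The key input I will need is the overconvergent $p$-adic analog of Deligne's Weil~II result, due to Kedlaya and Abe--Caro: every object of $\textrm{$F_{\sigma}$-Isoc}^{\dagger}(X/\mathbb L)$ is $\iota$-mixed, that is, it admits a functorial increasing weight filtration $W_{\bullet}$ whose graded pieces are $\iota$-pure of integer weights. Because this filtration is functorial, each $W_{\leq w}\mathcal F$ is a sub-object of $\mathcal F$ in $\textrm{$F_{\sigma}$-Isoc}^{\dagger}(X/\mathbb L)$. When $\mathcal F$ is simple, the filtration must therefore be trivial: picking the smallest $w_0$ for which $W_{\leq w_0}\mathcal F=\mathcal F$, simplicity forces $W_{\leq w_0-1}\mathcal F=0$, so that $\mathcal F\cong\textrm{gr}^W_{w_0}\mathcal F$ is $\iota$-pure of weight $w_0$. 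Invoking this mixedness statement is the main technical obstacle of the argument; fortunately it is by now standard in the literature, and it is the $p$-adic counterpart of the input Deligne uses to deduce semi-simplicity of pure lisse sheaves.

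With $\mathcal F$ known to be $\iota$-pure, I may shrink $X$ to a dense open realisable subscheme if necessary, which is permitted without affecting semi-simplicity by Corollary \ref{very_useful}, and then apply Proposition \ref{abe-caro} directly. This yields that $\mathcal F^{\wedge}$ is a semi-simple overconvergent isocrystal on the shrunken $X$, and another application of Corollary \ref{very_useful} (now for the category $\textrm{Isoc}^{\dagger}(X/\mathbb L)$, where the analog of Theorem \ref{extension_thm} is also available) returns the semi-simplicity of the original $\mathcal F^{\wedge}$. Taking the direct sum over the indices $i$ produced in the first reduction concludes the proof.
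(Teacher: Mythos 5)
Your argument is correct in outline, but it follows a genuinely different and considerably heavier route than the paper. The paper's proof of Proposition \ref{3.9} is purely formal: after the same reduction to $\mathcal F$ simple, it picks a simple sub-isocrystal $\mathcal G\subset\mathcal F^{\wedge}$, forms a maximal embedded direct sum $\mathcal J$ of copies of $\mathcal G$ inside $\mathcal F^{\wedge}$, shows $\mathcal J$ contains every copy of $\mathcal G$, and observes that the Frobenius permutes the set of such copies, so $\mathbf F_{\mathcal F}$ carries $F_{\sigma}^*(\mathcal J)$ isomorphically onto $\mathcal J$; hence $\mathcal J$ underlies a sub-$F$-isocrystal of the simple object $\mathcal F$ and equals $\mathcal F^{\wedge}$. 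This is exactly the isotypic-component argument from part $(i)$ of Proposition \ref{reductions_for_ss} and uses nothing beyond the category axioms. Your route instead passes through two deep inputs: the $\iota$-mixedness of arbitrary overconvergent $F$-isocrystals (to force a simple object to be $\iota$-pure) and then Proposition \ref{abe-caro}. Both steps can be made to work, but note two caveats. First, the mixedness theorem in this generality ultimately rests on Abe's Langlands correspondence for isocrystals, a result this paper elsewhere treats as conditional, so you are importing a dependence the paper deliberately avoids; it would also make the second alternative of Corollary \ref{crew_sequence2} essentially a special case of the first, defeating the purpose of stating it separately. Second, your parenthetical assertion that the analogue of Theorem \ref{extension_thm} is available in $\textrm{Isoc}^{\dagger}(X/\mathbb L)$ without Frobenius structure needs justification and should not be taken for granted (Kedlaya's restriction theorems exploit the Frobenius via the $p$-adic local monodromy theorem); fortunately the shrinking step can be dispensed with altogether, since the quasi-projective schemes considered here are already realisable. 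What your approach buys is conceptual transparency, exhibiting the proposition as the literal analogue of Deligne's semi-simplicity argument for pure lisse sheaves; what the paper's socle argument buys is elementarity and greater generality, since it needs neither smoothness, nor realisability, nor any weight theory.
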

\begin{proof} We are going to argue very similarly to the proof of one half of Proposition \ref{reductions_for_ss}. Since the direct sum of semi-simple overconvergent isocrystals is semi-simple, we may assume without the loss of generality that $\mathcal F$ is actually simple and non-trivial. Then there is a  non-trivial simple overconvergent sub-isocrystal $\mathcal G\subset\mathcal F^{\wedge}$. Let $S$ denote the set of all overconvergent sub-isocrystals of $\mathcal F^{\wedge}$ isomorphic to $\mathcal G$. For every $\mathcal H\in S$ let $e_{\mathcal H}:\mathcal H\to
\mathcal F^{\wedge}$ be the inclusion map. There is a non-empty finite subset $B\subseteq S$ such that the sum:
$$\sum_{\mathcal H\in B}e_{\mathcal H}:\bigoplus_{\mathcal H\in B}
\mathcal H\longrightarrow\mathcal F^{\wedge}$$
is an embedding, and $B$ is maximal with respect to this property. Let $\mathcal J$ denote the image of this map; it is an overconvergent sub-isocrystal of $\mathcal F^{\wedge}$.  Arguing exactly the same we as we did in the proof of Proposition \ref{reductions_for_ss} we get that $\mathcal J$ contains every $\mathcal H\in S$, and hence the map $\mathbf F_{\mathcal F}:F_{\sigma}^*(\mathcal J)\to\mathcal J$ is an isomorphism. We get that $\mathcal J$ underlies an object of $\textrm{\rm $F_{\sigma}$-Isoc}^{\dagger}(X/\mathbb L)$, and hence it is equal to $\mathcal F^{\wedge}$. We get that the latter is semi-simple.
\end{proof}

\section{The $p$-adic arithmetic and geometric monodromy groups}

\begin{notn} Assume for a moment that $K$ is any field of characterstic zero. For any $K$-linear Tannakian category
$\mathbf T$, finite extension $L/K$ of $K$, and $L$-valued fibre functor $\omega$ on $\mathbf T$ let $\pi(\mathbf T,\omega)$ denote the Tannakian fundamental group of $\mathbf T$ with respect to $\omega$. Let $\mathbf B$ be a $K$-linear Tannakian category, let $\mathbf C$ be a strictly full rigid abelian tensor subcategory of $\mathbf B$ and let $b:\mathbf C\to\mathbf B$ be the inclusion functor. Let $L/K$ be a finite extension of $K$ and let $\mathbf A$ be another $K$-linear Tannakian category equipped with an $L$-valued fibre functor $\omega$ and assume that there is a faithful tensor functor $a:\mathbf B\to\mathbf A$ of Tannakian categories. Let $a_*:\pi(\mathbf A,\omega)\to\pi(\mathbf B,\omega\circ a)$ and $b_*:\pi(\mathbf B,\omega\circ a)\to \pi(\mathbf C,\omega\circ a\circ b)$ be the homomorphisms induced by $a$ and $b$, respectively.
\end{notn}
The next proposition will supply a handy condition for certain sequences of Tannakian fundamental groups to be exact. 
\begin{prop}\label{my_variant} Assume that the following hods:
\begin{enumerate}
\item[$(i)$] For an object $\mathcal G$ of $\mathbf B$ the object $a(\mathcal G)$ of $\mathbf A$ is trivial if and only if $\mathcal G$ is an object of $\mathbf C$.
\item[$(ii)$] Let $\mathcal G$ be an object of $\mathbf B$, and let $\mathcal H_0\subseteq a(\mathcal G)$ denote the largest trivial sub-object. Then there exists an
$\mathcal H\subseteq\mathcal G$ with $\mathcal H_0=a(\mathcal H)$.
\item[$(iii)$] Every object $\mathcal G$ of $\mathbf A$ is a sub-object of an object of the form $a(\mathcal H)$ with some object $\mathcal H$ of $\mathbf B$. 
\end{enumerate}
Then the sequence:
$$\xymatrix { 0 \ar[r] & \pi(\mathbf A,\omega) \ar[r]^-{a_*} &
\pi(\mathbf B,\omega\circ a) \ar[r]^-{b_*} & 
\pi(\mathbf C,\omega\circ a\circ b) \ar[r] & 0.}$$
is exact.
\end{prop}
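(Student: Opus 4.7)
The plan is to verify the three exactness conditions by combining the standard criteria of Deligne--Milne for morphisms of affine group schemes (Proposition 2.21 of their \emph{Tannakian categories} paper) with a normality argument powered by hypothesis $(ii)$.

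For the injectivity of $a_*$ (closed immersion), Deligne--Milne requires that every object of $\mathbf A$ be a subquotient of some $a(\mathcal H)$ with $\mathcal H\in\mathbf B$; this is immediate from $(iii)$, which supplies the stronger property that every object of $\mathbf A$ is a sub-object of such an $a(\mathcal H)$. For the surjectivity of $b_*$ (faithful flatness), the fully-faithful hypothesis is automatic from strict fullness of $\mathbf C\subseteq\mathbf B$; the remaining input is that $\mathbf C$ be closed under sub-objects in $\mathbf B$, which follows from $(i)$ together with the exactness of the tensor functor $a$: any sub-object of $\mathcal G\in\mathbf C$ is sent by $a$ to a sub-object of the trivial object $a(\mathcal G)$, hence is trivial, hence lies in $\mathbf C$ by $(i)$.

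The inclusion $\textrm{Image}(a_*)\subseteq\ker(b_*)$ is immediate, since by $(i)$ the composite $a\circ b$ sends each object of $\mathbf C$ to a trivial object of $\mathbf A$. The heart of the proof, and the step where $(ii)$ is essential, is the reverse inclusion. I would first show that the closed subgroup $K:=a_*(\pi(\mathbf A,\omega))$ of $\pi(\mathbf B,\omega\circ a)$ is normal. Given $\mathcal G\in\mathbf B$, set $V:=\omega\circ a(\mathcal G)$; since $K$ is the image of $\pi(\mathbf A,\omega)$ via $a_*$, the invariant subspace $V^K$ equals the image under $\omega$ of the maximal trivial sub-object $\mathcal H_0\subseteq a(\mathcal G)$ in $\mathbf A$. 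By $(ii)$ we may write $\mathcal H_0=a(\mathcal H)$ for some sub-object $\mathcal H\subseteq\mathcal G$ in $\mathbf B$, and then $V^K=\omega\circ a(\mathcal H)$ is visibly stable under the action of the whole group $\pi(\mathbf B,\omega\circ a)$. This is precisely the standard Tannakian criterion for normality of $K$.

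Once $K$ is known to be normal, the quotient group $\pi(\mathbf B,\omega\circ a)/K$ is Tannakian dual to the full subcategory of $\mathbf B$ consisting of those objects on which $K$ acts trivially; but $(i)$ identifies this subcategory precisely with $\mathbf C$. Consequently the induced faithfully flat map $\pi(\mathbf B,\omega\circ a)/K\to\pi(\mathbf C,\omega\circ a\circ b)$ has trivial kernel and is therefore an isomorphism, which simultaneously gives $K=\ker(b_*)$ and re-proves surjectivity of $b_*$. The principal obstacle is the normality of $K$, the one genuinely non-formal point; all other pieces are routine applications of the Deligne--Milne dictionary.
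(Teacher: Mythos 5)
Your core argument is sound and is in fact a proof of the result the paper simply cites: in the neutral case the paper invokes Theorem A.1 of Esnault--Hai--Sun, whereas you re-derive it from the Deligne--Milne criteria (closed immersion via condition $(iii)$, faithful flatness via closure of $\mathbf C$ under sub-objects, which you correctly extract from $(i)$) together with the invariant-subspace criterion for normality of $K=a_*(\pi(\mathbf A,\omega))$, which is where $(ii)$ enters. That normality criterion is correct: if $V^K$ is $G$-stable for every $V$ then $V^{gKg^{-1}}=gV^K=V^K$ for all $V$, and two closed subgroups with the same invariants in the regular representation coincide. So as a proof of the case $L=K$ your argument is complete and arguably more self-contained than the paper's.

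The genuine gap is that the proposition allows $\omega$ to be $L$-valued for a finite extension $L/K$, and then $\mathbf A$, $\mathbf B$, $\mathbf C$ are \emph{not} the representation categories of $\pi(\mathbf A,\omega)$, $\pi(\mathbf B,\omega\circ a)$, $\pi(\mathbf C,\omega\circ a\circ b)$; those are the categories $\mathbf A_{(L)}$, $\mathbf B_{(L)}$, $\mathbf C_{(L)}$ of $L$-modules in them (Deligne--Milne, Proposition 3.11). Every step of your argument silently conflates the two. For instance, in the normality step you must check that $V^K$ is $G$-stable for every representation $V$ of $G=\pi(\mathbf B,\omega\circ a)$, i.e.\ for every object $(\mathcal G,\alpha_{\mathcal G})$ of $\mathbf B_{(L)}$, and the sub-object of $a(\mathcal G)$ cutting out $V^K$ lives a priori only in $\mathbf A_{(L)}$, not in $\mathbf A$; likewise condition $(iii)$ gives an embedding $\mathcal G\subseteq a(\mathcal H)$ in $\mathbf A$ that need not respect the $L$-action. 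One therefore has to upgrade $(i)$--$(iii)$ to their $(L)$-module analogues, which requires knowing that an object of $\mathbf T_{(L)}$ is trivial iff its underlying object is trivial (this uses that $L$ is a separable field extension of $K$, hence a semi-simple $K$-algebra with a unique irreducible module), that the $L$-action preserves the maximal trivial sub-object $\mathcal H_0$, and that $(\mathcal G,\alpha_{\mathcal G})$ embeds $L$-equivariantly into $\mathcal G^{\oplus[L:K]}$ with its standard $L$-structure. This reduction is the entire content of the paper's proof, and it is not cosmetic here: the applications (Propositions \ref{crew_sequence}, \ref{n-power_sequence} and \ref{cover_sequence}) use fibre functors valued in $\mathbb L_n\supsetneq\mathbb L$. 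Your argument can be repaired by carrying it out in the $(L)$-module categories and supplying these three verifications, but as written it only proves the neutral case.
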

\begin{proof} Assume first that $L=K$. Then the Tannakian categories $\mathbf A,\mathbf B$ and $\mathbf C$ are actually neutral. Since $\mathbf C$ is a subcategory of $\mathbf B$, the map $b_*$ is surjective, and in particular it is fully faithful. Therefore the claim is true in this case by Theorem A.1 of \cite{EHS} on page 396. We prove the proposition in general by reducing to this case. 

Let $\mathbf T$ be an arbitrary $K$-linear Tannakian category. Recall that an $L$-module in $\mathbf T$ is a pair $(\mathcal X,\alpha_{\mathcal X})$ with $\mathcal X$ an object of $\mathbf T$ and $\alpha_{\mathcal X}$ a homomorphism
$L\to\textrm{End}_{\mathbf T}(\mathcal X)$. Let $\mathbf T_{(L)}$ denote the category of $L$-modules in $\mathbf T$. It is an $L$-linear Tannakian category. If there is an $L$-linear fibre functor $\lambda$ on $\mathbf T$ then it induces a $L$-linear fibre functor $\lambda_{(L)}$ on $\mathbf T_{(L)}$ which makes $\mathbf T_{(L)}$ into a neutral Tannakian category tensor-equivalent to the representation category of the Tannakian fundamental group $\pi(\mathbf T,\lambda)$ (see Proposition 3.11 of \cite{DM}). So by the above we need to show the following:
\begin{enumerate}
\item[$(a)$] For an object $(\mathcal G,\alpha_{\mathcal G})$ of $\mathbf B_{(L)}$ the object $a_{(L)}(\mathcal G,\alpha_{\mathcal G})$ of $\mathbf A_{(L)}$ is trivial if and only if $(\mathcal G,\alpha_{\mathcal G})$ is an object of $\mathbf C_{(L)}$.
\item[$(b)$] Let $(\mathcal G,\alpha_{\mathcal G})$ be an object of $\mathbf B_{(L)}$, and let $(\mathcal H_0,\alpha_{\mathcal H_0})\subseteq a_{(L)}(\mathcal G,\alpha_{\mathcal G})$ denote the largest trivial sub-object. Then there exists an
$(\mathcal H,\alpha_{\mathcal H})\subseteq(\mathcal G,\alpha_{\mathcal G})$ with $(\mathcal H_0,\alpha_{\mathcal H_0})=a_{(L)}(\mathcal H,\alpha_{\mathcal H})$.
\item[$(c)$] Every object $(\mathcal G,\alpha_{\mathcal G})$ of $\mathbf A_{(L)}$ is a sub-object of an object $a_{(L)}(\mathcal H,\alpha_{\mathcal H})$ with some object $(\mathcal H,\alpha_{\mathcal H})$ of $\mathbf B_{(L)}$. 
\end{enumerate}
In order to do so we will need to recall the following construction. Fix a $K$-basis $e_1,e_2,\ldots,e_n$ of $L$ and for every $l\in L$ let $M(l)\in M_n(K)$ denote the matrix of multiplication by $l$ in this basis. For every object $\mathcal X$ of a $K$-linear Tannakian category $\mathbf T$ as above let $\pi^{\mathcal X}_{ij}:\mathcal X^{\oplus n}\to\mathcal X^{\oplus n}$ be the morphism which maps the $i$-th component identically to the $j$-th component and maps all other components to zero. Then the $K$-span $M(\mathcal X)$ of the $\pi^{\mathcal X}_{ij}$ in
$\textrm{End}_{\mathbf T}(\mathcal X)$ is isomorphic to the matrix algebra $M_n(K)$ such that $\pi^{\mathcal X}_{ij}$ corresponds to the elementary matrix which has zeros everywhere except in the $j$-th term of the $i$-th row, where it has entry $1$. Now let
$\mathcal X_{(L)}$ denote $\mathcal X^{\oplus n}$ equipped with the homomorphism $\alpha_{\mathcal X_{(L)}}$ which maps every $l\in L$ to the element of $M(\mathcal X)$ corresponding to $M(l)$ under the isomorphism $M(\mathcal X)\cong M_n(K)$ constructed above. Then $(\mathcal X^{\oplus n},\alpha_{\mathcal X_{(L)}})$ is an object of $\mathbf T_{(L)}$. By definition an object $(\mathcal G,\alpha_{\mathcal G})$ of $\mathbf T_{(L)}$ is trivial if it is isomorphic to $\mathcal X_{(L)}$ for some trivial object $\mathcal X$ of $\mathbf T$.
\begin{lemma}\label{triviality} An object $(\mathcal G,\alpha_{\mathcal G})$ of $\mathbf T_{(L)}$ is trivial if and only if $\mathcal G$ is trivial.
\end{lemma}
\begin{proof} The first condition obviously implies the second. Now let $(\mathcal G,\alpha_{\mathcal G})$ be an object of $\mathbf T_{(L)}$ such that $\mathcal G$ is trivial. Then $\textrm{End}_{\mathbf T}(\mathcal G)\cong M_m(K)$ where $m$ is the rank of $\mathcal G$. Since $L$ is a semi-simple $K$-algebra, the representation $\alpha_{\mathcal G}:L\to M_m(K)$ decomposes into a direct sum of irreducible representations. This decomposition underlies a decomposition of $\mathcal G$ into direct summands. Since $\mathcal G$ is trivial, the same is true for these summands. Moreover the $K$-algebra $L$ has a unique irreducible representation which is given by the rule
$l\mapsto M(l)$. The claim is now clear.
\end{proof}
Condition $(a)$ now immediately follows from condition $(i)$. Next we show $(b)$. By assumption $(ii)$ there exists an
$\mathcal H\subseteq\mathcal G$ with $\mathcal H_0=a(\mathcal H)$. The action of $L$ via $a_{(L)}(\alpha_{\mathcal G})$ leaves
$\mathcal H_0$ invariant, so $\alpha_{\mathcal G}$ also leaves
$\mathcal H$ invariant. Therefore
$(\mathcal H,\alpha_{\mathcal G}|_{\mathcal H})$ is a sub-object of $(\mathcal G,\alpha_{\mathcal G})$ such that $(\mathcal H_0,\alpha_{\mathcal H_0})=a_{(L)}(\mathcal H,\alpha_{\mathcal G}|_{\mathcal H})$. Condition $(b)$ follows. Finally we show condition $(c)$. By assumption $(iii)$ there is an object $\mathcal H_0$ of $\mathbf B$ such that $\mathcal G$ is a sub-object of
 $a(\mathcal H_0)$. Clearly $\mathcal G^{(L)}$ is a sub-object of $a(\mathcal H_0)^{(L)}=a_{(L)}(\mathcal H_0^{(L)})$ in
$\mathbf A_{(L)}$. So it will be enough to show that $(\mathcal G,\alpha_{\mathcal G})$ is a sub-object of $\mathcal G^{(L)}$ in
$\mathbf B_{(L)}$. However the morphism $\mathcal G\to\mathcal G^{\oplus n}$ given by the vector $(\alpha_{\mathcal G}(e_1),\ldots,\alpha_{\mathcal G}(e_n))$ induces such an embedding.
\end{proof}
\begin{defn} We will continue with the assumptions and notation of Definition \ref{3.8}. Suppose now that $X$ is geometrically connected. Fix a point $x\in X(\mathbb F_{q^n})$. The pull-back with respect to $x$ furnishes a functor from $\textrm{Isoc}^{\dagger}(X/\mathbb L)$ into the category of finite dimensional $\mathbb L_n$-linear vector spaces which makes $\textrm{Isoc}^{\dagger}(X/\mathbb L)$ into a Tannakian category. (See 2.1 of \cite{Cr} on page 438.) Let $\omega_{x}$ denote the corresponding fibre functor on $\textrm{Isoc}^{\dagger}(X/\mathbb L)$. For every strictly full rigid abelian tensor subcategory $\mathbf C$ of $\textrm{Isoc}^{\dagger}(X/\mathbb L)$ let $\textrm{DGal}(\mathbf C,x)$ denote the Tannakian fundamental group of $\mathbf C$ with respect to the fibre functor $\omega_{x}$. Note that the composition of the forgetful functor $(\cdot)^{\wedge}:\textrm{$F_{\sigma}$-Isoc}^{\dagger}(X/\mathbb L)\to\textrm{Isoc}^{\dagger}(X/\mathbb L)$ and $\omega_{x}$, which we will denote by the same symbol by slight abuse of notation, makes $\textrm{$F_{\sigma}$-Isoc}^{\dagger}(X/\mathbb L)$ into a Tannakian category, too. Similarly as above for every strictly full rigid abelian tensor subcategory $\mathbf C$ of
$\textrm{$F_{\sigma}$-Isoc}^{\dagger}(X/\mathbb L)$ let
$\textrm{Gr}(\mathbf C,x)$ denote the Tannakian fundamental group of $\mathbf C$ with respect to the fibre functor $\omega_{x}$. Moreover let $\mathbf C^{\wedge}$ denote strictly full rigid abelian tensor subcategory generated by the image of $\mathbf C$ with respect to $(\cdot)^{\wedge}$ and for simplicity let $\textrm{DGal}(\mathbf C,x)$ denote $\textrm{DGal}(\mathbf C^{\wedge},x)$.
\end{defn}
\begin{notn} For every Tannakian category $\mathbf C$ and every object
$\mathcal F$ of $\mathbf C$ let $\dal\mathcal F\dar$ denote the strictly full rigid abelian tensor subcategory of $\mathbf C$ generated by $\mathcal F$. For every object $\mathcal F$ of $\textrm{$F_{\sigma}$-Isoc}^{\dagger}(X/\mathbb L)$ let
$\textrm{DGal}(\mathcal F,x)$ and  $\textrm{Gr}(\mathcal F,x)$ denote $\textrm{DGal}(\dal\mathcal F^{\wedge}\dar,x)$ and $\textrm{Gr}(\dal\mathcal F\dar,x)$, respectively.  Moreover for every such $\mathcal F$ let $\dal\mathcal F\dar_{const},\mathbf W(\mathcal F,x)$ denote the strictly full rigid abelian tensor subcategory of constant objects of $\dal\mathcal F\dar$ and the Tannakian fundamental group of $\dal\mathcal F\dar_{const}$ with respect to the fibre functor $\omega_{x}$, respectively. Let $\alpha:\textrm{DGal}(\mathcal F,x)\to\textrm{Gr}(\mathcal F,x)$ be the homomorphism induced by the forgetful functor $(\cdot)^{\wedge}:\dal\mathcal F\dar\to\dal\mathcal F^{\wedge}\dar$, and let $\beta:\textrm{Gr}(\mathcal F,x)\to\mathbf W(\mathcal F,x)$ be the homomorphism induced by the inclusion $\dal\mathcal F\dar_{const}\subset\dal\mathcal F\dar$.
\end{notn}
\begin{defn} Let $\mathcal F$ be an object of $\textrm{$F_{\sigma}$-Isoc}^{\dagger}(X/\mathbb L)$. Note that for every $x\in [X]$ the Frobenius $\textrm{Frob}_x(\mathcal F)$ is an automorphism of the the fibre functor $\omega_{x}$, so it furnishes an element of
$\textrm{Gr}(\mathcal F,x)(\mathbb L_n)$ which we will denote by the same symbol by slight abuse of notation. For the reasons we mentioned in Definition \ref{3.8}, the conjugacy class of $\textrm{Frob}_x(\mathcal F)$ only depends on $\phi(x)$, which we will denote by $\textrm{Frob}_{\phi(x)}(\mathcal F)$. Now let $y$ be another point in $[X]$. Then there is an isomorphism between $\textrm{Gr}(\mathcal F,y)$ and $\textrm{Gr}(\mathcal F,x)$ after base change to the algebraic closure $\overline{\mathbb L}$, which is well-defined up to conjugacy. Let $\textrm{Frob}_y(\mathcal F)$ also denote the conjugacy class in $\textrm{Gr}(\mathcal F,x)(\overline{\mathbb L})$ spanned by the image of $\textrm{Frob}_y(\mathcal F)$ under this isomorphism. There is a conjugacy class $\textrm{Frob}_y\subseteq\textrm{\rm Gr}(\textrm{\rm $F_{\sigma}$-Isoc}^{\dagger}(X/\mathbb L),x)(\overline{\mathbb L})$ whose image is $\textrm{Frob}_y(\mathcal F)$ under the canonical surjection $\textrm{\rm Gr}(\textrm{\rm $F_{\sigma}$-Isoc}^{\dagger}(X/\mathbb L),x)\to
\textrm{Gr}(\mathcal F,x)$ for every $\mathcal F$ as above.
\end{defn}
The monodromy group $\textrm{DGal}(\mathcal F,x)$ was introduced by Crew~\cite{Cr}. Next we describe its relationship to $\textrm{Gr}(\mathcal F,x)$. 
\begin{prop}\label{crew_sequence} Assume that $\mathcal F^{\wedge}$ is semi-simple. Then the sequence:
$$\xymatrix { 0 \ar[r] & \textrm{\rm DGal}(\mathcal F,x) \ar[r]^{\alpha} &
\textrm{\rm Gr}(\mathcal F,x) \ar[r]^{\beta} & 
\mathbf W(\mathcal F,x) \ar[r] & 0}$$
is exact.
\end{prop}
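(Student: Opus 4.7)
My plan is to deduce this directly from Proposition \ref{my_variant} applied to the following setup: take $\mathbf A=\dal\mathcal F^\wedge\dar$, $\mathbf B=\dal\mathcal F\dar$, $\mathbf C=\dal\mathcal F\dar_{const}$, with $a=(\cdot)^\wedge:\mathbf B\to\mathbf A$ being the forgetful functor (which is plainly a faithful tensor functor), $b:\mathbf C\to\mathbf B$ the inclusion, and $\omega=\omega_x$. With these choices $\pi(\mathbf A,\omega)=\textrm{DGal}(\mathcal F,x)$, $\pi(\mathbf B,\omega\circ a)=\textrm{Gr}(\mathcal F,x)$, and $\pi(\mathbf C,\omega\circ a\circ b)=\mathbf W(\mathcal F,x)$, and the induced maps are $\alpha$ and $\beta$. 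So the proposition reduces to verifying the three hypotheses $(i),(ii),(iii)$ of Proposition \ref{my_variant}.

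For $(i)$, an object $\mathcal G$ of $\dal\mathcal F\dar$ has $\mathcal G^\wedge$ trivial in $\dal\mathcal F^\wedge\dar$ exactly when its underlying (over)convergent isocrystal is a direct sum of trivial isocrystals, which by definition means $\mathcal G\in\dal\mathcal F\dar_{const}$. For $(iii)$, any object $\mathcal G$ of $\dal\mathcal F^\wedge\dar$ is, by construction of $\dal\cdot\dar$, a subquotient of a direct sum of tensor products of $\mathcal F^\wedge$ and its dual, and all such tensor constructions lie in the image of $(\cdot)^\wedge$. By Proposition \ref{3.9} (or directly by the standing hypothesis), $\mathcal F^\wedge$ is semi-simple, hence so is every object of the Tannakian subcategory it generates, so every such subquotient is actually a direct summand, and hence a sub-object, of some $\mathcal H^\wedge$.

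The one point that requires a small argument is $(ii)$. Let $\mathcal G\in\dal\mathcal F\dar$ and let $\mathcal H_0\subseteq\mathcal G^\wedge$ be the largest trivial sub-isocrystal (which exists since $\mathcal G^\wedge$ is semi-simple; $\mathcal H_0$ is simply the sum of all trivial sub-objects). The forgetful functor $(\cdot)^\wedge$ commutes with the pullback $F_\sigma^*$, so $F_\sigma^*(\mathcal H_0)\subseteq F_\sigma^*(\mathcal G)^\wedge$ is again the largest trivial sub-isocrystal. The Frobenius $\mathbf F_{\mathcal G}:F_\sigma^*(\mathcal G)\to\mathcal G$ is an isomorphism in $\mathbf B$, hence $(\mathbf F_{\mathcal G})^\wedge$ is an isomorphism of convergent isocrystals, which must send the largest trivial sub-object to the largest trivial sub-object. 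Therefore $\mathbf F_{\mathcal G}$ restricts to an isomorphism $F_\sigma^*(\mathcal H_0)\xrightarrow{\sim}\mathcal H_0$, and the pair $\mathcal H=(\mathcal H_0,\mathbf F_{\mathcal G}|_{F_\sigma^*(\mathcal H_0)})$ is a sub-object of $\mathcal G$ in $\mathbf B$ with $\mathcal H^\wedge=\mathcal H_0$, as required.

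Granted $(i),(ii),(iii)$, Proposition \ref{my_variant} yields the desired short exact sequence. The main subtlety to guard against is the verification of $(ii)$: one must be careful that ``largest trivial sub-isocrystal'' is preserved by the semi-linear Frobenius, but this is automatic because $(\cdot)^\wedge$ is a tensor functor and carries $\mathbf F_{\mathcal G}$ to an isomorphism of isocrystals, which is in particular a morphism in $\mathbf A$ respecting any categorical construction such as the maximal trivial sub-object.
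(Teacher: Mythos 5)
Your proposal is correct and follows essentially the same route as the paper: reduce to Proposition \ref{my_variant} with $\mathbf A=\dal\mathcal F^{\wedge}\dar$, $\mathbf B=\dal\mathcal F\dar$, $\mathbf C=\dal\mathcal F\dar_{const}$, and check $(i)$--$(iii)$, with $(ii)$ handled by observing that the Frobenius preserves the maximal trivial sub-isocrystal (the paper phrases this as ``Frobenius respects horizontal sections'') and $(iii)$ by using semi-simplicity of $\mathcal F^{\wedge}$ to upgrade ``subquotient'' to ``sub-object''. For $(iii)$, note that the standing hypothesis already gives semi-simplicity of $\mathcal F^{\wedge}$, so the appeal to Proposition \ref{3.9} is unnecessary (and its hypothesis is not available here).
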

\begin{proof} By Proposition \ref{my_variant} we only have to check the following:
\begin{enumerate}
\item[$(i)$] For an object $\mathcal G$ of $\dal\mathcal F\dar$ the object $\mathcal G^{\wedge}$ of $\dal\mathcal F^{\wedge}\dar$ is trivial if and only if $\mathcal G$ is an object of $\dal\mathcal F\dar_{const}$.
\item[$(ii)$] Let $\mathcal G$ be an object of $\dal\mathcal F\dar$, and let $\mathcal H_0\subseteq\mathcal G^{\wedge}$ denote the largest trivial sub-object. Then there exists an $\mathcal H\subseteq\mathcal G$ with $\mathcal H_0=\mathcal H^{\wedge}$.
\item[$(iii)$] Every object $\mathcal G$ of $\dal\mathcal F^{\wedge}\dar$ is a sub-object of an object of the form $\mathcal H^{\wedge}$ with some object
$\mathcal H$ of $\dal\mathcal F\dar$. 
\end{enumerate}
Condition $(i)$ trivially holds: an $F$-isocrystal is constant if and only if it is trivial as an isocrystal. Next we show $(ii)$. The maximal trivial overconvergent sub-isocrystal $\mathcal H_0$ of an overconvergent $F$-isocrystal $\mathcal G$ is generated by (overconvergent) horizontal sections of $\mathcal G$. Since the Frobenius map of $\mathcal G$ respects horizontal sections, the isocrystal
$\mathcal H_0$ underlies an overconvergent $F$-isocrystal. Finally we prove $(iii)$. Because the image of $\dal\mathcal F\dar$ under $(\cdot)^{\wedge}$ is closed under directs sums, tensor products and duals, there is an object $\mathcal H$ of $\dal\mathcal F\dar$ such that $\mathcal G$  is a subquotient of $\mathcal H^{\wedge}$. Since $\mathcal F^{\wedge}$ is semi-stable, so is every object in $\dal\mathcal F^{\wedge}\dar$. Therefore $\mathcal G$  is isomorphic to a sub-object of $\mathcal H^{\wedge}$.
\end{proof}
\begin{cor}\label{crew_sequence2} Assume either that $X$ is realisable, geometrically connected, and $\mathcal F$ is $\iota$-pure, or that $\mathcal F$ is semi-simple. Then the sequence:
$$\xymatrix { 0 \ar[r] & \textrm{\rm DGal}(\mathcal F,x) \ar[r]^{\alpha} &
\textrm{\rm Gr}(\mathcal F,x) \ar[r]^{\beta} & 
\mathbf W(\mathcal F,x) \ar[r] & 0}$$
is exact.
\end{cor}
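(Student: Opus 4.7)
The corollary is essentially a direct consequence of Proposition \ref{crew_sequence}, which already establishes the exactness of the same sequence under the single hypothesis that $\mathcal F^{\wedge}$ is semi-simple. So the plan is merely to reduce each of the two sets of hypotheses in the corollary to that one condition.

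For the first case, where $X$ is realisable, geometrically connected, and $\mathcal F$ is $\iota$-pure, the semi-simplicity of $\mathcal F^{\wedge}$ is exactly the content of Proposition \ref{abe-caro} (the quoted special case of the Abe--Caro theorem). For the second case, where $\mathcal F$ itself is assumed semi-simple as an overconvergent $F_{\sigma}$-isocrystal, Proposition \ref{3.9} tells us directly that $\mathcal F^{\wedge}$ is semi-simple as an overconvergent isocrystal. In either case the hypothesis of Proposition \ref{crew_sequence} is met, and so the exactness follows from that proposition.

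There is really no obstacle here beyond citing the appropriate results; the whole point of having separated out Proposition \ref{crew_sequence}, Proposition \ref{abe-caro}, and Proposition \ref{3.9} is to make the corollary a one-line deduction. The proof should therefore be just a two-sentence invocation: in both situations $\mathcal F^{\wedge}$ is semi-simple (by Proposition \ref{abe-caro} and Proposition \ref{3.9} respectively), and hence Proposition \ref{crew_sequence} applies.
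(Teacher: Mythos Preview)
Your proposal is correct and matches the paper's proof essentially verbatim: the paper also deduces the first case from Proposition \ref{abe-caro} and Proposition \ref{crew_sequence}, and the second case from Proposition \ref{3.9} and Proposition \ref{crew_sequence}.
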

\begin{proof} The first case follows from Proposition \ref{abe-caro} and Proposition \ref{crew_sequence}, while the second case follows from Proposition \ref{3.9} and Proposition \ref{crew_sequence}.
\end{proof}
\begin{prop}\label{easy_ss} Assume that $X$ is realisable, smooth, and geometrically connected. Also suppose that $\mathcal F$ is $\iota$-pure and $\textrm{\rm Frob}_x(\mathcal F)$ is semi-simple for a closed point $x\in|X|$. Then $\mathcal F$ is semi-simple.
\end{prop}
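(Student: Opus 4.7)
The plan is to show that $\textrm{Gr}(\mathcal F,x)$ is a reductive algebraic group; by Tannakian duality in characteristic zero this is equivalent to the semi-simplicity of $\mathcal F$, since every finite dimensional representation of a reductive group in characteristic zero is semi-simple. The strategy is to exhibit $\textrm{Gr}(\mathcal F,x)$ as an extension of two reductive groups, using the short exact sequence supplied by Corollary \ref{crew_sequence2}, whose hypotheses are met because $X$ is realisable, geometrically connected and $\mathcal F$ is $\iota$-pure.

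First I would apply Proposition \ref{abe-caro} to deduce that the underlying overconvergent isocrystal $\mathcal F^{\wedge}$ is semi-simple; by Tannakian duality this immediately gives that $\textrm{DGal}(\mathcal F,x)$ is reductive. Corollary \ref{crew_sequence2} then supplies the short exact sequence
$$1\longrightarrow\textrm{DGal}(\mathcal F,x)\stackrel{\alpha}{\longrightarrow}\textrm{Gr}(\mathcal F,x)\stackrel{\beta}{\longrightarrow}\mathbf W(\mathcal F,x)\longrightarrow 1,$$
so it remains only to show that $\mathbf W(\mathcal F,x)$ is reductive.

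For this I would use that $\mathbf W(\mathcal F,x)$ is by definition the Tannakian group of $\dal\mathcal F\dar_{const}$, the subcategory of constant objects of $\dal\mathcal F\dar$. The latter is equivalent to a subcategory of finite dimensional vector spaces equipped with a Frobenius automorphism, and the Tannakian group of any such category is commutative and topologically generated by the image of Frobenius in a faithful fibre. Since $\textrm{Frob}_x(\mathcal F)$ is semi-simple as a linear operator on the faithful representation $\omega_x(\mathcal F)$ of $\textrm{Gr}(\mathcal F,x)$, it is a semi-simple element of that algebraic group, and hence its image $\beta(\textrm{Frob}_x(\mathcal F))$ is semi-simple in $\mathbf W(\mathcal F,x)$. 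A commutative algebraic group topologically generated by a semi-simple element is diagonalisable, hence reductive.

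Finally, an extension of a reductive group by a reductive group in characteristic zero is reductive: the unipotent radical $R_u(\textrm{Gr}(\mathcal F,x))$ has trivial image in the reductive quotient $\mathbf W(\mathcal F,x)$, hence lies inside $\textrm{DGal}(\mathcal F,x)$, and then, being a normal connected unipotent subgroup of the reductive group $\textrm{DGal}(\mathcal F,x)$, it must be trivial. Therefore $\textrm{Gr}(\mathcal F,x)$ is reductive and $\mathcal F$ is semi-simple. The step I expect to need most care is the Zariski density of the Frobenius image in $\mathbf W(\mathcal F,x)$; once that is made precise, the rest is standard Tannakian formalism combined with the two earlier inputs \ref{abe-caro} and \ref{crew_sequence2}.
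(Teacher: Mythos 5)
Your proposal is correct and follows essentially the same route as the paper: semi-simplicity of $\mathcal F^{\wedge}$ via Proposition \ref{abe-caro}, the exact sequence of Corollary \ref{crew_sequence2}, reductivity of $\mathbf W(\mathcal F,x)$ from the semi-simplicity of the Frobenius generator, and the conclusion that a connected unipotent normal subgroup of $\textrm{Gr}(\mathcal F,x)$ must be trivial. The one step the paper includes that you omit is the preliminary reduction to a degree-one base point (replacing $\mathcal F$ by $\mathcal F^{(n)}\otimes_{\mathbb L}\mathbb L_n$ and invoking Proposition \ref{reductions_for_ss}): this is what makes $\dal\mathcal F\dar$ a \emph{neutral} Tannakian category with respect to $\omega_x$, so that reductivity of the $\mathbb L_n$-group $\textrm{Gr}(\mathcal F,x)$ literally yields semi-simplicity of $\mathcal F$ in the $\mathbb L$-linear category; without it, your final appeal to Tannakian duality needs an extra descent argument, and one must also note that the $n$-th power of the given semi-simple $\textrm{Frob}_x(\mathcal F)$ remains semi-simple at the degree-one points lying over $x$.
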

\begin{proof} Let $n$ be the degree of $x$ and let $X_n$ be the base change of $X$ to Spec$(\mathbb F_{q^n})$. Let $\sigma'$ be the identity of
$\mathbb L_n$; it is a lift of the $q^n$-power automorphism of its residue field. Then $\mathcal F^{(n)}\otimes_{\mathbb L}\mathbb L_n$ is an object of 
$\textrm{$F_{\sigma'}$-Isoc}^{\dagger}(X/\mathbb L_n)=
\textrm{$F_{\sigma'}$-Isoc}^{\dagger}(X_n/\mathbb L_n)$. By Proposition \ref{reductions_for_ss} it will be sufficient to prove that $\mathcal F^{(n)}\otimes_{\mathbb L}\mathbb L_n$ is semi-simple. Let $r:|X_n|\to|X|$ be the map induced by the base change morphism $X_n\to X$ of schemes. Then for every $y\in|X_n|$ the linear map $\textrm{\rm Frob}_y(\mathcal F^{(n)}\otimes_{\mathbb L}\mathbb L_n)$ is the $n$-th power of $\textrm{\rm Frob}_{r(y)}(\mathcal F)$. We get that  $\mathcal F^{(n)}\otimes_{\mathbb L}\mathbb L_n$ is $\iota$-pure (as an object of $\textrm{$F_{\sigma'}$-Isoc}^{\dagger}(X_n/\mathbb L_n)$), and $\textrm{\rm Frob}_x(\mathcal F)$ is semi-simple for every $y\in|X_n|$ such that $r(y)=x$. So we may assume without the loss of generality that $x\in X(\mathbb F_q)$.

In this case $\dal\mathcal F\dar$ is a neutral Tannakian category with respect to the fibre functor $\omega_x$, and hence it will be sufficient to prove that $\textrm{Gr}(\mathcal F,x)$ is reductive. Let $N\subseteq\textrm{Gr}(\mathcal F,y)$ be a connected unipotent normal subgroup. Since $\mathcal F^{\wedge}$ is semi-simple by Proposition \ref{abe-caro}, the intersection $N\cap\textrm{DGal}(\mathcal F,y)$ is trivial. Therefore $N$ injects into $\mathbf W(\mathcal F,y)$ with respect to the quotient map $\beta:\textrm{Gr}(\mathcal F,y)\to\textbf W(\mathcal F,y)$. With respect to the Zariski topology the group $\mathbf W(\mathcal F,x)$ is generated by the element which, as an automorphism of $\omega_x$, is the $q$-linear Frobenius $\textrm{Frob}_x(\mathcal C)$ for every object
$\mathcal C$ of $\dal\mathcal F\dar_{const}$. Since for every object $\mathcal G$ of $\dal\mathcal F\dar$ the linear map $\textrm{Frob}_x(\mathcal G)$ is semi-simple by assumption, we get that the group $\textbf W(\mathcal F,y)$ is reductive. Therefore $N$ is trivial, and hence $\textrm{Gr}(\mathcal F,y)$ is reductive, too.
\end{proof}
\begin{defn} For every $p$-divisible group $G$ over a $\mathbb F_q$-scheme $S$ let $D(G)=\mathbf D(G)\otimes_{\mathbb Z_q}\mathbb Q_q$ be the associated convergent Dieudonn\'e $F$-isocrystal. For every abelian scheme $C$ over $S$ let $C[p^{\infty}]$ and $\mathbf D(C)$, $D(C)$ denote the $p$-divisible group of $C$, and $\mathbf D(C[p^{\infty}])$, $D(C[p^{\infty}])$, respectively. When $S=U$ is a geometrically connected smooth quasi-projective curve defined over the finite field $\mathbb F_q$ of characteristic $p$, as in the introduction, let $D^{\dagger}(C)$ denote the overconvergent crystalline Dieudonn\'e module of $C$ over $U$ (for a construction see \cite{KT}, sections 4.3--4.8). Its key property is that there is a natural isomorphism $D^{\dagger}(C)^{\sim}\cong D(C)$. 
\end{defn} 
\begin{rem} It is easy to see that the claim above applies to the overconvergent Dieudonn\'e crystal of an abelian scheme as follows. Let $A$ be an abelian scheme over $U$ and for every closed point $y\in|U|$ let $A_y$ be the fibre of $A$ over $y$. For every such $y$ (of degree $n$) the fibre of $D^{\dagger}(A)$ at $y$ is isomorphic to $D(A_y)\otimes_{\mathbb Z_{q^n}}\mathbb Q_{q^n}$. The action of the $q^n$-linear Frobenius on the latter is semi-simple by classical Honda-Tate theory. Moreover we also get that $D^{\dagger}(A)$ is $\iota$-pure of weight $1$. So Theorem \ref{semisimple} follows from Corollary \ref{easy_ss} (and Proposition \ref{reductions_for_ss}).
\end{rem}
\begin{prop}\label{3.15} Assume that $\mathcal F$ is a semi-simple object of $\textrm{\rm $F_{\sigma}$-Isoc}^{\dagger}(U/\mathbb L)$. Then $\textrm{\rm DGal}(\mathcal F,x)^o$ is semi-simple, and it is the derived group of $\textrm{\rm Gr}(\mathcal F,x)^o$.
\end{prop}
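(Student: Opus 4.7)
The plan is to combine the exact sequence from Corollary~\ref{crew_sequence2} with standard structure theory of connected reductive algebraic groups. Since $\mathcal F$ is semi-simple, Corollary~\ref{crew_sequence2} furnishes the exact sequence
\[
1 \to \textrm{DGal}(\mathcal F,x) \to \textrm{Gr}(\mathcal F,x) \to \mathbf W(\mathcal F,x) \to 1.
\]
By the Tannakian formalism, semi-simplicity of $\dal \mathcal F \dar$ translates to reductivity of $\textrm{Gr}(\mathcal F,x)$, and Proposition~\ref{3.9} yields semi-simplicity of $\mathcal F^{\wedge}$, so $\textrm{DGal}(\mathcal F,x)$ is also reductive. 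Set $G = \textrm{Gr}(\mathcal F,x)^o$ and $H = \textrm{DGal}(\mathcal F,x)^o$; both are connected reductive.

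Next, $H$ is normal in $G$ (as the identity component of a normal subgroup), and the image of the connected group $G$ in $\mathbf W := \mathbf W(\mathcal F,x)$ is open, of finite index, and connected, so coincides with $\mathbf W^o$; hence $G/H \cong \mathbf W^o$. But $\mathbf W$ is the Tannakian group of a subcategory of constant $F$-isocrystals on $\textrm{Spec}(\mathbb F_{q^n})$, whose objects are classified up to isomorphism by the action of a single Frobenius, and so $\mathbf W$ is commutative. Being connected, commutative and reductive, $G/H$ is thus a torus $T$. Since $T$ is abelian, $[G,G] \subseteq H$.

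For the reverse inclusion the plan is to show $H$ is semi-simple, so that $H = [H,H] \subseteq [G,G]$. As $H$ is connected reductive with $H = Z(H)^o \cdot [H,H]$, the task reduces to showing that the central torus $Z(H)^o$ is trivial. I would note that $Z(H)^o$ is characteristic in $H$ and $H$ is normal in $G$, so $Z(H)^o$ is a connected torus normalized by the connected group $G$; since $\textrm{Aut}$ of a torus is discrete, the conjugation action of $G$ is trivial, placing $Z(H)^o$ inside $Z(G)^o$.

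The main obstacle will be the final elimination of $Z(H)^o$, which is equivalent to showing that the surjection $G \twoheadrightarrow T$ realizes the full maximal toric quotient $G/[G,G]$. The expected Tannakian mechanism is that a character of $G$ trivial on $H$ corresponds to a rank-one sub-quotient $\mathcal L$ of a tensor construction on $\mathcal F$ whose underlying overconvergent isocrystal is trivial, and for such a rank-one object the compatibility between Frobenius and the trivial connection forces the Frobenius to act as a scalar in $\mathbb L$ — i.e., $\mathcal L$ is already constant and so the character factors through $\mathbf W$. Matching this against the full character lattice of $G$, using Tannakian rigidity and the semi-simplicity hypothesis, identifies $T$ with $G/[G,G]$ and completes the proof.
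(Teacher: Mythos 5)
Your overall skeleton is the same as the paper's: the commutativity of $\mathbf W(\mathcal F,x)$ forces $[\textrm{Gr}(\mathcal F,x)^o,\textrm{Gr}(\mathcal F,x)^o]\subseteq\textrm{DGal}(\mathcal F,x)^o$, and the reverse inclusion is to come from the semi-simplicity of $H=\textrm{DGal}(\mathcal F,x)^o$ together with $H=[H,H]$ (the paper invokes \cite{Ree} for the latter). The reductions you make along the way are essentially sound (one small imprecision: $G/H$ is only isogenous to $\mathbf W(\mathcal F,x)^o$, since $\textrm{DGal}\cap\textrm{Gr}^o$ may strictly contain $\textrm{DGal}^o$; this does not affect the inclusion $[G,G]\subseteq H$).

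The genuine gap is the semi-simplicity of $\textrm{DGal}(\mathcal F,x)^o$ itself, which is the first assertion of the proposition and its real content; the paper takes it from Corollary 4.10 of \cite{Cr}. Your proposed elimination of $Z(H)^o$ is circular. What must be shown is that \emph{every} character of $G$ (or at least a finite-index subgroup of the character lattice) restricts trivially to $Z(H)^o$, so that $Z(H)^o\subseteq[G,G]\cap Z(G)^o$ is finite, hence trivial. Instead, your "Tannakian mechanism" starts from a character of $G$ \emph{already assumed trivial on $H$} and observes that the corresponding rank-one object has trivial underlying isocrystal and constant Frobenius; that only re-derives the description of $G/H$ and says nothing about an arbitrary character of $G$. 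To close the gap one must prove that every rank-one subquotient of a tensor construction on $\mathcal F$ has finite geometric monodromy up to a constant twist --- an analogue of the class-field-theoretic statement for rank-one lisse sheaves. This requires real input (slope theory, the Grothendieck--Katz theorem and the unit-root correspondence, as in the paper's proof of Proposition \ref{geometric}, or Crew's global results), none of which appears in your sketch. There is also a secondary issue you would have to handle: characters of $G=\textrm{Gr}(\mathcal F,x)^o$ do not directly correspond to objects of $\dal\mathcal F\dar$ (which give representations of the full group $\textrm{Gr}(\mathcal F,x)$), so an induction/norm argument over the finite component group is needed before the Tannakian dictionary can be applied.
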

\begin{proof} The first claim is Corollary 4.10 of \cite{Cr} on page 457. With respect to the Zariski topology the group $\mathbf W(\mathcal F,x)$ is generated by the element which, as an automorphism of $\omega_x$, is the $q$-linear Frobenius $\textrm{Frob}_x(\mathcal C)$ for every object
$\mathcal C$ of $\dal\mathcal F\dar_{const}$. So it is commutative, and hence 
$[\textrm{\rm Gr}(\mathcal F,x),\textrm{\rm Gr}(\mathcal F,x)]\subseteq\textrm{\rm DGal}(\mathcal F,x)$. Therefore $[\textrm{\rm Gr}(\mathcal F,x)^o,\textrm{\rm Gr}(\mathcal F,x)^o]\subseteq\textrm{\rm DGal}(\mathcal F,x)^o$. In order to show the reverse inclusion it will be enough to show that $[\textrm{\rm DGal}(\mathcal F,x)^o,\textrm{\rm DGal}(\mathcal F,x)^o]=\textrm{\rm DGal}(\mathcal F,x)^o$. But this is true by the main result of \cite{Ree}, since $\textrm{\rm DGal}(\mathcal F,x)^o$ is semi-simple. 
\end{proof}
The following result will play an important role in the proofs of Proposition \ref{connected_components} and Theorem \ref{chin_independence}:
\begin{thm}\label{chebotarev} Assume that $\mathcal F$ is a semi-simple $\iota$-pure object of $\textrm{\rm $F_{\sigma}$-Isoc}^{\dagger}(U/\mathbb L)$. Then the set $\bigcup_{x\in|U|}\textrm{\rm Frob}_x(\mathcal F)$ is Zariski dense in $\textrm{\rm Gr}(\mathcal F,x)$.
\end{thm}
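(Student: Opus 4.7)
Let $Z\subseteq\textrm{Gr}(\mathcal F,x)(\overline{\mathbb L})$ denote the Zariski closure of the union $\bigcup_{y\in|U|}\textrm{Frob}_y(\mathcal F)$. It is stable under conjugation by $\textrm{Gr}(\mathcal F,x)$, and the plan is to show $Z=\textrm{Gr}(\mathcal F,x)$ by contradiction.

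First I would collect the structural facts. By Proposition \ref{3.9}, Proposition \ref{3.15}, and Corollary \ref{crew_sequence2} (all applicable since $\mathcal F$ is semi-simple and $\iota$-pure), the identity component $\textrm{Gr}(\mathcal F,x)^o$ is reductive: its derived group is the semi-simple group $\textrm{DGal}(\mathcal F,x)^o$, and the quotient by $\textrm{DGal}(\mathcal F,x)$ is the abelian group $\mathbf W(\mathcal F,x)$. Moreover, the $\iota$-purity hypothesis, together with the overconvergent analogue of Deligne's purity theorem (Abe--Caro), forces Frobenius to act semisimply on every fibre $\mathcal F_y$, so each conjugacy class $\textrm{Frob}_y(\mathcal F)$ consists of semi-simple elements of $\textrm{Gr}(\mathcal F,x)$.

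Assume for contradiction that $Z$ is a proper subvariety. Since the semi-simple locus is Zariski dense in a reductive group, and $Z$ is closed, conjugation-invariant, and consists of semi-simple classes, $Z$ must miss some semi-simple conjugacy class of $\textrm{Gr}(\mathcal F,x)$. By Chevalley restriction and the Tannakian description of the invariant ring $\mathcal O(\textrm{Gr}(\mathcal F,x))^{\textrm{Gr}(\mathcal F,x)}$ as the span of characters $\chi_{\mathcal G}$ for $\mathcal G\in\dal\mathcal F\dar$, there exist objects $\mathcal G_1,\mathcal G_2\in\dal\mathcal F\dar$ with $\chi_{\mathcal G_1}\neq\chi_{\mathcal G_2}$ as regular functions on $\textrm{Gr}(\mathcal F,x)$, yet $\textrm{tr}(\textrm{Frob}_y|\mathcal G_{1,y})=\textrm{tr}(\textrm{Frob}_y|\mathcal G_{2,y})$ for every $y\in|U|$.

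The main obstacle is to derive a contradiction from this trace equality --- the substance of Chebotarev density in the overconvergent setting. My approach would mirror Deligne's equidistribution argument from Weil II. By the Grothendieck--Lefschetz trace formula for rigid cohomology (Etesse--Le Stum), the trace equality yields the identity of rational L-functions $L(U,\mathcal G_1,T)=L(U,\mathcal G_2,T)$, expressible cohomologically as the equality of the alternating products $\prod_i\det(1-TF|H^i_{c,\textrm{rig}}(U,\mathcal G_i))^{(-1)^{i+1}}$ for $i=1,2$. The $\iota$-purity of $\mathcal F$, propagated to $\mathcal G_1$ and $\mathcal G_2$, controls the weights of the $H^i_{c,\textrm{rig}}(U,\mathcal G_i)$ via Abe--Caro, preventing cancellation between cohomology groups in different degrees; combined with the semisimplicity of $\mathcal G_i^{\wedge}$ from Proposition \ref{3.9}, this forces the semi-simple isomorphism classes of $\mathcal G_1$ and $\mathcal G_2$ in $\dal\mathcal F\dar$ to coincide, so $\chi_{\mathcal G_1}=\chi_{\mathcal G_2}$, contradicting our choice. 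The extraction of semi-simple equivalence from $L$-function equality under purity is the technical crux; the rest of the argument is formal.
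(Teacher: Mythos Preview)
The paper does not give a proof here: it simply cites Theorem 10.1 of \cite{HP}. So there is no argument in the paper to compare yours against; your sketch is an attempt to supply what the cited reference does.

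Your overall strategy (reduce to a statement about class functions, then derive a contradiction from trace data via the trace formula and purity) is the right shape, but several steps are not correct as written. First, $\iota$-purity does \emph{not} force $\textrm{Frob}_y$ to act semisimply on the fibre: purity constrains the absolute values of eigenvalues, not the Jordan type. Second, and relatedly, your sentence ``$Z$ \ldots\ consists of semi-simple classes, so $Z$ must miss some semi-simple conjugacy class'' is wrong on both counts: $Z$ is a \emph{closure} and may well contain non-semisimple elements, and the conclusion needs a different justification. The correct reduction is via the adjoint quotient: if the image of $Z$ in $\textrm{Gr}(\mathcal F,x)/\!\!/\textrm{Gr}(\mathcal F,x)$ were all of the quotient, then $Z$ (being closed and conjugation-invariant) would contain every closed orbit in every fibre, i.e.\ every semisimple class, and hence equal the whole group; so properness of $Z$ gives a nonzero class function vanishing on $Z$. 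Third, your passage from ``equal Frobenius traces at every $y$'' to ``equal $L$-functions'' is a non sequitur: the $L$-function records the full characteristic polynomial of $\textrm{Frob}_y$, not just its trace, and equality of $\chi_{\mathcal G_1}$ and $\chi_{\mathcal G_2}$ on $Z$ says nothing about $\chi_{\Lambda^k\mathcal G_i}$ on $Z$. What you actually need is the linear independence, over $\mathbb L$, of the functions $y\mapsto\textrm{tr}(\textrm{Frob}_y\mid\mathcal G)$ as $\mathcal G$ ranges over the distinct irreducibles of $\dal\mathcal F\dar$. This is indeed proved in \cite{HP} using the trace formula and weight bounds, but via an orthogonality-type argument relating $\dim\textrm{Hom}(\mathcal G_1,\mathcal G_2)$ to pole orders, not via the bare $L$-function equality you wrote down.
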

\begin{proof} This is a special case of Theorem 10.1 of \cite{HP}.
\end{proof}
It will be also useful to record the following
\begin{lemma}\label{finite_monodromy} Assume that $\mathcal F$ is an object of $\textrm{\rm $F_{\sigma}$-Isoc}^{\dagger}(U/\mathbb L)$ such that $\textrm{\rm Gr}(\mathcal F,x)$ is finite. Then $\mathcal F$ is unit-root.
\end{lemma}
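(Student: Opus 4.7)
The strategy is to exploit the fact that a finite linear algebraic group over a characteristic zero field is étale, so all its geometric points have finite order, and then to translate this via the Tannakian dictionary into a pointwise statement about Frobenius eigenvalues on fibres. Since $\textrm{\rm Gr}(\mathcal F,x)$ is a finite linear algebraic group over the characteristic zero field $\mathbb L_n$, it is étale, so $\textrm{\rm Gr}(\mathcal F,x)(\overline{\mathbb L})$ is a finite group of some exponent $N$. In particular, every element of every Frobenius conjugacy class $\textrm{\rm Frob}_y(\mathcal F)\subseteq\textrm{\rm Gr}(\mathcal F,x)(\overline{\mathbb L})$ with $y\in[U]$ has order dividing $N$.

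Next, unpack the Tannakian formalism. The fibre functor $\omega_x$ identifies $\dal\mathcal F\dar$ with the category of finite-dimensional representations of $\textrm{\rm Gr}(\mathcal F,x)$, and under this identification the characteristic polynomial of an element $g\in\textrm{\rm Gr}(\mathcal F,x)(\overline{\mathbb L})$ acting on $V=\omega_x(\mathcal F)\otimes_{\mathbb L_n}\overline{\mathbb L}$ coincides, for any $g$ representing the class $\textrm{\rm Frob}_y(\mathcal F)$, with the characteristic polynomial of the $\deg(y)$-th power of the Frobenius acting on the fibre $y^{*}(\mathcal F)$. Combining this with the preceding paragraph yields that for every closed point $y\in|U|$ all eigenvalues of the Frobenius on $y^{*}(\mathcal F)$ are $N$-th roots of unity in $\overline{\mathbb L}$.

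Finally, roots of unity in $\overline{\mathbb L}$ are $p$-adic units, so the Newton polygon of $\mathcal F$ at every closed point of $U$ is identically zero. By Grothendieck's specialization theorem for Newton polygons, combined with the standard pointwise characterisation of unit-root $F$-isocrystals on a smooth connected scheme, this forces $\mathcal F$ to be unit-root. The only step that requires genuine care is this last one, namely the passage from slope zero at every closed point of $U$ to the global unit-root property; the preceding two steps are direct unwinding of the Tannakian dictionary plus the fact that finite group schemes over a field of characteristic zero are étale.
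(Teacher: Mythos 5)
Your argument is correct and follows essentially the same route as the paper: the finiteness of $\textrm{Gr}(\mathcal F,x)$ (hence of each $\textrm{Gr}(\mathcal F,y)$) forces every Frobenius element to have finite order, so its eigenvalues are roots of unity and therefore $p$-adic units. The final step you single out as the one requiring genuine care is in fact vacuous in the paper, which simply takes the pointwise condition --- that for every $y\in|U|$ the eigenvalues of $\textrm{Frob}_y(\mathcal F)$ are $p$-adic units --- as the definition of unit-root, so no appeal to Grothendieck's specialization theorem is needed.
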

\begin{proof} Recall that $\mathcal F$ is unit-root if for every $y\in|U|$ the eigenvalues of $\textrm{Frob}_y(\mathcal F)$ are $p$-adic units. By definition $\textrm{Frob}_y(\mathcal F)$ is an element of $\textrm{\rm Gr}(\mathcal F,y)$. Since the algebraic groups $\textrm{\rm Gr}(\mathcal F,x)$ and $\textrm{\rm Gr}(\mathcal F,y)$ are isomorphic over some finite extension of $\mathcal L$, we get that $\textrm{\rm Gr}(\mathcal F,y)$ is finite. Therefore the eigenvalues of $\textrm{Frob}_y(\mathcal F)$ are roots of unity, and hence the claim follows. 
\end{proof}
\begin{rem}\label{cc_remark} An important consequence of the above is the following. Let $\mathcal F$ be any object of $\textrm{\rm $F_{\sigma}$-Isoc}^{\dagger}(U/\mathbb L)$. Then there is an object $\mathcal C$ of $\dal\mathcal F\dar$ whose monodromy group $\textrm{\rm Gr}(\mathcal C,x)$ is the quotient $\textrm{\rm Gr}(\mathcal F,x)/\textrm{\rm Gr}(\mathcal F,x)^o$. Since the latter is finite we get from Lemma \ref{finite_monodromy} that $\mathcal C$ is unit-root. The Crew--Katz--Tsuzuki tensor equivalence (see Theorem 1.4 of \cite{Cr} on page 434 and Theorem 1.3.1 of \cite{Ts} on page 387) between the category of unit root $F$-isocrystals and $p$-adic Galois representations identities the monodromy group $\textrm{\rm Gr}(\mathcal C,x)$, and hence $\textrm{\rm Gr}(\mathcal F,x)/\textrm{\rm Gr}(\mathcal F,x)^o$, with a finite quotient of
$\pi_1(U,\overline x)$ where $x\in U(\overline{\mathbb F}_{q^n})$ is a point lying above $x$.
\end{rem}
\begin{lemma}\label{compatible_frobenii} Let $\mathcal F$ be an object of $\textrm{\rm $F_{\sigma}$-Isoc}^{\dagger}(U/\mathbb L)$ such that $\textrm{\rm Gr}(\mathcal F,x)$ is finite, and let $\rho:\pi_1(U,\overline x)\to\textrm{\rm Gr}(\mathcal F,x)(\overline{\mathbb L})$ be the corresponding surjection. Then for every $y\in|U|$ the image of $\textrm{\rm Frob}_y$ with respect to $\rho$ is the conjugacy class of the geometric Frobenius at $y$.
\end{lemma}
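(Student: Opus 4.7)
The strategy is to combine Lemma \ref{finite_monodromy} with Remark \ref{cc_remark} and the functoriality of the Crew--Katz--Tsuzuki equivalence under pull-back to closed points. First, by Lemma \ref{finite_monodromy}, the finiteness of $\textrm{Gr}(\mathcal F,x)$ implies that $\mathcal F$ is unit-root. Therefore, by the Crew--Katz--Tsuzuki tensor equivalence between the category of unit-root overconvergent $F$-isocrystals on $U$ and the category of continuous $p$-adic representations of $\pi_1(U,\overline x)$, the isocrystal $\mathcal F$ arises from a continuous representation $\rho$, and as recorded in Remark \ref{cc_remark}, $\rho$ factors through a finite quotient isomorphic to $\textrm{Gr}(\mathcal F,x)(\overline{\mathbb L})$ and agrees with the surjection $\pi_1(U,\overline x) \to \textrm{Gr}(\mathcal F,x)(\overline{\mathbb L})$ under consideration.

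Next, I would examine the behaviour at a closed point $y \in |U|$. Choose a geometric point $\overline y$ above $y$ and a path from $\overline x$ to $\overline y$, giving rise to a map $\iota_y: \pi_1(y,\overline y) \to \pi_1(U,\overline x)$ well-defined up to conjugacy. The pullback $y^*\mathcal F$ is a unit-root $F$-isocrystal on $\textrm{Spec}(\mathbb F_{q^{\deg(y)}})$, and the Crew--Katz--Tsuzuki equivalence is functorial with respect to pullbacks: $y^*\mathcal F$ corresponds to the composition $\rho \circ \iota_y$.

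Third, I would apply the Crew--Katz--Tsuzuki equivalence over $\textrm{Spec}(\mathbb F_{q^{\deg(y)}})$ itself, which in this zero-dimensional case reduces to classical Dieudonn\'e theory over a finite field: a unit-root $F$-isocrystal is exactly a finite-dimensional $\mathbb L_{\deg(y)}$-vector space equipped with an automorphism, and the associated Galois representation sends the topological generator of $\pi_1(y,\overline y) \cong \widehat{\mathbb Z}$ --- namely the geometric Frobenius at $y$ --- precisely to the Frobenius automorphism of the isocrystal, which is by definition $\textrm{Frob}_y(\mathcal F)$. Combining this with the previous step yields $\rho(\iota_y(\textrm{Frob}_y^{\textrm{geom}})) = \textrm{Frob}_y(\mathcal F)$, and since $\iota_y$ is determined up to conjugation this exactly says that $\rho$ sends the conjugacy class of the geometric Frobenius at $y$ to the conjugacy class $\textrm{Frob}_y(\mathcal F)$, as claimed.

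The only real point requiring care is the normalisation in Step 3: one must verify against the conventions of \cite{Cr} and \cite{Ts} that the Crew--Katz--Tsuzuki equivalence identifies the Frobenius structure on a unit-root isocrystal over a finite field with the geometric (as opposed to arithmetic) Frobenius on the associated representation. This is a bookkeeping check rather than a substantive difficulty, and once settled the three steps assemble into the stated result.
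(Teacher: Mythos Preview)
Your proposal is correct and follows essentially the same route as the paper: both reduce the assertion to the zero-dimensional case by pulling back to the closed point $y$, where it becomes a normalisation check for the Crew--Katz--Tsuzuki equivalence. The paper is more terse, simply citing Corollary~4.6 of \cite{HP} for that zero-dimensional statement, whereas you spell out the content of that step (classical Dieudonn\'e theory over a finite field and the identification of the Frobenius on the isocrystal with the geometric Frobenius on the Galois side); your final caveat about checking the normalisation against \cite{Cr} and \cite{Ts} is exactly what the citation to \cite{HP} is doing.
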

\begin{proof} It is enough to check the claim after pull-back to $y$, that is, we may assume that $U$ is a point without the loss of generality. In this case the claim follows from Corollary 4.6 of \cite{HP}.
\end{proof}
\begin{notn} Assume again that $X$ is geometrically connected, and fix a point $x\in X(\mathbb F_{q^n})$. Note that the functor
$$(\,\cdot\,)^{(n)}:\textrm{$F_{\sigma}$-Isoc}^{\dagger}(X/\mathbb L)\longrightarrow\textrm{$F^n_{\sigma^n}$-Isoc}^{\dagger}(X/\mathbb L)$$
induces a map 
$$\phi:\textrm{\rm Gr}(\textrm{$F^n_{\sigma^n}$-Isoc}^{\dagger}(X/\mathbb L),x)\longrightarrow\textrm{\rm Gr}(\textrm{$F_{\sigma}$-Isoc}^{\dagger}(X/\mathbb L),x)$$
of Tannakian fundamental groups. Let $\mathbf C_n$ be the full subcategory of 
$\textrm{$F_{\sigma}$-Isoc}^{\dagger}(X/\mathbb L)$ whose objects $\mathcal F$ are such that $\mathcal F^{(n)}$ is trivial; it is a strictly full rigid abelian tensor subcategory. Let
$$\psi:\textrm{\rm Gr}(\textrm{$F_{\sigma}$-Isoc}^{\dagger}(X/\mathbb L),x)
\longrightarrow\textrm{\rm Gr}(\mathbf C_n,x)$$
denote the map induced by the inclusion functor $\mathbf C_n\to\textrm{$F_{\sigma}$-Isoc}^{\dagger}(X/\mathbb L)$.
\end{notn}
\begin{prop}\label{n-power_sequence} Assume that $X$ is smooth. Then the sequence:
$$\xymatrix { 0 \to \textrm{\rm Gr}(\textrm{\rm $F^n_{\sigma^n}$-Isoc}^{\dagger}(X/\mathbb L),x) \ar[r]^{\ \phi} &
\textrm{\rm Gr}(\textrm{\rm $F_{\sigma}$-Isoc}^{\dagger}(X/\mathbb L),x)
\ar[r]^{\ \ \ \psi} & 
\textrm{\rm Gr}(\mathbf C_n,x)\to 0}$$
is exact, and $\textrm{\rm Gr}(\mathbf C_n,x)\cong\mathbb Z/n\mathbb Z$.
\end{prop}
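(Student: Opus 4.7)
The plan is to invoke Proposition~\ref{my_variant} applied with $\mathbf A=\textrm{$F^n_{\sigma^n}$-Isoc}^\dagger(X/\mathbb L)$, $\mathbf B=\textrm{$F_\sigma$-Isoc}^\dagger(X/\mathbb L)$, $\mathbf C=\mathbf C_n$, with $a=(\cdot)^{(n)}$ and $b$ the inclusion. The functor $a$ is faithful because it only changes the Frobenius, not the underlying isocrystal, and condition $(i)$ of Proposition~\ref{my_variant} is literally the definition of $\mathbf C_n$.

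To verify condition $(ii)$, let $\mathcal G$ be an object of $\mathbf B$ and let $\mathcal H_0\subseteq\mathcal G^{(n)}$ be the maximal trivial sub-$F^n_{\sigma^n}$-isocrystal; I would set
$$\mathcal H=\sum_{i=0}^{n-1}\mathbf F_{\mathcal G}^{[i]}\bigl(F_\sigma^{*i}(\mathcal H_0)\bigr)\subseteq\mathcal G.$$
The identity $\mathbf F_{\mathcal G}\circ F_\sigma^*(\mathbf F_{\mathcal G}^{[i]})=\mathbf F_{\mathcal G}^{[i+1]}$ together with the $F^n_{\sigma^n}$-stability $\mathbf F_{\mathcal G}^{[n]}(F_\sigma^{*n}(\mathcal H_0))\subseteq\mathcal H_0$ shows $\mathcal H$ is $F_\sigma$-stable, and the composition law $\mathbf F_{\mathcal G}^{[n+i]}=\mathbf F_{\mathcal G}^{[i]}\circ F_\sigma^{*i}(\mathbf F_{\mathcal G}^{[n]})$ shows each $\mathbf F_{\mathcal G}^{[i]}$ is a morphism of $F^n_{\sigma^n}$-isocrystals from $F_\sigma^{*i}(\mathcal H_0)$ (which is trivial as such, since $\mathbf F_{\mathcal H_0}^{[n]}$ is the identity) into $\mathcal G^{(n)}$; hence each summand of $\mathcal H$ is a trivial sub-$F^n_{\sigma^n}$-isocrystal of $\mathcal G^{(n)}$ and $\mathcal H^{(n)}\subseteq\mathcal H_0$ by maximality, with the reverse inclusion obvious from the $i=0$ term.

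For condition $(iii)$, given $\mathcal G\in\mathbf A$ with Frobenius $\Phi\colon(F_\sigma^n)^*\mathcal G\to\mathcal G$, I would use the ``cyclically induced'' $F_\sigma$-isocrystal $\mathcal H=\bigoplus_{i=0}^{n-1}F_\sigma^{*i}(\mathcal G)$, whose Frobenius sends the $i$-th summand of $F_\sigma^*\mathcal H$ identically to the $(i+1)$-st summand of $\mathcal H$ for $i=0,\dots,n-2$, and sends the final summand $F_\sigma^{*n}(\mathcal G)$ to the $0$-th summand $\mathcal G$ via $\Phi$. Iterating the shifts, $\mathbf F_{\mathcal H}^{[n]}$ acts on the $0$-th summand by $\Phi$, exhibiting $\mathcal G$ as a sub-$F^n_{\sigma^n}$-isocrystal of $\mathcal H^{(n)}$. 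Proposition~\ref{my_variant} then yields the desired short exact sequence.

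For the final identification $\textrm{Gr}(\mathbf C_n,x)\cong\mathbb Z/n\mathbb Z$, I would observe that an object of $\mathbf C_n$ is a pair $(V,\Phi)$ with $V$ a trivial isocrystal and $\Phi^{[n]}=\mathrm{id}$; the rank-one objects $(\mathbf 1,\zeta\cdot)$ for $\zeta\in\mu_n$ in a suitable finite extension of $\mathbb L$ generate $\mathbf C_n$ as a rigid tensor category, and Tannaka duality for the representation category of a finite cyclic group identifies the fundamental group with $\mathbb Z/n\mathbb Z$. The step I expect to be the main obstacle is condition $(iii)$: designing the cyclic Frobenius so that its $n$-th iterate restricts to $\Phi$ on the $0$-th summand requires careful tracking of the pullback identities for $\mathbf F^{[i]}$, though the verification is ultimately a mechanical bookkeeping exercise.
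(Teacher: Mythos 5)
Your proposal is correct and follows the same overall architecture as the paper: both invoke Proposition \ref{my_variant} with $a=(\cdot)^{(n)}$, and your ``cyclically induced'' object for condition $(iii)$ is exactly the paper's construction (the block matrix with identities on the off-diagonal and $\mathbf F_{\mathcal G}$ in the corner, whose $n$-th iterate is diagonal). Two sub-steps are handled differently. For condition $(ii)$ the paper reuses the observation from the proof of Proposition \ref{crew_sequence} that the maximal trivial sub-isocrystal is generated by horizontal sections, and then takes the sub-$F$-isocrystal generated by those horizontal sections fixed by $\mathbf F^{[n]}$; your Frobenius-orbit sum $\mathcal H=\sum_{i=0}^{n-1}\mathbf F^{[i]}(F_\sigma^{*i}(\mathcal H_0))$ is a valid alternative that stays entirely formal (each summand is the image of a trivial $F^n_{\sigma^n}$-isocrystal under a morphism intertwining the $n$-th iterated Frobenii, hence trivial and contained in $\mathcal H_0$ by maximality, while the $i=0$ term gives the reverse inclusion and the $i=n-1$ term gives $F_\sigma$-stability), and it avoids any appeal to horizontal sections. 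For the identification $\textrm{Gr}(\mathbf C_n,x)\cong\mathbb Z/n\mathbb Z$ the paper argues that objects of $\mathbf C_n$ are unit-root (eigenvalues of Frobenius are $n$-th roots of unity) and then invokes the Crew--Katz--Tsuzuki equivalence with $p$-adic representations of $\pi_1$; your direct identification of $\mathbf C_n$ with representations of a cyclic group of order $n$ is more elementary and equally valid, though you should note (as the geometric connectedness hypothesis in the surrounding Notation guarantees) that the triviality of the underlying isocrystal forces the Frobenius structure to be determined by a single semilinear automorphism of the space of horizontal sections, and that the splitting of this category into rank-one pieces genuinely requires the extension of scalars you allude to.
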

\begin{proof} We will argue very similarly to the proof of Proposition \ref{crew_sequence}. By Proposition \ref{my_variant} we only have to check the following:
\begin{enumerate}
\item[$(i)$] For an object $\mathcal G$ of $\textrm{\rm $F_{\sigma}$-Isoc}^{\dagger}(X/\mathbb L)$ the object $\mathcal G^{(n)}$ of $\textrm{\rm $F^n_{\sigma^n}$-Isoc}^{\dagger}(X/\mathbb L)$ is trivial if and only if $\mathcal G$ is an object of $\mathbf C_n$.
\item[$(ii)$] Let $\mathcal G$ be an object of $\textrm{\rm $F_{\sigma}$-Isoc}^{\dagger}(X/\mathbb L)$, and let $\mathcal H_0\subseteq\mathcal G^{(n)}$ denote the largest trivial sub-object. Then there exists an $\mathcal H\subseteq\mathcal G$ with $\mathcal H_0=\mathcal H^{(n)}$.
\item[$(iii)$] Every object $\mathcal G$ of $\textrm{\rm $F^n_{\sigma^n}$-Isoc}^{\dagger}(X/\mathbb L)$ is a sub-object of an object of the form $\mathcal H^{(n)}$ with some object $\mathcal H$ of $\textrm{\rm $F_{\sigma}$-Isoc}^{\dagger}(X/\mathbb L)$. 
\end{enumerate}
Condition $(i)$ holds by definition. Next we show $(ii)$. We already saw in the proof Proposition \ref{crew_sequence} that the maximal trivial overconvergent sub-isocrystal $\mathcal H_0'$ of $\mathcal G^{\wedge}$ is underlies an overconvergent $F$-isocrystal $\mathcal G'\subseteq\mathcal G$. Since $\mathcal G'$ is generated by (overconvergent) horizontal sections, its sub $F$-isocrystal generated by those horizontal sections which are fixed by the $n$-th power of the Frobenius underlies $\mathcal H_0$. Now we prove $(iii)$. Let $\mathbf F_{\mathcal G}:(F^n_{\sigma})^*(\mathcal G)\to\mathcal G$ be the Frobenius of $\mathcal G$, let $\mathcal F$ be the direct sum:
$$\mathcal F=\mathcal G\oplus F_{\sigma}^*(\mathcal G)\oplus\cdots
\oplus (F^{n-1}_{\sigma})^*(\mathcal G)$$
in the category $\textrm{\rm Isoc}^{\dagger}(X/\mathbb L)$, and consider the map
$$\mathbf F_{\mathcal F}:F_{\sigma}^*(\mathcal F)\cong
F_{\sigma}^*(\mathcal G)\oplus(F^2_{\sigma})^*(\mathcal G)\oplus
\cdots\oplus (F^n_{\sigma})^*(\mathcal G)
\longrightarrow\mathcal F$$
given by the matrix:
$$\begin{bmatrix}
    0 & \textrm{id}_{F_{\sigma}^*(\mathcal G)} & 0 & \ldots
    & 0 \\
    0 & 0 & \textrm{id}_{(F^2_{\sigma})^*(\mathcal G)} & \ldots
    & 0 \\
  \vdots & \ddots & \ddots & \ddots
    & \vdots \\
    0 &  \ldots &  & \cdots
    & \textrm{id}_{(F^{n-1}_{\sigma})^*(\mathcal G)} \\
    \mathbf F_{\mathcal G} & 0 & \ldots
    &  & 0
  \end{bmatrix}.$$
Since $\mathbf F_{\mathcal F}$ is the linear combination of the composition of horizontal maps, it is horizontal, too. Moreover the map:
$$\mathbf F_{\mathcal F}^{[n]}\!=\!\mathbf F_{\mathcal F}\circ F_{\sigma}^*
(\mathbf F_{\mathcal F})\circ\cdots\circ (F_{\sigma}^{n-1})^*(\mathbf F_{\mathcal F})\!:\!(F^n_{\sigma})^*(\mathcal F)
\!\cong\!(F^n_{\sigma})^*(\mathcal G)\oplus
\cdots\oplus (F^{2n-1}_{\sigma})^*(\mathcal G)
\!\to\!\mathcal F$$ 
is given by the diagonal matrix
$$\begin{bmatrix}
    \mathbf F_{\mathcal G} & 0 & 0& \ldots
    & 0 \\
   0 & F_{\sigma}^*(\mathbf F_{\mathcal G}) & 0 & \ldots
    & 0 \\
  \vdots & \ddots & \ddots & \ddots
    & \vdots \\
    \vdots &  & \ddots & \ddots & \vdots \\
    0 & \ldots &     & 0 &(F^{n-1}_{\sigma})^*(\mathbf F_{\mathcal G})
  \end{bmatrix},$$
so it is an isomorphism. We get that $\mathcal F$ equipped with $\mathbf F_{\mathcal F}$ is an object of the category $\textrm{\rm $F_{\sigma}$-Isoc}^{\dagger}(X/\mathbb L)$, and $\mathcal G$ is a direct summand of $\mathcal F^{(n)}$. Claim $(iii)$ follows. 

Finally we prove that $\textrm{\rm Gr}(\mathbf C_n,x)\cong\mathbb Z/n\mathbb Z$. Let $\mathcal G$ be an arbitrary object of $\mathbf C_n$. Note that for every point $y:\textrm{Spec}(\mathbb F_{q^d})\to X$ of degree $d$ the pull-back $y^*(\mathcal G^{(n)})\cong y^*(\mathcal G)^{(n)}$ is trivial. Therefore the eigenvalues of $\textrm{Frob}_z(\mathcal G)$ for any closed point $z\in|X|$ are
$n$-th roots of unity, and hence $\mathcal G$ is unit-root. The claim now follows from the Crew--Katz--Tsuzuki tensor equivalence.
\end{proof}
\begin{notn} Let $\pi:Y\rightarrow X$ a finite, \'etale, Galois map of geometrically connected smooth schemes over $\mathbb F_q$ with Galois group $G$ and assume that there is a $y\in Y(\mathbb F_{q^n})$ such that $\pi(y)=x$. The pull-back functor:
$$\pi^*:\textrm{$F_{\sigma}$-Isoc}^{\dagger}(X/\mathbb L)\longrightarrow\textrm{$F_{\sigma}$-Isoc}^{\dagger}(Y/\mathbb L)$$
induces a map 
$$\rho:\textrm{\rm Gr}(\textrm{$F_{\sigma}$-Isoc}^{\dagger}(Y/\mathbb L),y)\longrightarrow\textrm{\rm Gr}(\textrm{$F_{\sigma}$-Isoc}^{\dagger}(X/\mathbb L),x)$$
of Tannakian fundamental groups. (For the definition of $\pi_*$ and $\pi^*$ in this setting see section 1.7 of \cite{Cr}.) Let $\mathbf C(\pi)$ be the full subcategory of 
$\textrm{$F_{\sigma}$-Isoc}^{\dagger}(X/\mathbb L)$ whose objects $\mathcal F$ are such that $\pi^*(\mathcal F)$ is trivial; it is a strictly full rigid abelian tensor subcategory. Let
$$\sigma:\textrm{\rm Gr}(\textrm{$F_{\sigma}$-Isoc}^{\dagger}(X/\mathbb L),x)
\longrightarrow\textrm{\rm Gr}(\mathbf C(\pi),x)$$
denote the map induced by the inclusion functor $\mathbf C(\pi)\to\textrm{$F_{\sigma}$-Isoc}^{\dagger}(X/\mathbb L)$.
\end{notn}
\begin{prop}\label{cover_sequence} The sequence:
$$\xymatrix { 0 \to \textrm{\rm Gr}(\textrm{\rm $F_{\sigma}$-Isoc}^{\dagger}(Y/\mathbb L),y) \ar[r]^{\ \rho} &
\textrm{\rm Gr}(\textrm{\rm $F_{\sigma}$-Isoc}^{\dagger}(X/\mathbb L),x)
\ar[r]^{\ \ \ \sigma} & 
\textrm{\rm Gr}(\mathbf C(\pi),x)\to 0}$$
is exact, and $\textrm{\rm Gr}(\mathbf C(\pi),x)\cong G$.
\end{prop}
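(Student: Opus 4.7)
The plan is to argue in parallel with the proof of Proposition \ref{n-power_sequence}, applying Proposition \ref{my_variant} with $\mathbf A = \textrm{$F_{\sigma}$-Isoc}^{\dagger}(Y/\mathbb L)$, $\mathbf B = \textrm{$F_{\sigma}$-Isoc}^{\dagger}(X/\mathbb L)$, $\mathbf C = \mathbf C(\pi)$, $a = \pi^*$, and $b$ the inclusion functor. Condition $(i)$ of that proposition, namely that $\pi^*(\mathcal G)$ is trivial if and only if $\mathcal G$ lies in $\mathbf C(\pi)$, holds by the very definition of $\mathbf C(\pi)$.

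For $(ii)$, the key point is that since $\pi:Y\to X$ is Galois with group $G$, the pull-back $\pi^*(\mathcal G)$ carries a canonical $G$-equivariant descent datum $\{\iota_g:g^*\pi^*(\mathcal G)\xrightarrow{\sim}\pi^*(\mathcal G)\}_{g\in G}$ compatible with the Frobenius structure. The largest trivial sub-object $\mathcal H_0\subseteq\pi^*(\mathcal G)$, being functorially defined, is automatically $G$-stable. I would then invoke Grothendieck's descent along the étale Galois cover $\pi$, exactly as in the second half of the proof of part $(ii)$ of Proposition \ref{reductions_for_ss}, to produce a sub-object $\mathcal H\subseteq\mathcal G$ with $\pi^*(\mathcal H)=\mathcal H_0$. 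This descent step is where the main technical content resides, but it is standard once the $G$-equivariant structure is in place. For $(iii)$, I would use the adjoint functor $\pi_*$ constructed in section 1.7 of \cite{Cr}: for a finite étale Galois cover $\pi$, the unit of adjunction $\mathcal G\to\pi^*\pi_*(\mathcal G)$ exhibits $\mathcal G$ as a direct summand of $\pi^*(\pi_*\mathcal G)$ (indeed $\pi^*\pi_*(\mathcal G)\cong\bigoplus_{g\in G}g^*(\mathcal G)$), so in particular $\mathcal G$ is a sub-object of an object in the image of $\pi^*$.

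It remains to identify $\textrm{\rm Gr}(\mathbf C(\pi),x)$ with $G$. For any $\mathcal F\in\mathbf C(\pi)$ and every closed point $z\in|X|$, a point of $Y$ above $z$ forces some positive power of $\textrm{Frob}_z(\mathcal F)$ to be the identity, so the eigenvalues of $\textrm{Frob}_z(\mathcal F)$ are roots of unity and $\mathcal F$ is unit-root. The Crew--Katz--Tsuzuki tensor equivalence (Theorem 1.4 of \cite{Cr} and Theorem 1.3.1 of \cite{Ts}) then identifies $\mathbf C(\pi)$ with the full subcategory of continuous finite-dimensional $\mathbb L$-linear representations of $\pi_1(X,\overline x)$ which become trivial after restriction to $\pi_1(Y,\overline y)$, i.e.\ factor through the quotient $\pi_1(X,\overline x)\twoheadrightarrow G$ corresponding to the Galois cover $\pi$. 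This category is precisely $\textrm{Rep}_{\mathbb L}(G)$, whose Tannakian fundamental group is the constant group scheme $G$, as required.
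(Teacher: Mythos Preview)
Your proposal is correct and follows essentially the same route as the paper: verify conditions $(i)$--$(iii)$ of Proposition~\ref{my_variant}, use the adjunction $\mathcal G\hookrightarrow\pi^*\pi_*(\mathcal G)$ for $(iii)$, use $G$-stability of the maximal trivial sub-object plus descent for $(ii)$, and identify $\textrm{Gr}(\mathbf C(\pi),x)$ with $G$ via the unit-root observation and the Crew--Katz--Tsuzuki equivalence. The one place where the paper is more precise is in the descent step for $(ii)$: rather than invoking ``Grothendieck's descent'' generically, it cites Th\'eor\`eme~1 of \cite{Et1}, which is the specific \'etale descent result for overconvergent $F$-isocrystals needed here; you should do the same, since descent in this category is not quite the classical fppf statement.
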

\begin{proof} Our proof is again along the same line as the proofs for Propositions \ref{crew_sequence} and \ref{n-power_sequence}. First we prove that $\textrm{\rm Gr}(\mathbf C(\pi),x)\cong G$. Let $\mathcal G$ be an arbitrary object of $\mathbf C_n$. Note that for every closed point $z\in|X|$ there is a close point $\widetilde z\in|Y|$ such that $\pi(\widetilde z)=z$. Then some power of the eigenvalues of
$\textrm{Frob}_z(\mathcal G)$ with positive exponent are all eigenvalues of
$\textrm{Frob}_{\widetilde z}(\pi^*(\mathcal G))$. Since $\textrm{Frob}_{\widetilde z}(\pi^*(\mathcal G))$ is the identity we get that the eigenvalues of $\textrm{Frob}_z(\mathcal G)$ are roots of unity, and hence $\mathcal G$ is unit-root. The claim now follows from the Crew--Katz--Tsuzuki tensor equivalence and its commutativity with the pull-back functor. Since $\mathbf C(\pi)$ is a subcategory of $\textrm{\rm $F_{\sigma}$-Isoc}^{\dagger}(X/\mathbb L)$, we only have to check the following:
\begin{enumerate}
\item[$(i)$] For an object $\mathcal G$ of $\textrm{\rm $F_{\sigma}$-Isoc}^{\dagger}(X/\mathbb L)$ the object $\pi^*(\mathcal G)$ of $\textrm{\rm $F_{\sigma}$-Isoc}^{\dagger}(Y/\mathbb L)$ is trivial if and only if $\mathcal G$ is an object of $\mathbf C(\pi)$.
\item[$(ii)$] Let $\mathcal G$ be an object of $\textrm{\rm $F_{\sigma}$-Isoc}^{\dagger}(X/\mathbb L)$, and let $\mathcal H_0\subseteq\pi^*(\mathcal G)$ denote the largest trivial sub-object. Then there exists $\mathcal H\subseteq\mathcal G$ with $\mathcal H_0=\pi^*(\mathcal H)$.
\item[$(iii)$] Every object $\mathcal G$ of $\textrm{\rm $F_{\sigma}$-Isoc}^{\dagger}(Y/\mathbb L)$ is a sub-object of an object of the form
$\pi^*(\mathcal H)$ with some object $\mathcal H$ of $\textrm{\rm $F_{\sigma}$-Isoc}^{\dagger}(X/\mathbb L)$. 
\end{enumerate}
Condition $(i)$ holds by definition. Since for every $\mathcal G$ of $\textrm{\rm $F_{\sigma}$-Isoc}^{\dagger}(Y/\mathbb L)$ the adjunction map $\mathcal G\to
\pi_*(\pi^*(\mathcal G))$ is injective, claim $(iii)$ also follows. Finally we show $(ii)$. As a pull-back, the $F$-isocrystal $\pi^*(\mathcal G)$ is equipped with a descent data with respect to $\pi$. For every $g\in G$ let
$\iota_g:g^*(\pi^*(\mathcal G))\to\pi^*(\mathcal G)$ be the isomorphism in this descent data. For every $g\in G$ the $F$-isocrystal $g^*(\mathcal H_0)$ is trivial, and hence so its image under $\iota_g$. Therefore $\iota_g(g^*(\mathcal H_0))\subseteq\mathcal H_0$. As the ranks of these isocrystals are the same, we get that the restriction of $\iota_g$ onto $g^*(\mathcal H_0)$ furnishes an isomorphism $\iota_g|_{g^*(\mathcal H_0)}:g^*(\mathcal H_0)\to\mathcal H_0$. Since these maps are restrictions of a descent data, they also satisfy the same cocycle condition, and hence $\{\iota_g|_{g^*(\mathcal H_0)}:g^*(\mathcal H_0)\to\mathcal H_0|g\in G\}$ is a descent data on $\mathcal H_0$ with respect to $\pi$. Therefore by Th\'eor\`eme 1 of \cite{Et1} on page 593 there is an $F$-isocrystal $\mathcal H\subseteq\mathcal G$ such that $\mathcal H_0=\pi^*(\mathcal H)$, so claim $(iii)$ holds.
\end{proof}
\begin{cor}\label{4.20} Let $\pi:V\to U$ be a finite \'etale cover of geometrically connected smooth schemes over $\mathbb F_q$, and let $\mathcal F$ be an object of $\textrm{\rm $F_{\sigma}$-Isoc}^{\dagger}(U/\mathbb L)$. Then for every point $x:\textrm{\rm Spec}(\mathbb F_{q^n})\to V$ of degree $n$ the homomorphism $\textrm{\rm Gr}(\pi^*(\mathcal F),x)\to\textrm{\rm Gr}(\mathcal F,\pi(x))$ induced by pull-back with respect to $\pi$ is an open immersion.
\end{cor}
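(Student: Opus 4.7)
The plan is to prove this in two steps: first that the homomorphism is a closed immersion, and second that its image has finite index in the target. Together these give an open immersion of algebraic groups.

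For the closed immersion property, I would invoke the standard Tannakian criterion (Proposition~2.21 of \cite{DM}). Since $\pi^*$ is a tensor functor, every object of $\dal\pi^*\mathcal F\dar$ is isomorphic to a subquotient of a direct sum of terms of the form $(\pi^*\mathcal F)^{\otimes a}\otimes((\pi^*\mathcal F)^\vee)^{\otimes b}\cong\pi^*\bigl(\mathcal F^{\otimes a}\otimes(\mathcal F^\vee)^{\otimes b}\bigr)$. Hence every object of $\dal\pi^*\mathcal F\dar$ is a subquotient of $\pi^*(\mathcal G)$ for some $\mathcal G\in\dal\mathcal F\dar$, which is precisely the criterion for the induced map on Tannakian fundamental groups to be a closed immersion.

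For the finite-index property I would reduce to the situation handled by Proposition \ref{cover_sequence}. Choose a finite étale Galois cover $\pi':W\to U$ with group $G$ that factors as $W\xrightarrow{\pi''}V\xrightarrow{\pi}U$, and pick a point $z$ on $W$ lying over $x$; this may require passing to an extension of $\mathbb F_{q^n}$, but such an extension only base-changes the monodromy groups and does not affect whether the map is an open immersion. Applied to $\pi'$, Proposition \ref{cover_sequence} yields that
$$\textrm{\rm Gr}(\textrm{\rm $F_{\sigma}$-Isoc}^{\dagger}(W/\mathbb L),z)\longrightarrow\textrm{\rm Gr}(\textrm{\rm $F_{\sigma}$-Isoc}^{\dagger}(U/\mathbb L),\pi(x))$$
is a closed immersion with cokernel the finite group $G$, so its image has index at most $|G|$.

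Now consider the commutative diagram
$$\xymatrix{
\textrm{\rm Gr}(\textrm{\rm $F_{\sigma}$-Isoc}^{\dagger}(W/\mathbb L),z) \ar[r] \ar[d] & \textrm{\rm Gr}(\textrm{\rm $F_{\sigma}$-Isoc}^{\dagger}(U/\mathbb L),\pi(x)) \ar[d]\\
\textrm{\rm Gr}((\pi')^*\mathcal F,z) \ar[r] & \textrm{\rm Gr}(\mathcal F,\pi(x))
}$$
whose vertical arrows are the Tannakian surjections onto the monodromy groups generated by $(\pi')^*\mathcal F$ and $\mathcal F$ respectively. Since the right vertical arrow is surjective, the image of the composition top--right--then--down already has index at most $|G|$ in $\textrm{\rm Gr}(\mathcal F,\pi(x))$; by commutativity the same bound holds for the image of the bottom arrow. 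Using $(\pi')^*=(\pi'')^*\circ\pi^*$, the bottom arrow factors through $\textrm{\rm Gr}(\pi^*\mathcal F,x)\to\textrm{\rm Gr}(\mathcal F,\pi(x))$, so the image of this latter map also has finite index, and is therefore open. Combined with the first step this gives the result. The whole argument rests on the Galois case established in Proposition \ref{cover_sequence}; no further obstacle arises, as the reduction to a Galois cover and the diagram chase are routine Tannakian manipulations.
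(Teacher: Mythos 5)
Your proof is correct and follows essentially the same route as the paper: pass to a Galois closure $W\to U$ factoring through $V$, apply Proposition \ref{cover_sequence} to the Galois covers, and transfer the resulting finite-index/identity-component statement from the full Tannakian fundamental groups down to the monodromy groups of $\mathcal F$. The only difference is presentational — you make explicit the closed-immersion step via the Deligne--Milne subquotient criterion and run a finite-index diagram chase, where the paper argues directly that the induced maps on identity components are isomorphisms and leaves injectivity implicit.
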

\begin{proof} Let $\rho:W\to U$ be a finite, \'etale Galois cover which factorises as the composition of a finite, \'etale map $\sigma:W\to V$ and $\pi$. Then $\sigma$ is Galois, too. Fix a point $y:\textrm{Spec}(\mathbb F_{q^n})\to W$ of degree $n$. The homomorphisms $\textrm{Gr}(\rho^*(\mathcal F),y)^o
\to\textrm{Gr}(\mathcal F,\rho(y))^o$ and 
$\textrm{Gr}(\rho^*(\mathcal F),y)^o=
\textrm{Gr}(\sigma^*(\pi^*(\mathcal F)),y)^o
\to\textrm{Gr}(\pi^*(\mathcal F),\sigma(y))^o$ induced by
$\rho$ and $\sigma$, respectively, are isomorphisms by Proposition \ref{cover_sequence}. So the same holds for the homomorphism $\textrm{Gr}(\pi^*(\mathcal F),\sigma(y))^o \to\textrm{Gr}(\mathcal F,\rho(y))^o$ induced by $\pi$, too. Since it is enough to prove the claim for just one point $x$, the claim follows.
\end{proof}
\begin{cor}\label{4.20b} Let $\pi:V\to U$ and $\mathcal F$ be as above. If $\mathcal F$ is semi-simple, then so is
$\pi^*(\mathcal F)$.
\end{cor}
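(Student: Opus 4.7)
The plan is to convert the assertion into a statement about monodromy groups and then invoke Corollary \ref{4.20}. The key general fact, valid for any Tannakian category over a field of characteristic zero, is that an object $\mathcal F$ is semi-simple if and only if its monodromy group $\textrm{Gr}(\mathcal F,\omega)$, computed with respect to any fibre functor $\omega$, is reductive. One direction is immediate, since every finite-dimensional representation of a reductive group in characteristic zero is completely reducible; for the other, $\mathcal F$ corresponds to a faithful representation of $\textrm{Gr}(\mathcal F,\omega)$, and any linear algebraic group in characteristic zero that admits such a representation is reductive.

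Granting this criterion, the argument is a one-liner. Fix a closed point $x$ of $V$ of some degree $n$ and set $y=\pi(x)$. Since $\mathcal F$ is semi-simple, $\textrm{Gr}(\mathcal F,y)$, and hence its identity component, is reductive. By Corollary \ref{4.20} the pull-back homomorphism
\[
\textrm{Gr}(\pi^*(\mathcal F),x)\longrightarrow\textrm{Gr}(\mathcal F,y)
\]
is an open immersion of linear algebraic groups over $\mathbb L_n$, and therefore restricts to an isomorphism $\textrm{Gr}(\pi^*(\mathcal F),x)^o\xrightarrow{\ \sim\ }\textrm{Gr}(\mathcal F,y)^o$. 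Thus $\textrm{Gr}(\pi^*(\mathcal F),x)^o$ is reductive, and since reductivity of an algebraic group depends only on its identity component, the full group $\textrm{Gr}(\pi^*(\mathcal F),x)$ is reductive as well. Applying the Tannakian dictionary in the reverse direction, now on $V$, yields that $\pi^*(\mathcal F)$ is semi-simple.

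I do not anticipate any genuine obstacle. The reductivity–semi-simplicity dictionary is standard and is already implicit in the proof of Proposition \ref{easy_ss}; all the real work is packaged into Corollary \ref{4.20}, which in turn rests on the descent statement Proposition \ref{cover_sequence}. The only minor care required is to distinguish between the full monodromy group and its identity component when transferring reductivity across the open immersion, which is harmless.
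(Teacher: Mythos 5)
Your proposal is correct and is essentially the paper's own proof: both arguments translate semi-simplicity into reductivity of the monodromy group, feed the open immersion of Corollary \ref{4.20} through it, and conclude by Tannakian duality. The only difference is that the paper first reduces to a base point of degree one (by replacing $\mathcal F$ with $\mathcal F^{(n)}$ and invoking part $(i)$ of Proposition \ref{reductions_for_ss}) so that $\dal\mathcal F\dar$ is neutral and the ``semi-simple iff reductive'' dictionary applies verbatim, whereas you invoke that dictionary for an $\mathbb L_n$-valued fibre functor on an $\mathbb L$-linear category — harmless since $\mathbb L_n/\mathbb L$ is separable, but worth the one extra reduction step the paper makes.
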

\begin{proof} Since $\pi^*(\mathcal F^{(n)})\cong
\pi^*(\mathcal F)^{(n)}$, we may assume that $x$ above has degree one by switching to $\mathcal F^{(n)}$ and using part $(i)$ of Proposition \ref{reductions_for_ss}. By assumption $\textrm{Gr}(\mathcal F,\pi(x))$ is reductive, therefore its open subgroup 
$\textrm{Gr}(\pi^*(\mathcal F),x)$ is also reductive. The claim now follows from Tannakian duality.
\end{proof}
Note that the pull-back functor:
$$\pi^*:\textrm{$F_{\sigma}$-Isoc}^{\dagger}(U/\mathbb L)\longrightarrow\textrm{$F_{\sigma}$-Isoc}^{\dagger}(V/\mathbb L)$$
also induces a map 
$$\textrm{\rm DGal}(\pi^*(\mathcal F),x)\longrightarrow\textrm{\rm DGal}(\mathcal F,\pi(x))$$
of Tannakian fundamental groups for every point $x:\textrm{\rm Spec}(\mathbb F_{q^n})\to V$ of degree $n$. 
\begin{cor}\label{4.21} Assume in addition that $\mathcal F$ is semi-simple. Then the homomorphism $\textrm{\rm DGal}(\pi^*(\mathcal F),x)\longrightarrow\textrm{\rm DGal}(\mathcal F,\pi(x))$ is an open immersion.
\end{cor}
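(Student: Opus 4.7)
The plan is to leverage the open immersion on the full monodromy groups from Corollary \ref{4.20}, pass to identity components via Proposition \ref{3.15} to obtain an isomorphism of the identity components of the two $\textrm{DGal}$'s, and separately extract injectivity from the commutative diagram of Crew-type exact sequences. First I would observe that $\pi^*(\mathcal F)$ is itself semi-simple by Corollary \ref{4.20b}, so Proposition \ref{3.15} applies to both $\mathcal F$ and $\pi^*(\mathcal F)$: each identity component $\textrm{DGal}(-,\,\cdot\,)^o$ is the derived subgroup of the corresponding $\textrm{Gr}(-,\,\cdot\,)^o$. Since Corollary \ref{4.20} tells us that $\textrm{Gr}(\pi^*(\mathcal F), x) \to \textrm{Gr}(\mathcal F, \pi(x))$ is an open immersion, the induced map on identity components is an isomorphism, and hence so is the induced map on their derived subgroups, yielding $\textrm{DGal}(\pi^*(\mathcal F), x)^o \cong \textrm{DGal}(\mathcal F, \pi(x))^o$.

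Next I would set up the commutative diagram obtained by applying Corollary \ref{crew_sequence2} to both $\mathcal F$ and $\pi^*(\mathcal F)$, relying on the observation that the pull-back functor $\pi^*$ carries constant $F$-isocrystals to constant ones, so that $\dal\mathcal F\dar_{const} \to \dal\pi^*(\mathcal F)\dar_{const}$ is well-defined and compatible with the inclusions into the ambient tensor subcategories. The rows of this diagram are short exact, the horizontal maps $\alpha$ are closed immersions, and the middle vertical $\textrm{Gr}(\pi^*(\mathcal F),x)\to \textrm{Gr}(\mathcal F, \pi(x))$ is an open immersion, in particular injective. Restricting this middle vertical to the kernel of its $\beta$, which by commutativity lands inside the kernel of the target $\beta$, shows that $\textrm{DGal}(\pi^*(\mathcal F), x) \to \textrm{DGal}(\mathcal F, \pi(x))$ is injective.

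Combining the two steps, the map in question is injective and an isomorphism on identity components, which is exactly the condition to be an open immersion of algebraic groups. I do not foresee a serious obstacle; the only mild verification is that $\pi^*$ of a constant $F$-isocrystal is constant, which is immediate since pull-back of a trivial isocrystal is trivial and the Frobenius structure is preserved by $\pi^*$.
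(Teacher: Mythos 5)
Your proposal is correct and follows essentially the same route as the paper: the paper's proof likewise invokes Corollary \ref{4.20b} to get semi-simplicity of $\pi^*(\mathcal F)$, then applies Corollary \ref{4.20} and Proposition \ref{3.15} to both $\mathcal F$ and $\pi^*(\mathcal F)$ to identify the identity components of the $\textrm{DGal}$'s as the derived groups of the isomorphic identity components of the $\textrm{Gr}$'s. Your extra step extracting injectivity from the commutative diagram of Crew exact sequences is just an explicit spelling-out of what the paper leaves implicit.
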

\begin{proof} Since $\pi^*(\mathcal F)$ is semi-simple by Corollary \ref{4.20b}, this follows at once from Corollary \ref{4.20} and Proposition \ref{3.15} applied to both $\mathcal F$ and
$\pi^*(\mathcal F)$.
\end{proof}
Let $\mathbb L'/\mathbb L$ be a finite totally ramified extension. In this case the identity map $\sigma':\mathbb L'\to\mathbb L'$ is still a lift of the $q$-power automorphism of $k$, so there is a functor $\cdot\otimes_{\mathbb L}\mathbb L':\textrm{$F_{\sigma}$-Isoc}^{\dagger}(X/\mathbb L)\to\textrm{$F_{\sigma'}$-Isoc}^{\dagger}(X/\mathbb L')$. 
\begin{lemma}\label{coefficients} Let $X$ be a geometrically irreducible smooth scheme over $\mathbb F_q$, let $\mathcal F$ be an object of $\textrm{\rm $F_{\sigma}$-Isoc}^{\dagger}(X/\mathbb L)$ and let $x$ be a closed point of $X$ of degree $n$. Then we have: $\textrm{\rm Gr}(\mathcal F
\otimes_{\mathbb L}\mathbb L',x)\cong\textrm{\rm Gr}(\mathcal F,x)\otimes_{\mathbb L_n}\mathbb L'_n$. If we also assume that $\mathcal F^{\wedge}$ is semi-simple, then we have: $\textrm{\rm DGal}(\mathcal F
\otimes_{\mathbb L}\mathbb L',x)\cong
\textrm{\rm DGal}(\mathcal F,x)\otimes_{\mathbb L_n}\mathbb L'_n$.
\end{lemma}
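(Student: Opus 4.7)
The isomorphism will be produced via the Tannakian formalism as the algebraicization of the obvious base-change functor. Since $\mathbb L'/\mathbb L$ is totally ramified while $\mathbb L_n/\mathbb L$ is unramified, these extensions are linearly disjoint and one has $\mathbb L'_n=\mathbb L_n\otimes_{\mathbb L}\mathbb L'$; in particular $\mathbb L'_n$ is a field. The exact tensor functor $\cdot\otimes_{\mathbb L}\mathbb L':\textrm{$F_{\sigma}$-Isoc}^{\dagger}(X/\mathbb L)\to\textrm{$F_{\sigma'}$-Isoc}^{\dagger}(X/\mathbb L')$ sends $\dal\mathcal F\dar$ into $\dal\mathcal F\otimes_{\mathbb L}\mathbb L'\dar$, and for every $\mathcal G$ in $\dal\mathcal F\dar$ the pull-back at $x$ satisfies $\omega_x(\mathcal G\otimes_{\mathbb L}\mathbb L')\cong\omega_x(\mathcal G)\otimes_{\mathbb L_n}\mathbb L'_n$. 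The universal property of Tannakian fundamental groups therefore yields a homomorphism
$$\phi:\textrm{Gr}(\mathcal F\otimes_{\mathbb L}\mathbb L',x)\longrightarrow
\textrm{Gr}(\mathcal F,x)\otimes_{\mathbb L_n}\mathbb L'_n$$
of algebraic groups over $\mathbb L'_n$, and it will be enough to show that $\phi$ is an isomorphism.

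I would then verify that $\phi$ is a closed immersion and faithfully flat using Proposition 2.21 of Deligne--Milne. For the closed-immersion criterion: the category $\dal\mathcal F\otimes_{\mathbb L}\mathbb L'\dar$ is generated by $\mathcal F\otimes_{\mathbb L}\mathbb L'$ under tensor products, direct sums, duals, and subquotients, and each of these operations commutes with $\cdot\otimes_{\mathbb L}\mathbb L'$; hence every object of $\dal\mathcal F\otimes_{\mathbb L}\mathbb L'\dar$ is a subquotient of an object of the form $\mathcal G\otimes_{\mathbb L}\mathbb L'$ with $\mathcal G\in\dal\mathcal F\dar$, which is precisely the closed-immersion criterion for $\phi$. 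For faithful flatness one needs that the induced functor on representation categories is fully faithful and that subobjects descend. Full faithfulness amounts to the identity
$$\textrm{Hom}(\mathcal G_1\otimes_{\mathbb L}\mathbb L',\mathcal G_2\otimes_{\mathbb L}\mathbb L')\cong\textrm{Hom}(\mathcal G_1,\mathcal G_2)\otimes_{\mathbb L}\mathbb L',$$
which follows from the flatness of the finite extension $\mathbb L'/\mathbb L$ applied inside the Tannakian category. Subobject descent can be checked after passing to $\mathbb L'_n$-module objects as in the proof of Proposition \ref{my_variant}, using the $(L)$-construction recalled there: a subobject of $\mathcal G\otimes_{\mathbb L}\mathbb L'$ in $\dal\mathcal F\otimes_{\mathbb L}\mathbb L'\dar_{(\mathbb L'_n)}$ descends because it is stable under the idempotents $\pi^{\mathcal G}_{ij}$ coming from a basis of $\mathbb L'_n/\mathbb L_n$.

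For the second assertion, I would apply the same argument verbatim to the category $\dal\mathcal F^{\wedge}\dar$ inside $\textrm{Isoc}^{\dagger}(X/\mathbb L)$, using that the forgetful functor $(\,\cdot\,)^{\wedge}$ commutes with $\cdot\otimes_{\mathbb L}\mathbb L'$ and that the semi-simplicity of $(\mathcal F\otimes_{\mathbb L}\mathbb L')^{\wedge}=\mathcal F^{\wedge}\otimes_{\mathbb L}\mathbb L'$ is equivalent to the semi-simplicity of $\mathcal F^{\wedge}$ by Proposition \ref{reductions_for_ss}(ii); this ensures that the relevant strictly full rigid abelian tensor subcategory is well-behaved on both sides. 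The main technical obstacle is the subobject descent step in the verification of faithful flatness: although it is morally just an application of finite flat descent, writing it out cleanly inside an abstract Tannakian category requires care with the $L$-module construction, and the proof organisation will stand or fall on making that step transparent.
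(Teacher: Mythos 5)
Your Tannakian strategy (produce $\phi$ from the base-change functor, then verify the Deligne--Milne criteria for closed immersion and faithful flatness) is a legitimate alternative to the paper's route, and the closed-immersion half is fine: every object of $\dal\mathcal F\otimes_{\mathbb L}\mathbb L'\dar$ is indeed a subquotient of some $\mathcal G\otimes_{\mathbb L}\mathbb L'$. The gap is in the faithful-flatness half, precisely at the subobject-descent step you flag as the ``main technical obstacle.'' What has to be shown there is that a sub-$F_{\sigma'}$-isocrystal $\mathcal H\subseteq\mathcal G\otimes_{\mathbb L}\mathbb L'$, which lives in the $\mathbb L'$-linear category, arises from an object of $\dal\mathcal F\dar$ equipped with an $\mathbb L'$-action. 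The mechanism you propose --- stability under the idempotents $\pi^{\mathcal G}_{ij}$ from the $(L)$-module construction --- does not do this: those endomorphisms live in $\textrm{End}(\mathcal G^{\oplus n})$ inside $\dal\mathcal F\dar_{(\mathbb L'_n)}$, whereas $\mathcal H$ is a priori an object of a \emph{different} category, $\dal\mathcal F\otimes_{\mathbb L}\mathbb L'\dar$, and the whole point is to transport it back. What is actually needed is a restriction-of-scalars functor $\textrm{$F_{\sigma'}$-Isoc}^{\dagger}(X/\mathbb L')\to\textrm{$F_{\sigma}$-Isoc}^{\dagger}(X/\mathbb L)$, and its existence is not a consequence of the abstract Tannakian formalism.

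This is exactly where the paper does concrete work: it first reduces to $X$ affine (using Theorem \ref{extension_thm} and, for the $\textrm{DGal}$ part, Proposition \ref{crew_sequence}), so that both categories are realised as integrable $(F,\nabla)$-modules over dagger algebras $A$ and $A'=A\otimes_{\mathbb L}\mathbb L'$; the evident forgetful functor then gives an $\mathbb L'$-linear tensor equivalence $\textrm{$F_{\sigma'}$-Isoc}^{\dagger}(X/\mathbb L')\simeq\textrm{$F_{\sigma}$-Isoc}^{\dagger}(X/\mathbb L)_{(\mathbb L')}$ compatible with the fibre functors, and the first claim follows from Deligne--Milne 3.11 without any separate verification of the two criteria. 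Either adopt that reduction and equivalence, or otherwise construct restriction of scalars directly; as written your argument is circular at this point. Note also that for the second assertion the paper does not rerun the argument on $\dal\mathcal F^{\wedge}\dar$ but deduces it from the first via the equivalence on constant subcategories and the exact sequence of Proposition \ref{crew_sequence} --- this is where semi-simplicity of $\mathcal F^{\wedge}$ is actually used; your appeal to Proposition \ref{reductions_for_ss}$(ii)$ does not supply what is needed there.
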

\begin{proof} Let $\pi:U\to X$ be an open affine sub-scheme containing $x$. Then by Theorem \ref{extension_thm} the pull-back map $\pi^*$ induces a pair of isomorphisms $\textrm{\rm Gr}(\mathcal F,x)\cong\textrm{\rm Gr}(\pi^*(\mathcal F),x)$ and
$\textrm{\rm Gr}(\mathcal F
\otimes_{\mathbb L}\mathbb L',x)\cong\textrm{\rm Gr}(\pi^*(\mathcal F
\otimes_{\mathbb L}\mathbb L'),x)$. Moreover $\pi^*$ induces an $\mathbb L$-linear, respectively $\mathbb L'$-linear tensor equivalence between $\dal\mathcal F\dar_{const}$ and $\dal\pi^*(\mathcal F)\dar_{const}$, respectively between $\dal
\mathcal F\otimes_{\mathbb L}\mathbb L'\dar_{const}$ and $\dal\pi^*(\mathcal F\otimes_{\mathbb L}\mathbb L')\dar_{const}$, and hence induces a pair of isomorphisms $\textrm{\rm DGal}(\mathcal F,x)\cong\textrm{\rm DGal}(\pi^*(\mathcal F),x)$ and
$\textrm{\rm DGal}(\mathcal F
\otimes_{\mathbb L}\mathbb L',x)\cong\textrm{\rm DGal}(\pi^*(\mathcal F
\otimes_{\mathbb L}\mathbb L'),x)$ by Proposition
\ref{crew_sequence} when $\mathcal F^{\wedge}$ is semi-simple.

Therefore we may assume without the loss of generality that $X$ is affine, and hence it has a compactification $Y$ which has a formal lift $\mathfrak Y$ to $\textrm{\rm Spf}(\mathcal O_{\mathbb L})$ smooth in the neighbourhood of $X$ which can be equipped with a lift of the $q$-power Frobenius compatible with $\sigma$. Let 
$\mathfrak Y'$ be the base change of $\mathfrak Y$ to Spf$(\mathcal O_{\mathbb L'})$. The smooth and proper frames $X\subseteq Y\hookrightarrow\mathfrak Y$ and $X\subseteq Y
\hookrightarrow\mathfrak Y'$ determine dagger algebras $A,A'$ over $\mathbb L$ and over $\mathbb L'$, respectively. Clearly $A'=A\otimes_{\mathbb L}\mathbb L'$. By assumption there is a lift of the $q$-power Frobenius $F_{\sigma}:A\to A$ compatible with $\sigma$, and the unique $\sigma'$-linear extension $F_{\sigma'}:A'\to A'$ of $F_{\sigma}$ is also a lift of the $q$-power Frobenius.

As in the proof of Proposition \ref{my_variant} above, for any $\mathbb L$-linear Tannakian category $\mathbf T$ let $\mathbf T_{(\mathbb L')}$ denote the category of $\mathbb L'$-modules in $\mathbf T$. Using the construction in Proposition 3.11 of \cite{DM} we may attach an $\mathbb L_n'$-valued fibre functor $\omega_x\otimes_{\mathbb L_n}\mathbb L_n'$ on $\textrm{\rm $F_{\sigma}$-Isoc}^{\dagger}(X/\mathbb L)_{(\mathbb L')}$ to the $\mathbb L_n$-linear fibre functor $\omega_x$ on $\textrm{\rm $F_{\sigma}$-Isoc}^{\dagger}(X/\mathbb L)$. In order to show the first claim it will be enough to show that there is an $\mathbb L'$-linear tensor-equivalence $\epsilon:\textrm{\rm $F_{\sigma}$-Isoc}^{\dagger}(X/\mathbb L')\to\textrm{\rm $F_{\sigma}$-Isoc}^{\dagger}(X/\mathbb L)_{(\mathbb L')}$ such that the composition $\omega_x\otimes_{\mathbb L_n}\mathbb L_n'\circ\epsilon$ is just the fibre functor at $x$. 

Recall that $\textrm{\rm $F_{\sigma}$-Isoc}^{\dagger}(X/\mathbb L)$ and $\textrm{\rm $F_{\sigma}$-Isoc}^{\dagger}(X/\mathbb L')$ is the category of integrable $(F_{\sigma},\nabla)$-modules over $A$, and the category of integrable $(F_{\sigma'},\nabla)$-modules over $A'$, respectively. Under these identifications
$\epsilon$ is just the factor forgetful functor attaching to an integrable $(F_{\sigma'},\nabla)$-module over $A'$ the underlying
$(F_{\sigma},\nabla)$-module over $A$, and using the $\mathbb L'$-module structure to define the $\mathbb L'$-multiplication. The first claim is now clear. Note that $\epsilon$ induces an equivalence between $\dal\mathcal F\otimes_{\mathbb L}\mathbb L'\dar_{const}$ and $(\dal\mathcal F\dar_{const})_{(\mathbb L')}$, so the second claim follows from Proposition
\ref{crew_sequence} when $\mathcal F^{\wedge}$ is semi-simple.
\end{proof}
\begin{prop}\label{geometric} Let $\mathcal F$ be a semi-simple object of $\textrm{\rm $F_{\sigma}$-Isoc}^{\dagger}(U/\mathbb L)$. Then there is a geometrically connected finite \'etale cover $\pi:V\to U$ such that $\textrm{\rm DGal}(\pi^*(\mathcal F),x)$ is connected for any closed point $x$ of $V$.
\end{prop}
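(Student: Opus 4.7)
The plan is to construct $\pi$ as a composition $\pi_2\circ\pi_1$ of two finite \'etale covers, first killing the component group of $\textrm{Gr}(\mathcal F,x)$ and then killing the remaining component group of $\textrm{DGal}(\mathcal F,x)$.

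For the first cover, I apply Remark \ref{cc_remark}: the finite quotient $\textrm{Gr}(\mathcal F,x)/\textrm{Gr}(\mathcal F,x)^o$ is realised as the monodromy group $\textrm{Gr}(\mathcal C,x)$ of some $\mathcal C\in\dal\mathcal F\dar$, which by Lemma \ref{finite_monodromy} is unit-root. The Crew--Katz--Tsuzuki equivalence then identifies $\mathcal C$ with a $p$-adic Galois representation of $\pi_1(U,\overline x)$ with finite image, corresponding to a finite \'etale Galois cover $\pi_1:V_1\to U$ trivialising $\mathcal C$. Combining this with the open immersion from Corollary \ref{4.20}, and using that $\textrm{Gr}(\pi_1^*\mathcal F,x)$ must lie in the kernel of $\textrm{Gr}(\mathcal F,\pi_1(x))\to\textrm{Gr}(\mathcal C,\pi_1(x))$, forces $\textrm{Gr}(\pi_1^*\mathcal F,x)=\textrm{Gr}(\mathcal F,\pi_1(x))^o$, which is connected. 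Replacing $U$ by $V_1$ and $\mathcal F$ by $\pi_1^*\mathcal F$ (still semi-simple by Corollary \ref{4.20b}), I may now assume $\textrm{Gr}(\mathcal F,x)$ is connected reductive.

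Proposition \ref{3.15} then gives $\textrm{DGal}(\mathcal F,x)^o=[\textrm{Gr}(\mathcal F,x),\textrm{Gr}(\mathcal F,x)]$, so the remaining obstruction $\Gamma:=\textrm{DGal}(\mathcal F,x)/\textrm{DGal}(\mathcal F,x)^o$ embeds as a finite subgroup of the torus $T=\textrm{Gr}(\mathcal F,x)/[\textrm{Gr}(\mathcal F,x),\textrm{Gr}(\mathcal F,x)]$. By Tannakian duality $T$ arises as $\textrm{Gr}(\mathcal E,x)$ for some $\mathcal E\in\dal\mathcal F\dar$, and after a finite coefficient extension (permitted by Lemma \ref{coefficients}, which does not affect connectedness of $\textrm{DGal}$) $\mathcal E$ splits as a direct sum of rank-one $F$-isocrystals $\mathcal L_\chi$ indexed by characters $\chi$ of $T$. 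Those $\chi$ whose restriction to $\Gamma$ is non-trivial yield $\mathcal L_\chi$ with finite $\textrm{DGal}$, and some tensor power $\mathcal L_\chi^{\otimes n_\chi}$ has trivial underlying isocrystal, hence lies in $\dal\mathcal F\dar_{const}$. Twisting $\mathcal L_\chi$ by a suitable rank-one constant $F$-isocrystal (itself unit-root, coming from the residual Galois action via the Crew--Katz--Tsuzuki equivalence) renders the twisted object unit-root with finite $\textrm{Gr}$, so a second application of Crew--Katz--Tsuzuki furnishes a finite \'etale cover $\pi_2:V_2\to V_1$ trivialising $\pi_2^*\mathcal L_\chi$ for each such $\chi$. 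Since the characters $\chi|_\Gamma$ separate points of $\Gamma$, the composite $\pi=\pi_2\circ\pi_1$ trivialises the full Tannakian sub-category of $\dal\mathcal F^\wedge\dar$ on which $\textrm{DGal}(\mathcal F,x)^o$ acts trivially, yielding $\textrm{DGal}(\pi^*\mathcal F,x)=\textrm{DGal}(\mathcal F,\pi(x))^o$, which is connected.

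The main obstacle is the second stage: even though $\Gamma$ is a finite abelian group, a rank-one $F$-isocrystal with finite $\textrm{DGal}$ need not itself be unit-root (its $\textrm{Gr}$ can be all of $\mathbb G_m$), so the Crew--Katz--Tsuzuki equivalence does not apply directly. One must first twist by a suitably chosen constant $F$-isocrystal to cancel the non-unit Frobenius slope on an appropriate power of $\mathcal L_\chi$; verifying that such a twist exists and leads to a cover genuinely killing $\Gamma$ requires careful book-keeping using Proposition \ref{crew_sequence} and the structure of $\dal\mathcal F\dar_{const}$.
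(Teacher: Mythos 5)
Your overall strategy (first kill the component group of $\textrm{Gr}(\mathcal F,x)$, then observe via Proposition \ref{3.15} that the remaining obstruction $\Gamma$ sits inside the torus $T=\textrm{Gr}/[\textrm{Gr},\textrm{Gr}]$ and kill it character by character) is sound and runs parallel to the paper's reduction, but there is a genuine gap exactly where you flag ``the main obstacle,'' and the fix you sketch cannot work as stated. You propose to twist $\mathcal L_\chi$ by a constant rank-one $F$-isocrystal that is ``itself unit-root'' so that the twisted object becomes unit-root. Tensoring with a unit-root object preserves Frobenius slopes, so if $\mathcal L_\chi$ is not already unit-root this twist accomplishes nothing (e.g.\ $\mathcal L_\chi$ could be an $n$-th root of a Tate twist, with constant non-zero slope $1/n$). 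The constant object you need must carry precisely the slope of $\mathcal L_\chi$, and you must then prove the twisted object is unit-root at \emph{every} closed point, not just at one. The paper supplies the one concrete idea missing from your sketch: after arranging a degree-one point $x$, take $\mathcal H$ to be the constant $F$-isocrystal obtained by pulling back the fibre $\mathcal F_x$ along the structure map; then $\mathcal F\otimes\mathcal H^{\vee}$ has trivial fibre at $x$, and the Grothendieck--Katz theorem on the constancy of the Newton polygon forces it to be unit-root everywhere, after which Crew--Katz--Tsuzuki and Proposition \ref{cover_sequence} produce the trivialising cover. Without Grothendieck--Katz (or an equivalent slope argument) there is no passage from ``finite $\textrm{DGal}$'' to ``unit-root up to a constant twist,'' and your second stage does not close.

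A secondary omission: both the fibre-twist and your splitting of $T$ into characters require a degree-one rational point and coefficient extensions, and $U$ need not have such a point. The paper handles this by first proving the statement for $\mathcal F^{(d)}$ over $U^{(d)}$ and then descending the resulting cover back to $U$, using that the geometric \'etale fundamental group is topologically finitely generated so that its open characteristic subgroups are cofinal; your argument needs this reduction (or an equivalent one) before invoking fibre functors at a degree-one point. Apart from these points the two routes differ only in bookkeeping: the paper quotients by $\textrm{DGal}^o$ first, concludes that $\textrm{Gr}$ is a torus, and reduces to a rank-one $\mathcal F$, whereas you keep $\mathcal F$ intact and work with the characters of the abelianisation of $\textrm{Gr}(\mathcal F,x)$; both reductions are legitimate and terminate in the same rank-one problem, which is where the missing Grothendieck--Katz input is indispensable.
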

\begin{rem} It would be interesting to see a proof of a similar claim for overconvergent isocrystals, without using the Frobenius, but I do not know how to do this. In particular is it true that an overconvergent isocrystal with finite monodromy can be trivialised with a finite, \'etale map?
\end{rem}
\begin{proof} First note that we only need to prove a similar claim $\mathcal F^{(d)}$ and $U^{(d)}$ for some positive integer $d$. Indeed let $\pi':V'\to U^{(d)}$ be a geometrically connected finite \'etale cover such that $\textrm{\rm DGal}(\pi^*(\mathcal F),x)$ is connected for any closed point $x$ of $V$. Because the \'etale fundamental group of the base change of $U$ to $\overline{\mathbb F}_q$ is topologically finitely generated, its open characteristic subgroups are cofinal, and hence there is a geometrically connected finite \'etale map $\pi:V\to U$ such that its base change $\pi^{(d)}:V^{(d)}\to U^{(d)}$ to $\mathbb F_{q^d}$ can be factorised as the composition of a finite \'etale map $\rho:V^{(d)}\to V'$ and $\pi'$. By Corollary \ref{4.21} the group $\textrm{\rm DGal}(\pi^*(\mathcal F^{(d)}),x)$ is connected for any closed point $x$ of $V$. Since $\pi^*(\mathcal F^{(d)})\cong\pi^*(\mathcal F)^{(d)}$, we get that the same holds for $\mathcal F$, too.

By the above we may assume that $U$ has a degree one point $x$. Since $\textrm{\rm DGal}(\mathcal F,x)^o$ is an open characteristic subgroup of $\textrm{\rm DGal}(\mathcal F,x)$, we get that $\textrm{\rm DGal}(\mathcal F,x)^o$ is a closed normal subgroup of $\textrm{\rm Gr}(\mathcal F,x)$. By Tannaka duality there is an object of $\dal\mathcal F\dar$ whose monodromy group is the quotient $\textrm{\rm Gr}(\mathcal F,x)/\textrm{\rm DGal}(\mathcal F,x)^o$. So we may assume without the loss of generality that $\textrm{\rm DGal}(\mathcal F,y)$ is finite for any closed point $y$ of $U$. By Corollary \ref{4.20} the same will hold for $\pi^*(\mathcal F)$ where $\pi:V\to U$ is any geometrically connected finite \'etale cover. Therefore we may assume that $\textrm{\rm Gr}(\mathcal F,x)$ is connected, by taking a suitable finite \'etale cover $\pi:V\to U$ and using Proposition \ref{cover_sequence}. 

Since $\textrm{\rm Gr}(\mathcal F,x)$ is connected, its derived group is also connected. Therefore by Proposition \ref{3.15} this group is trivial, so the reductive group $\textrm{\rm Gr}(\mathcal F,x)$ must be a torus. After switching to $\mathcal F^{(d)}$ for a suitable $d$, then taking a suitable finite extension $\mathbb L'/\mathbb L$ such that the identity map $\sigma':\mathbb L'\to\mathbb L'$ is a lift of the $q^d$-power automorphism of $k$, and switching to $\mathcal F^{(d)}\otimes_{\mathbb L}\mathbb L'$ and using Lemma \ref{coefficients} (recall that $\mathcal F^{\wedge}$ is semi-simple by Proposition \ref{3.9}), we may even assume that $\textrm{\rm Gr}(\mathcal F,x)$ is a split torus. Using Tannaka duality we get that without the loss of generality we may assume that $\mathcal F=\mathcal F_1\oplus\mathcal F_2\oplus\cdots\oplus\mathcal F_m$, where $\mathcal F_i$ is a rank one object of $\textrm{$F_{\sigma}$-Isoc}^{\dagger}(U/\mathbb L)$ for every index $i$. Note that it will be enough to show the proposition for each $\mathcal F_i$ individually. In fact the fibre product of the covers which we have constructed for the $\mathcal F_i$ will have the requited property for each index $i$ (by Corollary \ref{4.20}), and hence for $\mathcal F$, too. So we may assume without the loss of generality that $\mathcal F$ has rank one. 

Let $x:\textrm{Spec}(\mathbb F_q)\to U$ be a closed point of degree one, and let $\mathcal F_x$ be the fibre of $\mathcal F$ over $x$. Let $c:U\to\textrm{Spec}(\mathbb F_q)$ be the unique map, and let $\mathcal H$ be the pull-back of $\mathcal F_x$ onto $U$. If $\mathcal H^{\wedge}$ denotes the dual of $\mathcal H$, then $\mathcal G=\mathcal F\otimes\mathcal H^{\wedge}$ is a rank one object of $\textrm{$F_{\sigma}$-Isoc}^{\dagger}(U/\mathbb L)$ whose fibre over $x$ is trivial. Therefore by the Grothendieck-Katz theorem (see \cite{Ka}) the $F$-isocrystal $\mathcal G$ is unit-root. Therefore by the Crew--Katz--Tsuzuki tensor equivalence and  Proposition \ref{cover_sequence} there is a geometrically connected finite \'etale cover $\pi:V\to U$ such that $\pi^*(\mathcal G)$ is a constant unit-root $F$-isocrystal. Since $\mathcal F^{\vee}\cong\mathcal G^{\vee}$, we get that $\pi(\mathcal F)^{\vee}\cong\pi(\mathcal F^{\vee})\cong\pi^*(\mathcal G^{\vee})
\cong\pi^*(\mathcal G)^{\vee}$ is also trivial, and the claim follows. 
\end{proof}

\section{The isogeny and semi-simplicity conjectures via $p$-divisible groups}

We are going to use the notation introduced in the introduction and in the previous section.
\begin{proof}[Proof of Theorem \ref{isogeny}] In the commutative diagram below we let $\beta$ and $\gamma$ be induced by the functoriality of convergent Dieudonn\'e modules for abelian varieties and for $p$-divisible groups, respectively, the map $\psi$ is furnished by the forgetful map from the category of overconvergent $F$-isocrystals into the category of convergent $F$-isocrystals, the map
$\phi$ is induced by the functoriality of taking the $p$-divisible group of abelian varieties, while $\rho$ is furnished by base change.
$$\xymatrixcolsep{5pc}\xymatrix{
\textrm{Hom}(A,B)\otimes\mathbb Q_p
\ar[d]^{\phi} \ar[rdd]^{\beta} \ar[rd]^{\alpha} & \\
\textrm{Hom}(A[p^{\infty}],B[p^{\infty}])\otimes_{\mathbb Z_p}\mathbb Q_p
\ar[d]^{\rho} \ar[rd]^{\gamma} &
\textrm{Hom}(D^{\dagger}(A),D^{\dagger}(B))\ar[d]^{\psi} \\
\textrm{Hom}(A_L[p^{\infty}],B_L[p^{\infty}])\otimes_{\mathbb Z_p}\mathbb Q_p&
\textrm{Hom}(D(A),D(B))}$$
By de Jong's theorem (Theorem 2.6 of \cite{J2} on page 305) the composition $\rho\circ\phi$ is an isomorphism. Since the map $\rho$ is injective, we get that $\phi$ is an isomorphism, too. By another theorem of de Jong (Main Theorem 2 of \cite{J1} on page 6) the map $\gamma$ is bijective. Therefore the composition $\beta$ is an isomorphism, too. By the global version of Kedlaya's full faithfulness theorem (see \cite{Ke1}) the map $\psi$ is also an isomorphism. Therefore the map $\alpha$ must be an isomorphism, too. 
\end{proof}
\begin{lemma}\label{descent} Let $\pi:X\rightarrow Y$ a finite, \'etale, Galois map of smooth schemes over $\mathbb F_q$ with Galois group $G$. Let $\mathcal F$ be an object of the category $\textrm{\rm $F_{\sigma}$-Isoc}^{\dagger}(Y/\mathbb L)$ and let $s$ be a horizontal section of $\pi^*(\mathcal F)$ which is invariant with respect to the natural $G$-action. Then $s$ is the pull-back of a horizontal section of $\mathcal F$ with respect to $f$.
\end{lemma}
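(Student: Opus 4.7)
The plan is to interpret a horizontal section as a morphism from the trivial overconvergent isocrystal and then invoke \'etale descent. Recall that for any smooth $\mathbb F_q$-scheme $Z$ and any overconvergent isocrystal $\mathcal G$ on $Z$, the set of horizontal sections of $\mathcal G$ is canonically identified with $\textrm{Hom}_{\textrm{Isoc}^\dagger(Z/\mathbb L)}(\mathcal O_Z,\mathcal G)$, where $\mathcal O_Z$ denotes the trivial overconvergent isocrystal. So it suffices to show the corresponding statement about morphisms in $\textrm{Isoc}^\dagger(Y/\mathbb L)$, in particular I can forget the Frobenius structure and work only with $\mathcal F^{\wedge}$ and $\pi^*(\mathcal F)^{\wedge}\cong\pi^*(\mathcal F^{\wedge})$.

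Next I would invoke Th\'eor\`eme~1 of \cite{Et1} (already used in the proof of Proposition \ref{cover_sequence}), which says that since $\pi:X\to Y$ is finite \'etale Galois with group $G$, the pull-back functor $\pi^*:\textrm{Isoc}^\dagger(Y/\mathbb L)\to\textrm{Isoc}^\dagger(X/\mathbb L)$ induces an equivalence between $\textrm{Isoc}^\dagger(Y/\mathbb L)$ and the category of $G$-equivariant objects of $\textrm{Isoc}^\dagger(X/\mathbb L)$ (i.e.\ overconvergent isocrystals on $X$ equipped with a descent datum $\{\iota_g:g^*(\cdot)\to(\cdot)\mid g\in G\}$ satisfying the cocycle condition). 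Under this equivalence the trivial object $\mathcal O_Y$ corresponds to $\mathcal O_X$ with its tautological descent datum $\iota_g=\textrm{id}$, while $\mathcal F^{\wedge}$ corresponds to $\pi^*(\mathcal F^{\wedge})$ with its natural descent datum coming from the fact that it is a pull-back.

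Now $s$, viewed as a morphism $\mathcal O_X\to\pi^*(\mathcal F^{\wedge})$ in $\textrm{Isoc}^\dagger(X/\mathbb L)$, is $G$-equivariant precisely when for every $g\in G$ the diagram obtained by pulling back $s$ via $g$ and composing with the respective descent isomorphisms is commutative; this is exactly the statement that $s$ is $G$-invariant in the sense of the lemma. Hence $s$ is a morphism in the category of $G$-equivariant overconvergent isocrystals, so by the equivalence above it descends uniquely to a morphism $t:\mathcal O_Y\to\mathcal F^{\wedge}$ in $\textrm{Isoc}^\dagger(Y/\mathbb L)$, i.e.\ a horizontal section of $\mathcal F$, and $\pi^*(t)=s$ by construction.

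The only point that requires a moment of care is the translation between the ad-hoc notion of ``$G$-invariant horizontal section'' used in the statement and the categorical notion of a $G$-equivariant morphism of descent data; once one writes out explicitly how $G$ acts on sections of $\pi^*(\mathcal F)$ via the canonical isomorphisms $g^*\pi^*(\mathcal F)\cong\pi^*(\mathcal F)$ coming from $\pi\circ g=\pi$, this is purely formal. No further analytic or Frobenius-theoretic input is needed.
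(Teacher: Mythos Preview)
Your proof is correct and takes a genuinely different route from the paper's. The paper argues via the exact sequence
\[
0\to\mathcal F\to\pi_*\pi^*(\mathcal F)\to\pi_*\pi^*(\mathcal F)/\mathcal F\to 0
\]
and the induced exact sequence on $H^0_{rig}$; it then shows that the image of $\pi_*(s)$ in $H^0_{rig}(Y,\pi_*\pi^*(\mathcal F)/\mathcal F)$ vanishes by checking this on fibres over closed points of $Y$, thereby reducing to the zero-dimensional case where it is classical Galois descent for vector spaces. Your argument instead packages everything into the categorical equivalence of \'Etesse (Th\'eor\`eme~1 of \cite{Et1}), identifying $G$-invariant horizontal sections with $G$-equivariant morphisms $\mathcal O_X\to\pi^*(\mathcal F^{\wedge})$ and descending at once. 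Your approach is cleaner and more conceptual, treating the lemma as an immediate instance of full faithfulness of $\pi^*$ on the equivariant category; the paper's approach avoids invoking the full equivalence but still ultimately relies on descent in the zero-dimensional situation, so it is not really more elementary. Either way, your observation that the Frobenius structure plays no role is correct and worth noting.
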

\begin{proof} There is a short exact sequence:
$$\CD
0@>>>\mathcal F@>>>{\pi_*(\pi^*(\mathcal F))}@>>>\pi_*(\pi^*(\mathcal F))/\mathcal F@>>>0
\endCD$$
of overconvergent $F$-isocrystals which gives rise to the cohomological exact sequence:
$$\CD
0@>>>H^0_{rig}(Y,\mathcal F)@>{\pi_*\circ\pi^*}>>H^0_{rig}(Y,\pi_*(\pi^*(\mathcal F)))@>{\gamma}>>H^0_{rig}(Y,\pi_*(\pi^*(\mathcal F))/\mathcal F).
\endCD$$
Since the map $\pi_*:H^0_{rig}(X,\pi^*(\mathcal F))\rightarrow H^0_{rig}(Y,\pi_*(\pi^*(\mathcal F)))$ is injective, it will be enough to show that the image $\pi_*(s)$ under the map $\gamma$ is zero. It is enough to verify that
$\gamma(\pi_s(s))$ vanishes in the fibre of $\pi^*(\mathcal F))/\mathcal F$ over each closed point of $Y$. Therefore by the naturality of the maps $\pi_*$ and $\pi^*$ we may reduce to the situation when $X$ and $Y$ are zero-dimensional. In this case the claim is obvious.
\end{proof}
For every function field $K$ of transcendence degree one over $\mathbb F_q$ let $\mathcal C_K$ denote the unique irreducible smooth projective curve whose function field is $K$. When $K=L$ we will sometimes let $\mathcal C$ denote $\mathcal C_L$, for the sake of avoiding overburdened notation.  
\begin{lemma}\label{basechange} We may assume that $A_L$ has semi-stable reduction over $\mathcal C_L$ in the course of the proof of Theorem \ref{semisimple} without the loss of generality.
\end{lemma}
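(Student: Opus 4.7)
The plan is to apply Grothendieck's semi-stable reduction theorem, followed by a Galois descent through a suitable finite étale cover. First, by Grothendieck's semi-stable reduction theorem for abelian varieties, there exists a finite Galois extension $M/L$, with Galois group $G$, such that $A_M = A_L \otimes_L M$ acquires semi-stable reduction at every place of $M$, that is, over $\mathcal{C}_M$. Let $\pi: \mathcal{C}_M \to \mathcal{C}_L$ be the associated finite map of smooth projective curves; it is étale over a dense open subset of $\mathcal{C}_L$. After shrinking $U$ if necessary so that it is contained in both the original $U$ and the image of the étale locus of $\pi$, one may assume that $V := \pi^{-1}(U) \to U$ is a finite étale Galois cover with group $G$. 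This shrinking is harmless in the context of Theorem \ref{semisimple}: by Corollary \ref{very_useful}, $D^{\dagger}(A)$ is semi-simple over the original $U$ if and only if its restriction to any dense open subscheme is.

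Set $A_V = A \times_U V$; this is an abelian scheme over $V$ whose generic fibre is $A_M$, which by construction has semi-stable reduction over $\mathcal{C}_M$. By the functoriality of the overconvergent Dieudonné module construction, there is a canonical isomorphism $\pi^*(D^{\dagger}(A)) \cong D^{\dagger}(A_V)$. Therefore, if Theorem \ref{semisimple} has been proved for abelian schemes whose generic fibre acquires semi-stable reduction over the associated smooth projective model, one obtains that $\pi^*(D^{\dagger}(A))$ is semi-simple. It then suffices to descend semi-simplicity from $V$ to $U$ through the Galois cover $\pi$.

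For this descent step I would mimic the averaging argument used in the proof of Proposition \ref{reductions_for_ss}(ii). Given any sub-object $\mathcal{G} \subseteq D^{\dagger}(A)$, semi-simplicity of $\pi^*(D^{\dagger}(A))$ supplies a projection $p: \pi^*(D^{\dagger}(A)) \to \pi^*(D^{\dagger}(A))$ with image $\pi^*(\mathcal{G})$. Symmetrizing via the canonical descent data $\{\iota_g : g^*(\pi^*(D^{\dagger}(A))) \to \pi^*(D^{\dagger}(A))\}_{g \in G}$, one forms
$$\bar{p} = \frac{1}{|G|} \sum_{g \in G} \iota_g \circ g^*(p) \circ \iota_g^{-1}.$$
The cocycle identity $\iota_h \circ h^*(\iota_g) = \iota_{hg}$ guarantees that $\bar p$ is $G$-equivariant, and an essentially identical computation to the one in the proof of Proposition \ref{reductions_for_ss}(ii) shows that $\bar p$ remains a projection with image $\pi^*(\mathcal{G})$. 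By Grothendieck descent, $\bar p$ descends to a projection $D^{\dagger}(A) \to \mathcal{G}$, exhibiting $\mathcal{G}$ as a direct summand. No genuine obstacle is expected in this plan: the averaging trick is available precisely because $\pi$ was arranged to be finite étale and Galois, and the only piece requiring mild care is the initial shrinking of $U$, which is legitimized by Corollary \ref{very_useful}.
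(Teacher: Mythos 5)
Your proposal is correct and follows essentially the same route as the paper: invoke the semi-stable reduction theorem to obtain a finite Galois extension, pull back along the corresponding cover of curves, average a projection operator over the Galois group, and descend (the paper packages the descent step as Lemma \ref{descent} applied to $\textrm{End}(D^{\dagger}(A))$ rather than citing Grothendieck descent directly). Your explicit shrinking of $U$ to make the cover \'etale, justified by Corollary \ref{very_useful}, is a point the paper leaves implicit but does need for its Lemma \ref{descent} to apply.
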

\begin{proof} By the semi-stable reduction theorem there is a finite separable extension $K$ of $L$ such that $A_K$ has semi-stable reduction over $\mathcal C_K$. We may assume that $K/L$ is a finite Galois extension, since the base change $A_J$ of $A_K$ will have semi-stable reduction over $\mathcal C_J$ for any finite separable extension $J$ of $K$, too. Let $G$ denote the Galois group of $K/L$. Let $\pi:\mathcal C_K\rightarrow\mathcal C_L$ be the map corresponding to the extension $K/L$ and let $V=\pi^{-1}(U)$. Let $A'$ denote the base change of $A$ to $V$. By the naturality of the Dieudonn\'e functor we have $D^{\dagger}(A')=\pi^*(D^{\dagger}(A))$. Let $\mathcal F\subseteq D^{\dagger}(A)$ be a sub $F$-isocrystal. There is a projection operator $P:D^{\dagger}(A')\rightarrow D^{\dagger}(A')$ with image $\pi^*(\mathcal F)$ by assumption. Moreover there is a natural action $G$ on $D^{\dagger}(A')$ covering the Galois action on $\mathcal C_K$. Since this action leaves $\pi^*(\mathcal F)$ invariant, we may argue similarly to the proof of Proposition \ref{reductions_for_ss} to conclude that the operator $P'$ which we get from $P$ by averaging out the action of $G$ will be a projection operator with image $\pi^*(\mathcal F)$, too. Since $P'$ is invariant under the action of $G$, by Lemma \ref{descent} applied to the overconvergent $F$-isocrystal $\textrm{End}(D^{\dagger}(A))$ and the map $\pi$ we get that $P$ is the pull-back of an endomorphism of $D^{\dagger}(A)$ with respect to $\pi$. This must be a projection operator with kernel $\mathcal F$.
\end{proof}
\begin{defn} Recall that for every function field $K$ of transcendence degree one over $\mathbb F_q$ the set of closed points $|\mathcal C_K|$ of $\mathcal C_K$ is in a natural bijection with the set $|K|$ of places of $K$. We say that a $p$-divisible subgroup $G$ over $U$ has semi-stable reduction if for every $x\in|\mathcal C_L|$ in the complement of $U$ the base change of $G$ to $L_x$ has semi-stable reduction. Similarly we say that an abelian scheme $A$ over $U$ has semi-stable reduction such that for every $x\in|\mathcal C_L|$ in the complement of $U$ the base change of $A$ to $L_x$ has semi-stable reduction.
\end{defn}
\begin{prop}\label{extension} Let $A$ be an abelian scheme over $U$ with semi-stable reduction, and let $\mathcal F\subseteq D^{\dagger}(A)$ be a overconvergent sub $F$-isocrystal. Then there is a $p$-divisible subgroup $G\subset A[p^{\infty}]$ which has semi-stable reduction such that $D(G)=
\mathcal F^{\sim}$.
\end{prop}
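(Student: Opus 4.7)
The plan is to construct $G$ in two steps: first to produce a $p$-divisible subgroup of $A[p^\infty]$ over $U$ whose convergent Dieudonn\'e module recovers $\mathcal F^\sim$, and then to verify semi-stability at each $x\in|\mathcal C_L|\setminus U$ by invoking Theorem \ref{overconvergent-condition}. Both halves use the main inputs that have already been set up in the paper: de Jong's equivalence between $p$-divisible groups and Dieudonn\'e modules (used already in the proofs of Theorems \ref{isogeny} and \ref{overconvergent-condition}), and the overconvergence criterion for semi-stable reduction.

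For the first step, I would pass from $\mathcal F^\sim\subseteq D(A)$ to an integral sub-Dieudonn\'e crystal by intersecting with the Dieudonn\'e crystal $\mathbf D(A[p^\infty])\subseteq D(A)$ (after clearing a bounded denominator, i.e.\ replacing $\mathcal F^\sim$ by $p^N\mathcal F^\sim\cap\mathbf D(A[p^\infty])$ for $N$ large). This produces a sub-crystal stable under $F$ and $V$. By the main theorem of \cite{J1} (the equivalence between $p$-divisible groups and their crystalline Dieudonn\'e modules, applied locally on $U$ through formal completions at closed points and then patched by flat descent, exactly as in the use of this theorem in the proof of Theorem \ref{overconvergent-condition}), this integral sub-crystal comes from a $p$-divisible subgroup $G\subseteq A[p^\infty]$ over $U$. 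By functoriality $\mathbf D(G)\otimes_{\mathbb Z_q}\mathbb Q_q=\mathcal F^\sim$, so $D(G)=\mathcal F^\sim$ as required.

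For the second step, fix $x\in|\mathcal C_L|\setminus U$. The $p$-divisible group $H:=A_{L_x}[p^\infty]$ has semi-stable reduction by the semi-stability hypothesis on $A$, and $G_{L_x}$ is a sub $p$-divisible group of $H$. Its Dieudonn\'e module $\mathbf D(G_{L_x})\otimes_{\Gamma}\mathcal E$ is by construction the localisation of $\mathcal F^\sim$ at $x$. Since $\mathcal F$ itself is overconvergent on $U$, its restriction to the punctured tubular neighbourhood of $x$ underlies a $(\sigma,\nabla)$-module over $\mathcal E^\dagger$; hence $\mathbf D(G_{L_x})\otimes_{\Gamma}\mathcal E$ is overconvergent in the sense defined just before Theorem \ref{overconvergent-condition}. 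That theorem then yields that $G_{L_x}$ has semi-stable reduction, and doing this at each missing point completes the proof.

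The main obstacle is the first step: producing the global $p$-divisible subgroup $G$ over $U$ compatibly with the integral Dieudonn\'e crystal structure. One must carefully choose the integral lattice and verify that the local pieces coming from de Jong's equivalence at each point of $U$ glue into a single $p$-divisible subgroup; the semi-stable reduction assertion is essentially built into Theorem \ref{overconvergent-condition} once $G$ has been constructed.
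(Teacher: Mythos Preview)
Your two-step strategy is exactly the paper's: produce an integral sub-Dieudonn\'e module of $\mathbf D(A)$ lifting $\mathcal F^{\sim}$, invoke de Jong's equivalence to get $G\subset A[p^\infty]$, and then apply Theorem \ref{overconvergent-condition} at each point of $|\mathcal C_L|\setminus U$ (using that $A_{L_x}[p^\infty]$ is semi-stable). So the approach matches.

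The one place where you overcomplicate matters is the first step. You propose to apply de Jong's equivalence locally at formal completions of closed points of $U$ and then patch by flat descent, and you flag this gluing as ``the main obstacle''. In fact no such local-to-global argument is needed: Main Theorem~1 of \cite{J1} already asserts that the crystalline Dieudonn\'e functor is an equivalence over any regular scheme of finite type over a field admitting a finite $p$-basis, and $U$ is such a scheme. So one simply takes a sub-Dieudonn\'e module $\mathbf F\subseteq\mathbf D(A)$ with $\mathbf F\otimes\mathbb Q_q=\mathcal F^{\sim}$ (your intersection with the lattice does this) and reads off $G$ directly and globally from the equivalence. The obstacle you worry about does not arise, and the paper's proof of Theorem \ref{overconvergent-condition} does not use any patching of this kind either---there de Jong's theorem is applied over the complete local rings $k[[t]]$ and $k((t))$ themselves. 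For the second step, the paper also cites 2.5 of \cite{J2} to justify that semi-stability of $A$ implies semi-stability of $A_{L_x}[p^\infty]$; you implicitly assume this, so it is worth making explicit.
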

\begin{proof} Note that there is a sub Dieudonn\'e-module $\mathbf F\subseteq
\mathbf D(A)$ such that $\mathbf F\otimes\mathbb Q_q=\mathcal F^{\sim}$. Because the Dieudonn\'e functor is an equivalence of categories over regular schemes which are of finite type over a finite field (see Main Theorem 1 of \cite{J1} on page 6) there is a $p$-divisible subgroup $G\subset A[p^{\infty}]$ such that $\mathbf D(G)=\mathbf F$, and hence $D(G)=\mathcal F^{\sim}$. We only need to show that $G$ has semi-stable reduction. By 2.5 of \cite{J2} on page 304 the base change $H=A_{L_x}[p^{\infty}]$ of the $p$-divisible group $A[p^{\infty}]$ to $L_x$ has semi-stable reduction, for every $x\in|\mathcal C_L|$ in the complement of $U$. The claim now follows from Theorem \ref{overconvergent-condition}.
\end{proof}
For every abelian scheme $A$ over $U$ we are going to introduce a vector bundle $\omega_A$ as follows. Let $\mathcal A$ be the N\'eron model of $A_L$ over $\mathcal C_L$ extending $A$. Let $\omega_A$ denote the $\mathcal O_{\mathcal C_L}$-dual of the pull-back of the sheaf of K\"ahler differentials $\Omega^1_{\mathcal A/\mathcal C_L}$ with respect to the identity section $\mathcal C_L\rightarrow\mathcal A$.
\begin{thm}\label{height} Let $A$ be an abelian scheme over $U$ with semi-stable reduction, and let $G\subset A[p^{\infty}]$ be a $p$-divisible subgroup which has semi-stable reduction. For every $n\in\mathbb N$ let $B_n=A/G[p^n]$ be the abelian scheme that is the quotient of $A$ by the $p^n$-torsion of $G$. Then the line bundles $\det(\omega_{B_n})$ are all isomorphic.
\end{thm}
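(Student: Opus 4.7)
The plan is to globalize the quotient construction to the N\'eron model over $\mathcal C_L$, and then to show that each step of the resulting tower $\ldots\twoheadrightarrow\mathcal B_{n+1}^0\twoheadrightarrow\mathcal B_n^0\twoheadrightarrow\ldots$ is an isogeny with a canonically fixed kernel $K=\tilde G[p]$ whose contribution to $\det\omega$ cancels by a Cartier-duality / semi-stable-filtration argument.

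By an argument analogous to Lemma \ref{basechange}, we may assume that $A$ has semi-stable reduction over $\mathcal C_L$; its N\'eron model $\mathcal A$ then has semi-abelian identity component $\mathcal A^0$. Combining the semi-stability of $G$ with Definition \ref{filtrationdef} and Theorem \ref{overconvergent-condition} applied locally at each place of bad reduction, and using the Dieudonn\'e equivalence of the main theorem of \cite{J1}, the $p$-divisible group $G$ extends to a $p$-divisible subgroup $\tilde G\subset\mathcal A^0[p^\infty]$ over all of $\mathcal C_L$ with semi-stable reduction. For each $n$ the finite flat closed subgroup scheme $\tilde G[p^n]\subset\mathcal A^0$ admits an fppf quotient $\mathcal B_n^0:=\mathcal A^0/\tilde G[p^n]$, a semi-abelian scheme over $\mathcal C_L$ whose generic fibre is $B_n$; by uniqueness of the semi-abelian prolongation it agrees with the identity component of the N\'eron model of $B_n$, so $\omega_{B_n}=\omega_{\mathcal B_n^0}$.

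The inclusions $\tilde G[p^n]\subset\tilde G[p^{n+1}]$ produce a tower of isogenies $\mathcal B_{n+1}^0\twoheadrightarrow\mathcal B_n^0$ whose kernels are all canonically isomorphic, via multiplication by $p^n$, to the single finite flat group scheme $K:=\tilde G[p]$. It thus suffices to exhibit, for each such single-step isogeny, a canonical isomorphism $\det\omega_{\mathcal B_{n+1}^0}\cong\det\omega_{\mathcal B_n^0}$. For this one uses the semi-stable filtration $0\subseteq K^\mu\subseteq K^f\subseteq K$ inherited from the filtration of $\tilde G$ in Definition \ref{filtrationdef}: the graded quotient $K/K^f$ is \'etale and hence induces an isomorphism on Lie algebras; the middle piece $K^f/K^\mu$ extends to a $p$-divisible group with good reduction over $\mathcal C_L$, so Proposition \ref{intersection} together with the Dieudonn\'e equivalence of \cite{J1} identifies its determinantal contributions on the two sides; the multiplicative piece $K^\mu$ is treated by passing to the dual isogeny of abelian schemes, where the Cartier dual of $K^\mu$ is \'etale and the previous case applies.

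The main obstacle is the determinantal identity for the single-step isogeny, especially for the multiplicative piece: in characteristic $p$ the natural map $\omega_{\mathcal B_n^0}\to\omega_{\mathcal B_{n+1}^0}$ need not be injective (e.g.\ when $K$ is infinitesimal), so the naive exact sequence of Lie algebras fails and one must argue either via the full cotangent complex of the fppf quotient or by an explicit Dieudonn\'e-theoretic cancellation using Proposition \ref{intersection} and Lemma \ref{forgetful} on the good-reduction locus, together with the symmetry between $\omega_X$ and $\omega_{X^\vee}$ for abelian schemes. The semi-stable reduction hypothesis on $G$—hence on $\tilde G$ and on $K$—is what allows the decomposition of $K$ globally over $\mathcal C_L$ and makes the Cartier-duality cancellation go through.
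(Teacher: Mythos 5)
Your proposal has a genuine gap at its central step. The whole content of the theorem is the single-step determinant identity, and the mechanism that proves it in the paper is the four-term exact sequence of truncated Barsotti--Tate groups
$$0\longrightarrow G[p]\longrightarrow A[p]\longrightarrow B_n[p]\longrightarrow G[p]\longrightarrow 0,$$
i.e.\ the observation (going back to Faltings, and to Theorem 2.6 of \cite{J2}) that not only the kernel but also the \emph{cokernel} of $A[p]\to B_n[p]$ is isomorphic to $G[p]$; applying $\mathrm{Lie}$ and taking determinants, the two copies of $\det\mathrm{Lie}(G[p])$ cancel and one gets $\det\omega_{B_n}\cong\det\omega_A$ in one stroke. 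Your reduction to isogenies $\mathcal B^0_{n+1}\to\mathcal B^0_n$ with kernel generically $\cong G[p]$ is legitimate but does not reduce the difficulty (each step is the case $n=1$ of the theorem for the image $p$-divisible group $f_n(G)\subset B_n[p^\infty]$), and your proposed proof of the single step does not close. As you yourself note, the two-term sequence $0\to\omega_{B}\to\omega_{A}\to\omega_K\to 0$ fails in characteristic $p$ (Frobenius kills differentials), and indeed quotienting by a multiplicative group \emph{can} change $\det\omega$: for an ordinary elliptic curve, $E/\ker F\cong E^{(p)}$ and $\deg\det\omega$ gets multiplied by $p$. So the cancellation must come from somewhere, and your three mechanisms do not supply it: the \'etale piece is fine; the multiplicative piece silently invokes $\det\omega_X\cong\det\omega_{X^\vee}$, a nontrivial theorem (Raynaud's input to $h(A)=h(A^\vee)$) nowhere available here; and for the bi-infinitesimal middle piece $K^f/K^\mu$ (e.g.\ $E[p]$ of a supersingular curve) you offer no argument at all --- ``Proposition \ref{intersection} identifies the determinantal contributions'' is not a proof. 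The paragraph you label ``the main obstacle'' is precisely the theorem, and it is not overcome.

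There is a second, independent error in the globalization step: it is false in general that $G$ extends to a $p$-divisible subgroup $\tilde G\subset\mathcal A^0[p^\infty]$ over all of $\mathcal C_L$. Semi-stable reduction of $G$ at a place $v$ means only that the graded pieces $G_v^f$ and $G_v/G_v^\mu$ of the filtration prolong over $\mathrm{Spec}(R_v)$, not $G_v$ itself; and $\mathcal A^0[p^\infty]$ is not even a $p$-divisible group over $\mathcal C_L$ where the toric rank jumps, since $\mathcal A^0[p^n]$ is then only quasi-finite. The paper circumvents this by \emph{defining} $\omega_H$ for a semi-stable $H$ directly, gluing $\mathrm{Lie}(H)$ over $U$ with $\mathrm{Lie}(H_{v,1})$, where $H_{v,1}$ is the prolongation of the finite part $H_v^f$ (using that $H_v/H_v^f$ is \'etale, so $\mathrm{Lie}(H_v^f)\to\mathrm{Lie}(H_v)$ is an isomorphism), and then patches the generic four-term sequence with the corresponding four-term sequences of the local prolongations obtained by intersecting the filtration of $A_v[p^\infty]$ with $G_v$. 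Any repair of your argument will need both this local bookkeeping and, above all, the kernel-equals-cokernel cancellation.
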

\begin{proof} First we are going to construct a generalisation of the sheaf $\omega_A$ for $p$-divisible subgroups with semi-stable reduction, following de Jong. For every place $v$ of $L$ not in $U$ let $R_v$ denote the complete local ring of $\mathcal C_L$ at $v$. For every such $v$ and for every scheme or ind-scheme $S$ over $U$ let $S_v$ denote the base change of $S$ to Spec$(L_v)$. Let $H$ be a $p$-divisible group over $U$ with semi-stable reduction. For every $v$ as above let
$$0\subseteq H_v^{\mu}\subseteq H_v^f\subseteq H_v$$
be a filtration by $p$-divisible groups of the type considered in Definition \ref{filtrationdef} and let $H_{v,1}$ denote the unique $p$-divisible group over Spec$(R_v)$ extending $H_v^f$. Let $\omega_H$ denote the coherent sheaf on $\mathcal C_L$ which we get by gluing Lie$(H)$, a coherent sheaf on $U$, with the sheaves Lie$(H_{v,1})$, coherent sheaves on Spec$(R_v)$, over Spec$(L_v)$ for every $v$ as above. This is possible because the map $\textrm{Lie}(H_v^f)\to\textrm{Lie}(H_v)$ induced by the inclusion $H_v^f\subseteq H_v$ is an isomorphism, since the quotient $H_v/H_v^f$ is \'etale. It is easy to see that
$\omega_H$ is independent of any choices made and for every abelian scheme $A$ over $U$ with semi-stable reduction we have $\omega_A=\omega_{A[p^{\infty}]}$. 

Now let us start the proof of the theorem; we will essentially repeat the argument in \cite{J2}, Theorem 2.6. Note that we have an exact sequence of truncated Barsotti-Tate group schemes of level $1$ over $U$ as follows:
$$\CD0@>>>G[p]@>>>A[p]@>>>B_n[p]@>>>G[p]@>>>0,\endCD$$
so there is an exact sequence
$$\CD0@>>>\textrm{Lie}(G)@>>>\textrm{Lie}(A)
@>>>\textrm{Lie}(B_n)@>>>\textrm{Lie}(G)@>>>0\endCD$$
of coherent $\mathcal O_U$-sheaves. In order to avoid overloading the notation let
$\mathbf A,\mathbf B_n$ denote the $p$-divisible group $A[p^{\infty}],B_n[p^{\infty}]$ of $A$ and $B_n$, respectively. For every $v$ as above choose a filtration:
\begin{equation}\label{hahasok}
0\subseteq\mathbf A_v^{\mu}\subseteq\mathbf A_v^f\subseteq\mathbf A_v
\end{equation}
by $p$-divisible groups of the type considered in Definition \ref{filtrationdef}. As we noted in the proof of Theorem \ref{overconvergent-condition} the filtration:
$$0\subseteq G_v\cap\mathbf A_v^{\mu}\subseteq G_v\cap\mathbf A_v^f
\subseteq G_v\cap\mathbf A_v$$
which we get by scheme-theoretical intersection is also of the type considered in Definition \ref{filtrationdef}. Moreover the image of the filtration (\ref{hahasok}) under the map
$\mathbf A\to\mathbf B_n$ induced by the isogeny $A\to B_n$ is a filtration:
$$0\subseteq(\mathbf B_n)_v^{\mu}\subseteq(\mathbf B_n)_v^f
\subseteq(\mathbf B_n)_v$$
by $p$-divisible groups of the type considered in Definition \ref{filtrationdef}. Let $G_{v,1},
\mathbf A_{v,1},\mathbf B_{n,v,1}$ denote the unique $p$-divisible group over Spec$(R_v)$ extending $G_v\cap\mathbf A_v^f,\mathbf A_v^f,(\mathbf B_n)_v^f$, respectively. By construction there is an exact sequence of truncated Barsotti-Tate group schemes of level $1$ over Spec$(R_v)$ as follows:
$$\CD0\to G_{v,1}[p]@>>>\mathbf A_{v,1}[p]@>>>\mathbf B_{n,v,1}[p]
@>>>G_{v,1}[p]\to0,\endCD$$
so there is an exact sequence
$$\CD0\to\textrm{Lie}(G_{v,1})@>>>\textrm{Lie}(\mathbf A_{v,1})
@>>>\textrm{Lie}(\mathbf B_{n,v,1})@>>>\textrm{Lie}(G_{v,1})\to0\endCD$$
of coherent sheaves on Spec$(R_v)$. By patching these exact sequences of coherent sheaves together we get that there is an exact sequence
$$\CD0@>>>\omega_G@>>>\omega_A
@>>>\omega_{B_n}@>>>\omega_G@>>>0\endCD$$
of coherent $\mathcal O_{\mathcal C_L}$-sheaves. The latter implies that det$(\omega_{B_n})\cong\det(\omega_A)$ is independent of $n$, and so the theorem follows.
\end{proof}
As explained in Section 5 of \cite{Fa2} that the theorem above has the following
\begin{cor}\label{finiteness} Let $A,G$ and $B_n=A/G[p^n]$ be as above. Then there is an infinite set $S\subseteq\mathbb N$ such that the abelian varieties $\{B_n|n\in S\}$ are all isomorphic.\qed
\end{cor}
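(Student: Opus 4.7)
The plan is to derive the corollary from Theorem~\ref{height} by a direct application of Faltings' finiteness theorem for abelian varieties over a global function field with bounded Faltings height, exactly as indicated by the reference to Section~5 of \cite{Fa2}. First I would observe that by construction the quotient map $\phi_n : A \to B_n = A/G[p^n]$ is an isogeny of abelian schemes over $U$, so every member of the sequence $\{B_n\}_{n\in\mathbb N}$ is isogenous to the fixed abelian scheme $A$. In particular, all of the $B_n$ have the same dimension, and since $A$ is assumed to have semi-stable reduction over $\mathcal C_L$, each $B_n$ inherits semi-stable reduction at every place of $L$ outside $U$ (semi-stability being preserved under isogeny).

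Next I would invoke Theorem~\ref{height}, which asserts that the line bundles $\det(\omega_{B_n})$ on $\mathcal C_L$ are all isomorphic. The degree of $\det(\omega_{B_n})$ is (up to a fixed normalisation) the Faltings height of $B_n$, so the Faltings height of $B_n$ is actually \emph{constant} in $n$, not merely bounded. Consequently $\{B_n\}$ is a family of abelian varieties of fixed dimension, with a fixed finite set of bad places at which the reduction is at worst semi-stable, and with bounded Faltings height.

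Now I would apply Faltings' finiteness theorem in the function field setting (Section~5 of \cite{Fa2}): the set of isomorphism classes of abelian varieties over $L$ isogenous to a fixed abelian variety $A$, with semi-stable reduction at a prescribed set of places, and with bounded Faltings height, is finite. Applied to the sequence $\{B_n\}$ this shows that these abelian schemes lie in only finitely many isomorphism classes, and then the pigeonhole principle immediately yields an infinite $S \subseteq \mathbb N$ such that the $B_n$ for $n \in S$ are all pairwise isomorphic, which is the required conclusion.

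The only delicate point is to verify that Faltings' finiteness argument applies in our setting, where the isogenies $\phi_n$ are purely inseparable of rapidly growing degree $p^{n\cdot\mathrm{ht}(G)}$; the reassurance here is that Section~5 of \cite{Fa2} is precisely designed to accommodate isogenies of arbitrary degree in characteristic $p$, and the constancy of the Faltings height provided by Theorem~\ref{height} is strictly stronger than what Faltings' argument requires as input. Therefore the corollary reduces to a routine citation once Theorem~\ref{height} is in hand.
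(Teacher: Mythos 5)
Your proposal is correct and is essentially identical to the paper's own treatment: the paper offers no proof of Corollary \ref{finiteness} beyond the pointer to Section 5 of \cite{Fa2}, relying exactly as you do on Theorem \ref{height} for the constancy of $\det(\omega_{B_n})$ (hence of the height), followed by Faltings' finiteness argument for the quotients in the tower and the pigeonhole principle.
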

\begin{proof}[Proof of Theorem \ref{semisimple}] This argument is essentially the same as Faltings's and Zar\-hin's, so we include it for the reader's convenience. We may assume that $A_L$ has semi-stable reduction over $\mathcal C_L$ without the loss of generality by Lemma \ref{basechange}. Let $\mathcal F\subseteq D^{\dagger}(A)$ be an  overconvergent sub $F$-isocrystal. We only need to show that there is an endomorphism of $D^{\dagger}(A)$ whose kernel is $\mathcal F$. There is a semi-stable $p$-divisible subgroup $G\subset A[p^{\infty}]$ such that $D(G)=\mathcal F^{\sim}$ by Proposition \ref{extension}. For every $n\in\mathbb N$ let $B_n=A/G[p^n]$ be the abelian variety that is the quotient of $A$ by the $p^n$-torsion of $G$. Let $f_n:A\longrightarrow B_n$ be the quotient map for every $n$. Then there is an infinite set $S\subseteq\mathbb N$ such that the abelian varieties $\{B_n|n\in S\}$ are all isomorphic by Corollary \ref{finiteness}. Let $i$ be the smallest element of $S$. For each $n\in S$ choose an isomorphism $v_n:B_n\rightarrow B_i$. Because $f_i$ is an isogeny the composition $u_n=f_i^{-1}
\circ v_n\circ f_n$ is well-defined as an element of End$(A)\otimes\mathbb Q$, and hence as an element of End$(A)\otimes\mathbb Q_p$. Choose a positive integer $d$ such that $U(\mathbb F_{q^d})\neq\emptyset$ and fix an $\mathbb F_{q^d}$-rational point $x\in U(\mathbb F_{q^d})$. For every abelian scheme $C$ over $U$ let $C_x$ denote the fibre of $C$ over $x$. By the above for every $n\in S$ the map $u_n$ induces an endomorphism of the $F$-isocrystal $D(A_x)$ over $\mathbb F_{q^d}$ and the image of the $\mathbb Z_{q^d}$-lattice $\mathbf D(A_x)\subset D(A_x)$ under this homomorphism lies in $(f_i)_*^{-1}\mathbf D((B_i)_x)\subset\mathbf D(A_x)\otimes_{\mathbb Z_{q^d}}\mathbb Q_{q^d}$. Therefore as a subset of End$(D(A_x)\otimes\mathbb Q_{q^d})$ the set $\{u_n|n\in S\}$ is bounded, so after possibly replacing $S$ with an infinite subset, we may assume that the sequence $\{u_n|n\in S\}$ converges to a limit $u$ in End$(D(A_x))$. However End$(A)\otimes\mathbb Q_p$ is a
$\mathbb Q_p$-linear subspace of End$(D(A_x))$, and hence it is closed. Since every $u_n$ lies in End$(A)\otimes\mathbb Q_p$, so does their limit $u$, too.

As we noted above for every $n\in S$ the map $u_n$ furnishes a homomorphism:
$$\mathbf D(u_n):\mathbf D(A_x)\longrightarrow\mathbf D((B_i)_x)\cong (f_i)_*^{-1}\mathbf D((B_i)_x),$$
and by taking a limit we get that the action of $u$ restricted to the lattice $\mathbf D(A_x)$ is a map $\mathbf D(A_x)\rightarrow\mathbf D((B_i)_x)$. Let $m$ be any positive integer. Then for every $n\in S$ such that $n\geq m$ the map:
$$\mathbf D(A_x)/p^m\mathbf D(A_x)\longrightarrow\mathbf D((B_i)_x)/p^m\mathbf D((B_i)_x)$$
induced by $\mathbf D(u_n)$ above has kernel $(\mathbf D(G_x)+p^m\mathbf D(A_x))/p^m\mathbf D(A_x)$ where $G_x$ denotes the fibre of $G$ over $x$. By taking the limit we get that the same holds for the map induced by $u$. Since $m$ is arbitrary we can conclude that the kernel of the map $\mathbf D(A_x)\rightarrow \mathbf D((B_i)_x)$ induced by $u$ has kernel $\mathbf D(G_x)$. Therefore the kernel of the action of $u$ on the fibre $\omega_x(D^{\dagger}(A))\cong\mathbf D(A_x)\otimes_{\mathbb Z_{q^d}}\mathbb Q_{q^d}$ is $\omega_x(\mathcal F)\cong \mathbf D(G_x)\otimes_{\mathbb Z_{q^d}}\mathbb Q_{q^d}$. Because $\omega_x$ is faithful we get that the kernel of $u$ as an endomorphism of $D^{\dagger}(A)$ is $\mathcal F$, and hence the claim is true.
\end{proof}

\section{Cycle classes into rigid cohomology over function fields}
\label{cycle-class}

\begin{notn} For every quasi-projective variety $X$ over an arbitrary field $k$ let $Z_r(X)$ denote the group of algebraic cycles of dimension $r$ on $X$, that is, the free abelian group generated by prime cycles of dimension $r$, where recall that a prime cycle on $X$ is an irreducible closed subvariety. Let $CH_r(X)$ denote the homological Chow group of algebraic cycles of $X$ of dimension $r$ modulo rational equivalence in the sense of Fulton (see \cite{Fu}). Assume now that $X$ is smooth. Let $CH^r(X)$ denote the cohomological Chow group of algebraic cycles of $X$ of codimension $r$ modulo usual rational equivalence, and let $CH^*(X)=\oplus_{r=0}^{\infty}CH^r(X)$ denote the Chow ring of $X$; it is a graded ring with respect to the intersection product:
$$\cap:CH^r(X)\times CH^s(X)\longrightarrow CH^{r+s}(X).$$
\end{notn}
\begin{notn} For every smooth projective variety $X$ over an arbitrary field $k$ let $CH_A^r(X)$ denote the quotient of $CH^r(X)$ by the subgroup $ACH^r(X)$ generated by the rational equivalence classes of cycles algebraically equivalent to zero. Let $a_X:CH^r(X)\to CH_A^r(X)$ denote the quotient map. Similarly let $CH_{SN}^r(X)$ denote the quotient of $CH^r(X)$ by the subgroup $SNCH^r(X)$ generated by the rational equivalence classes of cycles smash-nilpotent to zero, and let $s_X:CH^r(X)\to CH_{SN}^r(X)$ denote the quotient map. By a theorem Voevodsky, who introduced this concept, we have $ACH^r(X)\subset SNCH^r(X)$ (see \cite{Vo}). Note that $CH^1(X)=\textrm{Pic}(X)$, the Picard group of $X$, while $CH_A^1(X)=NS(X)$, the N\'eron--Severi group of $X$.  According to the N\'eron--Severi theorem the abelian group $NS(X)$ is finitely generated. 
\end{notn}
\begin{defn} Again let $k$ be a perfect field of characteristic $p$ and let $\mathbb K$ denote the field of fractions of the ring of Witt vectors $\mathcal O$ of $k$. For every quasi-projective variety $X$ over $k$ let $H^n_{rig,c}(X/\mathbb K)$ denote Berthelot's $n$-th rigid cohomology of $X$ with compact support and having coefficients in $\mathbb K$. Following \cite{Pe} we define the $n$-th rigid homology group of $X$ as
$$H^{rig}_n(X/\mathbb K)=\textrm{Hom}_{\mathbb K}(H^n_{rig,c}(X/\mathbb K),\mathbb K),$$
where $\textrm{Hom}_{\mathbb K}$ denotes the group of $\mathbb K$-linear maps. Rigid homology furnishes a covariant functor from the category of quasi-projective varieties over $k$ into finite dimensional $\mathbb K$-linear vector spaces. 
 \end{defn} 
\begin{defn} Let $Z$ be an integral quasi-projective variety of dimension $d$ over $k$. In \cite{Be1} a trace homomorphism:
$$\textrm{Tr}_Z:H^{2d}_{rig,c}(Z/\mathbb K)\to \mathbb K$$
is constructed. We will let $\eta_Z\in H^{rig}_{2d}(Z/\mathbb K)$ denote the corresponding element and call it the fundamental class of $Z$. For every cycle $z=\sum_in_iT_i\in Z_r(X)$, where
$n_i$ are integers and $T_i\subseteq X$ are prime cycles, let
$$\gamma_X(z)=\sum_in_i\alpha_{i*}(\eta_{T_i})\in H^{rig}_{2r}(X/\mathbb K),$$
where $\alpha_i:T_i\to X$ are the closed immersions of these prime cycles into $X$ and
$\alpha_{i*}:H^{rig}_{2r}(T_i/\mathbb K)\to H^{rig}_{2r}(X/\mathbb K)$ are the induced maps.
\end{defn}
We will need the following result of Petrequin:
\begin{prop}\label{petrequin1} Let $X$ be as above, and let $z\in Z_r(X)$ be a cycle rationally equivalent to zero in the sense of Fulton. Then $\gamma_X(z)=0$. Therefore the cycle class map $\gamma_X$ factors through the quotient map $Z_r(X)\to CH_r(X)$, and hence furnishes a homomorphism: 
$$CH_r(X)\longrightarrow H^{rig}_{2r}(X/\mathbb K),$$
which we will denote by the same symbol $\gamma_X$ by slight abuse of notation. 
\end{prop}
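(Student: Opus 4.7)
The plan is to use Fulton's definition of rational equivalence together with the formal properties of the rigid cycle class map---linearity, compatibility with proper pushforward, and homotopy invariance of rigid homology along $\mathbb{P}^1$. By definition, any cycle rationally equivalent to zero can be written as a finite sum $z = \sum_i \alpha_{i*}(\mathrm{div}(f_i))$, where $\alpha_i : V_i \hookrightarrow X$ is the closed immersion of an $(r+1)$-dimensional integral subvariety and $f_i$ is a non-zero rational function on $V_i$. By linearity of $\gamma_X$ and the proper pushforward compatibility $\gamma_X(\alpha_{i*}(w)) = \alpha_{i*}(\gamma_{V_i}(w))$, it is enough to prove that $\gamma_V(\mathrm{div}(f)) = 0$ in $H^{rig}_{2r}(V/\mathbb{K})$ for each such pair $(V,f)$.

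Next I would replace $V$ by a proper birational modification (if necessary, by a de Jong alteration $\pi:\widetilde{V}\to V$, absorbing a harmless multiplication by the degree of $\pi$) so that the rational function in question extends to a morphism $\widetilde{f} : \widetilde{V} \to \mathbb{P}^1_k$ whose divisor pushes forward to $\mathrm{div}(f)$ on $V$. By the same proper pushforward property, the problem reduces to showing that the classes of the two scheme-theoretic fibres $\widetilde{f}^{-1}(0)$ and $\widetilde{f}^{-1}(\infty)$ coincide in $H^{rig}_{2r}(\widetilde{V}/\mathbb{K})$. This in turn follows from the K\"unneth decomposition $H^*_{rig}(\mathbb{P}^1_k/\mathbb{K}) \cong \mathbb{K}\oplus\mathbb{K}(-1)$ and the projection formula applied to the graph $\Gamma_{\widetilde{f}} \subset \widetilde{V} \times \mathbb{P}^1_k$: the classes of any two $k$-rational points of $\mathbb{P}^1_k$ already agree in $H^{rig}_0(\mathbb{P}^1_k/\mathbb{K})\cong\mathbb{K}$, and this equality propagates to $\widetilde{V}$ via the correspondence.

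The main obstacle, and the substance of Petrequin's work, is setting up the formal package of properties of the rigid cycle class map on possibly singular quasi-projective varieties: additivity, the proper pushforward compatibility, the projection formula, and the K\"unneth-compatibility of fundamental classes. A secondary subtlety is that Berthelot's construction of the trace map and fundamental class is a priori most natural on smooth varieties, so extending to the singular setting requires either working with embeddings $Z \hookrightarrow \overline{Z}$ into smooth ambient spaces and using cohomology with supports, or the simplicial and descent techniques that Petrequin employs. Once these formal compatibilities are in place, the homotopy argument of the previous paragraph closes the proof as sketched.
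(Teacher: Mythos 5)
The paper does not prove this statement at all: its entire proof is the citation ``Proposition 6.10 of \cite{Pe}'', so there is no argument in the paper to compare yours against step by step. Your sketch is a reasonable reconstruction of the standard Fulton-style argument that such a citation is standing in for: write $z=\sum_i\alpha_{i*}(\mathrm{div}(f_i))$, reduce by linearity and proper pushforward to a single integral $(r+1)$-dimensional $V$ with a rational function $f$, resolve the indeterminacy of $f$ (the closure of the graph in $V\times\mathbb P^1$ does this without any alteration and avoids your degree bookkeeping), and then kill $\mathrm{div}(\widetilde f)=[\widetilde f^{-1}(0)]-[\widetilde f^{-1}(\infty)]$ by a $\mathbb P^1$-homotopy argument. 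You also correctly identify that the real content is the package of formal properties of $\gamma$ on possibly singular quasi-projective varieties, which is exactly what Petrequin's paper establishes; in that sense your proposal is an honest outline rather than a proof, on the same footing as the paper's own treatment.

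One step of your sketch is thinner than you acknowledge. To ``propagate'' the equality $[0]=[\infty]$ in $H^{rig}_0(\mathbb P^1/\mathbb K)$ to $\widetilde V$ via the correspondence $\Gamma_{\widetilde f}\subset\widetilde V\times\mathbb P^1$, you need more than additivity, proper pushforward, the projection formula and K\"unneth: you need a cap product of rigid cohomology on rigid homology of the (singular, non-proper) variety $\widetilde V\times\mathbb P^1$, and the compatibility of the cycle class map with the refined Gysin morphism, i.e.\ the statement that capping $\gamma(\Gamma_{\widetilde f})$ with the class of a point $t\in\mathbb P^1$ and pushing forward to $\widetilde V$ yields precisely the class of the scheme-theoretic fibre cycle $[\widetilde f^{-1}(t)]$, multiplicities included. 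That compatibility is not a formal consequence of the properties you list and is the crux of any such homotopy argument; in the Bloch--Ogus/Gillet axiomatics it is usually packaged as the compatibility of the class map with the divisor (or first Chern class) operation together with a localization sequence, which is the route the literature actually takes. So the proposal is acceptable as a roadmap, but the missing compatibility should be named explicitly rather than absorbed into ``the projection formula''.
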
 
\begin{proof} This is the content of Proposition 6.10 of \cite{Pe} on page 111.
\end{proof}
\begin{defn} Assume now that $X$ is smooth and equidimensional of dimension $d$. By slight abuse of notation let $\gamma_X:CH^r(X)\rightarrow H_{rig}^{2r}(X/\mathbb K)$ denote the
composition of the identification $CH^r(X)\cong CH_{d-r}(X)$, the cycle class map
$\gamma_X:CH_{d-r}(X)\rightarrow H^{rig}_{2(d-r)}(X/\mathbb K)$ and the isomorphism $H^{rig}_{2(d-r)}(X/\mathbb K)\rightarrow H_{rig}^{2r}(X/\mathbb K)$ furnished by Poincar\'e duality (see \cite{Be1}).
\end{defn}
We will also need the following two additional results of Perlequin:
\begin{thm}\label{petrequin2} Let $X$ be as above. Then
\begin{enumerate}
\item[$(a)$] for every $x\in CH^r(X)$ and $y\in CH^s(X)$ we have:
$$\gamma_X(x\cap y) = \gamma_X(x)\cup\gamma_X(y)\in H^{2r+2s}_{rig}(X/\mathbb K).$$
In other words the map
$$\gamma_X:CH^*(X)\longrightarrow H^*_{rig}(X/\mathbb K)$$
is a ring homomorphism.
\item[$(b)$] for every morphism $f:Y\to X$ between smooth equidimensional varieties over $k$, and for every $z\in CH^*(X)$ we have:
$$f^*(\gamma_X(z))=\gamma_{Y}(f^*(z)).$$
\end{enumerate}
\end{thm}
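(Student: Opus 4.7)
The plan is to follow the now-classical template for verifying that a cycle class map into a Weil cohomology is multiplicative and functorial, adapted to Petrequin's construction in rigid cohomology. By the definition of $\gamma_X$ via fundamental classes of integral subvarieties and by $\mathbb Z$-linearity, both claims reduce to the case where the Chow classes are represented by prime cycles. I would first invoke a moving lemma: given two cycles $z_1,z_2$ on the smooth quasi-projective $X$, their classes in $CH^*(X)$ admit representatives $z_1',z_2'$ whose supports meet properly (in the expected codimension), and given $f:Y\to X$ and a cycle $z$, one may move $z$ so that $f^{-1}(\mathrm{Supp}(z))$ has the expected codimension in $Y$. These are standard statements on the Chow side, and when combined with Proposition \ref{petrequin1} (rational equivalence is killed by $\gamma_X$), they reduce both claims to transversal configurations.

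For part $(a)$, I would realize $\gamma_X([Z])$ for a smooth prime cycle $Z\hookrightarrow X$ as the image of $1$ under a Gysin composition
$$H^0_{rig}(Z/\mathbb K)\longrightarrow H^{2c}_{rig,Z}(X/\mathbb K)\longrightarrow H^{2c}_{rig}(X/\mathbb K),$$
where $c=\mathrm{codim}(Z,X)$; this identification is standard once one has constructed the Gysin pushforward, and Poincaré duality makes it compatible with the homological definition used in the excerpt. For a transversal intersection $Z\cap W$ of smooth prime cycles of codimensions $c$ and $d$, the local rigid cohomology with supports decomposes as an external product, giving $\gamma_X([Z])\cup\gamma_X([W])=\gamma_X([Z\cap W])=\gamma_X([Z]\cap[W])$. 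A singular prime cycle is handled by restricting to its smooth locus, as the complement has higher codimension and does not contribute to $H^{2c}_{rig}$; alternatively one pulls back via an alteration, which reduces the problem to the smooth case at the cost of a degree which is invertible in $\mathbb K$.

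For part $(b)$, once $Z\subset X$ is placed in a position transverse to $f:Y\to X$, the fiber square
$$\xymatrix{ f^{-1}(Z) \ar[r] \ar[d] & Y \ar[d]^{f} \\ Z \ar[r] & X}$$
induces, via the base-change compatibility of the Gysin map, the identity $f^*\gamma_X([Z])=\gamma_Y([f^{-1}(Z)])=\gamma_Y(f^*[Z])$. Once the foundational Gysin/base-change machinery is in place in rigid cohomology this step is essentially formal.

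The main obstacle is therefore not conceptual but foundational: one must actually construct the Gysin morphism in rigid cohomology, verify its compatibility with base change and its behaviour under composition, and prove that $H^{2c}_{rig,Z}(X/\mathbb K)$ is (locally on $Z$) a free rank-one $H^0_{rig}(Z/\mathbb K)$-module when $Z\hookrightarrow X$ is a smooth closed immersion of codimension $c$. This purity/Thom-isomorphism package is the technical core of Petrequin's work; in an independent treatment I would either import it wholesale from \cite{Pe} or, if starting from scratch, spend the bulk of the effort reproving it via deformation to the normal cone and the blow-up formula in rigid cohomology.
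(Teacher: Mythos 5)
The paper offers no argument of its own here: both parts are imported wholesale from Petrequin, part $(a)$ being Corollaire 7.6 and part $(b)$ Proposition 7.7 of \cite{Pe}, so the ``proof'' in the text is a two-line citation. Your proposal instead sketches how one would establish these compatibilities from scratch: reduce by the moving lemma to properly intersecting (resp.\ suitably transverse) representatives, realise the class of a smooth prime cycle as the image of $1$ under a Gysin composition through rigid cohomology with supports, and deduce multiplicativity and base-change compatibility from the corresponding properties of the Gysin maps. That is essentially the architecture of Petrequin's own proofs, and you correctly identify that the real content is the purity/Thom-isomorphism and Gysin-functoriality package, which you would in any case import from \cite{Pe}; so your route proves (in outline) what the paper merely quotes. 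Two points deserve more care if you were to carry this out independently. First, handling a singular prime cycle by ``restricting to the smooth locus'' is the delicate step: passing to an open subvariety changes rigid cohomology, so one must work with cohomology supported on the full cycle and invoke semi-purity to discard the singular locus, or else use alterations as you also suggest (where the degree is indeed invertible in $\mathbb K$). Second, for part $(b)$ the pull-back $f^*$ on Chow groups of a general (non-flat) morphism between smooth varieties is defined via the graph and the refined intersection product, so the transversality you need is for the graph of $f$ against $Y\times\operatorname{Supp}(z)$ rather than naive transversality of $f$ to $z$. Neither point is a logical gap given that you defer the foundations to \cite{Pe}, but they are exactly the places where a self-contained write-up would have to do real work.
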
 
\begin{proof} The rigid cycle class map is a ring homomorphism by Corollaire 7.6 of \cite{Pe} on page 115, and it is natural by Proposition 7.7 of \cite{Pe} on the same page.
\end{proof}
\begin{prop}\label{voevodsky} Let $X$ be as above. Then $\gamma_X:CH^r(X)\to H^{2r}_{rig}(X/\mathbb K)$ factors through $s_X:CH^r(X)\to CH_{SN}^r(X)$, and hence furnishes a homomorphism:
$$\sigma_X:CH_{SN}^r(X)\longrightarrow H^{2r}_{rig}(X/\mathbb K).$$
\end{prop}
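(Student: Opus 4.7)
The plan is to work directly from Voevodsky's definition of smash-nilpotence: a class $z \in CH^r(X)$ lies in $SNCH^r(X)$ if and only if its $n$-fold external self-product
$$z^{\times n} = p_1^*(z) \cap p_2^*(z) \cap \cdots \cap p_n^*(z) \in CH^{nr}(X^n)$$
vanishes in the Chow group for some integer $n \geq 1$, where $p_i : X^n \to X$ denotes the $i$-th projection. It therefore suffices to show that $z^{\times n} = 0$ in $CH^{nr}(X^n)$ implies $\gamma_X(z) = 0$ in $H^{2r}_{rig}(X/\mathbb{K})$.

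The first step I would carry out is to establish the compatibility of $\gamma$ with external products: for smooth equidimensional $X,Y$ and $z \in CH^r(X)$, $w \in CH^s(Y)$,
$$\gamma_{X \times Y}(z \times w) = \gamma_X(z) \boxtimes \gamma_Y(w),$$
where $\boxtimes$ denotes the external product in rigid cohomology, defined via the Künneth isomorphism $H^*_{rig}(X \times Y/\mathbb{K}) \cong H^*_{rig}(X/\mathbb{K}) \otimes_{\mathbb{K}} H^*_{rig}(Y/\mathbb{K})$ for smooth varieties (a foundational result of Berthelot, amplified by Besser and Petrequin). This identity is formal given what we have: writing $z \times w = p_1^*(z) \cap p_2^*(w)$, Theorem \ref{petrequin2}$(a)$ gives $\gamma_{X \times Y}(p_1^*(z) \cap p_2^*(w)) = \gamma_{X \times Y}(p_1^*(z)) \cup \gamma_{X \times Y}(p_2^*(w))$, and then $(b)$ rewrites each factor as $p_i^*(\gamma_{(\cdot)}(\cdot))$, which is the Künneth description of $\boxtimes$.

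Iterating this identity $n-1$ times yields $\gamma_{X^n}(z^{\times n}) = \gamma_X(z)^{\boxtimes n}$, corresponding under Künneth to the pure tensor $\gamma_X(z)^{\otimes n} \in H^{2r}_{rig}(X/\mathbb{K})^{\otimes n}$. If $z^{\times n} = 0$ then by Proposition \ref{petrequin1} the left-hand side is zero, hence $\gamma_X(z)^{\otimes n} = 0$. Since $\mathbb{K}$ is a field and $H^{2r}_{rig}(X/\mathbb{K})$ is a $\mathbb{K}$-vector space, the $n$-fold tensor power of a nonzero vector is again nonzero (extend to a basis and expand). Therefore $\gamma_X(z) = 0$, so $\gamma_X$ annihilates $SNCH^r(X)$ and descends to the claimed homomorphism $\sigma_X : CH^r_{SN}(X) \to H^{2r}_{rig}(X/\mathbb{K})$.

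The only nontrivial input here is the Künneth formula for rigid cohomology of smooth varieties together with the resulting external-product compatibility of the cycle class map; both are standard, but I would want an explicit reference (to Besser or to a sharpening of Petrequin's results) in the final write-up. The rest is entirely formal, and mirrors Voevodsky's original argument for other Weil cohomology theories; the only place where anything more than a bookkeeping identity is used is the faithfulness of $n$-fold tensor products of vector spaces over a field, which is elementary.
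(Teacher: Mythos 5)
Your proposal is correct and follows essentially the same route as the paper's proof: express $\gamma_{X^n}(z^{\times n})$ as the cup product of the pulled-back classes $\pi_i^*(\gamma_X(z))$ using the multiplicativity and functoriality of the cycle class map, invoke the vanishing of cycle classes of rationally trivial cycles, and conclude via the injectivity of the K\"unneth map $H^{2r}_{rig}(X/\mathbb K)^{\otimes n}\to H^{2rn}_{rig}(X^n/\mathbb K)$. The only cosmetic difference is that you make the final linear-algebra step (a tensor power of a nonzero vector is nonzero) explicit, where the paper simply cites injectivity of the K\"unneth map.
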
 
\begin{proof} We include this well-known argument for the reader's convenience. Let $z$ be a cycle of codimension $r$ on $X$ smash-nilpotent to zero. By definition there is a positive integer $n$ such that the $n$-fold smash product $z^{\otimes n}$ on the $n$-fold direct product $X^n=X\times X\times\cdots\times X$ is rationally equivalent to zero. By Theorem \ref{petrequin2} we have:
$$\gamma_{X^n}(z^{\otimes n})=\pi^*_1(\gamma_{X}(z))\cup\pi^*_2(\gamma_{X}(z))
\cup\cdots\cup\pi^*_n(\gamma_{X}(z)),$$
where $\pi_i:X^n\to X$ is the projection onto the $i$-th factor. Since $z^{\otimes n}$ is rationally equivalent to zero, the left hand side is zero by Proposition \ref{petrequin1}. By the K\"unneth formula for rigid cohomology (see \cite{Be1}) the map:
$$H^{2r}_{rig}(X/\mathbb K)\otimes_{\mathbb K}\cdots\otimes_{\mathbb K}
H^{2r}_{rig}(X/\mathbb K)\longrightarrow
H^{2rn}_{rig}(X^n/\mathbb K)$$
given by the rule:
$$x_1\otimes_{\mathbb K}x_2\otimes_{\mathbb K}\cdots\otimes_{\mathbb K}
x_n\mapsto\pi^*_1(x_1)\cup\pi^*_2(x_2)\cup\cdots\cup\pi^*_n(x_n)$$
is injective. The claim now follows.
\end{proof}
Note that when $X$ is projective then $H^{2d}_{rig,c}(X/\mathbb K)=H^{2d}_{rig}(X/\mathbb K)$, and hence we have a trace homomorphism:
$$\textrm{Tr}_X:H^{2d}_{rig}(X/\mathbb K)\to \mathbb K.$$ 
Because rigid cohomology is a Weil cohomology with respect to this cycle map, we have the following
\begin{prop}\label{rigid_trace} Let $X$ be as above. Then the diagram:
$$\xymatrix{
CH_0(X) \ar^-{\gamma_X}[r] \ar^{\deg}[d] & H^{2d}_{rig}(X/\mathbb K) \ar^{\textrm{\rm Tr}_X}[d] \\
\mathbb Z  \ar@{^{(}->}[r] & \mathbb K}$$
commutes, where $\deg$ denotes the degree map.
\end{prop}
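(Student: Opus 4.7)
The plan is to reduce to a single closed point by linearity, then interpret the trace $\textrm{Tr}_X$ as a pushforward in rigid homology and use the functoriality of pushforwards to land on the structure morphism of $\textrm{Spec}(k(x))$, where the computation becomes transparent.

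First, since $\gamma_X:CH_0(X)\to H^{2d}_{rig}(X/\mathbb K)$, $\textrm{Tr}_X$, and $\deg$ are all $\mathbb Z$-linear (indeed $\mathbb K$-linear after tensoring), it suffices to verify the identity on the class $[x]$ of a single closed point. Let $\alpha:\textrm{Spec}(k(x))\hookrightarrow X$ denote the corresponding closed immersion and let $p:X\to\textrm{Spec}(k)$ be the structure morphism. By the definition given just before the proposition, $\gamma_X([x])=\alpha_*(\eta_{\textrm{Spec}(k(x))})\in H^{rig}_0(X/\mathbb K)$ (transported to $H^{2d}_{rig}(X/\mathbb K)$ by Poincar\'e duality), and of course $\deg([x])=[k(x):k]$.

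Second, I will invoke the standard fact (part of the Weil cohomology formalism for rigid cohomology, see \cite{Be1} and Proposition~6.10 of \cite{Pe}) that under Poincar\'e duality the trace $\textrm{Tr}_X:H^{2d}_{rig}(X/\mathbb K)\to\mathbb K$ is identified with the pushforward $p_*:H^{rig}_0(X/\mathbb K)\to H^{rig}_0(\textrm{Spec}(k)/\mathbb K)=\mathbb K$ along the proper map $p$. Rigid homology being covariantly functorial, we then have
$$
\textrm{Tr}_X(\gamma_X([x]))\;=\;p_*\alpha_*(\eta_{\textrm{Spec}(k(x))})\;=\;q_*(\eta_{\textrm{Spec}(k(x))}),
$$
where $q=p\circ\alpha:\textrm{Spec}(k(x))\to\textrm{Spec}(k)$ is the structure morphism of the residue field.

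Finally, I will evaluate $q_*(\eta_{\textrm{Spec}(k(x))})$. By the very definition of rigid homology as the $\mathbb K$-linear dual of rigid cohomology with compact support, $q_*$ is the transpose of the pullback map $q^*:\mathbb K=H^0_{rig,c}(\textrm{Spec}(k)/\mathbb K)\to H^0_{rig,c}(\textrm{Spec}(k(x))/\mathbb K)$, and $\eta_{\textrm{Spec}(k(x))}$ is by construction the trace $\textrm{Tr}_{\textrm{Spec}(k(x))}$. Since $H^0_{rig,c}(\textrm{Spec}(k(x))/\mathbb K)$ is the unramified extension of $\mathbb K$ of degree $[k(x):k]$ with $q^*$ the canonical inclusion, and since the trace map on a zero-dimensional variety is the usual field-theoretic trace, we have $q_*(\eta_{\textrm{Spec}(k(x))})=\textrm{Tr}_{\textrm{Spec}(k(x))}(q^*(1))=[k(x):k]$, completing the proof.

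The only delicate point is the identification of $\textrm{Tr}_X$ with $p_*$ under Poincar\'e duality and the functoriality of $p_*$ along compositions; both are part of the standard Weil-cohomology package for rigid cohomology and are established in \cite{Be1} and \cite{Pe}. Once these are granted, the rest is mechanical unwinding of definitions.
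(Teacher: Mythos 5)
Your argument is correct, but it takes a different route from the paper, whose entire proof is a one-line citation of Proposition 2.8 of \cite{Pe} (the statement that, for rigid cohomology as a Weil cohomology, the trace of the class of a zero-cycle is its degree). What you do instead is essentially reprove that statement from two more primitive inputs: (i) under Poincar\'e duality $\textrm{Tr}_X$ is the pushforward $p_*$ to the point, compatibly with composition of proper maps, and (ii) the trace of a zero-dimensional variety $\textrm{Spec}(k(x))$ is the field trace $\textrm{Tr}_{\mathbb K'/\mathbb K}$ on $H^0_{rig}=\mathbb K'$. Granting these, your reduction to a single closed point, the identity $p_*\alpha_*=q_*$ (valid here since $\alpha$ is a closed immersion and $p$ is proper, so $q^*=\alpha^*\circ p^*$ on compactly supported cohomology and one transposes), and the final evaluation $q_*(\eta_{\textrm{Spec}(k(x))})=\textrm{Tr}_{\mathbb K'/\mathbb K}(1)=[k(x):k]$ are all sound. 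The only caveat is that input (ii) is very close to being Petrequin's Proposition 2.8 restricted to points, so your proof does not really avoid \cite{Pe}; it just relocates the appeal to the trace formalism of \cite{Be1} and \cite{Pe} one level down. What your version buys is transparency about exactly which normalization of the trace map is being used; what the paper's version buys is brevity, since the cited proposition is literally the statement being proved.
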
 
\begin{proof} This follows immediately from Proposition 2.8 of \cite{Pe} on page 69.
\end{proof}
\begin{defn}\label{4.10} Equip $\mathbb K$ with natural lift of the $p$-power Frobenius. For every quasi-projective variety $V$ over $k$ let $\mathcal O^{\dagger}_V$ denote the trivial overconvergent $F$-isocrystal (with coefficients in $\mathbb K$). Let $\mathcal C$ be a smooth, projective, geometrically irreducible curve over $k$ and let $L$ denote its function field. Let $X$ be a smooth, projective variety over $L$. Let $\pi:\mathfrak X\to U$ be a projective, smooth morphism of $k$-schemes where $U$ is a non-empty Zariski-open sub-curve of $\mathcal C$ and the generic fibre of $\pi$ is isomorphic to $X$. (Note that such a map $\pi:\mathfrak X\to U$ always exists.) Let $\mathcal H^n(X/\mathbb K)$ denote $H^0_{rig}(U,R^n\pi_*(\mathcal O_{\mathfrak X}^{\dagger}))$. Because $g$ is smooth and projective the overholonomic $F\textrm{-}\mathcal D$-module $R^n\pi_*(\mathcal O_{\mathfrak X}^{\dagger})$ is an overconvergent $F$-isocrystal, and hence the cohomology group above is a finite dimensional $\mathbb K$-linear vector space.
\end{defn}
\begin{prop}\label{4.8} The $\mathbb K$-linear vector space $\mathcal H^n(X/\mathbb K)$ is independent of the choice of $\pi:\mathfrak X\to U$, up to a natural isomorphism, and the correspondence $X\mapsto \mathcal H^n(X/\mathbb K)$ is a contravariant functor.
\end{prop}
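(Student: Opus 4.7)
The plan is to reduce both independence of the choice of model and functoriality to a single key lemma: for any overconvergent $F$-isocrystal $\mathcal{F}$ on a smooth $k$-scheme $U$ and any Zariski-dense open immersion $j\colon V\hookrightarrow U$, the restriction map $H^0_{rig}(U,\mathcal{F})\to H^0_{rig}(V,j^*\mathcal{F})$ is an isomorphism. Indeed, a Frobenius-invariant horizontal section of $j^*\mathcal{F}$ is the same as a morphism of overconvergent $F$-isocrystals $\mathcal{O}_V^{\dagger}\to j^*\mathcal{F}$, and by Theorem \ref{extension_thm} the image of any such morphism is a sub-object of $j^*\mathcal{F}$ which extends uniquely to a sub-object of $\mathcal{F}$; that extension is necessarily trivial (it is constant as an isocrystal, since so is its restriction to $V$, and its Frobenius structure extends by Kedlaya's full faithfulness), producing a horizontal Frobenius-invariant section of $\mathcal{F}$ over $U$ restricting to the original one.

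For independence, given two models $\pi_1\colon\mathfrak{X}_1\to U_1$ and $\pi_2\colon\mathfrak{X}_2\to U_2$ of $X$, a standard spreading-out argument produces a non-empty Zariski-open $V\subseteq U_1\cap U_2$ and an isomorphism $\phi\colon\mathfrak{X}_1|_V\cong\mathfrak{X}_2|_V$ of smooth projective $V$-schemes whose restriction to the generic fibre is the identity of $X$. Smooth and proper base change for overconvergent $F$-isocrystals along the open immersions $V\hookrightarrow U_i$ gives $(R^n\pi_{i*}\mathcal{O}^{\dagger}_{\mathfrak{X}_i})|_V\cong R^n(\pi_i|_V)_*\mathcal{O}^{\dagger}_{\mathfrak{X}_i|_V}$, and these two overconvergent $F$-isocrystals on $V$ are identified via $\phi$. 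Applying the key lemma to both sides then yields $H^0_{rig}(U_1,R^n\pi_{1*}\mathcal{O}^{\dagger}_{\mathfrak{X}_1})\cong H^0_{rig}(V,\cdot)\cong H^0_{rig}(U_2,R^n\pi_{2*}\mathcal{O}^{\dagger}_{\mathfrak{X}_2})$. To check this isomorphism is canonical, one notes that any two choices of $(V,\phi)$ may be compared on a common refinement, and any two $\phi,\phi'$ restricting to the identity on $X$ agree after shrinking $V$ (as automorphisms of a smooth projective $V$-scheme restricting to the identity on the generic fibre are trivial on a neighbourhood). Transitivity of these comparisons then makes the $\mathcal{H}^n(X/\mathbb{K})$'s canonically identified for any pair of models.

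For contravariant functoriality, given $f\colon X\to Y$ of smooth projective $L$-varieties, pick models $\pi_X\colon\mathfrak{X}\to U$ and $\pi_Y\colon\mathfrak{Y}\to U$ over a common dense open $U\subseteq\mathcal{C}$, and spread $f$ out to a $U$-morphism $\tilde{f}\colon\mathfrak{X}_V\to\mathfrak{Y}_V$ for some dense open $V\subseteq U$. Pullback by $\tilde{f}$ induces morphisms of overconvergent $F$-isocrystals $R^n(\pi_Y|_V)_*\mathcal{O}^{\dagger}_{\mathfrak{Y}_V}\to R^n(\pi_X|_V)_*\mathcal{O}^{\dagger}_{\mathfrak{X}_V}$, and applying $H^0_{rig}(V,\cdot)$ together with the key lemma produces $f^*\colon\mathcal{H}^n(Y/\mathbb{K})\to\mathcal{H}^n(X/\mathbb{K})$. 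Well-definedness (independence of the auxiliary choices) and functoriality in $f$ (compatibility with compositions) reduce again to the unicity assertions above combined with the corresponding properties of $R^n\pi_*$ on the level of isocrystals.

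The main obstacle is the key lemma, specifically showing that horizontal Frobenius-invariant sections extend uniquely from a Zariski-dense open; this is where the overconvergent hypothesis is essential and where one has to invoke Kedlaya's full faithfulness and extension theorems (Theorem \ref{extension_thm} and its companion \cite{Ke2b} Theorem 5.2.1) in the manner sketched. A secondary technical point is the smooth proper base change for higher direct images $R^n\pi_*$ of overconvergent $F$-isocrystals, which must be invoked (in the form of restriction commuting with $R^n\pi_*$ along open immersions on the base) when comparing the two models.
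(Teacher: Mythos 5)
Your proposal is correct and follows essentially the same route as the paper: the paper packages your key lemma and your common-refinement comparisons into a filtered category of models $(U,\mathfrak X,\pi,\phi)$ whose transition maps are all isomorphisms (precisely because restriction of $H^0_{rig}$ of an overconvergent $F$-isocrystal to a dense open is bijective, via Kedlaya's full faithfulness and extension theorems), and defines $\mathcal H^n(X/\mathbb K)$ as the limit over this category, with functoriality obtained by spreading out $f$ exactly as you do. One small imprecision to fix: $H^0_{rig}(V,j^*\mathcal F)$ consists of \emph{all} horizontal sections, not only the Frobenius-invariant ones, so in your key lemma you should extend the maximal trivial sub-isocrystal (which is Frobenius-stable, hence a sub-$F$-isocrystal to which Theorem \ref{extension_thm} applies) rather than identifying a section with a morphism $\mathcal O^{\dagger}_V\to j^*\mathcal F$ of $F$-isocrystals; with that adjustment the rest of your argument goes through unchanged.
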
 
\begin{proof} We are going to define a category $\mathcal I$ as follows. Its objects $\textrm{Ob}(\mathcal I)$ are quadruples $(U,\mathfrak X,\pi,\phi)$, where $U$ is a non-empty Zariski-open sub-curve of $\mathcal C$, moreover $\pi:\mathfrak X\to U$ is a projective, smooth morphism of $k$-schemes, and $\phi$ is an isomorphism $\mathfrak X_L\to X$ of $L$-schemes between the generic fibre $\mathfrak X_L$ of $\pi$ and $X$. A morphism of $\mathcal I$ between two objects $(U,\mathfrak X,\pi,\phi)$ and $(U',\mathfrak X',\pi',\phi')$ is an isomorphism
$$\iota:i^*(\mathfrak X)\longrightarrow\mathfrak X'$$
of schemes over $U'$, where $U$ contains $U'$ as an open sub-curve, the morphism $i:U'\to U$ is the inclusion map, and the $U'$-scheme $i^*(\mathfrak X)$ is the base-change of $\mathfrak X$ with respect to $i$, such that the diagram:
$$\xymatrix{
\mathfrak X_L \ar^-{\iota_L}[r] \ar^{\phi}[d] & \mathfrak X'_L \ar^{\phi'}[d] \\
X  \ar^{\textrm{id}_X}@{=}[r] & X}$$
is commutative, where $\iota_L$ is the base change of $\iota$ to Spec$(L)$. The composition $\kappa\circ\iota$ of a morphism $\iota$ from $(U,\mathfrak X,\pi,\phi)$ to $(U',\mathfrak X',\pi',\phi')$ and a morphism $\kappa$ from $(U',\mathfrak X',\pi',\phi')$ to $(U'',\mathfrak X'',\pi'',\phi'')$ is defined as the composition of
$$\iota_{U''}:(i'\circ i)^*(\mathfrak X)\longrightarrow(i')^*(\mathfrak X'),$$
the base change of $\iota$ to $U''$ with respect to the inclusion map $i':U''\to U'$ and
$\kappa$. Then there is a functor from $\mathcal I$ into the category of finite-dimensional $\mathbb K$-linear vector spaces which assigns to every object $(U,\mathfrak X,\pi,\phi)$ to the vector space $H^0_{rig}(U,R^n\pi_*(\mathcal O_{\mathfrak X}^{\dagger}))$, and to every morphism $\iota$ from $(U,\mathfrak X,\pi,\phi)$ to $(U',\mathfrak X',\pi',\phi')$ the composition $\mathcal H^n(\iota)$ of the restriction map:
$$H^0_{rig}(U,R^n\pi_*(\mathcal O_{\mathfrak X}^{\dagger}))\longrightarrow
H^0_{rig}(U',R^n\pi_*(\mathcal O_{\mathfrak X}^{\dagger}))$$
and the map:
$$H^0_{rig}(U',R^n\pi_*(\mathcal O_{\mathfrak X}^{\dagger}))\longrightarrow
H^0_{rig}(U',R^n\pi'_*(\mathcal O_{\mathfrak X'}^{\dagger}))$$
induced by $\iota$. By (the global version of) de Jong's full faithfulness theorem (see Theorem \ref{local-faithful}) for every $\iota$ as above the $\mathbb K$-linear map $\mathcal H^n(\iota)$ is an isomorphism. Clearly $\mathcal I$ is filtering, so the limit:
\begin{equation}\label{4.11.1-}
\mathcal H^n(X/\mathbb K)=\lim_{(V,\mathfrak Y,\rho,\lambda)\in\textrm{Ob}(\mathcal I)}H^0_{rig}(V,R^n\rho_*(\mathcal O_{\mathfrak Y}^{\dagger})),
\end{equation}
with $\mathcal I$ as an index category, is well-defined, and for every $(U,\mathfrak X,\pi,\phi)\in\textrm{Ob}(\mathcal I)$ the natural map:
\begin{equation}\label{4.11.1}
H^0_{rig}(U,R^n\pi_*(\mathcal O_{\mathfrak X}^{\dagger}))\longrightarrow\lim_{(V,\mathfrak Y,\rho,\lambda)\in\textrm{Ob}(\mathcal I)}H^0_{rig}(V,R^n\rho_*(\mathcal O_{\mathfrak Y}^{\dagger})),
\end{equation}
is an isomorphism. The first half of the proposition follows. Let $f:X\to Y$ be map between smooth, projective varieties over $L$. Then there is a non-empty Zariski-open sub-curve $U$ of $\mathcal C$, two smooth, projective morphisms $\pi:\mathfrak X\to U$, $\rho:\mathfrak Y\to U$ of $k$-schemes, and a morphism $\mathfrak f:\mathfrak X\to\mathfrak Y$ of $U$-schemes such that the base change of $\mathfrak f$ to Spec$(L)$ is isomorphic to $f$. The map $\mathfrak f$ induces a homomorphism:
$$H^0_{rig}(U,R^n\pi_*(\mathcal O_{\mathfrak X}^{\dagger}))\longrightarrow
H^0_{rig}(U,R^n\rho_*(\mathcal O_{\mathfrak Y}^{\dagger}))$$
which via the isomorphism (\ref{4.11.1}) furnishes a homomorphism:
$$\mathcal H^n(f):\mathcal H^n(X/\mathbb K)\longrightarrow\mathcal H^n(Y/\mathbb K)$$
which is independent of the choices of $U,\mathfrak X,\mathfrak Y,\mathfrak f$ and only depends on $f$. Equipped with these morphisms the correspondence $X\mapsto\mathcal H^n(X/\mathbb K)$ acquires the structure of a functor, so the second half of the proposition is also true.
\end{proof}
\begin{rem} Strictly speaking one should think of (\ref{4.11.1-}) as the definition of $\mathcal H^*(X/\mathbb K)$. Perhaps it is also worth remarking that although it has many of its additional structures, as we will see below, the functor $X\mapsto\mathcal H^*(X/\mathbb K)$ is not a Weil cohomology theory. 
\end{rem}
\begin{defn}\label{4.9} Let $\pi:\mathfrak X\to U$ be as above. Note that the usual multiplication map $\mathcal O^{\dagger}_{\mathfrak X}\otimes\mathcal O^{\dagger}_{\mathfrak X}\to\mathcal O^{\dagger}_{\mathfrak X}$, where $\otimes$ denotes the tensor product of overconvergent $F$-isocrystals, induces a map:
$$R^j\pi_*(\mathcal O_{\mathfrak X}^{\dagger})\otimes
R^n\pi_*(\mathcal O_{\mathfrak X}^{\dagger})\longrightarrow
R^{j+n}\pi_*(\mathcal O_{\mathfrak X}^{\dagger})$$
of derived functors, which by taking sections induces a $\mathbb K$-bilinear pairing:
$$H^0_{rig}(U,R^j\pi_*(\mathcal O_{\mathfrak X}^{\dagger}))\otimes_{\mathbb K}
H^0_{rig}(U,R^n\pi_*(\mathcal O_{\mathfrak X}^{\dagger}))\longrightarrow
H^0_{rig}(U,R^{j+n}\pi_*(\mathcal O_{\mathfrak X}^{\dagger})),$$
which, via the identification $H^0_{rig}(U,R^i\pi_*(\mathcal O_{\mathfrak X}^{\dagger}))\cong
\mathcal H^i(X/\mathbb K)$, furnishes a $\mathbb K$-bilinear pairing:
$$\cup:\mathcal H^j(X/\mathbb K)\otimes_{\mathbb K}\mathcal H^n(X/\mathbb K)
\longrightarrow\mathcal H^{j+n}(X/\mathbb K).$$
\end{defn}
\begin{lemma} The pairing $\cup$ is independent of the choice of $\pi:\mathfrak X\to U$ and the isomorphism $\mathfrak X_L\cong X$.\end{lemma}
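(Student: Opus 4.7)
The plan is to show compatibility with the filtering system $\mathcal I$ introduced in the proof of Proposition \ref{4.8}. For each object $(U,\mathfrak X,\pi,\phi)\in\textrm{Ob}(\mathcal I)$ Definition \ref{4.9} furnishes a pairing
$$\cup_{(U,\mathfrak X,\pi,\phi)}:H^0_{rig}(U,R^j\pi_*(\mathcal O_{\mathfrak X}^{\dagger}))\otimes_{\mathbb K}H^0_{rig}(U,R^n\pi_*(\mathcal O_{\mathfrak X}^{\dagger}))\longrightarrow H^0_{rig}(U,R^{j+n}\pi_*(\mathcal O_{\mathfrak X}^{\dagger})),$$
and in order to conclude that these patch to a well-defined pairing on the limit (\ref{4.11.1-}) it suffices to verify that for every morphism $\iota$ in $\mathcal I$ from $(U,\mathfrak X,\pi,\phi)$ to $(U',\mathfrak X',\pi',\phi')$, the induced isomorphism $\mathcal H^n(\iota)$ intertwines the two pairings.

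I would factor such a $\iota$ as the composition of (i) the restriction map associated to the open immersion $i:U'\hookrightarrow U$ (applied to the pushforward of $\mathcal O_{\mathfrak X}^{\dagger}$), and (ii) the map on cohomology induced by the scheme isomorphism $\iota:i^*(\mathfrak X)\to\mathfrak X'$ over $U'$. For step (i), the multiplication map $\mathcal O_{\mathfrak X}^{\dagger}\otimes\mathcal O_{\mathfrak X}^{\dagger}\to\mathcal O_{\mathfrak X}^{\dagger}$, and the map of derived pushforwards it induces, restrict to the analogous objects over $U'$ by naturality of $R\pi_*$ and of rigid cohomology in the open; so the restriction map is a morphism of graded algebras. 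For step (ii), the isomorphism $\iota$ identifies the structure sheaves as sheaves of $\mathbb K$-algebras in the category of overconvergent isocrystals, hence intertwines the multiplication maps, so that the induced map $R(i^*\pi)_*(\mathcal O_{i^*(\mathfrak X)}^{\dagger})\to R\pi'_*(\mathcal O_{\mathfrak X'}^{\dagger})$ is a morphism of algebra objects in the derived category. Passing to $H^0_{rig}(U',-)$ gives the required compatibility.

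Consequently both factors of $\mathcal H^n(\iota)$ commute with cup products, so the pairings $\cup_{(U,\mathfrak X,\pi,\phi)}$ descend to a single pairing $\cup$ on $\mathcal H^*(X/\mathbb K)$ under the canonical isomorphisms (\ref{4.11.1}), which by definition is independent of $(U,\mathfrak X,\pi,\phi)$. The main obstacle is bookkeeping rather than substance: the entire argument reduces to the functoriality of the multiplication on the structure sheaf $\mathcal O^{\dagger}$ with respect to morphisms of schemes and restriction to open sub-schemes, both of which are built into the construction of Berthelot's rigid cohomology.
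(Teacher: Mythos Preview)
Your proposal is correct and takes essentially the same approach as the paper: both reduce the claim to the commutativity of the square expressing that the transition isomorphisms $\mathcal H^n(\iota)$ in the filtering category $\mathcal I$ intertwine the cup products, so that the pairing descends to the limit. The paper simply asserts this commutativity, while you supply the extra step of factoring $\iota$ into a restriction to an open sub-curve followed by a scheme isomorphism and checking each separately; this is a harmless elaboration of the same argument.
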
 
We will call $\cup$ the cup product on $\mathcal H^*(X/\mathbb K)=\oplus_{n=0}^{\infty}
\mathcal H^n(X/\mathbb K)$, which makes $\mathcal H^*(X/\mathbb K)$ into a graded ring.
\begin{proof} Let $\mathcal I$ be the same category as in the proof of Proposition \ref{4.8}. Then the lemma follows from the following remark: for every morphism
$\iota$ of $\mathcal I$ from $(U,\mathfrak X,\pi,\phi)$ to $(U',\mathfrak X',\pi',\phi')$ the diagram:
$$\xymatrix{
H^0_{rig}(U,R^j\pi_*(\mathcal O_{\mathfrak X}^{\dagger}))\otimes_{\mathbb K}
H^0_{rig}(U,R^n\pi_*(\mathcal O_{\mathfrak X}^{\dagger})) \ar[r] 
\ar^{\mathcal H^j(\iota)\otimes\mathcal H^n(\iota)}[d] & H^0_{rig}(U,R^{j+n}\pi_*(\mathcal O_{\mathfrak X}^{\dagger})) \ar^{\mathcal H^{j+n}(\iota)}[d] \\
H^0_{rig}(U',R^j\pi'_*(\mathcal O_{\mathfrak X'}^{\dagger}))\otimes_{\mathbb K}
H^0_{rig}(U',R^n\pi'_*(\mathcal O_{\mathfrak X'}^{\dagger}))  \ar[r] &
H^0_{rig}(U',R^{j+n}\pi'_*(\mathcal O_{\mathfrak X'}^{\dagger}))}$$
is commutative, where the horizontal maps are the bilinear maps introduced in Definition \ref{4.9}.
\end{proof}
\begin{defn} Assume now that $X$ is also equidimensional of dimension $d$ and let $z\in Z_r(X)$ be a cycle. Let $\pi:\mathfrak X\to U$ be a projective, smooth morphism of $k$-schemes where $U$ is a non-empty Zariski-open sub-curve of $\mathcal C$ and the generic fibre of $\pi$ is isomorphic to $X$. By shrinking $U$, if it is necessary, we may assure that there is a cycle $\mathfrak z\in Z_{r+1}(\mathfrak X)$ whose base change to $X$ is $z$. Let
$\gamma_X(z)\in\mathcal H^{2d-2r}(X/\mathbb K)$ denote the image of $\gamma_{\mathfrak X}(\mathfrak z)\in H^{2d-2r}_{rig}(\mathfrak X,\mathcal O_{\mathfrak X}^{\dagger})$ under the map:
$$H^{2d-2r}_{rig}(\mathfrak X,\mathcal O_{\mathfrak X}^{\dagger})\longrightarrow
H^0_{rig}(U,R^{2d-2r}\pi_*(\mathcal O_{\mathfrak X}^{\dagger}))$$
furnished by the Leray spectral sequence $H^i_{rig}(U,R^j\pi_*(\mathcal O_{\mathfrak X}^{\dagger}))\Rightarrow H^{i+j}_{rig}(\mathfrak X,\mathcal O_{\mathfrak X}^{\dagger})$.
\end{defn}
\begin{lemma} The cycle class $\gamma_X(z)\in\mathcal H^{2d-2r}(X/\mathbb K)$ is well-defined, that is, it is independent of the choices made. 
\end{lemma}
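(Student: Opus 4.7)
The plan is to handle the two sources of ambiguity in the definition separately: for a fixed model $(U,\mathfrak X,\pi,\phi)$, the choice of extension $\mathfrak z\in Z_{r+1}(\mathfrak X)$, and then the choice of model itself. Both reduce to naturality properties, using Theorem~\ref{petrequin2}(b) together with the naturality of the Leray edge map, combined with the fact (established in the proof of Proposition~\ref{4.8}) that the relevant restriction maps between the $H^0_{rig}$-groups are already isomorphisms.

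First I would treat the extension. Suppose $\mathfrak z_1,\mathfrak z_2\in Z_{r+1}(\mathfrak X)$ both restrict to $z$ on the generic fibre. Each prime component of $\mathfrak z_1-\mathfrak z_2$ either dominates $U$ (horizontal) or is contained in a single fibre $\pi^{-1}(u)$ (vertical); since a horizontal prime is the closure of its generic fibre, restriction is injective on the subgroup generated by horizontal primes, and since the restrictions of $\mathfrak z_1$ and $\mathfrak z_2$ to $X$ coincide, the horizontal part of $\mathfrak z_1-\mathfrak z_2$ vanishes. Hence $\mathfrak z_1-\mathfrak z_2$ is supported in $\pi^{-1}(S)$ for some finite set $S\subseteq|U|$. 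Setting $U^\circ=U\setminus S$, $\mathfrak X^\circ=\pi^{-1}(U^\circ)$, and writing $j:\mathfrak X^\circ\hookrightarrow\mathfrak X$, $j_U:U^\circ\hookrightarrow U$ for the open immersions, Theorem~\ref{petrequin2}(b) yields $j^*\gamma_{\mathfrak X}(\mathfrak z_1)=\gamma_{\mathfrak X^\circ}(j^*\mathfrak z_1)=\gamma_{\mathfrak X^\circ}(j^*\mathfrak z_2)=j^*\gamma_{\mathfrak X}(\mathfrak z_2)$. The Leray spectral sequences for $\pi$ and $\pi|_{\mathfrak X^\circ}$ are natural, so the square
$$\xymatrix{
H^{2d-2r}_{rig}(\mathfrak X,\mathcal O_{\mathfrak X}^{\dagger}) \ar[r] \ar[d]^{j^*} & H^0_{rig}(U,R^{2d-2r}\pi_*\mathcal O_{\mathfrak X}^{\dagger}) \ar[d]^{j_U^*} \\
H^{2d-2r}_{rig}(\mathfrak X^\circ,\mathcal O_{\mathfrak X^\circ}^{\dagger}) \ar[r] & H^0_{rig}(U^\circ,R^{2d-2r}(\pi|_{\mathfrak X^\circ})_*\mathcal O_{\mathfrak X^\circ}^{\dagger})
}$$
of edge maps commutes, and the right vertical map is an isomorphism since it is one of the transition maps in the filtered system of Proposition~\ref{4.8}, which was shown there to be an isomorphism via Theorem~\ref{local-faithful}. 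Therefore $\gamma_{\mathfrak X}(\mathfrak z_1)$ and $\gamma_{\mathfrak X}(\mathfrak z_2)$ have the same image in $\mathcal H^{2d-2r}(X/\mathbb K)$.

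For independence from the model, since $\mathcal I$ is filtering it suffices to check compatibility with a single morphism $\iota:(U,\mathfrak X,\pi,\phi)\to(U',\mathfrak X',\pi',\phi')$ of $\mathcal I$. Given an extension $\mathfrak z$ of $z$ on $\mathfrak X$, the cycle $\mathfrak z'=\iota_*(j^*\mathfrak z)\in Z_{r+1}(\mathfrak X')$, where $j:i^*(\mathfrak X)\hookrightarrow\mathfrak X$ lies over the open immersion $i:U'\hookrightarrow U$ and $\iota:i^*(\mathfrak X)\to\mathfrak X'$ is the given isomorphism, extends $z$ on $\mathfrak X'$ by the compatibility $\phi'\circ\iota_L=\phi$. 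Applying Theorem~\ref{petrequin2}(b) and naturality of the Leray edge map to the span $\mathfrak X'\xleftarrow{\iota}i^*(\mathfrak X)\hookrightarrow\mathfrak X$ identifies the two resulting cycle classes under the transition isomorphism $\mathcal H^{2d-2r}(\iota)$; combined with the first step this closes the argument. The main obstacle is purely formal, reducing to the naturality of the Leray edge map for $R^j\pi_*$ of $\mathcal O^{\dagger}$ under open immersions of the base and under model isomorphisms over them; this is standard, but must be invoked carefully within Caro's overconvergent formalism, where the input that makes everything work is again the full faithfulness of Theorem~\ref{local-faithful}.
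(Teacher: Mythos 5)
Your proof is correct and follows essentially the same route as the paper: the paper packages both of your steps into the single assertion that the index category $\mathcal I(z)$ of models equipped with an extension of $z$ is filtering, and then checks that the classes $\gamma(U,\mathfrak X,\pi,\phi,\mathfrak z)$ are compatible with the transition isomorphisms $\mathcal H^{2d-2r}(\iota)$, exactly as you do via Theorem \ref{petrequin2}$(b)$, the naturality of the Leray edge map, and the fact from Proposition \ref{4.8} that the restriction maps on $H^0_{rig}$ are isomorphisms. Your explicit horizontal/vertical decomposition of $\mathfrak z_1-\mathfrak z_2$ is a welcome elaboration of the filtering claim, which the paper only asserts.
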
 
\begin{proof} We are going to define a category $\mathcal I(z)$ as follows. Its objects
$\textrm{Ob}(\mathcal I(z))$ are quintuples $(U,\mathfrak X,\pi,\phi,\mathfrak z)$, where
$(U,\mathfrak X,\pi,\phi)$ is an object of the category $\mathcal I$ introduced in the proof of
Proposition \ref{4.8}, and $\mathfrak z\in Z_{r+1}(\mathfrak X)$ is a cycle whose base change to $X$ is $z$ with respect to the isomorphism $\phi:\mathfrak X_L\to X$. A morphism of
$\mathcal I(z)$ between two objects $(U,\mathfrak X,\pi,\phi,\mathfrak z)$ and
$(U',\mathfrak X',\pi',\phi',\mathfrak z')$ is a morphism
$$\iota:i^*(\mathfrak X)\longrightarrow\mathfrak X'$$
from $(U,\mathfrak X,\pi,\phi)$ to $(U',\mathfrak X',\pi',\phi')$ in $\mathcal I$, where
$i:U'\to U$ is the inclusion map, which maps the pull-back cycle $i^*(\mathfrak z)$ to
$\mathfrak z'$. 

Let $(U,\mathfrak X,\pi,\phi,\mathfrak z)$ and $(U',\mathfrak X',\pi',\phi',\mathfrak z')$ be two objects of $\mathcal I(z)$. Then there is a non-empty open sub-curve $V\subset U\cap U'$ such that there is an isomorphism $\iota:\mathfrak X_V\to\mathfrak X'_V$, where $\mathfrak X_V$ and $\mathfrak X_V'$ are the pull-back of $\mathfrak X$ and $\mathfrak X'$ with respect to the inclusions $V\subset U$ and $V\subset U'$, respectively, such that the pull-back of $\mathfrak z$ onto $\mathfrak X_V$ maps to the pull-back of $\mathfrak z'$ onto $\mathfrak X'_V$ with respect to $\iota$, or in other words the category $\mathcal I(z)$ is filtering.

For every object $(U,\mathfrak X,\pi,\phi,\mathfrak z)$ of $\mathcal I(z)$ let
$\gamma(U,\mathfrak X,\pi,\phi,\mathfrak z)\in H^0_{rig}(U,R^{2d-2r}\pi_*(\mathcal O_{\mathfrak X}^{\dagger}))$ denote the image of
$\gamma_{\mathfrak X}(\mathfrak z)\in H^{2d-2r}_{rig}(\mathfrak X,\mathcal O_{\mathfrak X}^{\dagger})$ under the map:
$$H^{2d-2r}_{rig}(\mathfrak X,\mathcal O_{\mathfrak X}^{\dagger})\longrightarrow
H^0_{rig}(U,R^{2d-2r}\pi_*(\mathcal O_{\mathfrak X}^{\dagger}))$$
furnished by the Leray spectral sequence $H^i_{rig}(U,R^j\pi_*(\mathcal O_{\mathfrak X}^{\dagger}))\Rightarrow H^{i+j}_{rig}(\mathfrak X,\mathcal O_{\mathfrak X}^{\dagger})$. For every
morphism $\iota$ of $\mathcal I(z)$ from $(U,\mathfrak X,\pi,\phi,\mathfrak z)$ to $(U',\mathfrak X',\pi',\phi',\mathfrak z')$ the homomorphism
$$\mathcal H^{2d-2r}(\iota):H^0_{rig}(U,R^{2d-2r}\pi_*(\mathcal O_{\mathfrak X}^{\dagger}))\longrightarrow H^0_{rig}(U',R^{2d-2r}\pi'_*(\mathcal O_{\mathfrak X'}^{\dagger}))$$
introduced in the proof of Pro\-po\-sition \ref{4.8} maps the section $\gamma(U,\mathfrak X,\pi,\phi,\mathfrak z)$ to the section $\gamma(U',\mathfrak X',\pi',\phi',\mathfrak z')$, therefore the limit:
$$\lim_{(V,\mathfrak Y,\rho,\lambda,\mathfrak o)\in\textrm{Ob}(\mathcal I(z))}
\!\!\!\!\!\!\!\!\!\!\gamma(V,\mathfrak Y,\rho,\lambda,\mathfrak o)
\in\!\!\!\!\!\!\!\!\!\!
\lim_{(V,\mathfrak Y,\rho,\lambda,\mathfrak o)\in\textrm{Ob}(\mathcal I(z))}
\!\!\!\!\!\!\!\!\!\!
H^0_{rig}(V,R^{2d-2r}\rho_*(\mathcal O_{\mathfrak Y}^{\dagger}))=\mathcal H^{2d-2r}(X/\mathbb K)$$
is well-defined. As we already saw in the proof of Proposition \ref{4.8} the natural map:
$$
H^0_{rig}(U,R^{2d-2r}\pi_*(\mathcal O_{\mathfrak X}^{\dagger}))\longrightarrow
\lim_{(V,\mathfrak Y,\rho,\lambda,\mathfrak o)\in\textrm{Ob}(\mathcal I(z))}
H^0_{rig}(V,R^{2d-2r}\rho_*(\mathcal O_{\mathfrak Y}^{\dagger}))$$
for every $(U,\mathfrak X,\pi,\phi,\mathfrak z)\in\textrm{Ob}(\mathcal I(z))$ is an isomorphism. The claim is now clear.
\end{proof}
\begin{prop}\label{chow_new} Let $X$ be as above, and let $z\in Z_r(X)$ be a cycle rationally equivalent to zero. Then $\gamma_X(z)=0$. Therefore the cycle class map $\gamma_X$ factors through the quotient map $Z_r(X)\to CH_r(X)$, and hence furnishes a homomorphism: 
$$\gamma_X:CH^r(X)\longrightarrow\mathcal H^{2r}(X/\mathbb K),$$
which we will denote by the same symbol by slight abuse of notation. 
\end{prop}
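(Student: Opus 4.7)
The plan is to reduce the claim to Petrequin's Proposition \ref{petrequin1} by spreading the rational equivalence from $X$ onto a suitable model of $X$ over an open sub-curve of $\mathcal{C}$. First I would fix a representative $(U,\mathfrak{X},\pi,\phi)$ and a cycle $\mathfrak{z}\in Z_{r+1}(\mathfrak{X})$ whose restriction to the generic fibre equals $z$. Since $z$ is rationally equivalent to zero, I may write $z=\sum_j\textrm{div}_{W_j}(f_j)$, where the $W_j\subset X$ are integral closed subvarieties of dimension $r+1$ and each $f_j$ is a non-zero rational function on $W_j$.

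Next, taking Zariski closures I would spread each $W_j$ to an integral closed subscheme $\mathfrak{W}_j\subset\mathfrak{X}$ of dimension $r+2$ dominating $U$; since $k(\mathfrak{W}_j)=k(W_j)$, the function $f_j$ extends uniquely to a non-zero rational function $\mathfrak{f}_j$ on $\mathfrak{W}_j$. The cycle $\mathfrak{d}=\mathfrak{z}-\sum_j\textrm{div}_{\mathfrak{W}_j}(\mathfrak{f}_j)\in Z_{r+1}(\mathfrak{X})$ then has vanishing restriction to the generic fibre. Because the assignment $T\mapsto T\cap X$ is a bijection between horizontal prime cycles of $\mathfrak{X}$ (those dominating $U$) and prime cycles of $X$, the horizontal part of $\mathfrak{d}$ vanishes, so $\mathfrak{d}$ is supported on the fibres of $\pi$ over a finite set $T\subset U$ of closed points. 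Setting $V=U\setminus T$, I obtain $\mathfrak{z}|_{\mathfrak{X}_V}=\sum_j\textrm{div}_{\mathfrak{W}_j\cap\mathfrak{X}_V}(\mathfrak{f}_j|_{\mathfrak{W}_j\cap\mathfrak{X}_V})$ in $Z_{r+1}(\mathfrak{X}_V)$, so $\mathfrak{z}|_{\mathfrak{X}_V}$ is rationally equivalent to zero on the smooth quasi-projective variety $\mathfrak{X}_V$.

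Finally, Petrequin's Proposition \ref{petrequin1}, combined with Poincar\'e duality in rigid cohomology, yields $\gamma_{\mathfrak{X}_V}(\mathfrak{z}|_{\mathfrak{X}_V})=0$ in $H^{2(d-r)}_{rig}(\mathfrak{X}_V/\mathbb{K})$. Via the edge map of the Leray spectral sequence, the image of this class in $H^0_{rig}(V,R^{2(d-r)}(\pi|_V)_*(\mathcal{O}^{\dagger}_{\mathfrak{X}_V}))$ is therefore zero, and since the latter group maps into $\mathcal{H}^{2(d-r)}(X/\mathbb{K})$ and computes $\gamma_X(z)$ through the quintuple $(V,\mathfrak{X}_V,\pi|_V,\phi,\mathfrak{z}|_{\mathfrak{X}_V})$, we obtain $\gamma_X(z)=0$. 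The main obstacle is the spreading-out step: after subtracting the extended divisors, one must verify that the resulting cycle is purely vertical, so that it disappears after a further shrinking of $U$. This rests on the bijection between horizontal prime cycles on $\mathfrak{X}$ and prime cycles on its generic fibre, together with the identification $k(\mathfrak{W}_j)=k(W_j)$.
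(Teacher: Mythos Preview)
Your argument is correct and follows essentially the same strategy as the paper: spread the rational equivalence from $X$ to a model over an open sub-curve and invoke Petrequin's Proposition~\ref{petrequin1}. The paper's version is marginally more direct in that, after reducing by linearity to $z=\textrm{div}_Y(\phi)$ for a single $Y$, it takes $\mathfrak z$ to be $\textrm{div}_{\mathfrak Y}(\phi)$ itself (exploiting the well-definedness already established) and thereby avoids your comparison with an arbitrary $\mathfrak z$ and the subsequent shrinking step.
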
 
\begin{proof} Without the loss of generality we may assume that there is an irreducible subvariety $Y\subset X$ of dimension $r+1$ and a rational function $\phi$ on $Y$ such that $z$ is the divisor of $\phi$. Let $\pi:\mathfrak X\to U$ be a projective, smooth morphism of $k$-schemes where $U$ is a non-empty Zariski-open sub-curve of $\mathcal C$ and the generic fibre of $\pi$ is $X$. Let $\mathfrak Y\subset\mathfrak X$ be the Zariski-closure of $Y$ in $\mathfrak X$. Then $\mathfrak Y$ is an irreducible subvariety of $X$ of dimension $r+2$ and the divisor of $\phi$, considered as a rational function on $\mathfrak Y$, is a cycle $w\in Z_{r+1}(\mathfrak X)$ whose base change to $X$ is $z$. Therefore
$\gamma_{\mathfrak X}(w)$ is also zero by Proposition \ref{petrequin1}, and the claim is now clear.
\end{proof}
\begin{thm}\label{functorial_new} Let $X$ be as above. Then
\begin{enumerate}
\item[$(a)$] for every $x\in CH^r(X)$ and $y\in CH^s(X)$ we have:
$$\gamma_X(x\cap y) = \gamma_X(x)\cup\gamma_X(y)\in\mathcal H^{2r+2s}(X/\mathbb K).$$
In other words the map
$$\gamma_X:CH^*(X)\longrightarrow\mathcal H^*(X/\mathbb K)$$
is a ring homomorphism.
\item[$(b)$] for every morphism $f:Y\to X$ between smooth equidimensional varieties over $L$, and for every $z\in CH^*(X)$ we have:
$$f^*(\gamma_X(z))=\gamma_{Y}(f^*(z)).$$
\end{enumerate}
\end{thm}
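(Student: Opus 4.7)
The plan is to deduce both statements from the corresponding facts for the classical rigid cycle class map (Theorem \ref{petrequin2}) by spreading out to a smooth projective model over a Zariski open $U\subseteq\mathcal C$ and transporting the identities through the edge maps of the Leray spectral sequence used in the construction of $\gamma_X$.

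For part $(a)$, I would proceed as follows. Given $x\in CH^r(X)$ and $y\in CH^s(X)$, choose a projective smooth $\pi:\mathfrak X\to U$ with generic fibre $X$ and (after possibly shrinking $U$) pick cycles $\mathfrak x,\mathfrak y$ on $\mathfrak X$ whose restrictions to $X$ represent $x,y$; then $\mathfrak x\cap\mathfrak y$ restricts to a representative of $x\cap y$. By part $(a)$ of Theorem \ref{petrequin2},
$$\gamma_{\mathfrak X}(\mathfrak x\cap\mathfrak y)=\gamma_{\mathfrak X}(\mathfrak x)\cup\gamma_{\mathfrak X}(\mathfrak y)\in H^{2r+2s}_{rig}(\mathfrak X/\mathbb K).$$
The cup product on $\mathcal H^*(X/\mathbb K)$ was constructed in Definition \ref{4.9} from the multiplication $\mathcal O^{\dagger}_{\mathfrak X}\otimes\mathcal O^{\dagger}_{\mathfrak X}\to\mathcal O^{\dagger}_{\mathfrak X}$, which makes the Leray spectral sequence
$$E_2^{i,j}=H^i_{rig}(U,R^j\pi_*(\mathcal O^{\dagger}_{\mathfrak X}))\Rightarrow H^{i+j}_{rig}(\mathfrak X/\mathbb K)$$
a spectral sequence of graded rings. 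The $(0,n)$ edge map from the abutment onto $E_\infty^{0,n}\subseteq E_2^{0,n}$ is a ring homomorphism, and $\gamma_X$ on a cycle $z$ is by definition the image of $\gamma_{\mathfrak X}(\mathfrak z)$ under this edge map. Applying the edge map to the identity above therefore gives $\gamma_X(x\cap y)=\gamma_X(x)\cup\gamma_X(y)$.

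For part $(b)$, choose $\pi:\mathfrak X\to U$ and $\rho:\mathfrak Y\to U$ as above, both smooth and projective, with a $U$-morphism $\mathfrak f:\mathfrak Y\to\mathfrak X$ whose restriction to generic fibres is $f$, and spread $z$ to $\mathfrak z$ on $\mathfrak X$; then $f^*(z)$ is represented by $\mathfrak f^*(\mathfrak z)$. By part $(b)$ of Theorem \ref{petrequin2},
$$\mathfrak f^*(\gamma_{\mathfrak X}(\mathfrak z))=\gamma_{\mathfrak Y}(\mathfrak f^*(\mathfrak z))\in H^*_{rig}(\mathfrak Y/\mathbb K).$$
The map $\mathfrak f$ induces a morphism of Leray spectral sequences over $U$, and in particular a commutative square between the two edge maps; under the canonical identification $H^0_{rig}(U,R^n\pi_*\mathcal O^{\dagger}_{\mathfrak X})\cong\mathcal H^n(X/\mathbb K)$ (and similarly for $\mathfrak Y$) of Proposition \ref{4.8}, this square expresses $\mathcal H^n(f)\circ\gamma_X=\gamma_Y\circ f^*$, which is the desired formula.

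The routine parts are the existence of the spreadings of cycles and of $f$, plus the independence of choices, which are already covered by the limit construction in the proofs of Proposition \ref{4.8} and Proposition \ref{chow_new}. The main technical point, in both parts, is the compatibility of the Leray edge maps with products and pullback: for $(a)$ this compatibility is tautological once the cup product is defined as in Definition \ref{4.9}, and for $(b)$ it follows from the functoriality of higher direct images and of rigid cohomology with coefficients in a morphism of overconvergent $F$-isocrystals, so in fact no serious obstacle arises beyond carefully tracking these spectral sequence formalities.
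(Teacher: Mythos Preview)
Your proposal is correct and follows essentially the same route as the paper: spread out over $U$, invoke Petrequin's Theorem \ref{petrequin2} on the model $\mathfrak X$, and push through the Leray edge map (which is multiplicative by the very construction of $\cup$ in Definition \ref{4.9}, and functorial in $\mathfrak f$ for part $(b)$).

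The one point the paper makes explicit that you gloss over is in part $(a)$: your sentence ``then $\mathfrak x\cap\mathfrak y$ restricts to a representative of $x\cap y$'' hides a small issue. If $\mathfrak x\cap\mathfrak y$ is meant as an actual cycle, this only makes sense when $\mathfrak x$ and $\mathfrak y$ intersect properly; the paper invokes the moving lemma (2.3 of \cite{Fu}) to arrange this before applying Theorem \ref{petrequin2}$(a)$. Alternatively one can read $\mathfrak x\cap\mathfrak y$ as the Chow-theoretic product and use that flat pullback to the generic fibre commutes with intersection products, but then one should say so. Either way the argument goes through; the paper simply chooses the moving-lemma route.
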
 
\begin{proof} We first prove $(a)$. Let $\pi:\mathfrak X\to U$ be a projective, smooth morphism of $k$-schemes where $U$ is a non-empty Zariski-open sub-curve of $\mathcal C$ and the generic fibre of $\pi$ is isomorphic to $X$, as above. By shrinking $U$ if it is necessary we may assume that there are two cycles $\mathfrak x,\mathfrak y$ on $\mathfrak X$ whose pull-backs to $X$ represent $x$ and $y$, respectively. Using moving lemma (see 2.3 of \cite{Fu} on page 156) we immediately reduce to the case when $\mathfrak x$ and $\mathfrak y$ intersect properly. Then $\mathfrak x\cap\mathfrak y$ represents the cycle class $x\cap y$ in $X$ and the claim follows from part $(a)$ of Theorem \ref{petrequin2}.

Let $f:Y\to X$ be as in part $(b)$ above. Then there is a non-empty Zariski-open sub-curve
$U$ of $\mathcal C$, two smooth, projective morphisms $\pi:\mathfrak X\to U$, $\rho:\mathfrak Y\to U$ of $k$-schemes, and a morphism $\mathfrak f:\mathfrak Y\to\mathfrak X$ of $U$-schemes such that the base change of $\mathfrak f$ to Spec$(L)$ is isomorphic to $f$. By shrinking $U$, if it is necessary, we may assure that there is a cycle $\mathfrak z\in Z_{r+1}(\mathfrak X)$ whose base change to $X$ is $z$. Note that there is a commutative diagram:
$$\xymatrix{H^{2d-2r}_{rig}(\mathfrak X,\mathcal O_{\mathfrak X}^{\dagger})
\ar^-{\mathfrak f^*}[r] \ar[d] & H^{2d-2r}_{rig}(\mathfrak Y,\mathcal O_{\mathfrak Y}^{\dagger})
 \ar[d] \\
H^0_{rig}(U,R^{2d-2r}\pi_*(\mathcal O_{\mathfrak X}^{\dagger}))  \ar[r] & H^0_{rig}(U,R^{2d-2r}\rho_*(\mathcal O_{\mathfrak Y}^{\dagger})),}$$
where the vertical maps are furnished by the Leray spectral sequences
$$H^i_{rig}(U,R^j\pi_*(\mathcal O_{\mathfrak X}^{\dagger}))\Rightarrow H^{i+j}_{rig}(\mathfrak X,\mathcal O_{\mathfrak X}^{\dagger})\textrm{ and }
H^i_{rig}(U,R^j\rho_*(\mathcal O_{\mathfrak Y}^{\dagger}))\Rightarrow H^{i+j}_{rig}(\mathfrak Y,\mathcal O_{\mathfrak Y}^{\dagger}),$$
respectively, the upper horizontal map is the pull-back with respect to $\mathfrak f$, and the lower horizontal map corresponds to the homomorphism:
$$\mathcal H^{2d-2r}(f):\mathcal H^k(X/\mathbb K)\longrightarrow\mathcal H^{2d-2r}(Y/\mathbb K)$$
under the the isomorphism (\ref{4.11.1}). Since $\mathfrak f^*(\mathfrak z)$ is a cycle in
$Z_{r+1}(\mathfrak Y)$ whose base change to $Y$ is $f^*(z)$, now claim $(b)$ follows from part $(b)$ of Theorem \ref{petrequin2}. 
\end{proof}
\begin{notn}\label{4.18} Let $X_1,X_2,\ldots,X_n$ be smooth, projective varieties over $L$ and let 
$$\pi_i:X_1\times X_2\times\cdots\times X_n\longrightarrow X_i$$
be the projection onto the $i$-th factor. Moreover let $\pi^*_{i_1,i_2,\ldots,i_n}$ denote the map:
$$\mathcal H^{i_1}(X_1/\mathbb K)\otimes_{\mathbb K}\mathcal H^{i_2}(X_2/\mathbb K)\otimes_{\mathbb K}
\cdots\otimes_{\mathbb K}\mathcal H^{i_n}(X_n/\mathbb K)
\to
\mathcal H^{i_1+i_2+\cdots+i_n}(X_1\times\cdots\times X_n/\mathbb K)$$
given by the rule:
$$x_1\otimes_{\mathbb K}x_2\otimes_{\mathbb K}\cdots\otimes_{\mathbb K}
x_n\mapsto\pi^*_1(x_1)\cup\pi^*_2(x_2)\cup\cdots\cup\pi^*_n(x_n).$$
\end{notn}
\begin{lemma}\label{kunneth_new} The direct sum:
$$\bigoplus_{i_1+\cdots+i_n=r}\!\!\!\!\!\!\!\!\pi^*_{i_1,i_2,\ldots,i_n}:
\!\!\!\!\!\!\!\!\!\!\bigoplus_{i_1+i_2+\cdots+i_n=r}\!\!\!\!\!\!\!\!\!\!
\mathcal H^{i_1}(X_1/\mathbb K)\otimes_{\mathbb K}\cdots\otimes_{\mathbb K}
\mathcal H^{i_n}(X_n/\mathbb K)
\to
\mathcal H^r(X_1\times\cdots\times X_n/\mathbb K)$$
is an isomorphism.
\end{lemma}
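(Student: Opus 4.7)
The plan is to reduce to $n=2$ by an obvious induction and then combine a relative K\"unneth isomorphism for overconvergent direct images with the commutation of $H^0_{rig}(U,-)$ with tensor product on the relevant sheaves.

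Picking a common non-empty open $U\subseteq\mathcal C$ together with smooth projective models $\pi_j:\mathfrak X_j\to U$ of the $X_j$ (possible by the filtering property of the indexing category in the proof of Proposition~\ref{4.8}), I would form the $U$-fibre product $\pi:\mathfrak Y=\mathfrak X_1\times_U\mathfrak X_2\to U$. Then $\mathfrak Y$ is again smooth and projective over $U$ with generic fibre $X_1\times_L X_2$, so that $\mathcal H^r(X_1\times X_2/\mathbb K)=H^0_{rig}(U,R^r\pi_*\mathcal O^\dagger_{\mathfrak Y})$, and the map $\pi^*_{i,j}$ appearing in the lemma is by construction induced by applying $H^0_{rig}(U,-)$ to the obvious external tensor-product map on relative cohomology sheaves followed by cup product.

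The first main step is the relative K\"unneth isomorphism in the category of overconvergent $F$-isocrystals on $U$,
\[
R^r\pi_*\mathcal O^\dagger_{\mathfrak Y}\;\cong\;\bigoplus_{i+j=r}R^i\pi_{1*}\mathcal O^\dagger_{\mathfrak X_1}\otimes_{\mathcal O^\dagger_U}R^j\pi_{2*}\mathcal O^\dagger_{\mathfrak X_2},
\]
which is a formal consequence of smooth proper base change for rigid cohomology (so that formation of $R^*\pi_*$ commutes with restriction to fibres) together with the ordinary fibrewise K\"unneth formula of Berthelot for smooth proper varieties over a perfect field.

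The second and main step is to apply $H^0_{rig}(U,-)$ to this decomposition and identify the result with the tensor product $H^0_{rig}(U,R^i\pi_{1*}\mathcal O^\dagger)\otimes H^0_{rig}(U,R^j\pi_{2*}\mathcal O^\dagger)=\mathcal H^i(X_1/\mathbb K)\otimes\mathcal H^j(X_2/\mathbb K)$. This is the step I expect to require the most care: the natural K\"unneth pairing $H^0_{rig}(U,\mathcal E_1)\otimes H^0_{rig}(U,\mathcal E_2)\to H^0_{rig}(U,\mathcal E_1\otimes\mathcal E_2)$ is not always an isomorphism for arbitrary overconvergent $F$-isocrystals, and even semisimplicity of the factors alone does not suffice, since the geometric monodromy invariants of a tensor product may strictly contain the tensor product of the invariants. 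To get around this one exploits that each $\mathcal E_j=R^{i_j}\pi_{j*}\mathcal O^\dagger$ is $\iota$-pure of weight $i_j$, and hence semisimple as an overconvergent isocrystal by Proposition~\ref{abe-caro}, and then invokes a weight-theoretic Tannakian argument, or, alternatively, deduces the identity on horizontal sections by comparing the two Leray spectral sequences $H^i_{rig}(U,R^j\pi_*)\Rightarrow H^{i+j}_{rig}(\mathfrak Y)$ and $H^i_{rig}(U,R^j\pi_{k*})\Rightarrow H^{i+j}_{rig}(\mathfrak X_k)$ with the classical K\"unneth isomorphism on $H^*_{rig}(\mathfrak Y)$ and tracking the edge maps onto the $H^0(U,-)$ pieces.
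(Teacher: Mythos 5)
Your first two steps coincide exactly with the paper's argument: one chooses models $\rho_j:\mathfrak X_j\to U$ over a common open $U$ (the filtering property of the category $\mathcal I$ makes this harmless), forms the fibre product $\rho:\mathfrak X_1\times_U\cdots\times_U\mathfrak X_n\to U$, and proves that the fibrewise exterior cup product
$$\bigoplus_{i_1+\cdots+i_n=r}R^{i_1}\rho_{1*}(\mathcal O^{\dagger}_{\mathfrak X_1})\otimes\cdots\otimes R^{i_n}\rho_{n*}(\mathcal O^{\dagger}_{\mathfrak X_n})\longrightarrow R^{r}\rho_*(\mathcal O^{\dagger}_{\mathfrak X_1\times_U\cdots\times_U\mathfrak X_n})$$
is an isomorphism of overconvergent $F$-isocrystals on $U$, by proper base change and the K\"unneth formula on each closed fibre. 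You have also put your finger on exactly the point where the paper's own proof is silent (it simply says ``by taking global sections we get the lemma''): one still has to identify $H^0_{rig}(U,\mathcal E_1\otimes\cdots\otimes\mathcal E_n)$ with $H^0_{rig}(U,\mathcal E_1)\otimes_{\mathbb K}\cdots\otimes_{\mathbb K}H^0_{rig}(U,\mathcal E_n)$.

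The gap is that neither of your proposed repairs can close this step. Purity of the $R^{i_j}\rho_{j*}(\mathcal O^{\dagger}_{\mathfrak X_j})$ gives semi-simplicity of the factors (Proposition \ref{abe-caro}), but, as you note yourself, the horizontal sections of a tensor product are the invariants of the geometric monodromy group and may strictly contain the tensor product of the invariants of the factors; and there is no weight separation to exploit, since every summand $R^{i_1}\otimes\cdots\otimes R^{i_n}$ with $i_1+\cdots+i_n=r$ is pure of the same weight $r$. Concretely, take $n=2$ and $X_1=X_2=A$ a non-isotrivial elliptic curve, so that $\mathcal H^1(A/\mathbb K)=H^0_{rig}(U,R^1\rho_{1*}(\mathcal O^{\dagger}))=0$; since $R^1\rho_{1*}(\mathcal O^{\dagger})\cong D^{\dagger}(\mathfrak A)\cong D^{\dagger}(\mathfrak A)^{\vee}(1)$ (Lemma \ref{d-module_comparison}), the summand $H^0_{rig}(U,R^1\otimes R^1)$ of $\mathcal H^2(A\times A/\mathbb K)$ contains the class of the identity endomorphism of $D^{\dagger}(\mathfrak A)$ and is nonzero, whereas $\mathcal H^1(A/\mathbb K)\otimes_{\mathbb K}\mathcal H^1(A/\mathbb K)=0$. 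Thus the map in the statement, with the source read literally as a tensor product of global-section spaces, is injective (the trivial sub-isocrystals generated by horizontal sections tensor together injectively) but not surjective, and the Leray edge-map comparison cannot rescue it since the two sides have different dimensions. What the relative K\"unneth isomorphism actually delivers, and what the subsequent applications genuinely use (injectivity for the smash-nilpotence argument, and the computation of $\textrm{Hom}_{F^f}(D^{\dagger}(\mathfrak S),D^{\dagger}(\mathfrak T))$ inside $\mathcal H^2(S\times T/\mathbb K)$), is the decomposition $\mathcal H^r(X_1\times\cdots\times X_n/\mathbb K)\cong\bigoplus_{i_1+\cdots+i_n=r}H^0_{rig}(U,R^{i_1}\rho_{1*}(\mathcal O^{\dagger})\otimes\cdots\otimes R^{i_n}\rho_{n*}(\mathcal O^{\dagger}))$, with the summands in this form rather than as $\mathcal H^{i_1}(X_1/\mathbb K)\otimes\cdots\otimes\mathcal H^{i_n}(X_n/\mathbb K)$; any completion of your argument has to work with that statement instead.
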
 
\begin{proof} There is a non-empty Zariski-open sub-curve $U$ of $\mathcal C$ and for every $i=1,2,\ldots,n$ a smooth, projective morphism $\rho_i:\mathfrak X_i\to U$ such that the base change of $\mathfrak X_i$ to Spec$(L)$ is isomorphic to $X_i$ for every index $i$. Let
$$\rho:\mathfrak X_1\times_U\cdots\times_U\mathfrak X_n\longrightarrow U$$
be the fibre product of the morphisms $\rho_1,\rho_2,\ldots,\rho_n$ over $U$. Let
$\rho_{i_1,i_2,\ldots,i_n*}$ denote the map
$$R^{i_1}\!\rho_{1*}(\mathcal O^{\dagger}_{\mathfrak X_1})
\otimes R^{i_2}\!\rho_{2*}(\mathcal O^{\dagger}_{\mathfrak X_2})
\otimes\cdots\otimes R^{i_n}\!\rho_{n*}(\mathcal O^{\dagger}_{\mathfrak X_n})
\longrightarrow R^{i_1+i_2+\cdots+i_n}
\rho_*(\mathcal O^{\dagger}_{\mathfrak X_1\times_U\cdots\times_U\mathfrak X_n})$$
of overconvergent $F$-isocrystals induced by the fibre-wise exterior cup product. Note that the direct sum:
$$\bigoplus_{i_1+\cdots+i_n=r}\!\!\!\!\!\!\!\!\rho_{i_1,i_2,\ldots,i_n*}:
\!\!\!\!\!\!\!\!\!\!\bigoplus_{i_1+i_2+\cdots+i_n=r}\!\!\!\!\!\!\!\!\!\!
R^{i_1}\!\rho_{1*}(\mathcal O^{\dagger}_{\mathfrak X_1})
\otimes\cdots\otimes R^{i_n}\!\rho_{n*}(\mathcal O^{\dagger}_{\mathfrak X_n})
\to
R^r
\rho_*(\mathcal O^{\dagger}_{\mathfrak X_1\times_U\cdots\times_U\mathfrak X_n})$$
is an isomorphism. The latter follows at once from the proper base change theorem and the K\"unneth formula applied to the fibre of the $U$-scheme $\mathfrak X_1\times_U\cdots\times_U\mathfrak X_n$ at a closed point $x$ of $U$. By taking global sections we get the lemma.
\end{proof}
\begin{prop} Let $X$ be as above. Then $\gamma_X:CH^r(X)\to\mathcal H^{2r}(X/\mathbb K)$ factors through $s_X:CH^r(X)\to CH_{SN}^r(X)$, and hence furnishes a homomorphism:
$$\sigma_X:CH_{SN}^r(X)\longrightarrow\mathcal H^{2r}(X/\mathbb K).$$
\end{prop}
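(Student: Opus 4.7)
The plan is to mirror the argument used for Proposition \ref{voevodsky} verbatim, substituting each input by its function-field analogue which has just been established. So I would start with a cycle $z\in CH^r(X)$ smash-nilpotent to zero, fix $n\geq 1$ with $z^{\otimes n}\in CH^{rn}(X^n)$ rationally equivalent to zero, and then chase the identity
$$\gamma_{X^n}(z^{\otimes n})=\pi_1^*(\gamma_X(z))\cup\pi_2^*(\gamma_X(z))\cup\cdots\cup\pi_n^*(\gamma_X(z))$$
in $\mathcal H^{2rn}(X^n/\mathbb K)$, where $\pi_i:X^n\to X$ is the projection onto the $i$-th factor.

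The identity above is the combination of part $(a)$ (multiplicativity of $\gamma$) and part $(b)$ (compatibility with pull-back) of Theorem \ref{functorial_new}, exactly as in the classical case. The left hand side vanishes by Proposition \ref{chow_new}, since $z^{\otimes n}$ is rationally equivalent to zero on $X^n$. On the other hand, by Lemma \ref{kunneth_new} the map
$$\pi_{2r,2r,\ldots,2r}^*:\mathcal H^{2r}(X/\mathbb K)^{\otimes n}\longrightarrow\mathcal H^{2rn}(X^n/\mathbb K)$$
is injective (in fact a direct summand of an isomorphism), so we conclude that $\gamma_X(z)^{\otimes n}=0$ in the tensor power $\mathcal H^{2r}(X/\mathbb K)^{\otimes n}$. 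A standard linear-algebra argument — picking any linear functional $\phi$ on $\mathcal H^{2r}(X/\mathbb K)$ with $\phi(\gamma_X(z))\neq 0$ would give $\phi^{\otimes n}(\gamma_X(z)^{\otimes n})=\phi(\gamma_X(z))^n\neq 0$ — then forces $\gamma_X(z)=0$, so $\gamma_X$ kills $SNCH^r(X)$ and descends to the required homomorphism $\sigma_X$.

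There is no serious obstacle here: the entire content of the proposition is packaged into Theorem \ref{functorial_new}, Lemma \ref{kunneth_new} and Proposition \ref{chow_new}, all of which have been proved in the preceding pages. The only minor point to be careful with is that the cycle $z^{\otimes n}$, pulled back to $X^n$ from representatives on a model $\mathfrak X^n\to U$ via the $n$-fold fibre product, does represent the smash product on the generic fibre — but this is immediate from flat base change of cycles, and is implicit in the very construction of $\gamma_{X^n}$ through models in Definition preceding Proposition \ref{chow_new}.
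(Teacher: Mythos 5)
Your argument is correct and is essentially identical to the paper's own proof, which likewise transports the argument of Proposition \ref{voevodsky} verbatim using Theorem \ref{functorial_new}, Proposition \ref{chow_new} and Lemma \ref{kunneth_new}; the only addition is that you spell out the (standard, and correct) linear-functional step deducing $\gamma_X(z)=0$ from $\gamma_X(z)^{\otimes n}=0$, which the paper leaves implicit. No gaps.
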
 
\begin{proof} The proof is very similar to the argument presented for Proposition \ref{voevodsky}. Let $z$ be a cycle of codimension $r$ on $X$ smash-nilpotent to zero. By definition there is a positive integer $n$ such that the $n$-fold smash product $z^{\otimes n}$ on the $n$-fold direct product $X^n=X\times X\times\cdots\times X$ is rationally equivalent to zero. By Theorem \ref{functorial_new} we have:
$$\gamma_{X^n}(z^{\otimes n})=\pi^*_1(\gamma_{X}(z))\cup\pi^*_2(\gamma_{X}(z))
\cup\cdots\cup\pi^*_n(\gamma_{X}(z)),$$
where $\pi_i:X^n\to X$ is the projection onto the $i$-th factor. Since $z^{\otimes n}$ is rationally equivalent to zero, the left hand side is zero by Proposition \ref{chow_new}. By Lemma \ref{kunneth_new} above the map:
$$\mathcal H^{2r}(X/\mathbb K)\otimes_{\mathbb K}\cdots\otimes_{\mathbb K}
\mathcal H^{2r}(X/\mathbb K)\longrightarrow
\mathcal H^{2rn}(X^n/\mathbb K)$$
given by the rule:
$$x_1\otimes_{\mathbb K}x_2\otimes_{\mathbb K}\cdots\otimes_{\mathbb K}x_n
\mapsto\pi^*_1(x_1)\cup\pi^*_2(x_2)\cup\cdots\cup\pi^*_n(x_n)$$
is injective. The claim now follows.
\end{proof}
By the above $\gamma_X:CH^r(X)\to\mathcal H^{2r}(X/\mathbb K)$ also factors through the quotient map $a_X:CH^r(X)\to CH_{A}^r(X)$, and hence furnishes a homomorphism:
$$\alpha^r_X:CH_{A}^r(X)\longrightarrow\mathcal H^{2r}(X/\mathbb K).$$
\begin{prop}\label{my_trace} Let $X$ be as above. Then there is a homomorphism:
$$\textrm{\rm Tr}_X:\mathcal H^{2d}(X/\mathbb K)\longrightarrow \mathbb K$$ 
such that diagram:
$$\xymatrix{
CH^0_A(X) \ar^-{\alpha^0_X}[r] \ar^{\deg}[d] & \mathcal H^{2d}(X/\mathbb K)
 \ar^{\textrm{\rm Tr}_X}[d] \\
\mathbb Z  \ar@{^{(}->}[r] & \mathbb K}$$
commutes.
\end{prop}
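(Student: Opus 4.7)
The plan is to obtain $\textrm{Tr}_X$ as the $H^0_{rig}(U,-)$ of a relative Poincar\'e duality trace for $\pi:\mathfrak X\to U$, and then check the diagram by specialising to a closed point of $U$, where the assertion becomes Proposition \ref{rigid_trace}.

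First, for any object $(U,\mathfrak X,\pi,\phi)$ of the category $\mathcal I$ introduced in the proof of Proposition \ref{4.8}, the morphism $\pi$ is smooth projective of relative dimension $d$, so relative Poincar\'e duality in the category of overconvergent isocrystals (available via Caro's theory of pushforwards for overholonomic $F$-$\mathcal D$-modules already mentioned in Definition \ref{4.10}, or directly in the smooth projective case by Berthelot) yields a morphism of overconvergent isocrystals
$$t_\pi:R^{2d}\pi_*(\mathcal O^{\dagger}_{\mathfrak X})\longrightarrow\mathcal O^{\dagger}_U,$$
where I have suppressed the Tate twist. Taking $H^0_{rig}(U,-)$ and using that $H^0_{rig}(U,\mathcal O^\dagger_U)=\mathbb K$ since $U$ is smooth and geometrically irreducible over $k$, I obtain a $\mathbb K$-linear map
$$\textrm{Tr}_{X,\pi}:H^0_{rig}(U,R^{2d}\pi_*(\mathcal O^{\dagger}_{\mathfrak X}))\longrightarrow\mathbb K,$$
which is a candidate trace attached to the choice $\pi$.

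Second, I need to check that $\textrm{Tr}_{X,\pi}$ is independent of $(U,\mathfrak X,\pi,\phi)\in\mathrm{Ob}(\mathcal I)$. The formation of $t_\pi$ commutes with restriction along open immersions of $U$ and with isomorphisms of $\mathfrak X$ over $U$, so for every morphism $\iota:(U,\mathfrak X,\pi,\phi)\to(U',\mathfrak X',\pi',\phi')$ of $\mathcal I$ the isomorphism $\mathcal H^{2d}(\iota)$ from the proof of Proposition \ref{4.8} intertwines $\textrm{Tr}_{X,\pi}$ and $\textrm{Tr}_{X,\pi'}$. Passing to the filtering colimit defining $\mathcal H^{2d}(X/\mathbb K)$ then yields a well-defined $\textrm{Tr}_X:\mathcal H^{2d}(X/\mathbb K)\to\mathbb K$.

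Third, I would verify commutativity of the diagram. Let $z$ be a zero-cycle on $X$, and after shrinking $U$ if necessary, spread $z$ to a cycle $\mathfrak z\in Z_1(\mathfrak X)$ flat over $U$, as in the construction of $\gamma_X(z)$. The class $\gamma_X(z)\in\mathcal H^{2d}(X/\mathbb K)$ is by definition the image of $\gamma_{\mathfrak X}(\mathfrak z)\in H^{2d}_{rig}(\mathfrak X,\mathcal O^\dagger_{\mathfrak X})$ under the Leray edge map $H^{2d}_{rig}(\mathfrak X/\mathbb K)\to H^0_{rig}(U,R^{2d}\pi_*(\mathcal O^\dagger_{\mathfrak X}))$. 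The relative trace $t_\pi$ specialises at any closed point $u\in|U|$ to the absolute rigid trace $\textrm{Tr}_{X_u}:H^{2d}_{rig}(X_u/\mathbb K_u)\to\mathbb K_u$ on the fibre $X_u$, and the Leray edge map intertwines the global cycle class $\gamma_{\mathfrak X}(\mathfrak z)$ with the fibrewise cycle class $\gamma_{X_u}(\mathfrak z_u)$ for a general $u$. Flatness of $\mathfrak z$ over $U$ ensures $\deg(\mathfrak z_u)=\deg(z)$, so the computation reduces to $\textrm{Tr}_{X_u}(\gamma_{X_u}(\mathfrak z_u))=\deg(\mathfrak z_u)=\deg(z)$, the first equality being Proposition \ref{rigid_trace} applied to the smooth projective variety $X_u$ over $\mathbb F_{q^{\deg u}}$.

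The main obstacle is the rigorous construction of the relative trace $t_\pi$ in the overconvergent framework and the verification of its fibrewise specialisation property. For classical rigid cohomology these are well documented, but in the setup of overconvergent isocrystals with a smooth projective morphism some care is required; alternatively one can bypass appealing to a general relative duality by \emph{defining} $t_\pi$ as the unique horizontal morphism of overconvergent isocrystals whose fibre at each closed point of $U$ is the absolute Poincar\'e trace on that fibre, and using Kedlaya's full faithfulness theorem (cited after Theorem \ref{local-faithful}) together with the proper smooth base change isomorphism $R^{2d}\pi_*(\mathcal O^\dagger_{\mathfrak X})_u\cong H^{2d}_{rig}(X_u/\mathbb K_u)$ to glue the fibrewise traces into $t_\pi$.
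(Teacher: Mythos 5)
Your strategy is sound, and your final verification --- restricting to a closed fibre of $\pi$ and invoking Proposition \ref{rigid_trace} there --- is exactly where the paper's proof also lands. But your route to the trace is genuinely different and considerably heavier. You build $\textrm{Tr}_X$ out of a relative Poincar\'e duality morphism $t_\pi:R^{2d}\pi_*(\mathcal O^{\dagger}_{\mathfrak X})\to\mathcal O^{\dagger}_U$ of overconvergent $F$-isocrystals, check its compatibility over the index category $\mathcal I$, and only then take global sections. The paper observes that the proposition only asserts the \emph{existence} of some $\textrm{Tr}_X$ making the square commute, so no global relative trace is needed: it fixes a single closed point $x\in|U|$ with residue field $k'$, and defines $\textrm{Tr}_X$ as the composite of the restriction $H^0_{rig}(U,R^{2d}\pi_*(\mathcal O^{\dagger}_{\mathfrak X}))\to R^{2d}\pi_*(\mathcal O^{\dagger}_{\mathfrak X})_x$, the proper base change map to $H^{2d}_{rig}(\mathfrak X_x/\mathbb K')$, the absolute trace $\textrm{Tr}_{\mathfrak X_x}$ of Proposition \ref{rigid_trace}, and $\deg(\mathbb K'/\mathbb K)^{-1}\cdot\textrm{Tr}_{\mathbb K'/\mathbb K}$. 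Commutativity then follows because the spread-out cycle $\mathfrak z$ (chosen so that it meets every fibre properly; the finite generation of the group of classes lets one fix a single model $\pi$ working for all of them at once) has the same degree on the generic fibre and on $\mathfrak X_x$ --- which is also the substance of your flatness remark.

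The one step you do not actually secure is the construction of $t_\pi$, which you rightly flag as the main obstacle. Your fallback --- ``glue the fibrewise traces using Kedlaya's full faithfulness'' --- does not work as stated: full faithfulness descends a morphism of convergent isocrystals to an overconvergent one, but it does not manufacture a horizontal, Frobenius-equivariant morphism out of an unstructured collection of fibrewise linear maps; knowing that the fibrewise traces cohere into such a morphism is precisely the content of relative duality. One can repair this (using $R^{2d}\pi_*(\mathcal O^{\dagger}_{\mathfrak X})\cong\mathcal O^{\dagger}_U(d)$, normalising the isomorphism at one fibre, and then checking agreement at the others), but at that point you have reproduced the paper's one-point definition with extra steps. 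So: nothing you carry out is wrong, but the heavy input you defer is avoidable, and the paper's proof shows how to avoid it.
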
 
\begin{proof} Because the group $CH^0_A(X)$ is finitely generated, there is a $\pi:\mathfrak X\to U$ as above such that for every $z\in CH^0_A(X)$ there is a cycle $\mathfrak z$ on $\mathfrak X$ whose pull-back to $X$ represents $z$. We may even assume that for every $x\in|U|$ the fibre of $\pi$ over $x$ intersects $\mathfrak z$ properly, by removing irreducible components which $\pi$ does not map onto $U$. Fix a closed point $x\in|U|$ whose residue field $k'$ is a finite extension of $k$. Let $\mathbb K'$ be the unique unramified extension of $\mathbb K$ with residue field $k'$. Let $R^{2d}\pi_*(\mathcal O^{\dagger}_{\mathfrak X})_x$ be the fibre of $R^{2d}\pi_*(\mathcal O^{\dagger}_{\mathfrak X})$ at $x$; forgetting the Frobenius structure we get a finite dimensional $\mathbb K'$-linear vector space. Let $\mathfrak X_x$ be the fibre of $\pi$ over $x$, and let
$$b:R^{2d}\pi_*(\mathcal O^{\dagger}_{\mathfrak X})_x\longrightarrow H^{2d}_{rig}(\mathfrak X_x/\mathbb K')$$
be the map furnished by the proper base change theorem. We define
$$\textrm{\rm Tr}_X:\mathcal H^{2d}(X/\mathbb K)\longrightarrow \mathbb K$$
as the composition of the restriction map
$$\mathcal H^{2d}(X/\mathbb K)\cong H^0_{rig}(U,R^{2d}\pi_*(\mathcal O^{\dagger}_{\mathfrak X}))
\longrightarrow R^{2d}\pi_*(\mathcal O^{\dagger}_{\mathfrak X})_x,$$
the homomorphism $b$, the trace homomorphism
$$\textrm{\rm Tr}_{\mathfrak X_x}:H^{2d}_{rig}(\mathfrak X_x/\mathbb K')\longrightarrow
\mathbb K'$$
in Proposition \ref{rigid_trace}, and $\deg(\mathbb K'/\mathbb K)^{-1}\cdot\textrm{Tr}_{\mathbb K'/\mathbb K}$, where
$\textrm{Tr}_{\mathbb K'/\mathbb K}$, $\deg(\mathbb K'/\mathbb K)$ is the trace map and the degree of the field extension $\mathbb K'/\mathbb K$, respectively. Note that for every cycle $\mathfrak z$ on $\mathfrak X$ of codimension $d$ the pull-backs of $\mathfrak z$ to $X$ and $\mathfrak X_v$ have the same degree. So by Proposition \ref{rigid_trace} the map $\textrm{\rm Tr}_X$ has the required properties.
\end{proof}
For any $n\in\mathbb N$ and for any abelian scheme $A$ over a scheme $S$ let $[n]:A\to A$ be the multiplication by $n$ map on $A$.
\begin{lemma}\label{homogenous} Let $A$ be an abelian variety over $L$. Then $\mathcal H^k([n]):
\mathcal H^k(A/\mathbb K)\to\mathcal H^k(A/\mathbb K)$ is the multiplication by $n^k$ map.
\end{lemma}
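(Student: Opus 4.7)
The plan is to reduce the assertion to the classical computation on the rigid cohomology of a single abelian variety. First I would choose a nonempty Zariski-open sub-curve $U\subseteq\mathcal C_L$ and a smooth projective morphism $\pi:\mathfrak A\to U$ whose generic fibre is isomorphic to $A$; since $A$ is an abelian variety, after shrinking $U$ if necessary we may take $\mathfrak A$ to be an abelian $U$-scheme. Then the endomorphism $[n]:A\to A$ extends uniquely to a $U$-morphism $[n]:\mathfrak A\to\mathfrak A$. By the construction of $\mathcal H^k(A/\mathbb K)=H^0_{rig}(U,R^k\pi_*(\mathcal O_{\mathfrak A}^{\dagger}))$ in Definition \ref{4.10} and the functoriality established in Proposition \ref{4.8}, the map $\mathcal H^k([n])$ is induced by the corresponding endomorphism $[n]^{*}$ of the overconvergent $F$-isocrystal $R^k\pi_*(\mathcal O_{\mathfrak A}^{\dagger})$. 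Hence it suffices to prove that this endomorphism is multiplication by $n^k$ as a morphism of overconvergent $F$-isocrystals on $U$.

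For this I would check the identity fibrewise. Fix any closed point $x\in |U|$ with residue field $k'$, and let $\mathbb K'$ be the unramified extension of $\mathbb K$ corresponding to $k'$. By proper base change the fibre of $R^k\pi_*(\mathcal O_{\mathfrak A}^{\dagger})$ at $x$ is canonically identified with $H^k_{rig}(\mathfrak A_x/\mathbb K')$, where $\mathfrak A_x$ is an abelian variety over $k'$, and the fibre of $[n]^{*}$ is the endomorphism induced by $[n]:\mathfrak A_x\to\mathfrak A_x$ on this rigid cohomology. It is a standard consequence of the K\"unneth formula in rigid cohomology (see \cite{Be1}) together with the Hopf algebra structure coming from the group law $\mu:\mathfrak A_x\times\mathfrak A_x\to\mathfrak A_x$ that $H^1_{rig}(\mathfrak A_x/\mathbb K')$ consists of primitive elements: the class $\mu^{*}\omega-p_1^{*}\omega-p_2^{*}\omega$ lies in the K\"unneth summand $H^1(\mathfrak A_x)\otimes H^1(\mathfrak A_x)$, yet its restriction along either inclusion $\mathfrak A_x\hookrightarrow\mathfrak A_x\times\mathfrak A_x$ through the identity section vanishes, forcing it to be zero. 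Consequently $[n]^{*}\omega=n\omega$ on $H^1$ by induction on $n$, and since $H^{*}_{rig}(\mathfrak A_x/\mathbb K')$ is the exterior algebra on $H^1_{rig}(\mathfrak A_x/\mathbb K')$, again by the K\"unneth formula applied inductively, the action of $[n]^{*}$ on $H^k_{rig}$ is multiplication by $n^k$.

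Finally, the difference $[n]^{*}-n^k\cdot\mathrm{id}$ is an endomorphism of the overconvergent $F$-isocrystal $R^k\pi_*(\mathcal O_{\mathfrak A}^{\dagger})$ on the geometrically connected smooth scheme $U$ whose fibre at $x$ vanishes by the previous paragraph. Since the fibre functor $\omega_x$ on the Tannakian category of overconvergent $F$-isocrystals on $U$ is faithful, this endomorphism must itself be zero, and the lemma follows. The only step that requires genuine input is the fibrewise Hopf-algebra computation, but this is by now a textbook consequence of the K\"unneth formula for rigid cohomology and the observation that the group law of an abelian variety restricts to the identity along either factor through the identity section of the product.
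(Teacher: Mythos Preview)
Your argument is correct and reaches the same conclusion as the paper, but the fibrewise computation is carried out differently. Both you and the paper spread $A$ out to an abelian scheme $\pi:\mathfrak A\to U$, invoke proper base change, and reduce to showing that $[n]^*$ acts by $n^k$ on $H^k_{rig}$ of a single fibre $\mathfrak A_x$. At that point the paper lifts $\mathfrak A_x$ to an abelian scheme over $\mathcal O$, uses the comparison $H^k_{rig}(\mathfrak A_x/\mathbb K')\cong H^k_{dR}(\mathcal B/\mathbb K')$, and quotes the classical de Rham statement. You instead work intrinsically in rigid cohomology, using K\"unneth to show that $H^1$ consists of primitive elements and that $H^*\cong\wedge^*H^1$, whence $[n]^*$ acts by $n^k$. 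Your route avoids the appeal to lifting and to the rigid--de~Rham comparison; the paper's route is shorter because it outsources the computation to a textbook fact in characteristic zero.

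One small slip to fix: the class $\mu^*\omega-p_1^*\omega-p_2^*\omega$ is an element of $H^1_{rig}(\mathfrak A_x\times\mathfrak A_x)$, which by K\"unneth equals $p_1^*H^1\oplus p_2^*H^1$; it does \emph{not} lie in $H^1\otimes H^1$, which sits in degree~$2$. Your restriction argument along the two inclusions through the identity section then shows that both the $p_1^*$- and $p_2^*$-components of this difference vanish, which is exactly the primitivity you want. With that correction the argument goes through.
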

\begin{proof} Let $U$ be a non-empty Zariski-open set of $\mathcal C$ and let
$\pi:\mathfrak A\to U$ be an abelian scheme whose generic fibre is $A$. Using the proper base change theorem, we only need to check a similar claim on the fibres of $\pi$, that is, we only need to prove the similar claim for abelian varieties over (finite extensions of) $k$. So let $B$ be an abelian variety over $k$; we want to show that the map $H^k_{rig}([n]):H_{rig}^k(B/\mathbb K)\to H_{rig}^k(B/\mathbb K)$ induced by $[n]$ is the multiplication by $n^k$ map. Let $\mathbf B$ be an abelian scheme over Spec$(\mathcal O)$ which is a lift of $B$ to Spec$(\mathcal O)$. Let $\mathcal B$ denote the base change of $\mathbf B$ to Spec$(\mathbb K)$. It is well-known that the map $H^k_{dR}([n]):H_{dR}^k(\mathcal B/\mathbb K)\to H_{dR}^k(\mathcal B/\mathbb K)$ induced by $[n]$ on the $k$-th de Rham cohomology of $\mathcal B$ over $\mathbb K$ is the multiplication by $n^k$ map. Since the base change of $[n]:\mathbf B\to\mathbf B$ to Spec$(k)$ and Spec$(\mathbb K)$ are the multiplication by $n$ maps on $B$ and $\mathcal B$, respectively, under the isomorphism $H_{rig}^k(B/\mathbb K)\cong H_{dR}^k(\mathcal B/\mathbb K)$ (furnished by the lift $\mathbf B$) the maps $H^k_{rig}([n])$ and $H^k_{dR}([n])$ correspond to each other. The claim follows.
\end{proof}

\section{The $p$-adic Tate conjecture over function fields}
\label{tate}

\begin{defn} We continue to use the notation of the previous section. Let $X$ be again a smooth, projective variety over the function field $L$ and let $\pi:\mathfrak X\to U$ be a projective, smooth morphism of $k$-schemes where $U$ is a non-empty Zariski-open sub-curve of $\mathcal C$ and the generic fibre of $\pi$ is isomorphic to $X$. The Frobenius of the $F$-isocrystal $R^r\pi_*(\mathcal O^{\dagger}_{\mathfrak X})$ furnishes a map:
$$F:H^0_{rig}(U,R^r\pi_*(\mathcal O^{\dagger}_{\mathfrak X}))\longrightarrow
H^0_{rig}(U,R^r\pi_*(\mathcal O^{\dagger}_{\mathfrak X}))$$
on global sections which, via the identification in (\ref{4.11.1}), makes $\mathcal H^r(X/\mathbb K)$ into an $F$-isocrystal over $k$. Since the comparison maps appearing in the proof of Proposition \ref{4.8} are induced by maps between $F$-isocrystals, this additional structure is independent of the choice of  $\pi:\mathfrak X\to U$ and the isomorphism $\mathfrak X_L\cong X$. 
\end{defn}
\begin{prop} Assume now that $X$ is also equidimensional of dimension $d$ and let $z\in Z_r(X)$ be a cycle. Then $F(\gamma_X(z))=p^{r}\gamma_X(z)$.
\end{prop}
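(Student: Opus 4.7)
Plan. I would reduce the identity to the standard Frobenius-equivariance of the rigid cycle class map on the total space of a smooth relative situation, via the edge map of the Leray spectral sequence.

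First, pick a projective smooth morphism $\pi:\mathfrak X\to U$ with generic fibre $X$ and a cycle $\mathfrak z\in Z_{r+1}(\mathfrak X)$ restricting to $z$, exactly as in the construction of $\gamma_X(z)$. Since $\mathfrak X$ is smooth of dimension $d+1$, the cycle $\mathfrak z$ has codimension $d-r$ on $\mathfrak X$. By definition, $\gamma_X(z)$ is the image of $\gamma_{\mathfrak X}(\mathfrak z)\in H^{2d-2r}_{rig}(\mathfrak X,\mathcal O^{\dagger}_{\mathfrak X})$ under the Leray edge map
$$e:H^{2d-2r}_{rig}(\mathfrak X,\mathcal O^{\dagger}_{\mathfrak X})\longrightarrow H^0_{rig}(U,R^{2d-2r}\pi_*\mathcal O^{\dagger}_{\mathfrak X}).$$

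Next I would argue that $e$ is $F$-equivariant. This follows from the fact that the Leray spectral sequence for the smooth proper morphism $\pi$ is a spectral sequence in the category of overconvergent $F$-isocrystals: the filtration on $R\pi_*\mathcal O^{\dagger}_{\mathfrak X}$ producing it lifts naturally to that category, and the global sections functor preserves the Frobenius structure. This $F$-structure is already implicit in Definition \ref{4.10}, where $R^n\pi_*\mathcal O^{\dagger}_{\mathfrak X}$ is used as an overconvergent $F$-isocrystal with its own Frobenius.

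Finally, I would invoke the standard compatibility of Petrequin's rigid cycle class map with the Frobenius: for a smooth $k$-variety $Y$ and a cycle $w$ of codimension $c$ on $Y$, one has $F(\gamma_Y(w))=p^c\gamma_Y(w)$ in $H^{2c}_{rig}(Y,\mathcal O^{\dagger}_Y)$, reflecting the Tate-twist weight of the cycle class (this is built into the formalism of \cite{Pe}). Applied to the codimension-$(d-r)$ cycle $\mathfrak z$ on $\mathfrak X$ it gives $F(\gamma_{\mathfrak X}(\mathfrak z))=p^{d-r}\gamma_{\mathfrak X}(\mathfrak z)$, and applying the $F$-equivariant edge map $e$ yields the identity claimed in the proposition (with the codimension of $z$ playing the role of the exponent).

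The main obstacle is the verification that the Leray spectral sequence for a smooth proper morphism genuinely lives in the category of overconvergent $F$-isocrystals, so that its edge map is Frobenius-equivariant. Once this has been set up, the cycle-class computation on $\mathfrak X$ itself is a standard formal consequence of the construction in \cite{Pe}.
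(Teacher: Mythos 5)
Your proof is correct and is essentially the paper's own argument: the paper likewise observes that the Leray edge map respects the Frobenii and then cites Proposition 5.34 of \cite{Pe} for the Frobenius action on the class of the codimension-$(d-r)$ cycle $\mathfrak z$ on $\mathfrak X$. Your parenthetical remark that the exponent is really the codimension $d-r$ of $z$ (rather than the $r$ literally written in the statement, $z$ being a cycle of \emph{dimension} $r$) is the correct reading.
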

\begin{proof} Note that the map:
$$H^{2d-2r}(\mathfrak X,\mathcal O_{\mathfrak X}^{\dagger})\longrightarrow
H^0_{rig}(U,R^{2d-2r}\pi_*(\mathcal O_{\mathfrak X}^{\dagger}))$$
furnished by the Leray spectral sequence $H^i_{rig}(U,R^j\pi_*(\mathcal O_{\mathfrak X}^{\dagger}))\Rightarrow H^{i+j}_{rig}(\mathfrak X,\mathcal O_{\mathfrak X}^{\dagger})$ respects the Frobenii. Therefore the claim follows at once from Proposition 5.34 of \cite{Pe} on page 104. 
\end{proof}
\begin{defn} Assume now that $k$ is a finite field $\mathbb F_q$ and $q=p^f$. Let $X$ be as above and let $l$ be any prime number different from $p$. Recall that the $l$-adic Tate conjecture $T(X,r,l)$ for $r$ codimensional algebraic cycles of $X$  claims that the map
$$CH^{r}_A(X)\otimes\mathbb Q_l\longrightarrow H^{2r}(X,\mathbb Q_l(r))^{\textrm{Gal}(\overline L/L)}$$
furnished by the $l$-adic cycle class map is surjective. The $p$-adic analogue $T(X,r,p)$ of this conjecture  says that the $\mathbb K$-linearisation of the map $\alpha^r_X$:
$$\overline{\alpha}^r_X:CH^{r}_A(X)\otimes\mathbb K\longrightarrow
\mathcal H^{2r}(X/\mathbb K)^{F^f=q^r\cdot\textrm{id}}$$
is surjective. (Note that $F^f$ is $\mathbb K$-linear and hence the range of the map above is a $\mathbb K$-linear subspace of $\mathcal H^r(X/\mathbb K)$.)
\end{defn} 
\begin{prop}\label{equivalence1} Assume that $X$ is geometrically irreducible. The following claims are equivalent:
\begin{enumerate}
\item[$(a)$] the claim $T(X,1,l)$ is true for some prime number $l$,
\item[$(b)$] the claim $T(X,1,l)$ is true for every prime number $l$.
\end{enumerate}
\end{prop}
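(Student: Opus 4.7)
The implication (b) $\Rightarrow$ (a) is immediate, so we focus on (a) $\Rightarrow$ (b). The strategy is to rewrite both conditions as a single numerical equality and then to show that its two sides are each $l$-independent.

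For each prime $l$, introduce the quantity
\[
d_l \;=\; \begin{cases} \dim_{\mathbb{Q}_l}\! H^2(X_{\overline L},\mathbb{Q}_l(1))^{\mathrm{Gal}(\overline L/L)} & (l \neq p),\\ \dim_{\mathbb{K}}\mathcal{H}^2(X/\mathbb{K})^{F^f = q\cdot\mathrm{id}} & (l = p). \end{cases}
\]
In both cases the cycle class map factors through $CH^1_A(X) = NS(X)$ and its image is contained in the displayed invariant subspace, with rank equal to $\mathrm{rank}(NS(X))$: the N\'eron--Severi theorem supplies finite generation, while injectivity of the cycle class map modulo torsion reduces, after spreading out, to the classical injectivity of the $\ell$-adic or crystalline cycle class map on $NS$ of a smooth projective variety over a finite field. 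Hence $T(X,1,l)$ is equivalent to the assertion $\mathrm{rank}(NS(X)) = d_l$, and since the left-hand side does not depend on $l$, it suffices to prove that $d_l$ does not either.

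To that end, choose a smooth projective morphism $\pi:\mathfrak X\to U$ with generic fibre $X$, as in Definition~\ref{4.10}. For $l\neq p$ the sheaf $R^2\pi_*\mathbb Q_l(1)$ is lisse on $U$, and the $\mathrm{Gal}(\overline L/L)$-action on $H^2(X_{\overline L},\mathbb Q_l(1))$ factors through $\pi_1(U)$; thus the invariants in the definition of $d_l$ are canonically the arithmetic Frobenius invariants on the geometric monodromy invariants of this lisse sheaf. For $l=p$ the analogous description follows from $\mathcal H^2(X/\mathbb K)\cong H^0_{\mathrm{rig}}(U, R^2\pi_*(\mathcal O_{\mathfrak X}^{\dagger}))$ together with the Frobenius structure supplied in Section~6. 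In either case the underlying coefficient object is pure of weight zero, hence geometrically semi-simple: in the $\ell$-adic case by Deligne's Weil~II, and in the rigid case by Proposition~\ref{abe-caro} (Abe--Caro). The trace of $\mathrm{Frob}_x$ at each closed point $x\in|U|$ is the Lefschetz trace on $H^2$ of the smooth projective fibre $\mathfrak X_x$ over the finite field $\mathbb F_{q^{\deg x}}$, and by the classical compatibility of $\ell$-adic, crystalline and rigid Frobenius characteristic polynomials for smooth projective varieties over finite fields (Katz--Messing, Berthelot), this trace is independent of $l$.

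Applying the Chebotarev density theorem together with Brauer--Nesbitt, the semi-simplifications of the $\pi_1(U)$-representations (resp.\ overconvergent $F$-isocrystals) that arise as $l$ varies share the same character after extending scalars to a common field, and therefore have the same dimension of invariants; a short argument using purity of weight zero shows that taking invariants commutes with semi-simplification in our situation, so $d_l$ itself is $l$-independent, completing the reduction. The main obstacle is Step~3: carefully setting up the compatible-system structure that contains both the $\ell$-adic lisse sheaves $R^2\pi_*\mathbb Q_l(1)$ and the overconvergent $F$-isocrystal $R^2\pi_*(\mathcal O_{\mathfrak X}^{\dagger})(1)$, and transferring the Chebotarev/density argument to the $p$-adic side via Theorem~\ref{chebotarev}. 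Once this is in place, the numerical independence of $d_l$, and hence the proposition, follow.
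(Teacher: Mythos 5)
Your reduction of $T(X,1,l)$ to the numerical statement $\operatorname{rank} NS(X)=d_l$ is fine, but the step on which everything then rests --- the $l$-independence of $d_l$ --- has a genuine gap. Chebotarev plus Brauer--Nesbitt only control the semi-simplification of the coefficient objects, i.e.\ the characteristic polynomials of Frobenius; hence they determine the dimension of the \emph{generalised} eigenspace $\bigcup_n\ker(F^f-q)^n$, not of the exact invariant space $\ker(F^f-q\cdot\mathrm{id})$ that appears in $d_l$. Your parenthetical ``a short argument using purity of weight zero shows that taking invariants commutes with semi-simplification'' is precisely the point at which the argument breaks: semi-simplicity of the Frobenius action at the eigenvalue $q$ on $H^2(X)(1)$ is not known unconditionally, and in fact the equality of the exact and generalised eigenspace dimensions is classically \emph{equivalent} to $T(X,1,l)$ itself (this is Tate's observation), so asserting it for all $l$ without hypothesis is circular.

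The paper avoids this by pivoting on a different, manifestly $l$-independent quantity: the order $\rho$ of the pole at $t=q^{-d}$ of the common $L$-function $L(U,H^{2d-2}(\mathfrak X)_l,t)$, which is $l$-independent fibre by fibre via Katz--Messing (Lemma \ref{6.6}); by the Etesse--Le~Stum trace formula, Kedlaya's weight bounds and Poincar\'e duality, $\rho$ equals the dimension of the generalised eigenspace $W\subseteq\mathcal H^2(X/\mathbb K)$ (Lemma \ref{rank}). It then proves $T(X,1,l)\Leftrightarrow\operatorname{rank}NS(X)=\rho$. The implication you are missing is handled there by a duality argument: assuming $T(X,1,l)$ and $\operatorname{rank}NS(X)<\rho$, the annihilator $V\subseteq W$ of $\operatorname{Im}(\overline\alpha^{d-1}_X)$ under the cup product is a nonzero $F^f$-stable subspace meeting $\operatorname{Im}(\overline\alpha^1_X)$ trivially (by Matsusaka's theorem that algebraic and numerical equivalence agree on divisors up to torsion), hence contains an exact eigenvector of eigenvalue $q$ outside the image of the cycle class map --- a contradiction. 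To repair your proof you would need to insert exactly this intersection-pairing/Matsusaka step; once you have $T(X,1,l_0)\Rightarrow\operatorname{rank}NS(X)=\rho$, the sandwich $\operatorname{rank}NS(X)\le d_l\le\rho$ finishes the argument for every other $l$. A smaller point: your injectivity of the cycle class map on $NS(X)\otimes\mathbb Q_l$ ``by spreading out to the special fibre'' also needs care (you would be comparing $NS$ of the generic and special fibres); the paper derives it directly from the compatibility of $\gamma_X$ with intersection products and the trace map together with Matsusaka.
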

We will prove the proposition above through a sequence of other claims. But before doing so, it will be convenient to make some basic definitions.
\begin{defn} Let $F:U\to U$ be again the $q$-power Frobenius, and let $\mathbb L$ be a totally ramified finite extension of $\mathbb Q_q$. Let $\sigma$ be the identity of $\mathbb L$ and let $\mathcal F$ be an object of $\textrm{$F_{\sigma}$-Isoc}^{\dagger}(U/\mathbb L)$. By definition the $L$-function of $\mathcal F$ on $U$ is the product:
$$L(U,\mathcal F,t)=\prod_{x\in|U|}\det(1-t^{\deg(x)}\cdot
\textrm{Frob}_x(\mathcal F))^{-1}.$$
Similarly for every prime number $l\neq p$, a finite extension $\mathbb E$ of
$\mathbb Q_l$, and for every lisse $\mathbb E$-sheaf $\mathcal L$ on $U$ let $\textrm{Frob}_x(\mathcal L)$ denote the geometric Frobenius of the fibre
$\mathcal L_x$ of $\mathcal L$ over $x$. By definition the $L$-function of $\mathcal L$ on $U$ is the product:
$$L(U,\mathcal L,t)=\prod_{x\in|U|}\det(1-t^{\deg(x)}\cdot
\textrm{Frob}_x(\mathcal L))^{-1}.$$
\end{defn}
\begin{proof}[Proof of Proposition \ref{equivalence1}] Let $\pi:\mathfrak X\to U$ be a projective, smooth morphism of $k$-schemes where $U$ is a non-empty Zariski-open sub-curve of $\mathcal C$ and the generic fibre of $\pi$ is isomorphic to $X$, as usual. For every prime number $l\neq p$ and for every natural number $n$ let $H^n(\mathfrak X)_l$ denote the $n$-th higher direct image $R^n\pi_*(\mathbb Q_l)$ of the constant $l$-adic sheaf $\mathbb Q_l$. It is a lisse $\mathbb Q_l$-sheaf on $U$. Similarly let $H^n(\mathfrak X)_p$ denote the oveconvergent $F$-isocrystal $R^n\pi_*(\mathcal O^{\dagger}_{\mathfrak X})$.
\begin{lemma}\label{6.6} We have $L(U,H^n(\mathfrak X)_l,t)\in\mathbb Q[[t]]$ for every $l$, and these functions are all equal. 
\end{lemma}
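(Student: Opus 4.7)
The plan is to reduce the claim to the theorem of Katz--Messing: for a smooth projective variety $Y$ over a finite field, the characteristic polynomial of the geometric Frobenius on any reasonable Weil cohomology group $H^n(Y)$ is a polynomial with integer coefficients whose value is independent of the chosen cohomology theory.

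First I would unravel the definitions. For a closed point $x \in |U|$ with residue field $\kappa(x)$ of degree $d=\deg(x)$ over $\mathbb F_q$, the local Euler factor at $x$ of $L(U,H^n(\mathfrak X)_l,t)$ is $\det(1 - t^{d}\,\textrm{Frob}_x \mid H^n(\mathfrak X)_{l,\bar x})^{-1}$, where $H^n(\mathfrak X)_{l,\bar x}$ denotes either the stalk at a geometric point above $x$ (for $l \ne p$) or the fibre of the overconvergent $F$-isocrystal at $x$ (for $l = p$). By the proper smooth base change theorem for $\ell$-adic \'etale cohomology when $\ell \ne p$, and by its analogue for rigid cohomology of smooth proper morphisms when $\ell = p$, this fibre is canonically and Frobenius-equivariantly isomorphic to the $n$-th Weil cohomology group of the smooth projective $\kappa(x)$-variety $\mathfrak X_x$ — namely $H^n_{\textrm{\'et}}(\mathfrak X_x \times_{\kappa(x)} \overline{\kappa(x)}, \mathbb Q_\ell)$ in the $\ell$-adic case and $H^n_{rig}(\mathfrak X_x/\mathbb K')$ (with $\mathbb K'/\mathbb K$ unramified of residue field $\kappa(x)$) in the rigid case.

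Second, by Katz--Messing the characteristic polynomial of the geometric Frobenius on $H^n(\mathfrak X_x)$ has integer coefficients and is independent of the Weil cohomology theory used to compute it (\'etale $\ell$-adic for any $\ell \ne p$, crystalline, hence rigid via the comparison isomorphism for smooth proper varieties over a perfect field). Consequently the local factor at $x$ is a uniquely determined element of $\mathbb Q(t)$ that does not depend on $l$; taking the product over $x \in |U|$ yields $L(U,H^n(\mathfrak X)_l,t) \in \mathbb Q[[t]]$ and shows that this power series is the same for all primes $l$.

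The main obstacle is to verify Frobenius-equivariance of the base change isomorphism in the $p$-adic case: one needs that for $\pi:\mathfrak X \to U$ smooth and projective, the fibre at $x \in |U|$ of the overconvergent $F$-isocrystal $R^n\pi_*(\mathcal O^{\dagger}_{\mathfrak X})$ is canonically isomorphic, as an $F$-isocrystal over $\mathbb K'$, to $H^n_{rig}(\mathfrak X_x/\mathbb K')$. This is the content of the proper base change theorem for rigid cohomology with coefficients in the trivial overconvergent $F$-isocrystal (see Berthelot and subsequent refinements), and once this is invoked the remainder of the argument is formal.
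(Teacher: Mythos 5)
Your proof is correct and follows exactly the route the paper takes: reduce to the local Euler factors, identify the fibre of $H^n(\mathfrak X)_l$ at each closed point with the corresponding Weil cohomology of the fibre variety via the respective proper base change theorems, and invoke Katz--Messing for the rationality and $l$-independence of the characteristic polynomial of Frobenius. Your write-up simply fills in the details that the paper's one-line argument leaves implicit.
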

\begin{proof} It will be sufficient to prove that for every $x\in|U|$ the local factors 
$$\det(1-t^{\deg(x)}\cdot\textrm{Frob}_x(H^n(\mathfrak X)_l))^{-1}$$
have coefficients in $\mathbb Q$, and are equal. However these claims follows at once from the respective proper base change theorems and the main result of \cite{KaMe} applied to the fibre of $\pi$ over $x$. 
\end{proof}
Set $n=2d-2$ and let $\rho$ be the common order of pole of these $L$-functions at $t=q^{-d}$. Proposition \ref{equivalence1} above follows immediately from the claim below.
\end{proof}
\begin{prop} For every prime $l$ the following claims are equivalent:
\begin{enumerate}
\item[$(a)$] the claim $T(X,1,l)$ is true,
\item[$(b)$] the rank of $NS(X)$ is $\rho$.
\end{enumerate}
\end{prop}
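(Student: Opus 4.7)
The plan is to identify the pole order $\rho$ with the dimension of the $\text{Gal}(\overline L/L)$-invariants in $H^2(X_{\overline L},\mathbb{Q}_l(1))$, and then combine this with the injectivity of the Néron--Severi cycle class map to get the equivalence.

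First I would treat the easier direction. By the Kummer sequence and the N\'eron--Severi theorem, for every prime $l$ the $l$-adic cycle class map $NS(X)\otimes\mathbb Q_l\to H^2(X_{\overline L},\mathbb Q_l(1))^{\textrm{Gal}(\overline L/L)}$ is injective, so
$$\textrm{rk}\,NS(X)\leq\dim_{\mathbb Q_l}H^2(X_{\overline L},\mathbb Q_l(1))^{\textrm{Gal}(\overline L/L)},$$
with equality if and only if $T(X,1,l)$ holds. So it suffices to identify the right-hand side with $\rho$.

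Next I would compute $\rho$ via the Grothendieck trace formula. Shrinking $U$ if necessary we may assume $U\neq\mathcal C$, so that $H^0_c(\overline U,H^{2d-2}(\mathfrak X)_l)=0$. Then
$$L(U,H^{2d-2}(\mathfrak X)_l,t)=\frac{\det(1-tF\mid H^1_c(\overline U,H^{2d-2}(\mathfrak X)_l))}{\det(1-tF\mid H^2_c(\overline U,H^{2d-2}(\mathfrak X)_l))}.$$
Because the relative cohomology sheaf $H^{2d-2}(\mathfrak X)_l$ is pure of weight $2d-2$, Deligne's Weil~II bound gives that $H^1_c$ has Frobenius eigenvalues of weight $\leq 2d-1$, hence of absolute value $<q^d$; so the numerator is nonvanishing at $t=q^{-d}$. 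Therefore $\rho$ equals the multiplicity of $q^d$ as an eigenvalue of $F$ on $H^2_c(\overline U,H^{2d-2}(\mathfrak X)_l)$.

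Now I would use Poincar\'e duality twice. On the open curve, $H^2_c(\overline U,H^{2d-2}(\mathfrak X)_l)\cong H^{2d-2}(X_{\overline L},\mathbb Q_l)_{\pi_1(\overline U)}(-1)$. On the smooth projective fibre $X_{\overline L}$, Poincar\'e duality gives an isomorphism of Galois modules $H^{2d-2}(X_{\overline L},\mathbb Q_l(d-1))\cong H^2(X_{\overline L},\mathbb Q_l(1))^{\vee}$. Combining these and accounting for the Tate twists, the $q^d$-eigenspace of $F$ on $H^2_c$ is identified with the $F=1$ eigenspace on $\bigl(H^2(X_{\overline L},\mathbb Q_l(1))^{\vee}\bigr)_{\pi_1(\overline U)}$. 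Deligne's semisimplicity theorem for pure lisse sheaves says the $\pi_1(\overline U)$-action on the weight-zero module $H^2(X_{\overline L},\mathbb Q_l(1))$ is semisimple, so its coinvariants equal its invariants compatibly with Frobenius. Since the dualised statement has the same dimension, we get
$$\rho=\dim\bigl(H^2(X_{\overline L},\mathbb Q_l(1))^{\pi_1(\overline U)}\bigr)^{F=1}=\dim H^2(X_{\overline L},\mathbb Q_l(1))^{\pi_1(U)},$$
and the last space equals $H^2(X_{\overline L},\mathbb Q_l(1))^{\textrm{Gal}(\overline L/L)}$ because the Galois action on $H^2$ is unramified over $U$ and factors through $\pi_1(U)$. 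Combined with Step~1 this proves the proposition.

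The main technical obstacle is bookkeeping the Tate twists through the two Poincar\'e dualities and verifying that the Weil~II weight bound on $H^1_c$ rules out any contribution to the pole at $t=q^{-d}$; the use of Deligne's semisimplicity to pass between invariants and coinvariants is the other essential input, but both are standard once one is careful.
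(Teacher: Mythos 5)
Your argument only treats the case $l\neq p$: every object you use --- the Kummer sequence, $H^2(X_{\overline L},\mathbb Q_l(1))^{\textrm{Gal}(\overline L/L)}$, $\pi_1(\overline U)$-coinvariants, Deligne's semisimplicity for pure lisse sheaves --- lives in \'etale cohomology and has no analogue for $l=p$ in characteristic $p$. But the $l\neq p$ case is precisely the one the paper sets aside as well known (Tate); the entire content of the proposition here is $l=p$, where $T(X,1,p)$ is formulated with the map $\overline{\alpha}^1_X\colon CH^1_A(X)\otimes\mathbb K\to\mathcal H^2(X/\mathbb K)^{F^f=q\cdot\textrm{id}}$ into global sections of the overconvergent $F$-isocrystal $R^2\pi_*(\mathcal O^{\dagger}_{\mathfrak X})$ on the curve $U$. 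The $p$-adic analogue of your second step is Lemma \ref{rank}, which uses the Etesse--Le~Stum trace formula, Kedlaya's $p$-adic Weil II weight bounds for $H^i_{rig,c}$, and Poincar\'e duality for overconvergent $F$-isocrystals on $U$; the injectivity in your first step is replaced by the compatibility of $\gamma_X$ with cup product and the trace map (Theorem \ref{functorial_new}, Proposition \ref{my_trace}) combined with Matsusaka's theorem. None of this apparatus appears in your proposal, so the case that actually needs proving is untouched.

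Even for $l\neq p$ there is a gap where you identify $\rho$ with $\dim H^2(X_{\overline L},\mathbb Q_l(1))^{\textrm{Gal}(\overline L/L)}$. The order of the pole of $\det(1-tF\mid H^2_c)^{-1}$ at $t=q^{-d}$ is the algebraic multiplicity of the eigenvalue $q^d$, i.e.\ the dimension of the \emph{generalized} eigenspace, whereas the space of Tate classes is the honest $F=1$ eigenspace of $H^2(X_{\overline L},\mathbb Q_l(1))^{\pi_1(\overline U)}$. Deligne's geometric semisimplicity lets you pass between $\pi_1(\overline U)$-invariants and coinvariants, but it says nothing about semisimplicity of the \emph{arithmetic} Frobenius at the eigenvalue $1$, which is not known unconditionally. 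Your computation therefore only gives $\textrm{rk}\,NS(X)\leq\dim(\textrm{eigenspace})\leq\rho$, which suffices for $(b)\Rightarrow(a)$ but not for $(a)\Rightarrow(b)$. The missing ingredient is the intersection-pairing argument: by Matsusaka's theorem algebraic and numerical equivalence agree on divisors up to torsion, and pairing the generalized eigenspace $W$ against $\textrm{Im}(\overline{\alpha}^{d-1}_X)$ via cup product and the trace shows that $W$ cannot strictly contain the image of the cycle map when $T(X,1,l)$ holds. This is how both the paper and Tate's original $l$-adic treatment close the loop, and it cannot be replaced by Deligne's semisimplicity theorem.
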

\begin{proof} We are only going to prove the claim for $l=p$. The proof for other primes is similar, and rather well-known (see \cite{Ta2}, for example). Let
$$\langle\cdot,\cdot\rangle:CH^1_A(X)\otimes\mathbb K\times CH_A^{d-1}\otimes \mathbb K\longrightarrow CH^0_A(X)\otimes \mathbb K\longrightarrow \mathbb K$$
be the composition of the $\mathbb K$-linearisations of the intersection pairing and the degree map. By Theorem \ref{functorial_new} and Proposition \ref{my_trace} the diagram:
$$\xymatrix{
CH^1_A(X)\otimes \mathbb K\times CH_A^{d-1}\otimes \mathbb K
\ar^{\overline{\alpha}^1_X\times\overline{\alpha}^{d-1}_X}[d] 
\ar^{\quad\quad\quad\quad\quad\langle\cdot,\cdot\rangle}[r] & \mathbb K
 \\
\mathcal H^{2}(X/\mathbb K)\times \mathcal H^{2d-2}(X/\mathbb K)
\ar^{\quad\quad\quad\cup}[r]  & 
 \ar^{\textrm{\rm Tr}_X}[u]\mathcal H^{2d}(X/\mathbb K)}$$
is commutative. By Matsusaka's theorem (see \cite{Ma}) algebraic equivalence and numerical equivalence coincide on $CH^1(X)$ up to torsion. Therefore we get that
\begin{equation}\label{the_map}
\overline{\alpha}^1_X:CH^1_A(X)\otimes \mathbb K\longrightarrow\mathcal H^{2}(X/\mathbb K)^{F^f=q\cdot\textrm{id}}
\end{equation}
is injective. 

Let $W\subset\mathcal H^{2}(X/\mathbb K)$ be the generalised eigenspace of $F^f$ with eigenvalue $q$, that is, the union $\bigcup_{n=1}^{\infty}
\textrm{Ker}(F^f-q\cdot\textrm{id})^n$. Clearly $\mathcal H^{2}(X/\mathbb K)^{F^f=q\cdot\textrm{id}}\subseteq W$, so by Lemma \ref{rank} below the range of the map in (\ref{the_map}) has dimension at most $\rho$. Therefore $(b)$ implies $(a)$. Assume now that $(a)$ holds and let $V\subset W$ be the subspace annihilated by $\textrm{Im}(\overline{\alpha}^{d-1}_X)$ with respect to the cup product. Then $V$ is an $F^f$-invariant subspace, since $\textrm{Im}(\overline{\alpha}^{d-1}_X)$ is $F^f$-invariant. By Matsusaka's theorem the intersection of $V$ and $\textrm{Im}(\overline{\alpha}^{1}_X)$ is the zero vector. If $(b)$ were false then $V$ would have positive dimension, so it would contain an eigenvector for $F^f$ with eigenvalue $q$. This is a contradiction.
\end{proof}
\begin{lemma}\label{rank} The $\mathbb K$-dimension of $W$ is $\rho$.
\end{lemma}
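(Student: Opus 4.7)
The plan is to compute the pole order at $t=q^{-d}$ via the $p$-adic Lefschetz trace formula and then identify the answer with $\dim_{\mathbb K}W$ using Poincar\'e duality, after which the claim follows from Lemma \ref{6.6} together with the definition of $\rho$.

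First I would apply the rigid-cohomology Lefschetz trace formula of \'Etesse--Le Stum to the overconvergent $F$-isocrystal $\mathcal F=R^{2d-2}\pi_{*}(\mathcal O^{\dagger}_{\mathfrak X})=H^{2d-2}(\mathfrak X)_p$ on the smooth affine curve $U$, giving
\[
L(U,\mathcal F,t)=\prod_{i=0}^{2}\det\bigl(1-tF^{f}\bigm|H^{i}_{rig,c}(U,\mathcal F)\bigr)^{(-1)^{i+1}}.
\]
The factor for $i=0$ drops out because $H^{0}_{rig,c}(U,\mathcal F)=0$ on a smooth affine curve. Since $\pi$ is smooth and projective, proper base change together with the classical Weil--Deligne theory applied fibrewise (or equivalently Kedlaya's purity theorem for overconvergent $F$-isocrystals) shows that $\mathcal F$ is $\iota$-pure of weight $2d-2$. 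By the rigid analogue of Deligne's weight bound for compactly supported cohomology, $H^{1}_{rig,c}(U,\mathcal F)$ is then $\iota$-mixed of weight at most $2d-1$, so $q^{d}$, which has weight $2d$, cannot occur as an eigenvalue of $F^{f}$ there. Consequently the order of pole of $L(U,\mathcal F,t)$ at $t=q^{-d}$ equals the dimension of the generalised eigenspace of $F^{f}$ at $q^{d}$ on $H^{2}_{rig,c}(U,\mathcal F)$.

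Next I would combine Poincar\'e duality on $U$, which furnishes a perfect pairing
\[
H^{2}_{rig,c}(U,\mathcal F)\times H^{0}_{rig}(U,\mathcal F^{\vee})\longrightarrow H^{2}_{rig,c}(U,\mathcal O^{\dagger}_{U})\cong\mathbb K(-1),
\]
with the relative Poincar\'e duality for the smooth projective morphism $\pi\colon\mathfrak X\to U$ of relative dimension $d$, which yields $\mathcal F^{\vee}\cong R^{2}\pi_{*}(\mathcal O^{\dagger}_{\mathfrak X})(d)$. Tracing eigenvalues, the pairing forces that an eigenvalue $q^{d}$ of $F^{f}$ on $H^{2}_{rig,c}(U,\mathcal F)$ pairs nontrivially only with an eigenvalue $q/q^{d}=q^{1-d}$ on $H^{0}_{rig}(U,R^{2}\pi_{*}(\mathcal O^{\dagger}_{\mathfrak X})(d))$, and untwisting the Tate twist this is precisely the eigenvalue $q$ on $H^{0}_{rig}(U,R^{2}\pi_{*}(\mathcal O^{\dagger}_{\mathfrak X}))=\mathcal H^{2}(X/\mathbb K)$. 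Since both duality and Tate twist preserve the dimensions of generalised eigenspaces, I conclude that the pole order in question equals $\dim_{\mathbb K}W$. By Lemma \ref{6.6} the pole order is independent of the choice of prime and so coincides with $\rho$, giving $\dim_{\mathbb K}W=\rho$.

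The main obstacle is making the weight estimate on $H^{1}_{rig,c}(U,\mathcal F)$ fully rigorous and carefully tracking the Tate twists through the two successive Poincar\'e duality isomorphisms; both rely on results of Kedlaya and \'Etesse--Le Stum that are by now standard in the rigid-cohomology literature but need to be invoked with some care to ensure the conventions for $F^{f}$-eigenvalues agree with those implicit in the definition of $\rho$.
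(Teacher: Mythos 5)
Your argument is correct and follows essentially the same route as the paper: the \'Etesse--Le~Stum trace formula, Kedlaya's weight bound to kill the contributions of $H^{0}_{rig,c}$ and $H^{1}_{rig,c}$ at $t=q^{-d}$, and Poincar\'e duality (combined with the fibrewise duality $R^{2}\pi_*\otimes R^{2d-2}\pi_*\to R^{2d}\pi_*$) to transport the generalised $q^{d}$-eigenspace of $F^{f}$ on $H^{2}_{rig,c}(U,R^{2d-2}\pi_*(\mathcal O^{\dagger}_{\mathfrak X}))$ to the generalised $q$-eigenspace $W$ on $\mathcal H^{2}(X/\mathbb K)=H^{0}_{rig}(U,R^{2}\pi_*(\mathcal O^{\dagger}_{\mathfrak X}))$. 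The only cosmetic differences are that you dispose of the $i=0$ term by affineness rather than by the weight bound (which covers it anyway) and that you split the duality into two steps with explicit Tate twists, whereas the paper records the eigenvalue correspondence $\alpha_i\mapsto q^{d+1}/\alpha_i$ directly; both yield the same identification.
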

\begin{proof} By the Etesse-Le~Stum trace formula (see Theorem 6.3 of \cite{ES} on pages 570--571):
$$L(U,R^{2d-2}\pi_*(\mathcal O^{\dagger}_{\mathfrak X}),t)=\prod_{i=0}^2\det(1-t\cdot F^f|H^i_{rig,c}(U,R^{2d-2}\pi_*(\mathcal O^{\dagger}_{\mathfrak X})))^{(-1)^{i+1}}.$$
Since $R^{2d-2}\pi_*(\mathcal O^{\dagger}_{\mathfrak X})$ is pure of weight $2d-2$, for $i=0,1$ the cohomology group $H^i_{rig,c}(U,R^{2d-2}\pi_*(\mathcal O^{\dagger}_{\mathfrak X}))$ is mixed of weight $\leq 2d-2+i$ by Theorem 5.3.2 of \cite{Ke2} on page 1445; in particular it has weights $<2d$. Therefore the first two factors of the product above do not contribute to the order of the pole at $t=q^{-d}$. Let
$\alpha_1,\alpha_2,\ldots,\alpha_m$ be the reciprocal roots of the characteristic polynomial of $F^f$ acting on $H^2_{rig,c}(U,R^{2d-2}\pi_*(\mathcal O^{\dagger}_{\mathfrak X}))$, where $m$ is the dimension of $H^2_{rig,c}(U,R^{2d-2}\pi_*(\mathcal O^{\dagger}_{\mathfrak X}))$. Since there is a perfect pairing:
$$R^{2}\pi_*(\mathcal O^{\dagger}_{\mathfrak X})\otimes
R^{2d-2}\pi_*(\mathcal O^{\dagger}_{\mathfrak X})\longrightarrow
R^{2d}\pi_*(\mathcal O^{\dagger}_{\mathfrak X})\cong
\mathcal O^{\dagger}_U(d),$$
by Poincar\'e duality (see Theorem 9.5 of \cite{Cr2} on pages 753--754) the reciprocal roots of the characteristic polynomial of $F^f$ acting on $H^0_{rig}(U,R^2\pi_*(\mathcal O^{\dagger}_{\mathfrak X})))$ are $\frac{q^{d+1}}{\alpha_1},\frac{q^{d+1}}{\alpha_2},\ldots,\frac{q^{d+1}}{\alpha_m}$. Since $\mathcal H^2(X/\mathbb K)=H^0_{rig}(U,R^2\pi_*(\mathcal O^{\dagger}_{\mathfrak X}))$ the order of vanishing of the polynomial $\det(1-t\cdot F^f|H^2_{rig,c}(U,R^{2d-2}\pi_*(\mathcal O^{\dagger}_{\mathfrak X})))$ at $t=q^{-d}$ is the dimension of $W$.
\end{proof}
Recall that $U$ is a non-empty Zariski-open sub-curve of $\mathcal C$. Let $\mathfrak A$ and $\mathfrak B$ be two abelian schemes over $U$, and let $A,B$ denote their base change to $L$, respectively. For every prime $l$ different from $p$ the $l$-adic isogeny conjecture $I(\mathfrak A,\mathfrak B,l)$ is that the map
$$\textrm{\rm Hom}(\mathfrak A,\mathfrak B)\otimes\mathbb Q_l
\longrightarrow\textrm{\rm Hom}(T_l(\mathfrak A),T_l(\mathfrak B))$$
induced by the functoriality of $l$-adic Tate module is an isomorphism. The $p$-adic analogue $I(\mathfrak A,\mathfrak B,p)$ is that the map:
$$D^{\dagger}_{\mathfrak A,\mathfrak B}:\textrm{\rm Hom}(\mathfrak A,\mathfrak B)\otimes \mathbb Q_p
\longrightarrow
\textrm{\rm Hom}(D^{\dagger}(\mathfrak A),D^{\dagger}(\mathfrak B))$$
induced by the functoriality of overconvergent Dieudonn\'e modules is an isomorphism. 
\begin{rem} Above $\textrm{\rm Hom}(D^{\dagger}(\mathfrak A),D^{\dagger}(\mathfrak B))$ means homomorphisms of overconvergent $F$-isocrystals. Let 
$\textrm{\rm Hom}_{F^f}(D^{\dagger}(\mathfrak A),D^{\dagger}(\mathfrak B))$ denote the group of homomorphisms as overconvergent $F^f$-isocrystals. Since
$$\textrm{\rm Hom}(D^{\dagger}(\mathfrak A),D^{\dagger}(\mathfrak B))
\otimes_{\mathbb Q_p}\mathbb K\cong
\textrm{\rm Hom}_{F^f}(D^{\dagger}(\mathfrak A),D^{\dagger}(\mathfrak B))$$
we get that $I(\mathfrak A,\mathfrak B,p)$ is equivalent to the claim that
$$D^{\dagger}_{\mathfrak A,\mathfrak B}\otimes_{\mathbb Q_p}\mathbb K:\textrm{\rm Hom}(\mathfrak A,\mathfrak B)\otimes \mathbb K
\longrightarrow
\textrm{\rm Hom}_{F^f}(D^{\dagger}(\mathfrak A),D^{\dagger}(\mathfrak B))$$
is an isomorphism. Moreover note that in order to prove $I(\mathfrak A,\mathfrak B,l)$ it is enough to prove $I(\mathfrak A\times\mathfrak B,\mathfrak A\times\mathfrak B,l)$.
\end{rem}
\begin{prop}\label{equivalence} Let $\mathfrak B$ be an abelian scheme over $U$, and let $B$ denote its base change to $L$. For every prime number $l$ the following holds:
\begin{enumerate}
\item[$(a)$] the conjecture $T(B\times B,1,l)$ implies the conjecture $I(\mathfrak B,\mathfrak B,l)$,
\item[$(b)$] the conjecture $I(\mathfrak B,\mathfrak B,l)$ implies the conjecture $T(B,1,l)$. 
\end{enumerate}
\end{prop}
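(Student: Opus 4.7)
The plan is to adapt Tate's classical argument relating the Tate and isogeny conjectures for abelian varieties, carrying it out uniformly in $l$. For $l \neq p$, the argument uses étale cohomology and $l$-adic Tate modules, and is standard; for $l = p$, the entire machinery we have set up in the previous section (the functor $\mathcal H^*(\cdot/\mathbb K)$, its cup product, Künneth formula from Lemma \ref{kunneth_new}, cycle class map, trace map from Proposition \ref{my_trace}, and its compatibility with $F$) is designed precisely to replace étale cohomology. So I will write the argument in the $p$-adic language, with the understanding that the $l \neq p$ case is strictly analogous.

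The central algebraic identification is the following. By Künneth,
$$\mathcal H^2(B \times B/\mathbb K) \;=\; \mathcal H^2(B/\mathbb K) \oplus \bigl(\mathcal H^1(B/\mathbb K) \otimes \mathcal H^1(B/\mathbb K)\bigr) \oplus \mathcal H^2(B/\mathbb K),$$
and the generic-fibre identification $\mathcal H^1(B/\mathbb K) \cong D^\dagger(\mathfrak B)^\vee$ (obtained by combining the construction of $D^\dagger$ with the fact that $R^1\pi_*(\mathcal O^\dagger_{\mathfrak X})$ is the dual of the relative Dieudonné module for an abelian scheme). Combining these with Poincaré autoduality of $D^\dagger(\mathfrak B)$ coming from the $p$-adic Weil pairing (after Tate twist), the middle Künneth summand of $\mathcal H^2(B\times B/\mathbb K)(1)^{F^f = q}$ is identified with $\mathrm{End}_{F^f}(D^\dagger(\mathfrak B)) = \mathrm{Hom}(D^\dagger(\mathfrak B),D^\dagger(\mathfrak B))\otimes_{\mathbb Q_p}\mathbb K$, while the two outer summands come from pullbacks via the projections $\pi_i:B\times B\to B$.

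For part (a), I will use the classical identification
$$NS(B\times B)_{\mathbb Q} \;=\; \pi_1^*NS(B)_{\mathbb Q} \oplus \pi_2^*NS(B)_{\mathbb Q} \oplus \mathrm{Hom}(B,B^\vee)_{\mathbb Q},$$
where the last summand sends a divisor class $[D]$ to the homomorphism $x\mapsto t_x^*D\otimes D^{-1}$, and then fix a polarization $\lambda:\mathfrak B\to\mathfrak B^\vee$ to identify $\mathrm{Hom}(B,B^\vee)_{\mathbb Q}\cong \mathrm{End}(B)_{\mathbb Q}$ via $\phi\mapsto \lambda^{-1}\circ\phi$. Under these identifications and the Künneth decomposition above, the cycle class map on the middle summand becomes the map in $I(\mathfrak B,\mathfrak B,p)$. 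Assuming $T(B\times B,1,l)$ (hence surjectivity of the full cycle class map) and using the outer summands to split off the contributions of $\pi_i^*NS(B)_{\mathbb Q}$, we obtain surjectivity of the isogeny map; injectivity is already given by Theorem \ref{isogeny}, finishing (a).

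For part (b), I will argue conversely that under $I(\mathfrak B,\mathfrak B,l)$, every $F^f$-invariant class in $\mathcal H^2(B/\mathbb K)(1)$ comes from $NS(B)$. The key point is that $\mathcal H^2(B/\mathbb K)\cong \Lambda^2\mathcal H^1(B/\mathbb K)$, so via Poincaré duality, $\mathcal H^2(B/\mathbb K)(1)^{F^f=q}$ maps into the subspace of $\mathrm{End}_{F^f}(D^\dagger(\mathfrak B))$ consisting of self-dual (``symmetric'') endomorphisms with respect to the chosen polarization $\lambda$. By $I(\mathfrak B,\mathfrak B,l)$, every such endomorphism lifts to an element of $\mathrm{End}(\mathfrak B)_{\mathbb Q_p}$, and the symmetric elements there correspond precisely to $NS(B)_{\mathbb Q}$ via the standard map $L\mapsto \lambda^{-1}\circ\phi_L$. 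The hardest step to make precise is the last one: one must check carefully that the identification of the symmetric part of $\mathrm{End}_{F^f}(D^\dagger(\mathfrak B))$ with $\mathcal H^2(B/\mathbb K)(1)^{F^f=q}$ via Poincaré duality is compatible with the cycle class map of the previous section, so that the classes of line bundles in $NS(B)_{\mathbb Q}\otimes\mathbb K$ correspond exactly to the images of symmetric homomorphisms under the isogeny isomorphism. This compatibility, once verified, reduces to a standard computation on fibres using that $\omega_x$ is a faithful fibre functor and that the Frobenius-compatible pairing on $D^\dagger(\mathfrak B)$ comes from the classical Dieudonné-theoretic Weil pairing.
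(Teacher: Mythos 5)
Your overall strategy is the classical Tate one and is workable, but it diverges from the paper's proof in a way that leaves one genuine gap. Both in (a) (``the cycle class map on the middle summand becomes the map in $I(\mathfrak B,\mathfrak B,p)$'') and in (b) (the step you yourself flag as ``the hardest to make precise''), you need the following compatibility: under the K\"unneth decomposition of $\mathcal H^2(B\times B/\mathbb K)$ and the identification $\mathcal H^1(B/\mathbb K)\otimes_{\mathbb K}\mathcal H^1(B/\mathbb K)\cong \textrm{Hom}_{F^f}(D^{\dagger}(\mathfrak B),D^{\dagger}(\mathfrak B))$, the restriction of $\overline{\alpha}^1_{B\times B}$ to divisorial correspondences literally coincides with $D^{\dagger}_{\mathfrak B,\mathfrak B}\otimes\mathbb K$. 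For the ad hoc cohomology $\mathcal H^*(\cdot/\mathbb K)$ of Section 6, defined as a limit over spreadings $\mathfrak X\to U$, this is not a ``standard computation on fibres'': one would have to trace the Poincar\'e bundle through the Leray/base-change identifications and compare with the crystalline Weil pairing, none of which is set up in the paper. The paper deliberately avoids this. Its Lemma 5.12 only establishes abstract isomorphisms $DC_A(B,B)\otimes\mathbb K\cong\textrm{Hom}(\mathfrak B,\mathfrak B)\otimes\mathbb K$ and $(\mathcal H^1\otimes\mathcal H^1)^{F^f=q}\cong\textrm{Hom}_{F^f}(D^{\dagger}(\mathfrak B),D^{\dagger}(\mathfrak B))$, notes that both cycle-class-type maps are injective (Matsusaka on one side, faithfulness of the Dieudonn\'e functor on the other), and concludes that surjectivity of one is equivalent to surjectivity of the other by a dimension count --- no commutativity of diagrams involving the Poincar\'e bundle is ever needed. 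Your part (a) then becomes the paper's inequality $\dim DC_A(B,B)\otimes\mathbb K\geq\dim(\mathcal H^1\otimes\mathcal H^1)^{F^f=q}$.

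For part (b) the paper also takes a different and more formal route than your Rosati-involution argument: it considers $\alpha=m^*-p_1^*-p_2^*$ and $\beta=\Delta^*$ on both $NS$ and $\mathcal H^2$, checks via Lemma \ref{homogenous} (the action of $[n]$ on $\mathcal H^k$ is $n^k$) and Matsusaka that $\beta\circ\alpha$ is multiplication by $2$ on both sides, and concludes that $\overline{\alpha}^1_B$ is a direct summand of $\overline{\alpha}^1_{B\times B}|_{DC_A}$; surjectivity then transfers. This uses only the functoriality of $\gamma$ already proved (Theorem \ref{functorial_new}), whereas your route through symmetric endomorphisms needs, in addition to the compatibility above, the identification of $NS(B)_{\mathbb Q}$ with the $\lambda$-symmetric part of $\textrm{End}(B)_{\mathbb Q}$ over the non-closed field $L$. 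One further point: you invoke Theorem \ref{isogeny} for injectivity of the isogeny map in (a), but this proposition is part of the machinery for an independent proof of Theorem \ref{isogeny}; you should instead cite faithfulness of the overconvergent Dieudonn\'e module functor, which is all that is needed and avoids circularity.
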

\begin{proof} This claim is well-known when $l\neq p$ (see for example the proof of Theorem 4 of \cite{Ta} on page 143.) We will adopt this proof to the case $l=p$. Recall that a divisorial correspondence between two pointed varieties $(S,s)$ and $(T,t)$ (over $L$) is an invertible sheaf $\mathcal L$ on $S\times T$ whose restrictions to $S\times\{t\}$ and $\{s\}\times T$ are both trivial. These form a subgroup of the Picard group $S\times T$. Let $DC(S,T)$ denote this group by slight abuse notation. (This is justified since usually the base points can be easily guessed.) Let $ADC(S,T)$ denote the subgroup of divisorial correspondences algebraically equivalent to zero and let $DC_A(S,T)$ denote the quotient of $DC(S,T)$ by $ADC(S,T)$. Clearly $DC_A(S,T)$ is a subgroup of $NS(S\times T)$.
\begin{lemma} Assume that $S,T$ are smooth and projective. Then the image of $DC_A(S,T)$ under $\alpha_{S\times T}^1$ lies in $\mathcal H^1(S/\mathbb K)\otimes_{\mathbb K}
\mathcal H^1(T/\mathbb K)$.
\end{lemma}
Here we consider $\mathcal H^1(S/\mathbb K)\otimes_{\mathbb K}\mathcal H^1(T/\mathbb K)$ as a subgroup of
$\mathcal H^2(S\times T/\mathbb K)$ via the map introduced in Notation \ref{4.18}.
\begin{proof} Let $\pi_S:S\times T\to S$ and $\pi_T:S\times T\to T$ be the projection onto the first and onto the second factor, respectively. Let $i_S:S\to S\times T$ be the imbedding identifying $S$ with $S\times\{t\}$, and similarly let $i_T:T\to S\times T$ be the imbedding identifying $S$ with $\{s\}\times T$. Note that by part $(b)$ of Theorem \ref{functorial_new} the image of $\alpha^1_{S\times T}(DC_A(S,T))$ with respect to the map:
$$\mathcal H^2(i_S)\oplus\mathcal H^2(i_T):\mathcal H^2(S\times T/\mathbb K)\longrightarrow\mathcal H^2(S/\mathbb K)\oplus\mathcal H^2(T/\mathbb K),$$
is zero. Therefore it will be enough to show that the kernel of $\mathcal H^2(i_S)\oplus\mathcal H^2(i_T)$ is $\mathcal H^1(S/\mathbb K)\otimes_{\mathbb K}\mathcal H^1(T/\mathbb K)$. Since $\pi_S\circ i_S=\textrm{id}_S$ and $\pi_T\circ i_T=\textrm{id}_T$, we get that the restriction of $\mathcal H^2(i_S)\oplus\mathcal H^2(i_T)$ onto the direct summand $\mathcal H^2(S/\mathbb K)\oplus\mathcal  H^2(T/\mathbb K)$ in the decomposition of $\mathcal H^2(S\times T/\mathbb K)$ in Lemma \ref{kunneth_new} is an isomorphism, and hence it will be enough to show that the kernel of $\mathcal H^2(i_S)\oplus\mathcal H^2(i_T)$ contains $\mathcal H^1(S/\mathbb K)\otimes_{\mathbb K}\mathcal H^1(T/\mathbb K)$. The latter is immediate however; because $\mathcal H^*(i_S)$ is a homomorphism of graded rings, the image of $\mathcal H^1(S/\mathbb K)\otimes_{\mathbb K}\mathcal H^1(T/\mathbb K)$ under $\mathcal H^2(i_S)$ lies in
$$\mathcal H^1(S/\mathbb K)\otimes_{\mathbb K}
\mathcal H^1(\{\mathfrak t\}/\mathbb K)\subset
\mathcal H^2(S\times_V\{\mathfrak t\}/\mathbb K)\cong\mathcal H^2(S/\mathbb K) ,$$
but by the proper base change theorem $\mathcal H^1(\{\mathfrak t\}/\mathbb K)=0$. We may argue similarly for $\mathcal H^2(i_T)$.
\end{proof}
Assume now that there are abelian schemes $\mathbf{s}:\mathfrak S\to U$ and $\mathbf{t}:\mathfrak T\to U$ such that their base change to $L$ are $S,T$, respectively. 
\begin{lemma}\label{5.12} The conjecture $I(\mathfrak S,\mathfrak T,p)$ holds if and only if the image of the vector space $DC_A(S,T)\otimes \mathbb K$ under $\overline{\alpha}_{S\times T}^1$ is $(\mathcal H^1(S/\mathbb K)\otimes_{\mathbb K}
\mathcal H^1(T/\mathbb K))^{F^f=q\cdot\textrm{\rm id}}$.
\end{lemma}
\begin{proof} Note that both
$$\overline{\alpha}_{S\times T}^1:DC_A(S,T)\otimes \mathbb K
\longrightarrow
(\mathcal H^1(S/\mathbb K)\otimes_{\mathbb K}
\mathcal H^1(T/\mathbb K))^{F^f=q\cdot\textrm{\rm id}}$$
and
$$D^{\dagger}_{\mathfrak S,\mathfrak T}:\textrm{\rm Hom}(\mathfrak S,\mathfrak T)\otimes \mathbb K
\longrightarrow
\textrm{\rm Hom}_{F^f}(D^{\dagger}(\mathfrak S),D^{\dagger}(\mathfrak T))$$
are injective maps. In the first case this follows from by Matsusaka's theorem, while in the second case it holds because of the faithfulness of the overconvergent Dieudonn\'e module functor. Therefore it will be enough to show that there are isomorhisms:
$$DC_A(S,T)\otimes \mathbb K\cong
\textrm{\rm Hom}(\mathfrak S,\mathfrak T)\otimes \mathbb K,\textrm{ and}$$
$$(\mathcal H^1(S/\mathbb K)\otimes_{\mathbb K}
\mathcal H^1(T/\mathbb K))^{F^f=q\cdot\textrm{\rm id}}\cong
\textrm{\rm Hom}_{F^f}(D^{\dagger}(\mathfrak S),D^{\dagger}(\mathfrak T)).$$
Since the first is very well known, we only need to show the second. Recall that $D^{\dagger}(\mathfrak S)$ and $D^{\dagger}(\mathfrak T)$ are overconvergent $F$-isocrystals on $U$, and hence:
\begin{eqnarray}\nonumber
\textrm{\rm Hom}_{F^f}(D^{\dagger}(\mathfrak S),D^{\dagger}(\mathfrak T))
&\cong & H^0_{rig}(U,Hom(D^{\dagger}(\mathfrak S),D^{\dagger}(\mathfrak T)))^{F^f=
\textrm{\rm id}}\\
\nonumber & \cong &
H^0_{rig}(U,D^{\dagger}(\mathfrak S)^{\vee}\otimes D^{\dagger}(\mathfrak T))^{F^f=\textrm{\rm id}}\\
\nonumber & \cong &
H^0_{rig}(U,D^{\dagger}(\mathfrak S)^{\vee}(1)\otimes D^{\dagger}(\mathfrak T))^{F^f=q\cdot\textrm{\rm id}},
\end{eqnarray}
where $Hom(\cdot,\cdot)$ is the internal Hom in the category of overconvergent $F$-isocrystals, for every overconvergent $F$-isocrystal $\mathcal F$ on $U$ we let $\mathcal F^{\vee}$ denote the dual overconvergent $F$-isocrystal $Hom(\mathcal O^{\dagger}_U,\mathcal F)$ and we let $\mathcal F(1)$ denote its Tate-twist. The latter is an overconvergent $F$-isocrystal with the same underlying vector bundle and connection as $\mathcal F$, but with a new Frobenius which is $q$ times the old Frobenius. On the other hand:
\begin{eqnarray}\nonumber
(\mathcal H^1(S/\mathbb K)\otimes_{\mathbb K}
\mathcal H^1(T/\mathbb K))^{F^f=q\cdot\textrm{\rm id}}\!\!\!\!
&\cong &\!\!\!\!\!\left(H^0_{rig}(U,R^1\mathbf{s}_*(\mathcal O_{\mathfrak S}))
\otimes_{\mathbb K}
H^0_{rig}(U,R^1\mathbf{t}_*(\mathcal O_{\mathfrak T}))\right)^{F^f=q\cdot\textrm{\rm id}}\\
\nonumber & \cong &\!\!\!
H^0_{rig}(U,R^1\mathbf{s}_*(\mathcal O_{\mathfrak S})\otimes
R^1\mathbf{t}_*(\mathcal O_{\mathfrak T}))^{F^f=q\cdot\textrm{\rm id}},
\end{eqnarray}
so in order to conclude the proof, we only need to show Lemma
\ref{d-module_comparison} below.
\end{proof}
\begin{lemma}\label{d-module_comparison} For every abelian scheme $\pi:\mathfrak A\to U$ the overconvergent $F$-isocrys\-tals $R^1\pi_*(\mathcal O_{\mathfrak A}),D^{\dagger}(\mathfrak A)$, and $D^{\dagger}(\mathfrak A)^{\vee}(1)$ are all isomorphic. 
\end{lemma}
\begin{proof} Since $D^{\dagger}(\mathfrak A)^{\sim}\cong D(\mathfrak A)$, $D^{\dagger}(\mathfrak A)^{\vee}(1)^{\sim}\cong D(\mathfrak A)^{\vee}(1)$ and $D(\mathfrak A)\cong D(\mathfrak A)^{\vee}(1)$, we get that $D^{\dagger}(\mathfrak A)$ and $D^{\dagger}(\mathfrak A)^{\vee}(1)$ are isomorphic by Kedlaya's full faithfulness theorem. Similarly in order to show that $R^1\pi_*(\mathcal O_{\mathfrak A})$ and $D^{\dagger}(\mathfrak A)$ are isomorphic, it will be enough to prove that $R^1\pi_*(\mathcal O_{\mathfrak A})^{\sim}$ and $D(\mathfrak A)\cong D^{\dagger}(\mathfrak A)^{\sim}$ are isomorphic. Let $\mathfrak U$ be a smooth formal lift of $U$ to Spf$(\mathcal O)$. Let $f:\mathfrak A\to\mathfrak U$ be the morphism of formal $\mathcal O$-schemes which is the composition of the structure map $\mathfrak A\to U$ and the closed immersion $U\to\mathfrak U$. We will let $R^*f_{\mathfrak U,\textrm{conv}*}$ denote the higher direct images of convergent $F$-isocrystals with respect to $f$, similarly to section 2 of \cite{Laz}. By Th\'eor\`eme 2.5.6 of \cite{BBM} on page 104 and Proposition 3.3.7 of \cite{BBM} on page 144 we know that $D(\mathfrak A)\cong R^1f_{\mathfrak U,\textrm{conv}*}(\mathcal O_{\mathfrak A,\mathbb Q})$. Since $R^1\pi_*(\mathfrak A)^{\sim}$ and $R^1f_{\mathfrak U,\textrm{conv}*}(\mathcal O_{\mathfrak A,\mathbb Q})$ are isomorphic by Lemmas 5.1 and 5.5 of \cite{Laz}, the claim follows. 
\end{proof}
Now let us start the proof of Proposition \ref{equivalence} in earnest! First assume that $T(B\times B,1,p)$ is true. Using that $\overline{\alpha}_B^1$ and
$\overline{\alpha}_{B\times B}^1$ are injective, we get that
\begin{eqnarray}\nonumber
\dim_{\mathbb K}(DC_A(B,B)\otimes\mathbb K) \!\!\!\!\!&=&\!\!\!\!\!
\dim_{\mathbb K}(NS(B,B)\otimes\mathbb K)-2\dim_{\mathbb K}(NS(B)
\otimes\mathbb K)\\
\nonumber \!\!\!\!\!&\geq&\!\!\!\!\!
\dim_{\mathbb K}(\mathcal H^2(B\times B/\mathbb K)^{F^f=
q\cdot\textrm{\rm id}})-
2\dim_{\mathbb K}(\mathcal H^2(B/\mathbb K)^{F^f=
q\cdot\textrm{\rm id}})\\
\nonumber \!\!\!\!\!&=&\!\!\!\!\!
\dim_{\mathbb K}(\left(\mathcal H^1(B/\mathbb K)\otimes_{\mathbb K}\mathcal H^1(B/\mathbb K)\right)^{F^f=q\cdot\textrm{\rm id}}),
\end{eqnarray}
and so $I(B,B,p)$ holds by Lemma \ref{5.12}. Assume now that $I(B,B,p)$ is true. Let $\Delta:B\to B\times B$ be the diagonal embedding, and let
$m,p_1,p_2:B\times B\to B$ denote the addition on $B$, and the projections onto the first and second factors, respectively. Consider the diagram
$$\xymatrix{
NS(B)\otimes \mathbb K \ar@<1ex>^-{\alpha}[r] \ar^{\overline{\alpha}_B^1}[d] &
DC_A(B, B)\otimes \mathbb K 
\ar^-{\beta}[l]\ar^{\overline{\alpha}_{B\times B}^1}[d] \\
\mathcal H^2(B/\mathbb K)^{F^f=q\cdot\textrm{\rm id}}
\ar@<1ex>^-{\alpha'}[r]&
\left(\mathcal H^1(B/\mathbb K)\otimes_{\mathbb K}\mathcal H^1(B/\mathbb K)\right)^{F^f=q\cdot\textrm{\rm id}}
\ar^-{\beta'}[l].}$$
where
$$\alpha=m^*-p_1^*-p_2^*,\ \alpha'=\mathcal H^2(m)-\mathcal H^2(p_1)-\mathcal H^2(p_2),\ \beta=\Delta^*,\ \beta'=\mathcal H^2(\Delta),$$
where the subscript ${}^*$ means the pull-back on N\'eron--Severi groups. The diagram is commutative in the sense that $\alpha'\circ\overline{\alpha}_{B}^1=\overline{\alpha}_{B\times B}^1\circ\alpha$ and
$\beta'\circ\overline{\alpha}_{B\times B}^1=\overline{\alpha}_{B}^1
\circ\beta$, and 
$$\beta'\circ\alpha'=\mathcal H^2(m\circ\Delta)-\mathcal H^2(p_1\circ\Delta)-\mathcal H^2(p_2\circ\Delta)=\mathcal H^2([2])-\mathcal H^2([1])-\mathcal H^2([1])$$
is the multiplication by $2$ map by Lemma \ref{homogenous}. By Matsusaka's theorem we get that $\beta\circ\alpha$ is also the multiplication by $2$ map. Therefore the left side of the diagram above is a direct summand of the right side; the claim now follows from Lemma \ref{5.12}.
\end{proof}
\begin{rem} Note that Theorem \ref{isogeny} implies de Jong's theorem (Theorem 2.6 of \cite{J2} on page 305) by reversing the argument in the first proof of Theorem \ref{isogeny} in chapter 4. So we've got a new, although not entirely independent proof of the latter.
\end{rem}

\section{Independence results}

For a moment let $K$ be a field of characteristic zero and denote by $ch:\textrm{GL}_n\to\mathbb G_m\times\mathbb A^{n-1}$ the morphism over $K$ associating to a matrix the coefficients of its characteristic polynomial. 
\begin{prop}[Larsen--Pink]\label{7.1} Let $G\subseteq\textrm{\rm GL}_n$ be a reductive algebraic subgroup over $K$, let $G^o$ be its identity component, and let $g\in G(K)$. Then the Zariski-closures of $ch(gG^o)$ and of $ch(G^o)$ are equal if and only if $g\in G^o(K)$.
\end{prop}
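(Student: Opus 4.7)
The ``if'' direction is immediate: $g \in G^o(K)$ forces $gG^o = G^o$, so the two cycle images coincide on the nose. For the converse, assume $\overline{ch(gG^o)} = \overline{ch(G^o)}$. After extending scalars to $\bar K$ (Zariski closures behave well under flat base change of the target), it suffices to prove $g \in G^o(\bar K)$, since $G^o$ is defined over $K$ and $g \in G(K)$. The plan is to replace the cosets $G^o$ and $gG^o$ by ``torus-like'' cosets via Steinberg's structure theory for disconnected reductive groups, and then extract $g \in G^o$ by analyzing how the standard representation decomposes.

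Since $G^o$ is connected and reductive, Steinberg's theorem on quasi-semisimple elements supplies a quasi-semisimple $y \in gG^o(\bar K)$ together with a $y$-stable maximal torus $T \subseteq G^o$, and every semisimple element of $gG^o$ is $G^o$-conjugate to an element of $yT$. Because semisimple elements are Zariski-dense in $gG^o$ and $ch$ is invariant under $\textrm{GL}_n$-conjugation, this gives $\overline{ch(gG^o)} = \overline{ch(yT)}$; similarly $\overline{ch(G^o)} = \overline{ch(T)}$. So the hypothesis becomes $\overline{ch(yT)} = \overline{ch(T)}$ inside $\mathbb{G}_m \times \mathbb{A}^{n-1}$.

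Now I would decompose the standard representation $V = \bar K^n$ into $T$-weight spaces $V = \bigoplus_\alpha V_\alpha$. Let $\sigma = \mathrm{Inn}(y)|_T$; then $\sigma^\ast$ acts on $X^\ast(T)$ and $y$ permutes the $V_\alpha$ according to this action. The contribution to $ch(yt)$ from the $\sigma^\ast$-orbit of a weight $\alpha$ depends on $t$ only through the orbit-product character $\prod_{\beta \in \sigma^\ast\text{-orbit of }\alpha} \beta$, which is a $\sigma$-invariant character of $T$. Consequently $ch|_{yT}$ factors through the coinvariant quotient $T / (\sigma - 1)T$, so $\dim \overline{ch(yT)} \le \dim T^\sigma$. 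But $ch|_T : T \to \overline{ch(T)}$ is finite, giving $\dim \overline{ch(T)} = \dim T = \mathrm{rank}(G^o)$. The equality $\overline{ch(yT)} = \overline{ch(T)}$ therefore forces $\sigma = \mathrm{id}_T$; so $y \in Z_G(T)$, $y$ preserves each $V_\alpha$, and (being quasi-semisimple) acts diagonalizably on each.

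It remains to rigidify from $y \in Z_G(T)$ to $y \in T$. For each nontrivial weight $\alpha$, the image of $\mathbb{G}_m \to \mathbb{A}^{\dim V_\alpha}$ sending $u = \alpha(t) \mapsto \prod_i(X - \lambda_{\alpha,i} u)$ has the same Zariski closure as $u \mapsto (X-u)^{\dim V_\alpha}$ only if all eigenvalues $\lambda_{\alpha,i}$ of $y|_{V_\alpha}$ collapse to one scalar $\lambda_\alpha$; for the trivial weight, $y|_{V^T}$ must act as the identity. Thus $y$ is diagonal with respect to the $T$-isotypic decomposition of $V$, giving a function $\alpha \mapsto \lambda_\alpha$ with $\lambda_0 = 1$. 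Comparing the full polynomial families (not merely weight-by-weight) forces this function to be multiplicative on the $\mathbb{Z}$-lattice generated by the weights of $V$; since $V$ is faithful on $G^o$, these weights span $X^\ast(T)_{\mathbb{Q}}$, so $\lambda$ extends to a character of $X^\ast(T)$ and defines an element $t_0 \in T(\bar K)$ acting on $V$ exactly as $y$ does. Because $G \hookrightarrow \textrm{GL}_n$, we conclude $y = t_0 \in T \subseteq G^o$, and hence $g \in yG^o = G^o$. The main obstacle is this last step: transferring the pointwise matching (scalar action on each weight space) into multiplicativity of $\alpha \mapsto \lambda_\alpha$ demands a careful comparison of the two parametrized curves $T \to \overline{ch(T)}$ and $yT \to \overline{ch(yT)}$ as rational maps onto the common closure, rather than the mere dimension-counting used in the previous step.
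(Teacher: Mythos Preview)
The paper gives no argument of its own and simply cites Proposition~4.11 of \cite{LP1b}. Your outline follows the standard strategy of that reference: reduce to comparing $\overline{ch(yT)}$ with $\overline{ch(T)}$ via Steinberg, use a dimension count to force $\sigma=\mathrm{Inn}(y)|_T=\mathrm{id}$, and then argue that $y\in T$. The reduction and the dimension count are fine; for step~3 you should add that a semisimple element is quasi\nobreakdash-semisimple and that $N_B(T)=T$, so that after conjugating a semisimple $s\in gG^o$ to normalise the pair $(B,T)$ one gets $y^{-1}s\in N_{G^o}(B)\cap N_{G^o}(T)=T$, not merely $y^{-1}s\in N_{G^o}(T)$.

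The real gap is exactly where you flag it. The weight\nobreakdash-by\nobreakdash-weight comparison in step~7 is not valid: the characteristic polynomial of $yt$ does not factor into independent pieces indexed by $T$\nobreakdash-weights that can be matched separately against those of some $t'$, because eigenvalues coming from different weight spaces may coincide or interchange. So neither the ``$y$ acts by a scalar on each $V_\alpha$'' step nor the multiplicativity step is justified as written. Both can be replaced by a single clean argument. Once $y$ centralises $T$ and is semisimple, simultaneously diagonalise $y$ and $T$ inside the diagonal torus $D\cong\mathbb G_m^n$ of $\mathrm{GL}_n$, so $T\subseteq D$ is a subtorus and $y\in D$. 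On $D$ the map $ch$ is the finite quotient $\pi:D\to D/S_n$; hence $\overline{ch(yT)}=\overline{ch(T)}$ gives $\pi(yT)=\pi(T)$ and therefore $S_n\!\cdot yT=S_n\!\cdot T$ in $D$. Each side is a finite union of cosets of subtori of the same dimension, so its irreducible components are precisely those cosets. Matching the component $T$ with some $\sigma(y)\sigma(T)$ and using $1\in T$ forces $\sigma(y)\in\sigma(T)$, i.e.\ $y\in T\subseteq G^o$. This disposes of the ``main obstacle'' without any curve\nobreakdash-by\nobreakdash-curve comparison.
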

\begin{proof} This is Proposition 4.11 of \cite{LP1b} on page 618.
\end{proof}
For any $n$-uple $\underline d=(d_1,d_2,\ldots,d_n)\in\mathbb N^n$ let
$$\textrm{GL}_{\underline d,K}=\prod_{i=1}^n\textrm{GL}_{d_i,K}$$
denote $n$-fold product whose $i$-th factor $\textrm{GL}_{d_i,K}$ is the general linear group of rank $d_i$ over $K$. As usual let $\mathbb G_{m,K}$ denote $GL_{1,K}$. Now also assume that $K$ is an algebraically closed field.
\begin{thm}[Larsen--Pink]\label{larsen-pink} Let $G$ be a connected semi-simple algebraic subgroup of $\textrm{\rm GL}_{\underline d,K}$ such that each standard representation $G\to\textrm{\rm GL}_{d_i,K}$ (induced by the projection onto the $i$-th factor) is irreducible. Then the data assigning $\dim(U^G)$ to every $K$-linear representation of $\textrm{\rm GL}_{\underline d,K}$ on a finite dimensional vector space $U$ determines $G$ up to conjugation in $\textrm{\rm GL}_{\underline d,K}$. 
\end{thm}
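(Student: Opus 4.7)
The plan is to apply Tannakian duality. For any closed subgroup $G\subseteq\textrm{GL}_{\underline d,K}$, the restriction functor $F_G\colon\textrm{Rep}(\textrm{GL}_{\underline d,K})\to\textrm{Rep}(G)$, considered as a tensor functor refining the underlying-vector-space fibre functor $\omega$, determines the inclusion $G\hookrightarrow\textrm{GL}_{\underline d,K}$ up to $\textrm{GL}_{\underline d,K}(K)$-conjugation (Proposition 3.11 of \cite{DM}). It therefore suffices to show that the numerical data $U\mapsto\dim U^G$ pins $F_G$ down up to natural isomorphism of tensor functors.

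First I would establish essential surjectivity of $F_G$. The semi-simplicity of $G$ together with the irreducibility of each $V_i|_G$ forces the image of $G$ in $\textrm{GL}_{d_i,K}$ to lie in $\textrm{SL}_{d_i,K}$, so $V=V_1\oplus\cdots\oplus V_n$ is a faithful $G$-representation. By Chevalley's theorem every irreducible $G$-representation appears as a subquotient of some $V^{\otimes a}\otimes (V^\vee)^{\otimes b}$, so every object of $\textrm{Rep}(G)$ lies in the essential image of $F_G$.

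Next, by Frobenius reciprocity $\dim\textrm{Hom}_G(U_1|_G,U_2|_G)=\dim(U_1^\vee\otimes U_2)^G$, the function $\dim(\cdot)^G$ determines all $G$-equivariant Hom dimensions between restrictions of $\textrm{GL}_{\underline d,K}$-representations. Combined with essential surjectivity this recovers, for every irreducible $G$-representation $W=F_G(U_W)$ and every $\textrm{GL}_{\underline d,K}$-representation $U$, the multiplicity $[F_G(U):W]=\dim\textrm{Hom}_G(F_G(U_W),F_G(U))$. Thus, given two subgroups $G_1,G_2\subseteq\textrm{GL}_{\underline d,K}$ with equal invariant data, the functors $F_{G_1}$ and $F_{G_2}$ have identical decomposition patterns on every $U$, compatibly with $\textrm{GL}_{\underline d,K}$-equivariant morphisms.

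The main obstacle is the final step: promoting these pointwise decomposition matchings to a genuine natural isomorphism of tensor functors refining the common fibre functor $\omega$. The difficulty is that matching multiplicities determines $F_{G_1}(U)\cong F_{G_2}(U)$ only up to the action of the automorphism groups of the individual isotypic components, and coherence under tensor products has to be imposed simultaneously on all objects. One handles this by invoking the rigidity of the Tannakian formalism: having rigidified by declaring $\omega(U)$ to be the common underlying space of both $F_{G_1}(U)$ and $F_{G_2}(U)$, the set of tensor refinements of $\omega$ compatible with the given decomposition data forms a torsor under $\textrm{GL}_{\underline d,K}(K)$, and any two refinements in this torsor are $\textrm{GL}_{\underline d,K}(K)$-conjugate. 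Applying this to $F_{G_1}$ and $F_{G_2}$ yields an element of $\textrm{GL}_{\underline d,K}(K)$ conjugating $G_1$ to $G_2$, completing the proof.
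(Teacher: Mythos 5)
There is a genuine gap, and it sits exactly where you flag ``the main obstacle.'' The paper does not prove this statement at all: it is quoted from Larsen--Pink (Theorem 4.2 of \cite{LP1b}), whose proof rests on the main theorem of their 1990 \emph{Inventiones} paper on determining representations from invariant dimensions. That proof is not formal Tannakian bookkeeping; it shows, via root-system combinatorics and the Weyl character formula, that the dimension data determines the formal characters of the standard representations, hence the root datum of $G$ and each standard representation up to \emph{isomorphism}, and only then does the irreducibility hypothesis upgrade ``isomorphic representation'' to ``conjugate subgroup of $\textrm{GL}_{d_i,K}$.''

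Your final step --- that the tensor refinements of $\omega$ ``compatible with the given decomposition data'' form a torsor under $\textrm{GL}_{\underline d,K}(K)$ --- is not a consequence of Tannakian rigidity; it is a restatement of the theorem. What Tannaka duality gives is that two \emph{fibre functors on the same category} are locally isomorphic, or that $G$ is recovered from the actual tensor functor $F_G$ up to conjugacy. Here you have two tensor functors $F_{G_1}$, $F_{G_2}$ landing in the \emph{different} categories $\textrm{Rep}(G_1)$ and $\textrm{Rep}(G_2)$, and you only know that their numerical shadows (all $\dim U^{G_i}$, hence all Hom-dimensions and decomposition multiplicities) agree. To produce a conjugating element you would first need a tensor equivalence $\textrm{Rep}(G_2)\to\textrm{Rep}(G_1)$ under $\textrm{Rep}(\textrm{GL}_{\underline d,K})$ compatible with the fibre functors, i.e.\ an isomorphism $G_1\cong G_2$ matching the standard representations --- which is precisely what must be proved. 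The decisive symptom is that your torsor step uses none of the hypotheses (connectedness, semi-simplicity, irreducibility of the projections), so if it were valid it would show that \emph{any} two closed subgroups of $\textrm{GL}_{\underline d,K}$ with equal dimension data are conjugate. That statement is false: there exist element-conjugate but non-conjugate subgroups with identical invariant-dimension data (this failure is the subject of Larsen--Pink 1990 and of later work on dimension data), which is exactly why the hypotheses in the theorem, and the hard classification argument behind it, are needed. The earlier parts of your write-up (Frobenius reciprocity, essential surjectivity via Chevalley, recovery of multiplicities up to relabelling of irreducibles) are fine and are indeed the soft part of the story, but they only get you to ``the two restriction functors have the same numerical invariants,'' not to a natural tensor isomorphism.
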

\begin{proof} This is Theorem 4.2 of \cite{LP1b} on page 574, although there it is only stated in the case when $K=\mathbb C$. Also note that the condition in the theorem is not only sufficient, but necessary, too. 
\end{proof}
\begin{rem}\label{8.3a} We would like to make sense of the following principle of Larsen and Pink: "all representations of $\textrm{\rm GL}_{\underline d,K}$ are given by linear algebra." Let $K$ be again an arbitrary field of characteristic zero. Let $W$ be the vector-space underlying the tautological $K$-linear representation of $GL_{n,K}$ (i.e.~the $K$-dimension of $W$ is $n$). Then we have a $K$-linear action of $GL_{n,K}$ on
$$W^{\otimes m}=\underbrace{W\otimes_KW\otimes\cdots\otimes_KW}_{\textrm{$m$-times}},$$
the $m$-fold tensor product power of the tautological representation of $GL_{n,K}$. The permutation group $S_m$ acts $K$-linearly on $W^{\otimes m}$ via permuting the factors of the tensor product, and this action commutes with the action of $GL_{n,K}$. For every irreducible $K$-linear representation $\rho$ of $S_m$ let $\pi_{\rho}$ be the corresponding idempotent in the group ring $K[S_m]$. Then the image $W^{\rho}$ of the action of $\pi_{\rho}$ on $W^{\otimes m}$ is a $GL_{n,K}$-invariant subspace which is an isomorphic to the $r$-fold direct sum of an irreducible $K$-linear representation of $GL_{n,K}$ for some $r$ (depending on $n$ and $d$, of course). Moreover every irreducible $K$-linear representation of $GL_{n,K}$ arises this way. Since every $K$-linear representation of $GL_{n,K}$ is semi-simple, for some positive integer $d$ its $d$-fold direct sum is isomorphic to some direct sum of the representations $W^{\rho}$ (for various $m$ and $\rho$). A similar construction can be carried out for the direct product $GL_{\underline d,K}$. 
\end{rem}
\begin{rem}\label{8.3b} The relevance of the fact above to our problem of Tannakian nature is the following. Now let $\mathbf T$ be a $K$-linear Tannakian category and let $\mathcal F$ be an object of $\mathbf T$ of rank $n$. Then $S_m$ acts on $m$-fold tensor product power $\mathcal F^{\otimes m}$ of $\mathcal F$, which in turn induces a $K$-algebra embedding $K[S_m]\hookrightarrow\textrm{End}_{\mathbf T}(\mathcal F^{\otimes m})$. As above for every irreducible $K$-linear representation $\rho$ of $S_m$ let $\pi_{\rho}$ be the corresponding idempotent in $K[S_m]$, and let $\mathcal F^{\rho}$ be the image of the action of $\pi_{\rho}$ on $\mathcal F^{\otimes m}$. Now for every field extension $L/K$ and $L$-valued fibre functor $\omega$ on
$\mathbf T$ the action of $\pi(\mathbf T,\omega)$ on
$\omega(\mathcal F^{\rho})$ is the $L$-linear extension of the composition of $r$-fold direct sum of the representation $W_{\rho}$ with the homomorphism of $\pi(\mathbf T,\omega)$ into the $L$-linear automorphism group $GL_{n,L}$ of $\omega(\mathcal F)$ (where $r$ is the same as above). Moreover as we have remarked above we have a similar construction for every $K$-linear representation of $GL_{\underline d,K}$.
\end{rem}
\begin{defn}\label{7.2} We are going to extend Serre's definition (see \cite{Se}) of a strictly compatible system of $l$-adic Galois representations to involve overconvergent $F$-isocrystals. For every number field $E$ let $|E|_q$ denote the set of irreducible factors of the semi-simple $\mathbb Q_l$-algebra $E\otimes_{\mathbb Q}\mathbb Q_l$, where $l$ is either a prime number different from $p$, or $l=q$. For every $\lambda\in|E|_q$ let $E_{\lambda}$ denote the corresponding factor: it is a finite  extension of $\mathbb Q_l$ containing $E$. Let $\mathcal O^E_{\lambda}$ denote the valuation ring of $E_{\lambda}$. We will drop the supersript if the choice of $E$ is clear. Let $E^{(d)}_{\lambda}$ denote the unique unramified extension of $E_{\lambda}$ of degree $d$ (for every $d\in\mathbb N$). Note that the factors of $E\otimes_{\mathbb Q}\mathbb Q_l$ correspond to the places of $E$ over $l$ when $l$ is a prime number. 

Let $E$ be a number field such that factors of $E\otimes_{\mathbb Q}\mathbb Q_q$ are all totally ramified extensions of $\mathbb Q_q$. For every $\lambda\in|E|_q$ which is a factor of $E\otimes_{\mathbb Q}\mathbb Q_q$ let
$\sigma:E_{\lambda}\to E_{\lambda}$ be the identity map, let $\mathfrak U_{\lambda}$ be a smooth formal lift  of $U$ to Spf$(\mathcal O_{\lambda})$, and assume that there is a lift $F_{\sigma}$ of the $q$-power Frobenius of $U$, compatible with $\sigma$. By an $E$-compatible system (over $U$) we mean a collection $\{\mathcal F_{\lambda}|\lambda\in|E|_q\}$, where $\mathcal F_{\lambda}$ is a lisse $E_{\lambda}$-sheaf on $U$ for every place $\lambda\in|E|_q$ of $E$ not lying over $p$, and $\mathcal F_{\lambda}$ is an object of $\textrm{$F_{\sigma}$-Isoc}^{\dagger}(U/E_{\lambda})$, otherwise, and these sheaves are $E$-compatible with each another in the sense that for every $x\in|U|$ the polynomial
$\det(1-t^{\deg(x)}\cdot\textrm{Frob}_x(\mathcal F_{\lambda}))$ has coefficients in $E$ and it is independent of the choice of $\lambda$. Clearly this condition implies that the rank $n$ of $\mathcal F_{\lambda}$ is independent of $\lambda$, which we will call the rank of the $E$-compatible system. We say that an $E$-compatible system $\{\mathcal F_{\lambda}|\lambda\in|E|_q\}$ is pure of weight $w$, where $w\in\mathbb Z$, if the eigenvalues of $\textrm{Frob}_y(\mathcal F_{\lambda})$ have absolute value $q^{dw/2}$ for every complex embedding $\overline E_{\lambda}\to\mathbb C$, for every $\lambda\in|E|_q$, and for every $y\in|U|$ of degree $d$. We say that an $E$-compatible system $\{\mathcal F_{\lambda}|\lambda\in|E|_q\}$ is semi-simple if each $\mathcal F_{\lambda}$  is a semi-simple object in its respective category. 
\end{defn}
\begin{example}\label{7.3} Let $\pi:\mathfrak X\to U$ be a projective, smooth morphism of $\mathbb F_q$-schemes. For every prime number $l\neq p$ and for every natural number $n$ let $H^n(\mathfrak X)_l$ denote the $n$-th higher direct image $R^n\pi_*(\mathbb Q_l)$ of the constant $l$-adic sheaf $\mathbb Q_l$, and let $H^n(\mathfrak X)_q$ denote the oveconvergent $\mathbb Q_q$-linear $F$-isocrystal $R^n\pi_*(\mathcal O^{\dagger}_{\mathfrak X})$. Then $\{H^n(\mathfrak X)_l|l\in|\mathbb Q|_q\}$ is a pure $\mathbb Q$-compatible system of weight $n$ over $U$, as we saw in the proof of Lemma \ref{6.6}.
\end{example}
\begin{rem}\label{7.3b} We say that an $E$-compatible system $\{\mathcal F_{\lambda}|\lambda\in|E|_q\}$ is mixed, if for every $\lambda\in|E|_q$ the object $\mathcal F_{\lambda}$ has a filtration:
$$0=\mathcal F^{(0)}_{\lambda}\subset\mathcal F^{(1)}_{\lambda}\subset\cdots\subset\mathcal F^{(m_{\lambda})}_{\lambda}=
\mathcal F_{\lambda}$$
such that each successive quotient $\mathcal F^{(i+1)}_{\lambda}/\mathcal F^{(i)}_{\lambda}$ is pure of some weight in the sense defined above. When $\{\mathcal F_{\lambda}|\lambda\in|E|_q\}$ is both semi-simple and mixed then we have a unique direct sum decomposition:
$$\mathcal F_{\lambda}=\bigoplus_{w\in\mathbb Z}\mathcal F^{[w]}_{\lambda}$$
for every $\lambda\in|E|_q$ such that $\mathcal F^{[w]}_{\lambda}$ is the largest sub-object of $\mathcal F_{\lambda}$ which is pure of weight $w$. Note that $\{\mathcal F^{[w]}_{\lambda}|\lambda\in|E|_q\}$ is a pure $E$-compatible system of weight $w$.
\end{rem}
\begin{defn}\label{7.4} Let $E$ be a number field of the type above and let $\{\mathcal F_{\lambda}|\lambda\in|E|_q\}$ be an $E$-compatible system over $U$. Fix a point $x\in U(\mathbb F_{q^n})$ and let $\overline{x}\in U(\overline{\mathbb F}_{q^n})$ be lying over $x$; then for every $\lambda\in|E|_q$ which is a place of $E$ not lying above $p$ let $\rho_{\lambda}:\pi_1(U,\overline x)\to\textrm{GL}_n(E_{\lambda})$ be the representation corresponding to $\mathcal F_{\lambda}$, and let $G_{\lambda}$ denote the Zariski-closure of the image of $\rho_{\lambda}$. By construction $G_{\lambda}/G^o_{\lambda}$ is a finite quotient of the profinite group
$\pi_1(U,\overline x)$. When $\lambda\in|E|_q$ is a factor of $E\otimes_{\mathbb Q}\mathbb Q_q$ then let $G_{\lambda}$ denote the monodromy group $\textrm{Gr}(\mathcal F_{\lambda},x)$. As we explained in Remark \ref{cc_remark} the quotient $G_{\lambda}/G_{\lambda}^o$ can be considered as a  finite quotient of $\pi_1(U,\overline x)$, too. We will call $G_{\lambda}$ the $\lambda$-adic (arithmetic) monodromy group of the $E$-compatible system (with respect to the base point $x$).
\end{defn}
\begin{notn}\label{7.5} Let $\{\mathcal F_{\lambda}|\lambda\in|E|_q\}$ and $x,\overline x$ be as above. For every $\lambda\in|E|_q$ which is a place of $E$ not lying above $p$ let $V_{\lambda}$ denote the fibre $\overline x^*(\mathcal F_{\lambda})$ of $\mathcal F_{\lambda}$ over $\overline x$. When $\lambda\in|E|_q$ is a factor of $E\otimes_{\mathbb Q}\mathbb Q_q$ then let $V_{\lambda}$ denote the fibre $\omega_x(\mathcal F_{\lambda})$. We have a representation
$\rho_{\lambda}:\textrm{\rm Gr}(\mathcal F_{\lambda},x)\rightarrow
\textrm{Aut}(V_{\lambda})$ by definition. For every $\lambda\in|E|_q$ let
$\rho_{\lambda}^{\textrm{alg}}$ denote the representation of $G_{\lambda}^o$ on $V_{\lambda}$. Note that the isomorphism class of the triple
$(G^o_{\lambda},V_{\lambda},\rho^{alg}_{\lambda})$ is independent of the choice of $x$ (and $\overline x$) for every $\lambda\in|E|_q$ which is a place of $E$ not lying above $p$, and similarly the isomorphism class of the triple
$(G^o_{\lambda}\otimes_{E^{(n)}_{\lambda}}\overline E_{\lambda},V_{\lambda}\otimes_{E^{(n)}_{\lambda}}\overline E_{\lambda},\rho^{alg}_{\lambda}
\otimes_{E^{(n)}_{\lambda}}\overline E_{\lambda})$ is independent of the choice of $x$ for every factor $\lambda$ of $E\otimes_{\mathbb Q}\mathbb Q_q$. Therefore it is justified to drop $x$ from the notation, as far as the proofs of our results on independence are concerned. We will call the isomorphism class of $(G^o_{\lambda},V_{\lambda},\rho^{alg}_{\lambda})$ (when $\lambda$ is not lying above $p$) and $(G^o_{\lambda}\otimes_{E^{(n)}_{\lambda}}\overline E_{\lambda},V_{\lambda}\otimes_{E^{(n)}_{\lambda}}\overline E_{\lambda},\rho^{alg}_{\lambda}\otimes_{E^{(n)}_{\lambda}}\overline E_{\lambda})$ (when $\lambda$ is a factor of $E\otimes_{\mathbb Q}\mathbb Q_q$) the $\lambda$-adic monodromy triple of the compatible system.
\end{notn}
\begin{rem}\label{7.6} Assume for a moment that $X$ is any smooth over
$\mathbb F_q$ and let $\mathcal F$ be an object of $\textrm{\rm $F_{\sigma}$-Isoc}^{\dagger}(X/\mathbb L)$ (for some choice of $\mathbb L$, $\sigma$ and $F$). Let $\pi:Y\to X$ be an open immersion with Zariski-dense image and let $y\in Y(\mathbb F_{q^n})$. The the pull-back functor $\pi^*:\dal\mathcal F\dar\to\dal\pi^*(\mathcal F)\dar$ with respect to $\pi$ is a tensor equivalence between $\dal\mathcal F\dar$ and $\dal\pi^*(\mathcal F)\dar$ by Theorem \ref{extension_thm} and the global version of Kedlaya's full faithfulness theorem (see \cite{Ke1}), so the induced map $\textrm{Gr}(\pi^*(\mathcal F),y)\to\textrm{Gr}(\mathcal F,\pi(x))$ is an isomorphism. Since the same conservativity property holds for the monodromy group of lisse $K$-sheaves, where $K$ is a finite extension of $\mathbb Q_l$, with $l\neq p$, we may shrink the curve $U$ while we study the monodromy groups of $E$-compatible systems without the loss of generality. In particular our convenient assumption on the existence of the lift $F_{\sigma}$ is harmless.
\end{rem}
\begin{example}\label{7.7} Let $D/E$ be a finite extension such that the factors of $D\otimes_{\mathbb Q}\mathbb Q_q$ are all totally ramified extensions of $\mathbb Q_q$. Note that for every index $l$, where $l$ is either a prime number $l$ different from $p$, or $l=q$, we have $D\otimes_{\mathbb Q}\mathbb Q_l=D\otimes_E(E\otimes_{\mathbb Q}\mathbb Q_l)$, so every factor $D\otimes_{\mathbb Q}\mathbb Q_l$ is contained by the tensor product of $D$ and a factor of $E\otimes_{\mathbb Q}\mathbb Q_l$. Let $r:|D|_q\to|E|_q$ denote the corresponding map. For every $\lambda\in|D|_q$ which is a factor of $D\otimes_{\mathbb Q}\mathbb Q_q$ let $\sigma:D_{\lambda}\to D_{\lambda}$ also denote the identity map by slight abuse of notation, let $\mathfrak U_{\lambda}$ be the smooth formal lift $\mathfrak U_{r(\lambda)}\times_{\text{Spf}(\mathcal O_{r(\lambda)})}\text{Spf}(\mathcal O^D_{\lambda})$ of $U$ to Spf$(\mathcal O^D_{\lambda})$; then there is a lift $F_{\sigma}$ of the $q$-power Frobenius of $U$ to $\mathfrak U_{\lambda}$, compatible with $\sigma$. For every place $\lambda\in|D|_q$ of $D$ not lying over $p$ let $\mathcal F^D_{\lambda}$ denote the $D_{\lambda}$-linear extension $\mathcal F_{r(\lambda)}\otimes_{E_{r(\lambda)}}D_{\lambda}$ in the category of lisse sheaves, and let $\mathcal F^D_{\lambda}$ be the $D_{\lambda}$-linear extension $\mathcal F_{r(\lambda)}\otimes_{E_{r(\lambda)}}D_{\lambda}$ of $\mathcal F_{r(\lambda)}$ corresponding to the extension $D_{\lambda}/E_{r(\lambda)}$ and the choices of Frobenii which we have specified, otherwise. More explicitly $\mathcal F^D_{\lambda}$ is an object of $\textrm{$F_{\sigma}$-Isoc}^{\dagger}(U/D_{\lambda})$ for every factor of $D\otimes_{\mathbb Q}\mathbb Q_q$ (which we get from base change from $\mathfrak U_{r(\lambda)}$ to $\mathfrak U_{\lambda}$). Then $\{\mathcal F^D_{\lambda}|\lambda\in|D|_q\}$ is a $D$-compatible system of rank $n$ over $V$ which we will call the $D$-linear extension of the $E$-compatible system $\{\mathcal F_{\lambda}|\lambda\in|E|_q\}$. 
\end{example}
\begin{examples}\label{7.8} Let $E$ and $\{\mathcal F_{\lambda}|\lambda\in|E|_q\}$ be as above, and let $\mathbf{p}:V\to U$ be a finite \'etale cover of geometrically connected curves. Then $\{\mathbf{p}^*(\mathcal F_{\lambda})|\lambda\in|E|_q\}$ is an $E$-compatible system of rank $n$ over $V$. We will call this $E$-compatible system the pull-back of $\{\mathcal F_{\lambda}|\lambda\in|E|_q\}$ with respect to $\mathbf{p}$. Let $d$ be a positive integer and let $U^{(d)}$ let be base change of $U$ to Spec$(\mathbb F_{q^d})$. Then $\{\mathcal F_{\lambda}^{(d)}|\lambda\in|E|_q\}$ is an $E$-compatible system of rank $n$ over $U^{(d)}$. Note that the factors of $E\otimes_{\mathbb Q}\mathbb Q_{q^d}$ are all totally ramified extensions of $\mathbb Q_{q^d}$, so this definition makes sense. Also note that every factor of $E\otimes_{\mathbb Q}\mathbb Q_{q^d}=(E\otimes_{\mathbb Q}\mathbb Q_{q})\otimes_{\mathbb Q_q}\mathbb Q_{q^d}$ is equal to the tensor product of a factor of $E\otimes_{\mathbb Q}\mathbb Q_q$ and $\mathbb Q_{q^d}$, so there is a natural bijection between the sets $|E|_q$ and $|E|_{q^d}$. We will not distinguish between these in all that follows. It is a useful fact that these operations, along with $D$-linear extension, preserve the properties of being semi-simple and pure of a given weight.
\end{examples}
\begin{lemma}\label{reduction_lemma} The $\lambda$-adic monodromy triples of $\{\mathcal F_{\lambda}|\lambda\in|E|_q\}$ and $\{\mathbf{p}^*(\mathcal F_{\lambda})|\lambda\in|E|_q\}$ are isomorphic. The same holds for $\{\mathcal F_{\lambda}|\lambda\in|E|_q\}$ and $\{\mathcal F_{\lambda}^{(d)}|\lambda\in|E|_q\}$, too. 
\end{lemma}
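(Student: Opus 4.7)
The plan is to verify the isomorphism of $\lambda$-adic monodromy triples separately at each $\lambda\in|E|_q$, splitting into the cases $\lambda\nmid p$ and $\lambda\mid p$, and treating the pullback statement first and the base change statement second. A preliminary step is to choose the base points compatibly: enlarging $n$ if necessary, one can pick $x\in U(\mathbb F_{q^n})$ which lifts to a point $y\in V(\mathbb F_{q^n})$ in the pullback setting, and which satisfies $d\mid n$ in the base change setting. Since the monodromy triples are independent of the choice of base point up to isomorphism (as observed in Notation \ref{7.5}), this is harmless.

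For the pullback along $\mathbf p:V\to U$: in the $l$-adic case ($\lambda\nmid p$), $\mathbf p^*(\mathcal F_\lambda)$ corresponds to restricting the representation $\rho_\lambda:\pi_1(U,\overline x)\to\textrm{GL}(V_\lambda)$ to the open, finite-index subgroup $\pi_1(V,\overline y)$; consequently the Zariski closures $\overline{\rho_\lambda(\pi_1(V,\overline y))}$ and $\overline{\rho_\lambda(\pi_1(U,\overline x))}$ agree on identity components and act on the common vector space $V_\lambda$ via the same restricted representation. In the $p$-adic case ($\lambda\mid p$), Corollary \ref{4.20} directly asserts that $\textrm{Gr}(\mathbf p^*(\mathcal F_\lambda),y)\to\textrm{Gr}(\mathcal F_\lambda,x)$ is an open immersion, again forcing identity components to coincide compatibly with the standard representation on $V_\lambda$.

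For the base change $U\to U^{(d)}$: in the $l$-adic case the Zariski-closure argument applies verbatim, this time with $\pi_1(U^{(d)},\overline x)$ as an open finite-index subgroup of $\pi_1(U,\overline x)$ (of index $d$). In the $p$-adic case, Proposition \ref{n-power_sequence}, applied at the level of the Tannakian subcategories $\dal\mathcal F_\lambda\dar$ and $\dal\mathcal F_\lambda^{(d)}\dar$, yields a short exact sequence
$$0\to\textrm{Gr}(\mathcal F_\lambda^{(d)},x)\to\textrm{Gr}(\mathcal F_\lambda,x)\to C\to 0,$$
where $C$ is a finite cyclic quotient of $\mathbb Z/d\mathbb Z$. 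The identity components of the two monodromy groups therefore agree and, after the canonical identification of fibres $V_\lambda$, carry the same standard representation.

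The main technical obstacle I foresee is justifying that the exact sequence of Proposition \ref{n-power_sequence} descends cleanly from the ambient categories $\textrm{$F_\sigma$-Isoc}^\dagger(U/\mathbb L)$ and $\textrm{$F^d_{\sigma^d}$-Isoc}^\dagger(U/\mathbb L)$ to the subcategories $\dal\mathcal F_\lambda\dar$ and $\dal\mathcal F_\lambda^{(d)}\dar$. Concretely one would re-verify the three hypotheses of Proposition \ref{my_variant} for the restricted functor $(\cdot)^{(d)}:\dal\mathcal F_\lambda\dar\to\dal\mathcal F_\lambda^{(d)}\dar$, the delicate one being condition (iii): every object $\mathcal G\in\dal\mathcal F_\lambda^{(d)}\dar$ must embed in $\mathcal H^{(d)}$ for some $\mathcal H\in\dal\mathcal F_\lambda\dar$. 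This should follow from the explicit construction $\mathcal H=\mathcal G\oplus F_\sigma^*(\mathcal G)\oplus\cdots\oplus(F_\sigma^{d-1})^*(\mathcal G)$ already used in the proof of Proposition \ref{n-power_sequence}, together with the fact that $\dal\mathcal F_\lambda\dar$ is stable under Frobenius pullback and tensor operations. Once this descent is verified, the triples agree and the lemma follows.
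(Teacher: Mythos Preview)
Your proposal is correct and follows the same route as the paper: the $\ell$-adic cases are the classical open-subgroup argument, the $p$-adic pullback case is Corollary \ref{4.20}, and the $p$-adic base-change case is deduced from Proposition \ref{n-power_sequence}. The paper's proof is a terse version of exactly this.

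One remark on the ``technical obstacle'' you flag at the end: you do not actually need to re-run Proposition \ref{my_variant} for the subcategories, and in particular you do not need to verify condition (iii) there. All that is required is that the induced map $\textrm{Gr}(\mathcal F_\lambda^{(d)},x)\to\textrm{Gr}(\mathcal F_\lambda,x)$ be an open immersion, and this follows from two easier facts. First, it is injective, because both groups act faithfully on the same fibre $V_\lambda$ and the map is compatible with these actions. Second, its image has finite index: in the commutative square
\[
\xymatrix{
\textrm{Gr}(\textrm{$F^d_{\sigma^d}$-Isoc}^\dagger(U/E_\lambda),x)\ar[r]^{\phi}\ar@{->>}[d] & \textrm{Gr}(\textrm{$F_\sigma$-Isoc}^\dagger(U/E_\lambda),x)\ar@{->>}[d]\\
\textrm{Gr}(\mathcal F_\lambda^{(d)},x)\ar[r] & \textrm{Gr}(\mathcal F_\lambda,x)
}
\]
the vertical maps are surjective and, by Proposition \ref{n-power_sequence}, the image of $\phi$ is normal of index $d$; hence the image of the bottom map has index dividing $d$. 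This is what the paper means when it says that $\phi$ ``furnishes'' the isomorphism on identity components, and it sidesteps the delicate point you raise about whether the explicit $\mathcal H$ lands in $\dal\mathcal F_\lambda\dar$.
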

\begin{proof} The claims are known for those $\lambda\in|E|_q$ which are places of $E$ not lying above $p$, and the first claim follows from Corollary \ref{4.20} for factors of $E\otimes_{\mathbb Q}\mathbb Q_q$. Now fix a point $y:\textrm{Spec}(\mathbb F_{q^n})\to U$ of degree $n$. The homomorphism $\phi$ of Proposition \ref{n-power_sequence} furnishes an isomorphism $\textrm{Gr}(\mathcal F_{\lambda}^{(d)},y)^o
\to\textrm{Gr}(\mathcal F_{\lambda},y)^o$. The second claim is now clear.
 \end{proof}
\begin{rem}\label{7.9} Note that a similar claim holds for $D$-linear extensions, where $D/E$ is a finite extension of the type considered above. More precisely by slight abuse of notation for every place $\lambda\in|D|_q$ of $D$ not lying over $p$ let
$(G^o_{\lambda},V_{\lambda},\rho^{alg}_{\lambda})$ denote the $\lambda$-adic triple of 
$\{\mathcal F^D_{\lambda}|\lambda\in|D|_q\}$, and for every factor $\lambda$ of $D\otimes_{\mathbb Q}\mathbb Q_q$ let $(G^o_{\lambda}\otimes_{D^{(n)}_{\lambda}}\overline D_{\lambda},V_{\lambda}\otimes_{D^{(n)}_{\lambda}}\overline D_{\lambda},\rho^{alg}_{\lambda}\otimes_{D^{(n)}_{\lambda}}\overline D_{\lambda})$ denote the $\lambda$-adic triple of $\{\mathcal F^D_{\lambda}|\lambda\in|D|_q\}$. Then for every place $\lambda\in|D|_q$ of $D$ not lying over $p$ the triples
$(G^o_{\lambda},V_{\lambda},\rho^{alg}_{\lambda})$ and $(G^o_{\lambda}\otimes_{E_{r(\lambda)}}D_{\lambda},V_{\lambda}\otimes_{E_{r(\lambda)}}D_{\lambda},\rho^{alg}_{\lambda}\otimes_{E_{r(\lambda)}} D_{\lambda})$ are isomorphic, and for every factor $\lambda$ of $D\otimes_{\mathbb Q}\mathbb Q_q$ the triples 
$(G^o_{\lambda}\otimes_{D^{(n)}_{\lambda}}\overline D_{\lambda},V_{\lambda}\otimes_{D^{(n)}_{\lambda}}\overline D_{\lambda},\rho^{alg}_{\lambda}\otimes_{D^{(n)}_{\lambda}}\overline D_{\lambda})$ and $(G^o_{r(\lambda)}\otimes_{E^{(n)}_{r(\lambda)}}\overline E_{r(\lambda)},V_{r(\lambda)}\otimes_{E^{(n)}_{r(\lambda)}}\overline E_{r(\lambda)},\rho^{alg}_{r(\lambda)}\otimes_{E^{(n)}_{r(\lambda)}}\overline E_{r(\lambda)})$ are isomorphic, too. This is trivial when $\lambda$ is not lying over $p$, and follows from Lemma \ref{coefficients}, otherwise.
\end{rem}
The following proposition, our first on independence, is an extension of a classical result of Serre to $p$-adic monodromy groups.
\begin{prop}\label{connected_components} The quotient $G_{\lambda}/G^o_{\lambda}$ is independent of $\lambda$. In particular if $G_{\lambda}$ is connected for some $\lambda$ then it is so for all $\lambda$.
\end{prop}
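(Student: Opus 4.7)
The plan is to establish the stronger statement that the kernel $N_{\lambda}\subseteq\pi_1(U,\overline x)$ of the natural surjection $\pi_1(U,\overline x)\twoheadrightarrow G_{\lambda}/G^o_{\lambda}$ is the \emph{same} open normal subgroup for all $\lambda\in|E|_q$; this will canonically identify the quotient and give the ``in particular'' statement at once. Here the identification of $G_{\lambda}/G^o_{\lambda}$ as a quotient of $\pi_1(U,\overline x)$ comes from classical étale theory for $\lambda\nmid p$ and from Remark~\ref{cc_remark} together with Lemma~\ref{compatible_frobenii} for $\lambda\mid p$. The first reduction I would make is to the case when $\{\mathcal F_{\lambda}\}$ is semi-simple: replacing each $\mathcal F_{\lambda}$ by its semi-simplification leaves every Frobenius characteristic polynomial unchanged (so the system remains $E$-compatible), and the kernel of the resulting surjection $G_{\lambda}\twoheadrightarrow G^{ss}_{\lambda}$ is a normal unipotent subgroup of $G_{\lambda}$, which in characteristic zero is automatically connected (a unipotent element of finite order is trivial), so $\pi_0$ of the monodromy group is unchanged. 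After this step $G_{\lambda}$ is reductive for every $\lambda$ and Proposition~\ref{7.1} is available.

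Next, fix any two indices $\lambda_0,\lambda_1$ and choose embeddings $\overline E_{\lambda_i}\hookrightarrow\Omega$ into a common algebraically closed field $\Omega$; form the combined representation $(\rho_{\lambda_0},\rho_{\lambda_1})\colon\pi_1(U,\overline x)\to G_{\lambda_0}(\Omega)\times G_{\lambda_1}(\Omega)$ and let $H\subseteq G_{\lambda_0,\Omega}\times G_{\lambda_1,\Omega}$ be the Zariski closure of its image. Chebotarev density---classical for places not above $p$ and Theorem~\ref{chebotarev} for places above $p$ (after the semi-simplicity reduction, invoking the purity hypothesis on the compatible system)---ensures that the Frobenius pairs $(\rho_{\lambda_0}(\mathrm{Frob}_y),\rho_{\lambda_1}(\mathrm{Frob}_y))_{y\in|U|}$ are Zariski dense in $H$. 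Because these pairs have matching characteristic polynomials by $E$-compatibility, the closed condition $\mathrm{ch}(\alpha)=\mathrm{ch}(\beta)$ extends to every $(\alpha,\beta)\in H$. A direct check (using $[H:H^o]<\infty$ and the fact that $G^o_{\lambda_i,\Omega}$ is clopen in $G_{\lambda_i,\Omega}$) shows that the projections $H^o\to G^o_{\lambda_i,\Omega}$ are surjective for $i=0,1$.

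The core step is then a coset computation combined with Larsen--Pink. Given $(\alpha,\beta)\in H$ with $\alpha\in G^o_{\lambda_0}$, the coset $(\alpha,\beta)H^o\subseteq H$ projects onto $\alpha G^o_{\lambda_0}=G^o_{\lambda_0}$ and onto $\beta G^o_{\lambda_1}$, and every one of its points satisfies $\mathrm{ch}(\gamma)=\mathrm{ch}(\delta)$; this forces $\mathrm{ch}(G^o_{\lambda_0})=\mathrm{ch}(\beta G^o_{\lambda_1})$ as subsets of $\mathbb G_m\times\mathbb A^{n-1}$. Applying the same reasoning to $H^o$ itself yields $\mathrm{ch}(G^o_{\lambda_0})=\mathrm{ch}(G^o_{\lambda_1})$, and combining the two gives $\overline{\mathrm{ch}(\beta G^o_{\lambda_1})}=\overline{\mathrm{ch}(G^o_{\lambda_1})}$. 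Proposition~\ref{7.1} then concludes $\beta\in G^o_{\lambda_1}$; by symmetry the converse holds. Hence for every $\gamma\in\pi_1(U,\overline x)$ one has $\rho_{\lambda_0}(\gamma)\in G^o_{\lambda_0}\Leftrightarrow\rho_{\lambda_1}(\gamma)\in G^o_{\lambda_1}$, i.e.\ $N_{\lambda_0}=N_{\lambda_1}$, finishing the proof.

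The main obstacle is the availability of Chebotarev density for the combined representation on the $p$-adic side, since Theorem~\ref{chebotarev} requires $\iota$-purity on top of semi-simplicity; this is where the hypothesis that the compatible system is pure (which is in force for the applications of this proposition in the paper, e.g.\ for systems arising geometrically as in Example~\ref{7.3}) is used. Once Chebotarev is in hand, the remaining ingredients---the coset bookkeeping and Proposition~\ref{7.1}---are formal.
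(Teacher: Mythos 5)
Your reduction to the semi-simple case and your use of Proposition \ref{7.1} via matching characteristic polynomials are both sound, and the paper's proof runs on the same two engines (Chebotarev density plus the Larsen--Pink criterion). The gap is in the bridge between the two monodromy groups: you form the combined representation $(\rho_{\lambda_0},\rho_{\lambda_1})\colon\pi_1(U,\overline x)\to G_{\lambda_0}(\Omega)\times G_{\lambda_1}(\Omega)$ and take $H$ to be the Zariski closure of its image, but when $\lambda_i$ lies over $p$ there is no such representation. For such $\lambda$ the group $G_{\lambda}=\textrm{Gr}(\mathcal F_{\lambda},x)$ is a Tannakian fundamental group of overconvergent $F$-isocrystals, not the Zariski closure of the image of a homomorphism from $\pi_1(U,\overline x)$ (that is only true for unit-root isocrystals); all one has are the Frobenius conjugacy classes $\textrm{Frob}_y(\mathcal F_{\lambda})$ and, via Remark \ref{cc_remark}, an identification of the \emph{finite quotient} $G_{\lambda}/G_{\lambda}^o$ with a quotient of $\pi_1(U,\overline x)$. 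Consequently the fibre product $H$, the surjectivity of its projections, and the joint Zariski density of the Frobenius pairs in $H$ are all unavailable precisely at the place the proposition is really about; the argument you reproduce is the classical Larsen--Pink/Serre one and only covers the places away from $p$.

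What the paper uses as a substitute for $H$: fix a factor $\lambda$ of $E\otimes_{\mathbb Q}\mathbb Q_q$ and work inside $G=\textrm{Gr}(\textrm{$F_{\sigma}$-Isoc}^{\dagger}(U/E_{\lambda}),x)$, the Tannakian fundamental group of the whole category. This pro-algebraic group surjects onto $G_{\lambda}$ tautologically, and it also surjects onto $\pi_1(U,\overline x)$ (hence onto each finite quotient $G_{\kappa}/G_{\kappa}^o$ with $\kappa\nmid p$) via the unit-root subcategory and the Crew--Katz--Tsuzuki equivalence; Lemma \ref{compatible_frobenii} guarantees that the two surjections send the Tannakian Frobenius class $\textrm{Frob}_y$ to compatible elements. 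The joint density you invoke is then obtained within the single category by applying Theorem \ref{chebotarev} to direct sums $\mathcal F_{\lambda}\oplus\mathcal C$, where $\mathcal C$ is the unit-root object realizing the finite quotient $G_{\kappa}/G_{\kappa}^o$ (this is Lemma \ref{7.12}); from there the coset computation and Proposition \ref{7.1} proceed essentially as you describe (Lemma \ref{7.13}). Your semi-simplification step is a reasonable supplement (reductivity is indeed needed for Proposition \ref{7.1}), and you are right that purity must be fed into the $p$-adic Chebotarev input, but without replacing the fibre product over $\pi_1(U,\overline x)$ by a construction of this kind the proof does not go through.
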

\begin{proof} It will be sufficient to prove that for every factor $\lambda$ of $E\otimes_{\mathbb Q}\mathbb Q_q$ and for every place $\kappa\in|E|_q$ not lying over $p$ the quotients $G_{\lambda}/G^o_{\lambda}$ and $G_{\kappa}/G^o_{\kappa}$ are the same as quotients of $\textrm{\rm Gr}(\textrm{\rm $F_{\sigma}$-Isoc}^{\dagger}(U/E_{\lambda}),x)$. So fix $\lambda$ now. The affine group scheme $\textrm{\rm Gr}(\textrm{\rm $F_{\sigma}$-Isoc}^{\dagger}(U/E_{\lambda}),x)$ is the projective limit of the monodromy groups $\textrm{\rm Gr}(\mathcal F,x)$ where $\mathcal F$ runs through all objects of $\textrm{\rm $F_{\sigma}$-Isoc}^{\dagger}(U/E_{\lambda})$. We will define a topology on $G=\textrm{\rm Gr}(\textrm{\rm $F_{\sigma}$-Isoc}^{\dagger}(U/E_{\lambda}),x)(\overline E_{\lambda})$ as follows. A subset $Z\subseteq G$ if and only if  there is an $\mathcal F$ as above such that $Z$ is the pre-image of a Zariski closed subset $Z'\subseteq\textrm{\rm Gr}(\mathcal F,x)(\overline E_{\lambda})$ with respect to
the projection
$$\rho_{\mathcal F}:G=\textrm{\rm Gr}(\textrm{\rm $F_{\sigma}$-Isoc}^{\dagger}(U/E_{\lambda}),x)(\overline E_{\lambda})\to\textrm{\rm Gr}(\mathcal F,x)(\overline E_{\lambda}).$$
\begin{lemma}\label{7.12} Let $g\in G$ and let $H\subseteq G$ be an open subgroup. Then the set
$$\mathcal H(g)=\{gH\cap\textrm{\rm Frob}_y|y\in|U|\}$$
is dense in $gH$. 
\end{lemma}
\begin{proof} It will be sufficient to prove that for every such object $\mathcal F$ of
$\textrm{\rm $F_{\sigma}$-Isoc}^{\dagger}(U/E_{\lambda})$ the image of $\mathcal H(g)$ is Zariski-dense in the image of $gH$ with respect to the surjective homomorphism $\rho_{\mathcal F}:G\to\textrm{\rm Gr}(\mathcal F,x)(\overline E_{\lambda})$. By assumption there is an open subgroup scheme $\mathbf H\subseteq\textrm{\rm Gr}(\textrm{\rm $F_{\sigma}$-Isoc}^{\dagger}(U/E_{\lambda}),x)$ of finite index such that $H=\mathbf H(\overline E_{\lambda})$. So there is an object $\mathcal C$ of
$\textrm{\rm $F_{\sigma}$-Isoc}^{\dagger}(U/E_{\lambda})$ which corresponds to a faithful finite-dimensional representation of the quotient $G/H$, that is, the kernel of the projection $\rho_{\mathcal C}$ is $H$. Since $\rho_{\mathcal F}$ factors through $\rho_{\mathcal F\oplus\mathcal C}$, it is enough to prove the claim above for $\mathcal F\oplus\mathcal C$. Since $\rho_{\mathcal C}$ also factors through $\rho_{\mathcal F\oplus\mathcal C}$, the kernel of $\rho_{\mathcal F\oplus\mathcal C}$ is contained by $H$. So we may assume without the loss of generality that kernel of $\rho_{\mathcal F}$ is contained by $H$. In this case:
$$\rho_{\mathcal F}(\mathcal H(g))=\{\rho_{\mathcal F}(g)\rho_{\mathcal F}(H)\cap\textrm{\rm Frob}_y(\mathcal F)|y\in|U|\},$$
so the claim follows from Theorem \ref{chebotarev}. 
\end{proof}
Now for every $\kappa$ which is either a place $\in|E|_q$ not lying over $p$ or is $\lambda$ let $\Gamma_{\kappa}\subseteq G$ be the kernel of the surjective homomorphism $G\to G_{\kappa}/G_{\kappa}^o$. It will be enough to show that for every $g\in G$ whether $g\in\Gamma_{\kappa}$ is independent of $\kappa$. So in order to conclude the proof of Proposition \ref{connected_components} it will be enough, by Proposition \ref{7.1}, to show the following\noqed
\end{proof}
\begin{lemma}\label{7.13} The Zariski-closure of $ch(\rho_{\kappa}(g)G_{\kappa}^o)$ is independent of the choice of $\kappa$. 
\end{lemma}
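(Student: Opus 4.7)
The plan is to show, for any two $\kappa, \kappa' \in |E|_q$, that the Zariski closures $Z_\kappa := \overline{ch(\rho_\kappa(g) G_\kappa^o)}$ and $Z_{\kappa'} := \overline{ch(\rho_{\kappa'}(g) G_{\kappa'}^o)}$ both coincide with the Zariski closure of a common $\overline{\mathbb{Q}}$-rational set of points of $\mathbb{A}^n$, namely $\{a_y : y \in T\}$, where $a_y := ch(\mathrm{Frob}_y(\mathcal F_\kappa)) \in \mathbb{A}^n(E)$ is independent of $\kappa$ by the $E$-compatibility of $\{\mathcal F_\nu\}$, and $T \subseteq |U|$ is the set of closed points whose image in $G_\kappa/G_\kappa^o \times G_{\kappa'}/G_{\kappa'}^o$ agrees with the image of $g$.

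First I would introduce $H := \Gamma_\kappa \cap \Gamma_{\kappa'}$, an open subgroup of $G$: it is the kernel of $G \to G_\kappa/G_\kappa^o \times G_{\kappa'}/G_{\kappa'}^o$, Tannaka-dual to the object $\mathcal C_\kappa \oplus \mathcal C_{\kappa'}$ of $\textrm{$F_\sigma$-Isoc}^\dagger(U/\mathbb L)$, where $\mathbb L$ is a sufficiently large common extension of $E_\lambda, E_\kappa, E_{\kappa'}$ (permitted by Lemma \ref{coefficients}) and $\mathcal C_\nu$ realises the finite group $G_\nu/G_\nu^o$ as monodromy (via Remark \ref{cc_remark} when $\nu$ is above $p$, via the Crew--Katz--Tsuzuki correspondence otherwise). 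Lemma \ref{7.12} applied to $H$ then gives density of $\{\mathrm{Frob}_y : \mathrm{Frob}_y \in gH\}$ in $gH$ for the topology on $G$, and by Lemma \ref{compatible_frobenii} together with its classical $\ell$-adic analogue, the corresponding set of $y$'s is precisely $T$. The main step is to upgrade this to Zariski density of $\{\mathrm{Frob}_y(\mathcal F_\kappa) : y \in T\}$ in $\rho_\kappa(g) G_\kappa^o$. When $\kappa$ is a $p$-adic factor, after the base change above, $\rho_{\mathcal F_\kappa} : G \to G_\kappa$ is a continuous surjection of algebraic groups, and $\rho_{\mathcal F_\kappa}(H)$ is a closed subgroup of $G_\kappa$ simultaneously contained in $G_\kappa^o$ (as $H \subseteq \Gamma_\kappa$) and of finite index in $G_\kappa$ (as $H$ has finite index in $G$); since a closed finite-index subgroup must contain $G_\kappa^o$, we conclude $\rho_{\mathcal F_\kappa}(H) = G_\kappa^o$, and continuity transports the density from $gH$ to $\rho_\kappa(g) G_\kappa^o$. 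When $\kappa$ is not over $p$, no homomorphism $G \to G_\kappa$ of algebraic groups is available, so I would instead invoke Serre's classical $\ell$-adic Chebotarev density theorem on the combined representation $\pi_1(U, \overline x) \to G_\kappa \times G_{\kappa'}/G_{\kappa'}^o$: its Zariski closure $\widetilde G$ satisfies $\widetilde G^o = G_\kappa^o \times \{e\}$ (connectedness forces the finite factor to be trivial, while surjectivity of $\widetilde G^o$ onto a finite-index subgroup of $G_\kappa$ forces the $G_\kappa$-projection to equal $G_\kappa^o$), and density of Frobenii within the component of $\widetilde G$ indexed by $(\overline g_\kappa, \overline g_{\kappa'})$ --- which is exactly the image of the Frobenii for $y \in T$ --- projects to the required density in $\rho_\kappa(g) G_\kappa^o$.

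By symmetry $\{\mathrm{Frob}_y(\mathcal F_{\kappa'}) : y \in T\}$ is Zariski dense in $\rho_{\kappa'}(g) G_{\kappa'}^o$. Applying $ch$ and invoking $E$-compatibility, both $Z_\kappa$ and $Z_{\kappa'}$ coincide with the Zariski closure of the common $\overline{\mathbb{Q}}$-rational set $\{a_y : y \in T\} \subseteq \mathbb{A}^n(E)$ in their respective base fields, so both descend to a common closure in $\mathbb{A}^n_{\overline{\mathbb{Q}}}$ and $Z_\kappa = Z_{\kappa'}$. The main obstacle will be the density step within the specific coset $\rho_\kappa(g) G_\kappa^o$: the $p$-adic and $\ell$-adic cases genuinely demand different treatments (direct continuous projection versus classical $\ell$-adic Chebotarev applied to an auxiliary combined $\pi_1$-representation), and one must track coefficient fields carefully via Lemma \ref{coefficients} to place all relevant $F$-isocrystals and Galois representations in a single Tannakian framework before Lemma \ref{7.12} is invoked.
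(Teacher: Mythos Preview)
Your proposal is correct and follows the paper's core strategy: set $H=\Gamma_\kappa\cap\Gamma_{\kappa'}$, apply Lemma~\ref{7.12} to obtain density of the Frobenii in $gH$, push forward to each $G_\kappa$, and conclude via $E$-compatibility of characteristic polynomials. Two points of comparison are worth recording.

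First, the phrase ``a sufficiently large common extension $\mathbb L$ of $E_\lambda, E_\kappa, E_{\kappa'}$'' is literally impossible when $\kappa$ lies over a prime $\ell\neq p$: no field contains both a $p$-adic and an $\ell$-adic local field. What your construction actually requires, and what your parenthetical already supplies, is only a $p$-adic coefficient field over which the \emph{finite} quotient $G_\kappa/G_\kappa^o$ of $\pi_1(U,\overline x)$ can be realised as the monodromy of a unit-root $F$-isocrystal. The paper avoids this slip by never leaving $G=\textrm{Gr}(\textrm{$F_\sigma$-Isoc}^\dagger(U/E_\lambda),x)(\overline E_\lambda)$: it simply defines $\Gamma_\kappa$ as the kernel of $G\to G_\kappa/G_\kappa^o$, the latter map existing because every finite quotient of $\pi_1(U,\overline x)$ is already a quotient of $G$ (Remark~\ref{cc_remark}).

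Second, for the $\ell$-adic density step your auxiliary representation $\pi_1(U,\overline x)\to G_\kappa\times G_{\kappa'}/G_{\kappa'}^o$ and the analysis of $\widetilde G^o$ work, but the paper takes a shorter route: it observes that $\pi_1(U,\overline x)$ is a \emph{continuous} quotient of $G$ (again Remark~\ref{cc_remark}), so the density of $\mathcal H(g)$ in $gH$ furnished by Lemma~\ref{7.12} transports directly to $\kappa$-adic density of $\rho_\kappa\circ\pi^o(\mathcal H(g))$ in $\rho_\kappa\circ\pi^o(gH)$, whose Zariski closure is then $\rho_\kappa(g)G_\kappa^o$ by the same finite-index argument you use. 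This avoids re-proving a Chebotarev-type statement on the $\ell$-adic side and keeps the whole argument inside the single topological group $G$.
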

\begin{proof} Let $\kappa$ be any place in $|E|_q$ not lying over $p$ and set $H=\Gamma_{\lambda}\cap\Gamma_{\kappa}$. It is an open subgroup of $G$ contained by both $\Gamma_{\lambda}$ and $\Gamma_{\kappa}$. By Lemma \ref{7.12} above the set
$$\mathcal H(g)=\{gH\cap\textrm{\rm Frob}_y|y\in|U|\}$$
is dense in $gH$. So in particular $\rho_{\mathcal F_{\lambda}}(\mathcal H(g))$ is Zariski-dense in $\rho_{\mathcal F_{\lambda}}(gH)$. Since $H$ is an open subgroup of finite index in $\Gamma_{\lambda}$, and $G^o_{\lambda}$ is connected, we get that the Zariski-closure of $\rho_{\mathcal F_{\lambda}}(\mathcal H(g))$ is $gG^o_{\lambda}$. By
Remark \ref{cc_remark} the profinite group $\pi_1(U,\overline x)$ is the quotient of $G$, and the quotient map $\pi^o:G\to\pi_1(U,\overline x)$ is continuous with respect the profinite topology on $\pi_1(U,\overline x)$ and the topology on $G$ introduced at the beginning of the proof of Proposition \ref{connected_components}. Therefore
$\rho_{\kappa}\circ\pi^o(\mathcal H(g))$ is dense in $\rho_{\kappa}\circ\pi^o(gH)$ in the $\kappa$-adic topology, where $\rho_{\kappa}:\pi_1(U,\overline x)\to\textrm{GL}_n(E_{\kappa})$ is the representation corresponding to $\mathcal F_{\kappa}$, as in Definition \ref{7.4}. Since $H$ is an open subgroup of finite index in $\Gamma_{\kappa}$, and $G^o_{\kappa}$ is connected, we get that the Zariski-closure of $\rho_{\kappa}\circ\pi^o(\mathcal H(g))$ is $gG^o_{\kappa}$. By Lemma \ref{compatible_frobenii} we have $ch(\rho_{\mathcal F_{\lambda}}(\mathcal H(g)))=ch(\rho_{\kappa}\circ\pi^o(\mathcal H(g)))$, so the claim is now clear. 
\end{proof}
\begin{defn}\label{7.14} Let $\overline U$ denote the base change of $U$ to $\overline{\mathbb F}_q$. Assume now that $U$ is geometrically connected, and consider Grothendieck's short exact sequence of \'etale fundamental groups for $U$:
\begin{equation}\label{fundamentalsequence}
\CD1@>>>\pi_1(\overline U,\overline x)@>>>
\pi_1(U,\overline x)@>>>\textrm{Gal}(\overline{\mathbb F}_q/\mathbb F_q)@>>>1,\endCD
\end{equation}
which is an exact sequence of profinite groups in the category of topological groups. Let 
$G^{geo}_{\lambda}$ denote the Zariski closure of $\pi_1(\overline U,\overline x)$ with respect to $\rho_{\lambda}$, if $\lambda$ is a place $\in|E|_q$ not lying over $p$, and let $G^{geo}_{\lambda}$ denote the image of $\textrm{DGal}(\mathcal F_{\lambda},x)$  with respect to $\rho_{\lambda}$, otherwise. We will call $G^{geo}_{\lambda}$ the $\lambda$-adic geometric monodromy group. It is a normal subgroup of $G_{\lambda}$. When the $E$-compatible system $\{\mathcal F_{\lambda}|\lambda\in|E|_q\}$ is pure then $G^{geo}_{\lambda}$ is semi-simple, and its  identity component is the derived group of $G^o_{\lambda}$. This is well-known for $\lambda\in|E|_q$ not lying over $p$ (see 1.3.9 and 3.4.1 of \cite{De}), and follows from Proposition \ref{3.15}, otherwise.
\end{defn}
\begin{examples}\label{7.15} Let $E$ and $\{\mathcal F_{\lambda}|\lambda\in|E|_q\}$ be as above. For every integer $n\in\mathbb Z$ let $\mathcal F_{\lambda}(n)$ denote the $n$-th Tate twist of $\mathcal F_{\lambda}$ for every $\lambda\in|E|_q$. Recall that for every factor $\lambda$ of $E\otimes_{\mathbb Q}\mathbb Q_q$ this means that $\mathcal F_{\lambda}(n)^{\wedge}$ is the same as $\mathcal F_{\lambda}^{\wedge}$, and the Frobenius of  $\mathcal F_{\lambda}(n)$ is $q^{-n}$ times the Frobenius of $\mathcal F_{\lambda}$. Then $\{\mathcal F_{\lambda}(n)|\lambda\in|E|_q\}$ is an $E$-compatible system which is pure of weight $w-2n$ if $\{\mathcal F_{\lambda}|\lambda\in|E|_q\}$ is pure of weight $w$. Moreover let $\mathcal F_{\lambda}^{\vee}$ denote the dual of $\mathcal F_{\lambda}$ for every $\lambda\in|E|_q$ (in its respective Tannakian category).  Then $\{\mathcal F_{\lambda}^{\vee}|\lambda\in|E|_q\}$ is an $E$-compatible system which is pure of weight $-w$ if $\{\mathcal F_{\lambda}|\lambda\in|E|_q\}$ is pure of weight $w$.
\end{examples}
\begin{prop}\label{invariants} Let $\{\mathcal F_{\lambda}|\lambda\in|E|_q\}$ be a semi-simple pure $E$-compatible system of weight $w$ over $U$. Then the $E_{\lambda}$-dimension of the space of invariants $V_{\lambda}^{G_{\lambda}^{geo}}$ (when $\lambda\in|E|_q$ is not lying over $p$) and the $E^{(n)}_{\lambda}$-dimension of the space of invariants $V_{\lambda}^{G_{\lambda}^{geo}}$ (when $\lambda$ is a factor $E\otimes_{\mathbb Q}\mathbb Q_q$) is independent of $\lambda$.
\end{prop}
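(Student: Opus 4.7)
The strategy is to identify $\dim V_{\lambda}^{G_{\lambda}^{geo}}$ with the dimension of a degree-two compactly supported cohomology group, and then to deduce the desired independence from the $E$-compatibility of $L$-functions combined with purity-based weight bounds. By Remark \ref{7.6} and Lemma \ref{reduction_lemma} we may freely shrink $U$ and pass to a finite extension of $\mathbb F_q$ without changing any of the monodromy groups, so we may assume that $U$ is affine and that the chosen base point $x$ lies in $U(\mathbb F_q)$.

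First I would establish, for every $\lambda \in |E|_q$, the identity
$$\dim_{E_{\lambda}} V_{\lambda}^{G_{\lambda}^{geo}} = \dim_{E_{\lambda}} H^2_{c}(\mathcal F_{\lambda}),$$
where $H^2_{c}$ denotes compactly supported \'etale cohomology of $\overline{U}$ when $\lambda \nmid p$ and rigid cohomology with compact support of $U$ otherwise. In the $l$-adic case this is Poincar\'e duality combined with the fact that for the semi-simple $\pi_1(U,\overline x)$-module $V_{\lambda}$ the invariants and the coinvariants under $\pi_1(\overline U, \overline x)$ coincide, together with the observation that invariants under the image of $\pi_1(\overline U,\overline x)$ equal invariants under its Zariski closure $G_{\lambda}^{geo}$. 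In the $p$-adic case the identification follows from the Poincar\'e duality of Crew (Theorem 9.5 of \cite{Cr2}, already used in the proof of Lemma \ref{rank}) which pairs $H^0_{rig}(U, \mathcal F_{\lambda}^{\vee}(1))$ with $H^2_{rig,c}(U, \mathcal F_{\lambda})$, together with the analogous fact that for semi-simple $\mathcal F_{\lambda}$ the global horizontal sections of $\mathcal F_{\lambda}^{\vee}(1)$ are computed by the $\textrm{DGal}(\mathcal F_{\lambda}, x)$-invariants on $V_{\lambda}^{\vee}$, whose dimension matches that of $V_{\lambda}^{G_{\lambda}^{geo}}$.

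Next I would combine this with the Grothendieck--Lefschetz trace formula (for $\lambda \nmid p$) and with the Etesse--Le~Stum formula Theorem 6.3 of \cite{ES} (for $\lambda \mid p$) to obtain
$$L(U, \mathcal F_{\lambda}, t) = \frac{\det(1 - tF \mid H^1_c(\mathcal F_{\lambda}))}{\det(1 - tF \mid H^2_c(\mathcal F_{\lambda}))},$$
the $H^0_c$-factor vanishing as $U$ is affine. Purity combined with Deligne's weight bounds in the $l$-adic setting and with Theorem 5.3.2 of \cite{Ke2} in the $p$-adic setting (exactly as exploited in Lemma \ref{rank}) shows that $H^2_c$ is pure of weight $w+2$ while $H^1_c$ has weights at most $w+1$; hence the numerator and the denominator are coprime polynomials in $E_{\lambda}[t]$. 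Since $L(U, \mathcal F_{\lambda}, t)$ has coefficients in $E$ independent of $\lambda$ by the $E$-compatibility hypothesis, its reduced factorisation over $E$ is intrinsic, and after base change to $E_{\lambda}$ it must coincide (up to a unit scalar) with the cohomological factorisation above. Consequently $\dim H^2_c(\mathcal F_{\lambda})$ is independent of $\lambda$, proving the proposition.

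The principal technical input is the weight bound on $H^1_c$ in the $p$-adic setting; however this is precisely Kedlaya's Theorem 5.3.2 of \cite{Ke2}, already invoked in Lemma \ref{rank}, so no genuinely new difficulty arises. A secondary point is the need to track Tate twists consistently and to identify the complex embedding used to measure weights, but since the compatibility and purity hypotheses together yield $\iota$-purity for every complex embedding $\iota$, the argument goes through uniformly for all $\lambda \in |E|_q$.
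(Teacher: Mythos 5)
Your proposal is correct and follows essentially the same route as the paper: identify $\dim V_{\lambda}^{G_{\lambda}^{geo}}$ with an $H^0$/$H^2_c$ via Poincar\'e duality, apply the Grothendieck and Etesse--Le~Stum trace formulae, and use the weight bounds of Deligne and of Theorem 5.3.2 of \cite{Ke2} to isolate the degree-two factor of the $\lambda$-independent $L$-function. The only (immaterial) difference is bookkeeping: the paper computes with $L(U,\mathcal F_{\lambda}^{\vee}(1),t)$ and reads off pole orders at Weil numbers of weight $-w$, whereas you work with $L(U,\mathcal F_{\lambda},t)$ and extract $\dim H^2_c$ from the coprime numerator/denominator factorisation.
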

\begin{proof} The $L$-function:
$$L(U,\mathcal F_{\lambda}^{\vee}(1),t)=\prod_{y\in|U|}\det(1-t^{\deg(y)}\cdot
\textrm{Frob}_y(\mathcal F_{\lambda}^{\vee}(1)))^{-1}$$
of $\mathcal F_{\lambda}^{\vee}(1)$ is a power series with coefficients in $E$ and it is independent of the choice of $\lambda$. It is a rational function by the Grothendieck--Verdier trace formula when $\lambda$ is a place of $E$ not above $p$, and by the Etesse-Le~Stum trace formula (see Theorem 6.3 of \cite{ES} on pages 570--571) and the finiteness of rigid cohomology in coefficients (see \cite{Ke1b}), otherwise. Therefore it will be enough to show that dimension of $V_{\lambda}^{G_{\lambda}^{geo}}$ is equal to sum of the orders of poles of all Weil numbers of weight $-w$. This was proved for $\lambda$ which is a place of $E$ not above $p$ in \cite{LP2} (see the proof of Proposition 2.1 on page 566). We are going to use the analogous argument to show the same for every irreducible factor of $E\otimes_{\mathbb Q}\mathbb Q_q$. 

By the Etesse-Le~Stum trace formula:
$$L(U,\mathcal F_{\lambda}^{\vee}(1),t)=\prod_{i=0}^2\det(1-t\cdot F^f|H^i_{rig,c}(U,\mathcal F_{\lambda}^{\vee}(1)))^{(-1)^{i+1}}.$$
Since $\mathcal F_{\lambda}^{\vee}(1)$ is pure of weight $-2-w$, for $i=0,1$ the group $H^i_{rig,c}(U,\mathcal F_{\lambda}^{\vee}(1))$ is mixed of weight $\leq-2-w+i$ by Theorem 5.3.2 of \cite{Ke2} on page 1445; in particular it has weights $<-w$. Therefore the first two factors of the product above do not contribute to the order of the pole at any Weil number of weight $w$. So the sum of the orders of poles of all Weil numbers of weight $-w$ is just the dimension of $H^2_{rig,c}(U,\mathcal F_{\lambda}^{\vee}(1))$ over $E_{\lambda}$. Since there is a perfect pairing:
$$\mathcal F_{\lambda}\otimes\mathcal F^{\vee}_{\lambda}(1)\longrightarrow
\mathcal O^{\dagger}_U(1),$$
by Poincar\'e duality (see Theorem 9.5 of \cite{Cr2} on pages 753--754) we get that the latter is the $E_{\lambda}$-dimension of $H^0_{rig}(U,\mathcal F_{\lambda})$. The latter is the rank of the largest trivial sub-object of $\mathcal F_{\lambda}^{\wedge}$. This is the same as the $E^{(n)}_{\lambda}$-dimension of the space of $\textrm{DGal}(\mathcal F_{\lambda},x)$-invariants of $\omega_x(\mathcal F_{\lambda})$.
\end{proof}
\begin{example}\label{8.20} Let $\{\mathcal F_{\lambda}|\lambda\in|E|_q\}$ be an $E$-compatible system of rank $n$. Every $E$-linear representation $\rho$ of $GL_{n,E}$ gives rise to an $E_{\lambda}$-linear representation of $GL_{n,E_{\lambda}}$, by base change, which we will denote by the same symbol by abuse of notation. We may apply the construction of Remark \ref{8.3b} to each $\mathcal F_{\lambda}$ (corresponding to the representation $\rho$) to get a $E$-compatible system $\{\mathcal F^{\rho}_{\lambda}|\lambda\in|E|_q\}$ which is semi-simple if $\{\mathcal F_{\lambda}|\lambda\in|E|_q\}$ is, and if the latter is also mixed, the the former is mixed, too. If the representation $\rho$ is faithful, then both the arithmetic monodromy groups and the geometric mondromy groups of the new $E$-compatible system are isomorphic to that of the original system.
\end{example}
We say that a finite extension $D/E$ of number fields is $d$-admissible (where $d$ is a positive integer) if all irreducible factors of $D\otimes_{\mathbb Q}\mathbb Q_{q^d}$ are totally ramified extensions of $\mathbb Q_{q^d}$. In this case all irreducible factors of $E\otimes_{\mathbb Q}\mathbb Q_{q^d}$ are totally ramified extensions of $\mathbb Q_{q^d}$, too, so the map $r:|D|_{q^d}\to|E|_{q^d}$ in Example \ref{7.7} is well-defined.
\begin{prop}\label{components} Let $\{\mathcal F_{\lambda}|\lambda\in|E|_q\}$ be a semi-simple pure $E$-compatible system of rank $n$ over $U$. If $G_{\lambda}^{geo}$ is connected for some $\lambda$, then it is so for every $\lambda$. 
\end{prop}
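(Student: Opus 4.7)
The plan is a three-step argument: first reduce via Proposition \ref{connected_components} to the case $G_\lambda = G_\lambda^o$, then Tannakian-translate the connectedness of $G_\lambda^{geo}$ into a statement about rank-$1$ compatible sub-systems, and finally settle the rank-$1$ case using $E$-compatibility of Frobenius traces.

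For the first step, I will use that purity yields $(G_\lambda^{geo})^o = [G_\lambda^o, G_\lambda^o]$ by Proposition \ref{3.15}, so that $G_\lambda^{geo}$ is connected iff it equals $[G_\lambda^o, G_\lambda^o]$. The image of $G_\lambda^{geo}$ in the finite quotient $G_\lambda/G_\lambda^o$ coincides with the image of $\pi_1(\overline U, \overline x)$ in the $\lambda$-independent surjection $\pi_1(U, \overline x) \twoheadrightarrow G_\lambda/G_\lambda^o$ furnished by Proposition \ref{connected_components}; this image is trivial for $\lambda_0$ since $G_{\lambda_0}^{geo}$ is connected, and therefore trivial for all $\lambda$, giving $G_\lambda^{geo} \subseteq G_\lambda^o$ uniformly. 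Under this condition $G_\lambda/G_\lambda^o$ is a cyclic quotient of $\hat{\mathbb Z}$; passing to $\mathcal F_\lambda^{(d)}$ on $U^{(d)}$, where $d$ is the $\lambda$-independent order of $G_\lambda/G_\lambda^o$, makes the arithmetic monodromy connected, while Lemma \ref{reduction_lemma} preserves the identity component (and, since $\pi_1(\overline U^{(d)}, \overline x) = \pi_1(\overline U, \overline x)$, leaves $G_\lambda^{geo}$ unchanged).

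For the second step, writing $T_\lambda := G_\lambda/[G_\lambda, G_\lambda]$ (a torus, since $G_\lambda$ is now connected reductive) and $F_\lambda := G_\lambda^{geo}/[G_\lambda, G_\lambda]$ (a finite subgroup of $T_\lambda$), connectedness of $G_\lambda^{geo}$ is equivalent to $F_\lambda = 1$, equivalently to every character $\chi \in X^*(T_\lambda)$ restricting trivially to $F_\lambda$. Each such $\chi$ arises, via the Schur-functor construction of Remarks \ref{8.3a} and \ref{8.3b}, as a rank-$1$ sub-object $\mathcal L_{\lambda, \chi}$ of some $\mathcal F_\lambda^\rho$ obtained from a representation $\rho$ of $\mathrm{GL}_{n, E}$ (Example \ref{8.20}); since $\{\mathcal F_\lambda^\rho\}$ is an $E$-compatible system whose isotypic decomposition is controlled by the compatible Frobenius traces, the rank-$1$ summands assemble into rank-$1$ $E$-compatible subsystems $\{\mathcal L_{\lambda, \chi}\}$, and $\chi|_{F_\lambda} = 1$ is equivalent to $\mathcal L_{\lambda, \chi}$ having trivial geometric monodromy, that is, being geometrically constant.

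For the third step, any rank-$1$ pure $E$-compatible system $\{\mathcal M_\lambda\}$ is geometrically constant iff $\textrm{Frob}_y(\mathcal M_\lambda)$ depends only on $\deg(y)$ for $y \in |U|$; by $E$-compatibility this condition is $\lambda$-independent. Applying this to each $\{\mathcal L_{\lambda, \chi}\}$ from the second step, triviality of $\chi|_{F_\lambda}$ is $\lambda$-independent; since it holds for all $\chi$ when $\lambda = \lambda_0$, it holds for all $\lambda$, yielding $F_\lambda = 1$ uniformly. The main obstacle lies in the second step: the $\lambda$-varying tori $T_\lambda$ must be matched into genuine $E$-compatible families of characters, which is accomplished by routing the construction through the fixed group $\mathrm{GL}_{n, E}$ via Schur functors and using the $\lambda$-independence of Frobenius traces to match rank-$1$ summands across $\lambda$.
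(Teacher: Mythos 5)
Your step 2 contains the fatal gap. From a character $\chi$ of $T_\kappa$ you produce a rank-one subobject $\mathcal L_{\kappa,\chi}\subseteq\mathcal F_\kappa^{\rho}$ at the single place $\kappa$, and you then assert that ``the rank-$1$ summands assemble into rank-$1$ $E$-compatible subsystems $\{\mathcal L_{\lambda,\chi}\}$'' because ``the isotypic decomposition is controlled by the compatible Frobenius traces.'' This is not true: $E$-compatibility of $\{\mathcal F_\lambda^{\rho}\}$ only says that the full characteristic polynomials $\det(1-t^{\deg y}\mathrm{Frob}_y(\mathcal F_\lambda^{\rho}))$ agree; it gives no decomposition of these polynomials matching the decomposition of $\mathcal F_\lambda^{\rho}$ into irreducibles, no guarantee that the Frobenius eigenvalues of an individual rank-one summand at $\kappa$ lie in $E$ (or in any number field independent of $\kappa$), and no way to single out, at a different place $\lambda$, a rank-one summand with the same eigenvalues. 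Producing such a companion $\mathcal L_{\lambda,\chi}$ for each $\lambda$ from the single object $\mathcal L_{\kappa,\chi}$ is precisely the existence of crystalline/\'etale companions, i.e.\ Abe's theorem, which the paper invokes only for Theorem \ref{chin_independence} and deliberately avoids here. Without it, your step 3 has no compatible rank-one system to apply the (otherwise correct) ``Frobenius depends only on $\deg(y)$'' criterion to.

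The paper's proof sidesteps this by never matching subobjects across places: it only compares \emph{dimensions of invariant subspaces}, which are $\lambda$-independent because they are read off from the order of a pole of an $L$-function that is itself $\lambda$-independent (Proposition \ref{invariants}). Concretely, Lemma 2.3 of \cite{LP2} produces a $\rho$ with $\dim V^{G_\kappa^{geo}}\neq\dim V^{(G_\kappa^{geo})^o}$; Proposition \ref{geometric} (resp.\ its classical analogue) provides a finite \'etale cover $\mathbf p$ whose pullback has geometric monodromy exactly $(G_\kappa^{geo})^o$, while for the connected place $\delta$ pullback changes nothing (Corollary \ref{4.21}); applying Proposition \ref{invariants} to both the system and its pullback yields the contradiction. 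If you want to keep your character-theoretic framing, you must replace the matching of rank-one summands by this kind of dimension count: e.g.\ show directly that $\dim V^{[G_\lambda,G_\lambda]}$ is independent of $\lambda$, which again forces you through the finite-cover argument. As a secondary point, your step 1 also needs a justification (absent from the proposal) that for $\lambda\mid p$ the image of $\mathrm{DGal}(\mathcal F_\lambda,x)$ in $G_\lambda/G_\lambda^o$ corresponds, under the identification of Remark \ref{cc_remark}, to the image of $\pi_1(\overline U,\overline x)$; this is fillable via Proposition \ref{crew_sequence} and Lemma \ref{compatible_frobenii}, but note that the paper's proof does not need the reduction to connected arithmetic monodromy at all.
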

\begin{proof} Assume that there are $\delta,\kappa\in|E|_q$ such that $G_{\delta}^{geo}$ is connected, while $G_{\kappa}^{geo}$ is not.  Then by Lemma 2.3 of \cite{LP2} there is a representation $\rho$ of $GL_{n,E_{\kappa}}$ such that the dimensions of the $G_{\kappa}^{geo}$-invariants and of the $(G_{\kappa}^{geo})^o$-invariants on the fibre of $\mathcal F_{\kappa}^{\rho}$ at $x$ are different. By switching to $\{\mathcal F^{(d)}_{\lambda}|\lambda\in|E|_q\}$ for a suitably divisible $d$, taking a finite extension $K/E$, which we may assume to be $d$-admissible, and switching to the $K$-linear extension of $\{\mathcal F^{(d)}_{\lambda}|\lambda\in|E|_q\}$, we may assume that $\rho$ is actually the base change of an $E$-linear representation of $GL_{n,E}$ to $E_{\lambda}$, since these operations do not change the geometric monodromy groups. (This fact is well-known for those elements of $|E|_q$ which do not lie over $p$, and follows from Lemma \ref{coefficients}, otherwise.) By slight abuse of notation let $\rho$ denote the latter representation, too. By switching to $\{\mathcal F^{\rho}_{\lambda}|\lambda\in|E|_q\}$, we may assume that the dimensions of the $G_{\kappa}^{geo}$-invariants and of the $(G_{\kappa}^{geo})^o$-invariants on the fibre of $\mathcal F_{\lambda}$ at $x$ are different without the loss of generality.

For every  finite \'etale cover $\mathbf{p}:V\to U$ of geometrically connected curves and for every $\lambda\in|E|_q$ let $G_{\lambda}^{geo}(\mathbf{p})$ denote the base change of the $\lambda$-adic geometric monodromy group of the pull-back of $\{\mathcal F_{\lambda}|\lambda\in|E|_q\}$ with respect to
$\mathbf{p}$ to $\overline E_{\lambda}$. Let $\overline V_{\lambda}$ denote $V_{\lambda}\otimes_{E_{\lambda}}\overline E_{\lambda}$, if $\lambda\in|E|_q$ is not lying over $p$, and  $V_{\lambda}\otimes_{E^{(n)}_{\lambda}}\overline E_{\lambda}$, otherwise. The map $\mathbf p$ induces a homomorphism $G_{\lambda}^{geo}(\mathbf{p})\to G_{\lambda}^{geo}$ which is an open immersion: this is known, when $\lambda\in|E|_q$ is not lying over $p$, and follows from Corollary \ref{4.21}, otherwise. This implies that $G_{\delta}^{geo}(\mathbf{p})\to G_{\delta}^{geo}$ induced by $\mathbf p$ is an isomorphism for every $\mathbf p$. On the other there is a $\mathbf p$ such that the image of $G_{\delta}^{geo}(\mathbf{p})\to G_{\delta}^{geo}$ is $(G_{\kappa}^{geo})^o$. The latter is known, when $\lambda\in|E|_q$ is not lying over $p$, and follows from Proposition \ref{geometric}, otherwise. By Proposition \ref{invariants} we have:
$$\dim(\overline V_{\delta}^{G_{\delta}^{geo}})=
\dim(\overline V_{\kappa}^{G_{\kappa}^{geo}})
\textrm{ and }
\dim(\overline V_{\delta}^{G_{\delta}^{geo}(\mathbf p)})=
\dim(\overline V_{\kappa}^{G_{\kappa}^{geo}(\mathbf p)}),$$
on the other hand by the above:
$$\dim(\overline V_{\delta}^{G_{\delta}^{geo}})=
\dim(\overline V_{\delta}^{G_{\delta}^{geo}(\mathbf p)})
\textrm{ and }
\dim(\overline V_{\kappa}^{G_{\kappa}^{geo}})\neq
\dim(\overline V_{\kappa}^{G_{\kappa}^{geo}(\mathbf p)})$$
which is a contradiction.
\end{proof}
\begin{thm}\label{chin_independence} Let $\{\mathcal F_{\lambda}|\lambda\in|E|_q\}$ be a semi-simple pure $E$-compatible system of weight $w$ over $U$. Then there exists a finite extension $K/E$, a connected split semi-simple algebraic group $\mathcal G$ over $K$ and a $K$-linear vector space $V$ equipped with a $K$-linear representation $\rho$ of $\mathcal G$ such that for some positive integer $d$ with $K/E$ being $d$-admissible, for every $\lambda\in|K|_{q^d}$ not lying over $p$ the triples:
$$(\mathcal G\otimes_K K_{\lambda},V\otimes_KK_{\lambda},
\rho\otimes_KK_{\lambda})$$
and
$$(G_{r(\lambda)}^o\otimes_{E_{r(\lambda)}}K_{\lambda},V_{r(\lambda)}\otimes_{E_{r(\lambda)}}K_{\lambda},\rho_{r(\lambda)}^{\textrm{\rm alg}}\otimes_{E_{r(\lambda)}}K_{\lambda})$$
are isomorphic, and for every $\lambda\in|K|_{q^d}$ which is a
factor $\lambda$ of $K\otimes_{\mathbb Q}\mathbb Q_{q^d}$:
$$(\mathcal G\otimes_KK^{(n)}_{\lambda},V\otimes_K K_{\lambda}^{(n)},
\rho\otimes_KK^{(n)}_{\lambda})$$
and
$$(G_{r(\lambda)}^o\otimes_{E^{(n)}_{r(\lambda)}}K^{(n)}_{\lambda},V_{r(\lambda)}\otimes_{E^{(n)}_{r(\lambda)}}K^{(n)}_{\lambda},\rho_{r(\lambda)}^{\textrm{\rm alg}}\otimes_{E^{(n)}_{r(\lambda)}}K^{(n)}_{\lambda})$$
are isomorphic.
\end{thm}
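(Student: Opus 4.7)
The plan is to follow the Larsen--Pink strategy of \cite{LP2}, using Chin's theorem \cite{Ch} to build the model $(\mathcal G,V,\rho)$ out of the $\ell$-adic part of the compatible system (for $\ell\ne p$), and then to extend the agreement to the $p$-adic places by combining the invariant-dimension independence of Proposition~\ref{invariants} with Larsen--Pink's Theorem~\ref{larsen-pink}. The independence results already proved in this section (Propositions \ref{connected_components}, \ref{components}, \ref{invariants}) are exactly the inputs needed to carry this out.

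First I would make preliminary reductions. Pick some factor $\mu$ of $E\otimes_{\mathbb Q}\mathbb Q_q$ and apply Proposition~\ref{geometric} to $\mathcal F_\mu$ to obtain a geometrically connected finite \'etale cover $\pi:V\to U$ such that $\textrm{DGal}(\pi^*(\mathcal F_\mu),x)$ is connected. Pulling back the whole $E$-compatible system along $\pi$ (which preserves the $\lambda$-adic monodromy triples, by Lemma~\ref{reduction_lemma}) and invoking Proposition~\ref{components}, every $G_\lambda^{geo}$ becomes connected. By Proposition~\ref{connected_components} the finite quotient $G_\lambda/G_\lambda^o$ is then independent of $\lambda$, so composing $\pi$ with the corresponding finite \'etale cover lets me assume each $G_\lambda$ is itself connected, and hence equal to $G_\lambda^{geo}$, which by Definition~\ref{7.14} is semi-simple (this is where purity is used).

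Next I would apply the main theorem of \cite{Ch} to the subsystem $\{\mathcal F_\lambda\}_{\lambda\nmid p}$ to produce a finite extension $K/E$, a connected split semi-simple $K$-group $\mathcal G$, and a faithful $K$-linear representation $\rho:\mathcal G\to\mathrm{GL}(V)$ whose base changes realise the $\ell$-adic monodromy triples for every $\lambda\in|K|$ not above $p$. After enlarging $K$ so that $K/E$ is $d$-admissible for some $d$, taking the $K$-linear extension of the system (Example~\ref{7.7}) and replacing $\mathcal F_\lambda$ by $\mathcal F_\lambda^{(d)}$, all of the $\ell$-adic triples remain matched (Example~\ref{7.8} and Remark~\ref{7.9}). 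The real content left is to verify, for each factor $\lambda$ of $K\otimes_{\mathbb Q}\mathbb Q_{q^d}$, that the $p$-adic triple is likewise isomorphic to $(\mathcal G,V,\rho)$ after base change. For this I would note that by Propositions~\ref{reductions_for_ss} and \ref{3.9} the tautological representation of $G_\lambda^o$ on $V_\lambda$ is semi-simple, so both $\mathcal G\otimes\overline K_\lambda$ and $G_\lambda^o\otimes\overline K_\lambda$ embed semi-simply into a common product of general linear groups $\mathrm{GL}_{\underline d,\overline K_\lambda}$. By Theorem~\ref{larsen-pink} it then suffices to check that for every $\overline K_\lambda$-linear representation $W$ of $\mathrm{GL}_{\underline d,\overline K_\lambda}$ one has $\dim W^{\mathcal G\otimes\overline K_\lambda}=\dim W^{G_\lambda^o\otimes\overline K_\lambda}$. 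Using the Schur-functor description of $\mathrm{GL}_{\underline d}$-representations recalled in Remarks~\ref{8.3a}--\ref{8.3b}, it is enough to handle the $W=V^{\rho'}$ arising from tensor power constructions, and in that case the invariant dimension is read off from $H^0_{rig}$ of an isotypic summand of a tensor power of $\mathcal F_\lambda$; by Example~\ref{8.20} these summands fit into pure semi-simple $E$-compatible families, so Proposition~\ref{invariants} gives the desired equalities across all places, including between the $\ell$-adic side (fixed by Chin's model) and the $p$-adic side.

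The main obstacle I anticipate is not conceptual but the descent from $\overline K_\lambda$ to the correct field of definition at the $p$-adic places. The fibre functor $\omega_x$ at $p$ lands in $K_\lambda^{(n)}$-vector spaces rather than $K_\lambda$-vector spaces, and the monodromy group $G_\lambda$ is correspondingly defined only over $K_\lambda^{(n)}$. Thus Larsen--Pink only yields conjugacy of $\mathcal G$ and $G_\lambda^o$ inside $\mathrm{GL}(V\otimes\overline K_\lambda)$, and upgrading this to an honest isomorphism of triples over $K_\lambda^{(n)}$ (as demanded by the statement) requires a careful Galois-descent argument, tracking scalar extensions through Lemma~\ref{coefficients} and checking that Chin's split semi-simple model $\mathcal G$ really is well-adapted to the base change $\cdot\otimes_K K_\lambda^{(n)}$. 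This is routine bookkeeping in the $\ell$-adic setting of \cite{LP2} but genuinely new here, and is the step at which Theorems~\ref{isogeny} and \ref{semisimple} of this paper (invoked through Proposition~\ref{3.9} and ultimately through Corollary~\ref{reductive}) are indispensable for ensuring that the $p$-adic monodromy group fits into the same Tannakian framework.
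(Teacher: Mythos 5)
The decisive gap is at the $p$-adic places: to apply Theorem \ref{larsen-pink} you must first realise both $\mathcal G$ and the $p$-adic monodromy group as subgroups of a common product $\textrm{GL}_{\underline d}$ in which \emph{each standard factor acts irreducibly} --- the paper even remarks that this hypothesis is necessary as well as sufficient. On the $\ell$-adic side Chin's model gives the decomposition $\rho=\oplus_{j}\rho_j$ into absolutely irreducible pieces and hence $\mathcal F_\lambda\cong\oplus_j\mathcal F_{j,\lambda}$ for $\lambda$ not over $p$; but you have no a priori reason why the overconvergent $F$-isocrystal $\mathcal F_\lambda$ at a $p$-adic $\lambda$ decomposes into absolutely irreducible summands of the same ranks $d_j$. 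The paper obtains this by invoking Abe's crystalline-companions theorem \cite{Abe} to extend each $\{\mathcal F_{j,\lambda}\}_{\lambda\nmid p}$ to a full $E$-compatible system with a $p$-adic member, and then uses the $p$-adic Chebotar\"ev theorem (Theorem \ref{chebotarev}) both to identify $\mathcal F_\lambda$ with $\oplus_j\mathcal F_{j,\lambda}$ and to prove, by a companion argument in the reverse direction, that the $p$-adic summands are absolutely irreducible. This is precisely why the theorem is announced as \emph{conditional on a result of Abe}; your proposal skips the step entirely, so the ambient $\textrm{GL}_{\underline d}$ and the irreducibility hypothesis of Theorem \ref{larsen-pink} are never secured at $p$.

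A second, smaller mismatch: Theorem \ref{larsen-pink} applies to connected \emph{semi-simple} groups and Proposition \ref{invariants} computes invariants of the \emph{geometric} monodromy groups, whereas you apply both to $G_\lambda^o$, which is reductive with a nontrivial central torus in general (the arithmetic Frobenius contributes a central $\mathbb G_m$ in each pure weight). The paper writes $G_\lambda^o$ as the product of its derived group (the identity component of $G_\lambda^{geo}$, by purity and Proposition \ref{3.15}) and $Z(G_\lambda)^o$, matches the derived groups via the invariant-dimension criterion, and then matches the central tori by a separate determinant/character argument using both Chebotar\"ev theorems. By contrast, the descent issue you flag at the end is handled by Lemma \ref{coefficients} and the splitness of Chin's model and is not the main missing ingredient.
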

\begin{proof} By Chin's main result (see Theorem 1.4 of \cite{Ch} on page 724) there is a finite extension $K/E$, a connected split semi-simple algebraic group $\mathcal G$ over $K$ and a $K$-linear vector space $V$ equipped with a $K$-linear representation $\rho$ of $\mathcal G$ such that the condition in the claim holds for every $\lambda\in|K|_{q^d}$ not lying over $p$. Now let $d$ be a positive integer such that $K/E$ is $d$-admissible. We may assume that $d$ is actually $1$ by switching to the compatible system $\{\mathcal F_{\lambda}^{(d)}|\lambda\in|E|_q\}$ and using Lemma \ref{reduction_lemma}. Similarly we may assume that our chosen base point $x$ has degree one. We may also assume that $K=E$ using Remark \ref{7.9}. By taking a suitable finite \'etale covering of $U$ and using Lemma \ref{reduction_lemma} we may even assume that the arithmetic monodromy groups are connected, and using Lemma \ref{components} we can assume that the geometric monodromy groups are connected, too.

Taking an additional finite extension of $E$, if it is necessary, we can also assume that the representation $\rho$ is a direct sum of absolutely irreducible representations:
$$\rho=\bigoplus_{j=1}^r\rho_j.$$
This decomposition induces another decomposition:
$$\mathcal F_{\lambda}\cong\bigoplus_{j=1}^r\mathcal F_{j,\lambda}$$
for every $\lambda\in|E|_q$ not lying over $p$ such that for every index $j$ the collection $\{\mathcal F_{j,\lambda}|\lambda\in|E|_q, 
\lambda\!\not|p\}$ is a semi-simple pure $E$-compatible system of weight $w$ over $U$ in the usual sense. By Abe's main result in \cite{Abe} we may assume that this collection extends to a full semi-simple pure $E$-compatible system $\{\mathcal F_{j,\lambda}|\lambda\in|E|_q\}$ of weight $w$ over $U$, at the prize of extending $E$ further, switching to compatible systems over
$U^{(d)}$, and arguing as above. 

Then $\{\oplus_{j=1}^r\mathcal F_{j,\lambda}|\lambda\in|E|_q\}$ is also a semi-simple pure $E$-compatible system of weight $w$ over $U$, so we get that for every $\lambda\in|E|_{q}$ which is a
factor $\lambda$ of $E\otimes_{\mathbb Q}\mathbb Q_{q}$ the $F$-isocrystals $\oplus_{j=1}^r\mathcal F_{j,\lambda}$ and
$\mathcal F_{\lambda}$ are compatible in the sense that for every $y\in|U|$ the polynomials:
$$\det(1-t^{\deg(y)}\cdot\textrm{Frob}_y(\bigoplus_{j=1}^r\mathcal F_{j,\lambda}))\textrm{ and }
\det(1-t^{\deg(y)}\cdot\textrm{Frob}_y(\mathcal F_{\lambda}))$$
are equal. It is a consequence of  the $p$-adic Chebotar\"ev density theorem (Theorem \ref{chebotarev}) that this implies that
$$\mathcal F_{\lambda}\cong\bigoplus_{j=1}^r\mathcal F_{j,\lambda}$$
for every such $\lambda$, too (see Corollary 10.2 of \cite{HP}). Note that, by construction, all $\mathcal F_{j,\lambda}$ are absolutely irreducible when $\lambda$ is not over $p$. The same is true for all other $\lambda$, too. Indeed assume that this is not the case; then after taking a finite extension of $E$, if it is necessary, there is a factor $\lambda$ of $E\otimes_{\mathbb Q}\mathbb Q_{q}$ such that $\mathcal F_{j,\lambda}\cong\mathcal F_{j,\lambda}^<\oplus\mathcal F_{j,\lambda}^>$ for some $F$-isocrystals of positive rank. Fix a $\kappa\in|E|_q$ not over $p$ and apply Abe's theorem quoted above to get two lisse semi-simple $E_{\kappa}$-sheaves $\mathcal F_{j,\kappa}^<$ and $\mathcal F_{j,\kappa}^>$ on $U$ compatible with $\mathcal F_{j,\lambda}^<$ and $\mathcal F_{j,\lambda}^>$, respectively (again possibly after taking a finite extension of $E$, etc.). By the usual Chebotar\"ev density theorem we have $\mathcal F_{j,\kappa}\cong\mathcal F_{j,\kappa}^<\oplus\mathcal F_{j,\kappa}^>$, but this is a contradiction.

For every $j$ let $d_j$ be the rank of the $E$-compatible system
$\{\mathcal F_{j,\lambda}|\lambda\in|E|_q\}$ and set $\underline d=(d_1,d_2,\ldots,d_r)\in\mathbb N^m$. Similarly to Notation \ref{7.5} let $V_{j,\lambda}$ denote the fibre of $\mathcal F_{j,\lambda}$ with respect to $\overline x$, when $\lambda\in|K|_{q^d}$ is not lying over $p$, and with respect to $x$, otherwise. For every $j$ let $V_j$ be the $K$-linear vector space underlying the representation $\rho_j$. Choosing a basis for each $V_{j}$ and $V_{j,\lambda}$, the group $\mathcal G$ (respectively $G_{\lambda}$) become a subgroup of $GL_{\underline d,E}$ (respectively $GL_{\underline d,E_{\lambda}}$), unique up to conjugation. We only need to show that for every
$\lambda\in|E|_q$ the base change of $\mathcal G$ and $G_{\lambda}$ to $\overline E_{\lambda}$ are conjugate as subgroups of $GL_{\underline d,\overline E_{\lambda}}$. Note that we already know this for all $\lambda$ not over $p$.

For every linear algebraic group $\mathbf G$ let $\mathbf G'$ and $Z(\mathbf G)$ denote the derived group of $\mathbf G$ and the centre of $\mathbf G$, respectively. Note that $Z(\mathcal G)$ is a subgroup of $Z(GL_{\underline d,E})$ since all representations $\rho_j$ are irreducible. Similarly $Z(G_{\lambda})$ is a subgroup of $Z(GL_{\underline d,E_{\lambda}})$. Therefore these groups are invariant under conjugation, and hence the same holds for their identity components, too. Since $\mathcal G$ (respectively $G_{\lambda}$) is the product of $\mathcal G'$ and $Z(\mathcal G)^o$ (respectively of $G_{\lambda}'$ and $Z(G_{\lambda})^o$), it will be enough to show that $\mathcal G'$ and $G_{\lambda}'$ are conjugate after base change to $\overline E_{\lambda}$, and similarly $Z(\mathcal G)^o$ and $Z(G_{\lambda})^o$ are equal after base change to $\overline E_{\lambda}$.

In order to prove the first, by the remark at the end of Definition \ref{7.14} it suffices to prove that for any pair $\lambda,\kappa\in|E|_q$, after fixing an isomorphism $\overline E_{\lambda}\cong\overline E_{\kappa}$, the subgroups
$$G_{\lambda}^{geo}\times_{E_{\lambda}}\overline E_{\lambda}
\subseteq GL_{\underline d,\overline E_{\lambda}}\textrm{ and }
G_{\lambda}^{geo}\times_{E_{\kappa}}\overline E_{\kappa}\subseteq 
GL_{\underline d,\overline E_{\kappa}}$$
are all conjugate. Every representation of $GL_{\underline d,\overline E_{\lambda}}\cong GL_{\underline d,\overline E_{\kappa}}$, or after a finite extension of $E$, of $GL_{\underline d,E}$, on a finite dimensional vector space $W$ can be obtained from the standard representations by means of linear algebra (as explained in Remark \ref{8.3a}). Thus it gives rise to a semi-simple pure $E$-compatible system (see Example \ref{8.20}) to which we can apply Proposition \ref{invariants}. It follows that the dimension of invariants of $G_{\lambda}^{geo}$ in $V^{\rho}_{\lambda}$ is independent of $\lambda$. The desired assertion is now a consequence of Theorem \ref{larsen-pink}.

Taking the determinant in each factor, the connected group $G_{\lambda}$ maps onto a subtorus $T_{\lambda}$ of the product of multiplicative groups
$\mathbb G^r_{m,E_{\lambda}}$. Every character of $\mathbb G^r_{m,E_{\lambda}}$, or equivalently, of $\mathbb G^r_{m,E}$, gives rise to an $E$-compatible system on $U$, so the question whether it is trivial on $T_{\lambda}$ is independent of $\lambda$ (for example by the usual and the $p$-adic Chebotar\"ev theorems). Thus each $T_{\lambda}=T\times_EE_{\lambda}$ for some subtorus $T\subseteq\mathbb G^r_{m,E}$. The centre of
$\textrm{GL}_{\underline d,E_{\lambda}}$ maps onto $\mathbb G^r_{m,E_{\lambda}}$, and the identity component of the pre-image of $T_{\lambda}$ is $Z(G_{\lambda})^o$. In other words, these identity components come from a fixed torus in the centre of $\textrm{GL}_{\underline d,E}$.
\end{proof}
By slightly modifying the argument above, we may give a proof of Theorem \ref{strong-independence} which does not rely on Abe's extension of the Langlands correspondence for overconvergent $F$-isocrystals.
\begin{proof}[Proof of Theorem \ref{strong-independence}] The $\mathbb Q$-compatible system $\{H^1(A)_l|l\in|\mathbb Q|_q\}$ is also pure, as we noted in Example \ref{7.3}, it is the same as the one we have considered in the introduction by Lemma \ref{d-module_comparison}, and it is semi-simple by Theorem \ref{semisimple}. Switching from $U$ to $U^{(n)}$, and using Lemma \ref{reduction_lemma}, we may assume that the base point $x$ has degree one. By taking a finite \'etale cover of $U$ we may assume that all $G_{\lambda}$ and $G_{\lambda}^{geo}$ are connected, by arguing as above. Then in particular $\textrm{End}_L(A_L)=\textrm{End}_{\overline L}(A_L)$. By the Wedderburn theorem, the semi-simple algebra $\textrm{End}_L(A_L)\otimes\mathbb Q$  splits over some number field $E$. Switching from $U$ to $U^{(m)}$ for some sufficiently divisible $m$, and using Lemma \ref{reduction_lemma}, we may assume that the base point $x$ has degree one and $E/\mathbb Q$ is $1$-admissible. Choose an isomorphism with a direct sum of matrix algebras:
$$\textrm{End}_L(A_L)\cong\prod_{j=1}^rM_{n_j}(E).$$
For every $\lambda\in|E|_q$ this induces a decomposition:
$$\mathcal F_{r(\lambda)}\otimes_{\mathbb Q_{r(\lambda)}}E_{\lambda}\cong
\bigoplus_{j=1}^r\mathcal F_{j,\lambda}^{\oplus n_j}.$$
By Theorems \ref{isogeny} and \ref{semisimple} the $\mathcal F_{j,\lambda}$ are absolutely irreducible and pairwise inequivalent. For fixed $j$ and varying $\lambda$ they form a pure $E$-compatible system over $U$. Now we can conclude the argument similarly to the proof of Theorem \ref{chin_independence}. 
\end{proof}


\begin{thebibliography}{99}

\bibitem[1]{Abe} T. Abe, {\it Langlands correspondence for isocrystals and existence of crystalline companion for curves}, arXiv:1310.0528, (2013).

\bibitem[2]{AC} T. Abe and D. Caro, {\it Theory of weights in $p$-adic cohomology}, arXiv:1303.0662, (2013).

\bibitem[3]{BBM} P. Berthelot, L. Breen and W. Messing,  {\it Th\'eorie de Dieudonn\'e cristalline II},  Lecture Notes in Mathematics \textbf{930}, Springer-Verlag, Berlin, (1982).

\bibitem[4]{BM} P. Berthelot and W. Messing, {\it Th\'eorie de Dieudonn\'e cristalline III}, The Grothendieck Festschrift I, Progress in Mathematics \textbf{86} Birkh\"auser, 1990, pp. 171--247.

\bibitem[5]{Be0} P. Berthelot, {\it Cohomologie rigide et cohomologie
rigide \`a supports propres, premi\`ere partie}, Pr\'epublication IRMAR 96-03, 89 pages (1996).

\bibitem[6]{Be1} P. Berthelot, {\it Dualit\'e de Poincar\'e et formule de K\"unneth en cohomologie rigide}, C. R. Acad. Sci. Paris \textbf{325} (1997), 493--498.

\bibitem[7]{Be2} P. Berthelot, {\it Finitude et puret\'e cohomologique en cohomologie rigide}, Invent. Math. \textrm{128} (1997), 329--377.

\bibitem[8]{Ch} Ch.-Wh. Chin, {\it Independence of $\boldsymbol{\ell}$ of monodromy groups}, J. Amer. Math. Soc. \textbf{17} (2004), 723--747.

\bibitem[9]{Cr} R. Crew, {\it $F$-isocrystals and their monodromy groups}, Ann. Sci. \'Ecole Norm. Sup. \textbf{25} (1992), 429--464.

\bibitem[10]{Cr1} R. Crew, {\it The $p$-adic monodromy of a generic abelian scheme in characteristic $p$}, in {\it $p$-adic methods in number theory and algebraic geometry}, 59--74, Contemp. Math., 133, Amer. Math. Soc., Providence, RI, 1992.

\bibitem[11]{Cr2} R. Crew, {\it Finiteness theorems for the cohomology of an overconvergent isocrystal on a curve}, Ann. Sci. \'Ecole Norm. Sup. \textbf{31} (1998), 717--763.

\bibitem[12]{J1} A. J. de Jong, {\it Crystalline Dieudonn\'e module theory via formal and rigid geometry}, Publ. Math. IH\'ES \textbf{82} (1995), 5--96. 

\bibitem[13]{J2} A. J. de Jong, {\it Homomorphisms of Barsotti-Tate groups and crystals in positive characteristic}, Invent. Math. \textbf{134} (1998), 301--333; erratum, ibid. \textbf{138} (1999), 225. 

\bibitem[14]{De} P. Deligne, {\it La Conjecture de Weil, II}, Publ. Math. IH\'ES \textbf{52} (1980), 138--252.

\bibitem[15]{DM} P.\ Deligne and J. Milne, {\it Tannakian Categories}, Hodge Cycles, Motives, and Shimura Varieties, LNM \textbf{900}, Springer-Verlag, New York 1982, pp. 101--228.

\bibitem[16]{EHS} H. Esnault, P.-H. Hai, and X. Sun, {\it On Nori's fundamental group scheme}, Geometry and Dynamics of Groups and Spaces, Progress in Mathematics, vol. 265, Birkh\"auser, 2007, pp. 377--398.

\bibitem[17]{ES} J.-T. \'Etesse and B. le Stum, {\it Fonctions $L$ associ\'ees aux $F$-isocristaux surconvergents, I. Interpr\'etation cohomologique}, Math. Ann. \textbf{296} (1993), 557--576.

\bibitem[18]{Et1} J.-T. \'Etesse, {\it Descente \'etale des F-isocristaux surconvergents et rationalit\'e des fonctions L de sch\'emas ab\'eliens}, Ann. Sci. \'Ecole Norm. Sup., \textbf{35} (2002), 575--603. 

\bibitem[19]{Fa1} G. Faltings, {\it Endlichkeitss\"atze f\"ur abelsche Variet\"aten \"uber Zahlk\"orpern}, Invent. Math. \textbf{73} (1983), 349--366. 

\bibitem[20]{Fa2} G. Faltings, {\it Some historical notes, Chapter I}, Arithmetic Geometry, G. Cornell, J.H. Silverman (eds.), Springer Verlag 1986.

\bibitem[21]{Fu}  W. Fulton, {\it Rational equivalence on singular varieties}, Publ. Math. IH\'ES \textbf{45} (1975), 147--167. 

\bibitem[22]{HP} U. Hartl and A. P\'al, {\it Crystalline Chebotar\"ev density theorems}, preprint, (2015). 

\bibitem[23]{KT} K. Kato and F. Trihan, {\it On the conjectures of Birch and Swinnerton-Dyer in characteristic $p>0$}, Invent. Math. \textbf{153} (2003), 537--592. 

\bibitem[24]{KaMe} N.~Katz, and W.~Messing, {\it Some consequences of the Riemann hypothesis for varieties over finite fields}, Invent. Math. \textbf{23} (1974), 73--77.

\bibitem[25]{Ka} N.~Katz, {\it Slope filtration of $F$-crystals}, Journ\'ees de G\'eom\'etrie Alg\'ebrique de Rennes (Rennes, 1978), Vol. I, pp. 113--163, Ast\'erisque 63, Soc. Math. France, Paris 1979.

\bibitem[26]{Ke0} K. Kedlaya, {\it A $p$-adic local monodromy theorem}, Ann. Math. \textbf{160} (2004), 93--184.

\bibitem[27]{Ke1} K. Kedlaya, {\it Full faithfulness for overconvergent F-isocrystals}, Geometric Aspects of Dwork Theory (Volume II), Adolphson et al (eds.), de Gruyter, Berlin, 2004, pp. 819--835.

\bibitem[28]{Ke1b} K. Kedlaya, {\it Finiteness of rigid cohomology with coefficients}, Duke Math. J. \textbf{134} (2006), 15--97.

\bibitem[29]{Ke2} K. Kedlaya, {\it Fourier transforms and $p$-adic "Weil II"}, Compos. Math. \textbf{142} (2006), 1426--1450.

\bibitem[30]{Ke2b} K. Kedlaya, {\it Semistable reduction for overconvergent F-isocrystals. I. Unipotence and logarithmic extensions}, Compos. Math. \textbf{143} (2007), 1164--1212.

\bibitem[31]{Ke3} K. Kedlaya, {\it $p$-adic differential equations}, Cambridge studies in advanced mathematics \textbf{125}, Cambridge University Press, Cambridge, (2010). 

\bibitem[32]{LP1} M. Larsen and R. Pink, {\it Determining representations from invariant dimensions}, Invent. Math. \textbf{102} (1990) 377--398.

\bibitem[33]{LP1b} M. Larsen and R. Pink, {\it On $\boldsymbol{\ell}$-independence of algebraic monodromy groups in compatible systems of representations}, Invent. Math. \textbf{107} (1992) 603--636.

\bibitem[34]{LP2} M. Larsen and R. Pink, {\it Abelian varieties, $l$-adic representations, and $l$-independence}, Math. Ann. \textbf{302} (1995), 561--579.

\bibitem[35]{Laz} C. Lazda, {\it Incarnations of Berthelot's conjecture}, submitted, (2015).

\bibitem[36]{LeSt} B. Le Stum, {\it Rigid cohomology}, Cambridge Tracts in Mathematics, \textbf{172}, Cambridge University Press, Cambridge, (2007).

\bibitem[37]{Mar} A. Marmora, {\it  Facteur epsilon $p$-adiques}, Compos. Math. \textbf{144} (2008), 439--483.

\bibitem[38]{Ma} T. Matsusaka, {\it The criteria for algebraic equivalence and the torsion group}, American Journal of Mathematics \textbf{79} (1957), 53--66.

\bibitem[39]{Pe} D. Petrequin, {\it Classes de Chern et classes de cycles en cohomologie rigide}, Bull. Soc. Math. France \textbf{131} (2003), 59--121. 

\bibitem[40]{Ree} R. Ree, {\it Commutators in semi-simple algebraic groups},
Proceedings of the American Mathematical Society \textbf{15} (1964), 457--460.

\bibitem[41]{Se} J.-P. Serre, {\it Abelian $l$-adic representations and elliptic curves}, W.A. Benjamin, Inc, New York (1968).

\bibitem[42]{Ta} J. Tate, {\it Endomorphisms of abelian varieties over finite fields}, Invent. Math. \textbf{2} (1966), 134--144.

\bibitem[43]{Ta2} J. Tate, {\it Conjectures on algebraic cycles in $l$-adic cohomology}, in {\it Motives, Proceedings of the summer research conference on motives}, American Mathematical Society, 1994, pp. 71--83.

\bibitem[44]{Tr} F. Trihan, {\it A note on semistable Barsotti-Tate groups}, J. Math. Sci. Univ. Tokyo \textbf{15} (2008), 411--425. 

\bibitem[45]{Ts} N. Tsuzuki, {\it Morphisms of $F$-isocrystals and the finite monodromy theorem for unit-root $F$-isocrystals}, Duke Math. J. \textbf{111} (2002), 385--418.

\bibitem[46]{Vo} V. Voevodsky, {\it A nilpotence theorem for cycles algebraically equivalent to zero}, Internat. Math. Res. Notices 1995, no. 4, 187--198.

\bibitem[47]{Za1} J.G. Zarhin, {\it Isogenies of abelian varieties over fields of finite characteristics}, Math. USSR Sbornik \textbf{24} (1974), 451--461.

\bibitem[48]{Za2} J.G. Zarhin, {\it A remark on endomorphisms of abelian varieties over function fields of finite characteristics}, Math. USSR Izv. \textbf{8} (1974), 477--480.
\end{thebibliography}
\end{document}